\newcommand{\assign}{\stackrel{\mathrm{def}}{=}}
\newenvironment{proof}{\noindent\textbf{Proof\ }}{\hspace*{\fill}$\Box$\medskip}
\begin{document}

\title{Lyapunov exponents in a slow environment}

\author{Tommaso Rosati}

\institute{Imperial College London, \email{t.rosati@imperial.ac.uk}} 

\maketitle

\begin{abstract}
	Motivated by the evolution of a population in a slowly varying random
environment, we consider the 1D Anderson model on finite volume, with viscosity $ \kappa > 0 $:
\begin{equs}
\partial_{t} u(t,x) = \kappa \Delta u(t,x) + \xi(t, x) u(t,x), \quad u(0, x) =
u_{0}(x), \qquad t > 0, x \in \TT.
\end{equs} 
The noise $ \xi $ is chosen constant on time intervals of length $ \tau >0 $ and sampled 
independently after a time $ \tau $. We prove that the Lyapunov
exponent $ \lambda (\tau) $ is positive and near $ \tau= 0 $ follows a power law that depends
on the regularity on the driving noise. As $ \tau \to \infty $ the Lyapunov
exponent converges to the average top eigenvalue of the associated time-independent
Anderson model. The proofs make use of a solid control of the projective
component of the solution and build on the
Furstenberg--Khasminskii and Bou\'e--Dupuis formulas, as well as on Doob's
H-transform and on tools from singular stochastic PDEs.
\end{abstract}

\section{Introduction}

In this work, we study a parabolic Anderson model with viscosity $ \kappa >0 $ and periodic
boundary conditions:
\begin{equ}[eqn:time-Anderson]
   \partial_t u (t, x)  =  \kappa \Delta u (t, x) + \xi (t, x) u (t, x),
\qquad u (0, x) = u_0 (x) \geqslant 0, \qquad t > 0, \ x \in \TT,
\end{equ}
where $ \TT $ is the 1D torus.
We understand $ u $ as the density of a population evolving in the environment $
\xi $. Our main assumption is that the environment varies on a slower timescale
with respect to the population, so that one expects the underlying
population to adapt, more or less rapidly and efficiently, to the surrounding environment.
This is a natural setting for
evolutionary models. For instance, adaptation to
seasonal influences was recently observed in certain kinds of fruit flies
\cite{Bergland2014Fly} and many other
dynamical effects in the study of biodiversity are linked fluctuating
environments that are colored in time
\cite{bell2010fluctuating, Vasseur2004Color}, see also \cite{Benaim}. 
In this context \eqref{eqn:time-Anderson} arises as the limit of a number of
microscopic, finite-dimensional evolution models.

The aim of this article is to show how the
long term effect of the environment changes both as a function of the timescale at which the noise varies and of
the spatial variability of the noise.
We will therefore introduce a parameter $ \tau > 0 $ and choose $
\xi(t, x)$ such that $ \xi $ is constant on time intervals of length $ \tau
$, after which it is chosen independently according to a fixed centered distribution.
Then the capacity of the population to adapt to and exploit the environment can be measured
by the Lyapunov exponent $ \lambda(\tau) $ of the system. 
We expect that if $ \tau \gg 1 $ the population can
optimize its distribution around favorable parts of the potential, where
it can grow rapidly, while if the environment varies quickly it averages out and the population
is not able to exploit it.
Following our heuristic we will prove that the Lyapunov exponent is strictly
positive and for $ \tau \to \infty $, $ \lambda(\tau) \to \EE
[\lambda_{\mathrm{stat}}] > 0 $, with $
\lambda_{\mathrm{stat}} $ being the largest eigenvalue of the associated
time-independent dynamic, whereas $ \lambda (\tau) \to 0 $ for $ \tau \to 0 $.

In addition, simulations show that $ \lambda (\tau) $ stabilizes relatively
quickly around $ \EE [\lambda_{\mathrm{stat}}] $, so the behaviour near $ \tau
=0 $ is particularly interesting and gives a measure of the speed of adaptation
of the population. Here we prove that the Lyapunov exponent
follows a power law that depends on the regularity of the noise. We consider two archetypal settings:
in the first one the noise is function valued, in the second one it is
space white noise. In the first
setting we show that $ \lambda(\tau) $ grows linearly, while in the second
setting $ \lambda(\tau) \simeq \sqrt{ \kappa^{-1} \pi\tau}. $ 

This behaviour can be explained by considering a different scaling. If we multiply $ \xi
$ with a factor $ \tau^{- \frac{1}{2} } $ we expect to see a Stratonovich
equation in the limit $ \tau \to 0 $ and by scaling we also expect that $\lambda(\tau) $ gets roughly multiplied by $
\tau^{-1} $ -- since in the limiting equation the Lyapunov exponent depends
linearly on the second moment of the noise -- and after this rescaling we expect its
convergence to the Lyapunov exponent of the Stratonovich
equation. This is the case is if the noise is regular: instead for irregular noise the Stratonovich equation makes sense only after
renormalization (i.e. after subtracting an It\^o correction), which accounts for the blow-up of order $
\tau^{- \frac{1}{2}}$ in the new scaling.

However, we will not follow precisely the approach we just outlined. Instead,
on the one hand, our scaling slightly simplifies the setting, in the sense that we can work
entirely without making use of Wong--Zakai
results: it will be sufficient to establish averaging to the heat equation as
$ \tau \to 0 $. On the other hand, since in our scaling we are essentially
performing a Taylor expansion, we will need to focus on certain additional moment estimates.

As a notable side effect of our scaling, we can recover the
renormalization constant for the multiplicative stochastic heat equation purely from the large scale dynamical properties of
the equation, without considering the small-scale behaviour that usually
motivates the use of tools from singular SPDEs \cite{Hairer2014, GubinelliImkellerPerkowski2015}. These tools will still be
required, though in a different setting and in combination with different
(indeed fewer and simpler) stochastic estimates than in the case of the
multiplicative stochastic heat equation.

It is also crucial to observe that Wong--Zakai results for the associated SPDEs with Stratonovich or
white noise, such as \cite{HairerPardoux2015WongZakai}, are not per se sufficient to derive convergence of the
Lyapunov exponents, since they consider convergence on compact time intervals and do not cover
the large scales behavior. The key additional ingredient in the
control of the longtime behaviour is the projective invariant measure
associated to the equation. The latter is the limit of the ``projective''
process $ u_{t} / \| u_{t} \| $, for some suitable norm $ \| \cdot \| $. If one establishes convergence of the solution map
as well as of the named invariant measure in a suitable space and with
sufficient moment estimates, then the convergence of the Lyapunov exponent will
follow. This is, in a nutshell, the approach discussed for SDEs in the
monograph \cite{KhasminskiiBook}. In full generality there is no infinite-dimensional
extension of this theory, due to a lack of understanding
of the projective component. To the best of our knowledge the present order
preserving case is the only one in which a spectral gap for the projective
component is available and has been studied in a series of papers
\cite{Mierczynski2013First, Mierczynski2013Second, Mierczynski2016Third} and
\cite{Mierczinsky2016LyapFormula}, see also the survey
\cite{Mierczynski2014Survey} and the book \cite{Hess1991periodic} for the
time-periodic case.

At the heart of our arguments lies the strict positivity
of the solution map to \eqref{eqn:time-Anderson}, together with classical
approaches for products of random matrices.
We will decompose $ u_{t} = \|
u_{t} \|_{L^{1}} z_{t} $, where $ z_{t} > 0 $ integrates to one and is
the projective component of $ u_{t} $. It is then useful to
endow the projective space in which $
z_{t} $ takes values with a particular topology under which positive linear
maps satisfy a contraction principle. This guarantees the existence of a spectral gap
for the process $ z_{t} $, as was first observed for random matrices
\cite{ArnoldGundlachDemetrius1994, Hennion1997} -- but the result extends immediately to
SPDEs \cite{Mierczynski2016Third, Rosati2019synchronization}.
We observe that in the existing literature unique ergodicity of the
projective component has been studied in many different forms over the past
years. A seminal work by Sinai based on the representation of $ z_{t} $ via
directed polymers \cite{Sinai1991Buergers} has been extended to cases
without viscosity and on infinite volume
\cite{WeinanKhaninSinai2000BurgersInviscous, BakhtinCatorKhanin2014Burgers, DunlapGrahamRyzhik2019stationary} in the context of
ergodicity for Burgers equation: we highlight here the work 
\cite{Bakhtin2016Kick} where the noise has a comparable structure to our case
and a recent article \cite{gu2021kpz} that uses a somewhat similar approach
to establish Gaussian fluctuations.

In view of the mentioned results it would appear particularly interesting to
extend the present study to
infinite volume. However, in such setting the results for the projective
component we mentioned 
are weaker and the picture we presented changes drastically, as the growth rate can be
super-exponential in cases of so called strongly catalytic environments.
In particular the regularity of the environment can determine the exact
super-exponential order of growth of the population (see e.g. \cite{Konig2020LongtimeAO, Chen2014} for the
time independent case or \cite{Gaertner2006Catalytic, Cranston2005Levy}, as
well as the monograph \cite[Chapter 8]{Konig2016}, for time dependent
problems). 

Next let us discuss our proof methods. We will make use of the spectral gap of $ z_{t} $ to derive the Furstenberg--Khasminskii formula for $
\lambda(\tau) $. We then study the
Lyapunov exponent near zero via a Taylor expansion of the latter formula, in the spirit of similar results for products of random
matrices close to the identity.
These Taylor expansions build on the convergence of certain stochastic
quantities, which give rise to the leading order terms. Much unlike the
finite-dimensional case, here, if the noise is rough, we use 
paracontrolled distributions \cite{GubinelliPerkowski2017KPZ} to identify
such terms.
The positivity of $ \lambda(\tau) $ in the bulk is proven instead with an
application of the Bou\'e--Dupuis formula, by constructing a suitable control.
We observe that there is a vast literature on lower bounds for Lyapunov
exponents for parabolic Anderson models. Although to the best of our knowledge there is no result that covers
our minimal assumption (the Lyapunov exponent is always positive, unless the
noise is constant in space), we believe that our proof, based in a different
way than above on the contraction property in the projective space and on a
perturbation expansion, is of independent interest. Eventually, the convergence for $ \tau \to
\infty $ is studied via Doob's $ H- $transform. In all cases, the backbone of
our analysis consists of moment estimates for the invariant projective
component: an essential tool in this respect is the use of certain quantitative lower
bounds to the fundamental solution of SPDEs, as were developed recently in the context of SDEs with singular drift
\cite{perkowski2020quantitative}, as well as some precise estimates on the
moments of the solution map to \eqref{eqn:time-Anderson}.

In conclusion, this work shows in a novel way how the tools presented in the cited
works can be extended to
obtain a strong quantitative control of the Lyapunov exponent, with a
particular attention to the interplay with the regularity of the noise and
theories from singular SPDEs.

\subsection{Structure of the article}

In Section \ref{sec:main-results} we will collect our main results,
Theorems~\ref{thm:lyapunov-smooth} and~\ref{thm:lyapunov-white}, after
having introduced the setting of the article. In the next three sections we
then prove the results concerning, respectively, the behaviour near zero, the
bulk behaviour and the averaging as $ \tau \to \infty $. The focus will be on
the crucial points of the proofs: we sometimes provide an intuitive proof in the simpler case of regular
noise and then concentrate on the added difficulties of white noise
(or sometimes we just treat the latter, more complicated case). We leave the most
technical calculations to later sections. In particular, in Section
\ref{sec:proj-invariant-measures} we recall the required properties of the
projective space and prove some moment estimates on the invariant measures.
These in turn build on a quantitative analytic bound that we present in
Section~\ref{sec:an-analytic-estimate}. In
Section~\ref{sec:stochastic-estimates} we prove the stochastic estimates
required for the Taylor expansion near $ \tau = 0 $ and in
Appendix~\ref{sec:paraproducts} we define some functions spaces and recall basic
constructions involving paraproducts.

\subsection{Acknowledgments}
The author gratefully acknowledges support through the Royal Society
grant $RP \backslash R1 \backslash 191065$. Many thanks
to Florian Bechtold, Ilya Goldsheid, Martin Hairer and Nicolas Perkowski for
some fruitful discussions, ideas and support.

\subsection{Notations}

Let $ \NN = \{ 0, 1, 2, \dots \} $. We will work on the torus $ \TT = \bigslant{\RR}{\ZZ}. $
We denote with $M_b (\TT)$ the space of measurable and bounded
functions $\varphi : \TT  \rightarrow \RR$ with the uniform norm:
\[ \| \varphi \|_{\infty} = \sup_{x \in \TT } | \varphi (x) | . \]
Then let $ \mS(\TT), \mS^{\prime}(\TT) $ be respectively the space of Schwartz
test functions (i.e. smooth) and its topological dual, the space Schwartz distributions.
For $ \alpha \in \RR, p \in [1, \infty] $, we denote with
\begin{equs}
\| \varphi \|_{\mC^{\alpha}} = \| \varphi \|_{B^{\alpha}_{\infty,
\infty}}, \qquad \| \varphi \|_{\mC^{\alpha}_{p}} = \| \varphi
\|_{B^{\alpha}_{p, \infty}},
\end{equs}
the spaces constructed in Appendix~\ref{sec:paraproducts}. For $ \alpha \in
(0, \infty] \setminus \NN $, $ \mC^{\alpha} $ coincides with the typical space of
$ \alpha - $H\"older continuous functions. For time-dependent functions $
\varphi \colon [0, T] \to X $ (for some $ T > 0$) and a Banach space $ X $, we introduce the
norms, for $ p \in [1, \infty]$ and the usual modification if $ p = \infty $:
\begin{equs}
\| \varphi \|_{L_{T}^{p}X} = \bigg( \int_{0}^{T} \| \varphi(s) \|_{X}^{p} \ud s
\bigg)^{\frac{1}{p}}.
\end{equs}
For a set $ \mX $ and two functions $ f, g \colon \mX \to \RR $ we write $
f \lesssim g$ if there exists a constant \( c > 0 \) such that $ f(x) \leqslant
c g(x) $ for all $ x \in \TT $.

\section{Main results}\label{sec:main-results}

As mentioned in the introduction, we will work in two distinct settings: in the first one every
realization of the noise, for fixed time, is assumed to be a function
(Assumption~\ref{assu:smooth-noise}), either piecewise constant or with some
continuity requirement; in the second one we consider space-white noise. We
start by describing precisely the first setting: we add the index $
\mathrm{stat} $ to indicate that the noise we describe is time independent.

\begin{assumption}[Regular noise]\label{assu:smooth-noise}
  Consider a probability space $(\Omega, \mathcal{F}, \PP)$ supporting a random function:
  \[ \xi_{\mathrm{stat}} : \Omega \to M_b (\TT), \]
	such that the following requirements are satisfied.
  \begin{enumerate}
    \item \textbf{(Centeredness \& nontriviality)} For every $x \in \TT$
    \[ \mathbf{E} [\xi_{\mathrm{stat}} (x)] = 0, \qquad \int_{\TT^{2}} \EE | \xi(x) -
\xi(y) |^{2} \ud x \ud y \in (0, \infty). \]
    \item \textbf{(Moment bound)} For some  $ \sigma > 0$
    \[ \mathbf{E} [e^{ \sigma \| \xi_{\mathrm{stat}} \|_{\infty}^{2}}] < \infty . \]
    \item \textbf{(Regularity)} One of the following two holds true:
\begin{enumerate}
\item There exists an $ \alpha \in (0, 1) $ such that 
\begin{equs}[eqn:holder-noise]
\EE \| \xi_{\mathrm{stat}} \|_{C^{\alpha}} < \infty.
\end{equs}
\item There exist deterministic disjoint intervals\footnote{Here an interval is a set of the
  form $ [a,b],[a,b),(a,b],(a,b) \subseteq \TT $ for some $ -1/2 \leqslant a <
b \leqslant  1/2 $.} $
A_{i} \subseteq \TT, \ i = 1, \dots,
\mf{n} $ for some $ \mf{n} \in \NN $ such that $ \TT = \bigcup_{i =
1}^{\mf{n}} A_{i} $ and
\begin{equs}[eqn:piecewise-noise]
\xi_{\mathrm{stat}} (x) = \sum_{i = 1}^{\mf{n}}  X_{i} 1_{A_{i}}(x),
\end{equs}
with $ X_{i} \colon \Omega \to \RR $ random variables.
\end{enumerate}
   \end{enumerate}
\end{assumption}

\begin{remark}\label{rem:on-reg-ass}
Most of the calculations we will present will work for general potential in $
L^{\infty} $ with exponential moments. The additional regularity assumption
will be used for certain stochastic estimates that are based on the
Feynman--Kac formula. The quadratic exponential moments are instead required to
study the behaviour in the bulk.
\end{remark}
In the second setting, below, we consider space white noise, as
an archetype for more irregular noises for which every realization is a
distribution rather than a function.
\begin{assumption}[White noise]\label{assu:white-noise}
  Consider a probability space $( \Omega,
  \mathcal{F}, \PP)$ supporting a random distribution
  $ \xi_{\mathrm{stat}} : \Omega \rightarrow \mS^{\prime} (\TT) $
	such that for any $ \varphi \in C^{\infty}(\TT) $ the random variable
$ \langle \xi, \varphi \rangle $ is a centered Gaussian with
covariance:
\begin{equs}
\EE \big[ \langle \xi, \varphi \rangle \langle \xi, \psi \rangle \big] =
\langle \varphi, \psi \rangle.
\end{equs}
\end{assumption}
Now we define our time-dependent potential.

\begin{definition}\label{def:tau-potential}
  Consider a probability space $ (\Omega, \mF, \PP) $ such that either
Assumption~\ref{assu:smooth-noise} or Assumption~\ref{assu:white-noise} is
satisfied and supporting a sequence $ \{ \xi^{i}_{\mathrm{stat}} \}_{i \in \NN}
$ of i.i.d. random fields $ \xi^{i}_{\mathrm{stat}} \colon \Omega \to
\mS^{\prime} ( \TT ) $ such
that
$\xi^{i}_{\mathrm{stat}} = \xi_{\mathrm{stat}}$ in distribution.
Then for any $\tau > 0$ define:
  $ \xi^{\tau} : \Omega \times [0, \infty) \rightarrow \mS^{\prime} ( \TT)$
  by
  \[ \xi^{\tau}  (\omega, t, \cdot) = \xi^{\lfloor t / \tau \rfloor}_{\mathrm{stat}}
     (\omega, \cdot). \]
\end{definition}
We will consider the following random Hamiltonians, naturally associated to \(
\xi^{\tau} \).

\begin{definition}\label{def:hamiltonians}
  In the same setting as the previous definition, define for every $\omega \in
  \Omega$:
  \[ H (\omega) = \kappa \Delta + \xi_{\mathrm{stat}} (\omega), \qquad H^i (\omega) =
     \kappa \Delta + \xi_{\mathrm{stat}}^i (\omega).\]
  In the case of space white noise (Assumption~\ref{assu:white-noise}) the
Hamiltonian is defined in the sense of Fukushima and Nakao \cite{Fukushima1976}.
\end{definition}
To study the longtime behaviour of
\eqref{eqn:time-Anderson} as $\tau$ we recall the Furstenberg formula for the
Lyapunov exponent $\lambda (\tau)$. Here we will write
$u_0 > 0$ if $u_0 (x) \geqslant 0$ for almost all $x \in
\TT$ and it holds $ 0 < \int_{\TT} u_{0}(x) \ud x < \infty$.

\begin{lemma}[Furstenberg formula]\label{lem:furst}
  Consider $\tau > 0$ and let $u$ be the solution
  to \eqref{eqn:time-Anderson} with initial condition $u_{0} \in
L^{1}(\TT), \ u_0 > 0$ and with $\xi = \xi^{\tau} $ as in Definition
  \ref{def:tau-potential}. Then there exists a $\lambda (\tau)
  \in \RR $ such that $ \PP$-almost surely the
  following limit holds (and is independent of the choice of $u_0$):
  \[ \lambda (\tau) = \lim_{t \rightarrow \infty} \frac{1}{t} \log \left( 
     \int_{\TT} u (t, x) \ud x \right) \in \RR. \]
In addition, consider $ H(\omega) $ as in
Definition~\ref{def:hamiltonians}. Then
  \begin{equation}
    \lambda (\tau) = \frac{1}{\tau} \int_{\Omega \times \Omega} \log
    \left( \int_{\TT} e^{\tau H (\omega)} (z_{\infty} (\tau, \omega')) (x)
    \ud x \right) \ud \PP (\omega) \ud \PP (\omega'),
    \label{eqn-furstenberg-formula}
  \end{equation}
  where $z_{\infty}$ is the projective invariant measure constructed in
Proposition~\ref{prop:existence-proj-invariant-measure}.

\end{lemma}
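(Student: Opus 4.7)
The plan is to discretize time at multiples of $\tau$ and convert the existence of $\lambda(\tau)$ into a Birkhoff ergodic theorem for the projective Markov chain supplied by Proposition~\ref{prop:existence-proj-invariant-measure}. Since $\xi^{\tau}$ is constant on each $[k\tau,(k+1)\tau)$, the semigroup identity $u_{(k+1)\tau} = e^{\tau H^{k}} u_{k\tau}$ together with positivity preservation yields the telescoping formula
\begin{equs}
\frac{1}{n\tau} \log \int_{\TT} u_{n\tau}(x)\,\ud x = \frac{\log \|u_{0}\|_{L^{1}}}{n\tau} + \frac{1}{n\tau} \sum_{k=0}^{n-1} \log \int_{\TT} \bigl(e^{\tau H^{k}} z_{k\tau}\bigr)(x)\,\ud x,
\end{equs}
where $z_{k\tau} = u_{k\tau}/\|u_{k\tau}\|_{L^{1}}$ and $(z_{k\tau})_{k \geqslant 0}$ is a Markov chain on the projective space with i.i.d.\ innovations and transition $z \mapsto e^{\tau H} z/\|e^{\tau H} z\|_{L^{1}}$.

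By Proposition~\ref{prop:existence-proj-invariant-measure}, this chain admits a unique invariant law $z_{\infty}(\tau, \cdot)$ with a spectral gap, which in particular makes the shift on the driving noise $(\xi^{i}_{\mathrm{stat}})_{i \in \NN}$ ergodic. Moreover $z_{k\tau}$ is measurable with respect to $\sigma(H^{0}, \dots, H^{k-1})$, hence independent of $H^{k}$, so under the stationary law each summand has the distribution of $\log \int_{\TT} e^{\tau H(\omega)} z_{\infty}(\tau, \omega')(x)\,\ud x$ with $\omega, \omega'$ independent. Birkhoff's theorem then forces the Cesaro sum above to converge $\PP$-a.s.\ to the right-hand side of \eqref{eqn-furstenberg-formula} after division by $\tau$. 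To pass from discrete $n\tau$ to continuous $t$ I would control the defect $|\log\|u_{t}\|_{L^{1}} - \log\|u_{n\tau}\|_{L^{1}}|$ by a term of order $\tau\|\xi^{n}_{\mathrm{stat}}\|_{\infty}$ (with the appropriate substitute when the noise is white), which becomes negligible after division by $t$ via Borel--Cantelli and the exponential moment assumption. Independence of $u_{0}$ follows because the positivity of $e^{\tau H}$ makes the projective transition a contraction in the Hilbert projective metric, so two processes started from arbitrary positive $u_{0}, \tilde u_{0}$ synchronize projectively at a geometric rate and therefore produce the same Cesaro limit.

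I expect the main technical obstacle to be the integrability of $(\omega, \omega') \mapsto \log \int_{\TT} e^{\tau H(\omega)} z_{\infty}(\tau, \omega')(x)\,\ud x$ under $\PP \otimes \PP$, which is precisely what guarantees that Birkhoff returns a finite value. The upper bound is immediate from $\int_{\TT} e^{\tau H(\omega)} z_{\infty} \leqslant e^{\tau \|\xi_{\mathrm{stat}}(\omega)\|_{\infty}}$ (via the Feynman--Kac representation in the regular case, with an analogous spectral bound in the white-noise case) together with the exponential moment bound in Assumption~\ref{assu:smooth-noise}. The genuine difficulty is the lower bound, which requires ruling out that $z_{\infty}$ concentrates on the small-value regions of $e^{\tau H(\omega)}$; this is exactly the role of the quantitative lower bounds on the fundamental solution of \eqref{eqn:time-Anderson} established in Section~\ref{sec:an-analytic-estimate} and of the moment estimates on $z_{\infty}$ collected in Section~\ref{sec:proj-invariant-measures}.
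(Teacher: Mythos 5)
Your plan follows the same route as the paper: telescope $\log\|u(n\tau)\|_{L^1}$ at multiples of $\tau$, pass to the projective chain $z_{k\tau}=u_{k\tau}/\|u_{k\tau}\|_{L^1}$, identify the stationary replacement via Proposition~\ref{prop:existence-proj-invariant-measure}, and conclude by the ergodic theorem together with the independence of $H(\omega)$ and $z_\infty(\tau,\omega')$. Two small remarks on the logical order. First, when you write that Birkhoff "forces the Cesaro sum above to converge," you are implicitly replacing the chain started from $u_0$ by the stationary sequence $z_\infty^{k}$. That replacement is not free: it is exactly the exponential synchronization $d_H(z^i,z_\infty^i)\leqslant b(\omega)e^{-c(\omega)i}$ that the paper inserts before invoking the ergodic theorem, so that the error $\frac1n\sum_i\log\max_x(z^{i}(x)/z_\infty^{i}(x))\lesssim\frac1n\sum_i be^{-ci}\to 0$. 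You do have this contraction in hand (you use it later to argue independence of $u_0$), but it has to enter already at the Birkhoff step; independence of $u_0$ is then a by-product rather than a separate argument. Second, the spectral gap of the projective chain is not what makes the shift on $(\xi^i_{\mathrm{stat}})_i$ ergodic—that is just i.i.d.-ness; the contraction in $d_H$ serves precisely the replacement just described. Finally, on integrability: for regular noise the lower bound on $\int_\TT e^{\tau H(\omega)}z_\infty$ is immediate from the maximum principle ($\geqslant e^{-\tau\|\xi_{\mathrm{stat}}(\omega)\|_\infty}$, independent of where $z_\infty$ is concentrated), while for white noise the paper appeals to Corollary~\ref{cor:bound-eigenvalue}; the paper also uses the subadditive ergodic theorem at the outset to secure existence of the limsup, which your sketch omits but which costs nothing extra to add.
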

We provide a
proof of the lemma in Section~\ref{sec:proj-invariant-measures}. Instead now we
pass to the main results of this work.
The next result describes the behavior of the Lyapunov under the assumption
that the noise is regular. Here we denote with $ \sigma (H) $ the spectrum of
a closed operator $ H $. We note that all operators we consider have compact
resolvents, so $ \sigma(H) $ consists of the pure point spectrum of the
operator.

\begin{theorem}\label{thm:lyapunov-smooth}
Under Assumption~\ref{assu:smooth-noise} the map \( \lambda : (0, \infty)
\rightarrow \RR, \) with $ \lambda(\tau) $ as in Lemma~\ref{lem:furst}, is
continuous, strictly positive (i.e. $ \lambda(\tau)> 0 $ for all $ \tau > 0
$) and satisfies:
  \begin{enumerate}
    \item The limit $\lim_{\tau \rightarrow 0^+} \lambda (\tau) =
0$ holds, with:
    \[ \lim_{\tau \rightarrow 0^+} \frac{\lambda (\tau)}{\tau} =
\frac{1}{4} \int_{\TT} \int_{\TT} \EE | \xi_{\mathrm{stat}}(x) -
\xi_{\mathrm{stat}}(y) |^{2} \ud x \ud y  . \]
    \item For large values of $\tau$:
    \[ \lim_{\tau \rightarrow \infty} \lambda (\tau) =\mathbf{E}
       [\mathrm{max} \ \sigma (\Delta + \xi_{\mathrm{stat}})] \in (0, \infty) . \]
  \end{enumerate}
\end{theorem}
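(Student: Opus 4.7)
The plan is to derive all three claims from the Furstenberg formula~\eqref{eqn-furstenberg-formula} combined with the properties of the projective invariant measure $z_{\infty}(\tau)$ of Proposition~\ref{prop:existence-proj-invariant-measure}. The backbone for both asymptotic regimes is a Taylor expansion of $e^{\tau H(\omega)}$ around, respectively, the identity and the projector onto the top eigenfunction.

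\textbf{Limit as $\tau \to 0^{+}$.} Under rapid switching, the random forcing averages out and the dynamics are dominated by $\kappa \Delta$, whose unique projective invariant measure is the constant $\mathbf{1}$. I would first establish, via the moment estimates of Section~\ref{sec:proj-invariant-measures}, that $z_{\infty}(\tau, \omega') \to \mathbf{1}$ as $\tau \to 0$ with controlled rate and moments. Writing $\epsilon := z_{\infty}(\tau, \omega') - \mathbf{1}$ and Taylor expanding,
\begin{equs}
\int_{\TT} e^{\tau H(\omega)}(z_{\infty}(\tau, \omega'))(x) \ud x = 1 + \tau \int_{\TT} \xi_{\mathrm{stat}}(\omega) z_{\infty}(\tau, \omega') \ud x + \frac{\tau^{2}}{2} \int_{\TT} H(\omega)^{2} z_{\infty}(\tau, \omega') \ud x + O(\tau^{3}),
\end{equs}
(the $\kappa\Delta$ contribution vanishing by periodicity), and using $\log(1 + Y) = Y - Y^{2}/2 + O(Y^{3})$, centeredness of $\xi_{\mathrm{stat}}$ kills the $O(\tau)$ term after expectation in $\omega$, while independence of $\omega, \omega'$ makes the $O(\tau^{2})$ corrections involving $\epsilon$ vanish or become $o(\tau^{2})$ once one controls $\EE \epsilon$. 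What remains is $\tfrac{\tau}{2} \EE[\int \xi_{\mathrm{stat}}^{2} \ud x - (\int \xi_{\mathrm{stat}} \ud x)^{2}]$, which by the standard symmetry identity equals $\tfrac{\tau}{4} \iint \EE|\xi_{\mathrm{stat}}(x) - \xi_{\mathrm{stat}}(y)|^{2} \ud x \ud y$. The main technical work is controlling the higher-order remainders uniformly, using the exponential moment bound of Assumption~\ref{assu:smooth-noise}.

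\textbf{Limit as $\tau \to \infty$.} Via Doob's $H$-transform I would spectrally decompose $e^{\tau H(\omega)}$ in its eigenbasis $\{\phi_{k}(\omega)\}_{k \geqslant 0}$, with strictly positive top eigenfunction $\phi_{0}(\omega)$ and eigenvalue $\lambda_{0}(\omega) = \max \sigma(H(\omega))$. The leading asymptotic is
\begin{equs}
\int_{\TT} e^{\tau H(\omega)}(z_{\infty}(\tau, \omega'))(x) \ud x = e^{\tau \lambda_{0}(\omega)} \langle \mathbf{1}, \phi_{0}(\omega)\rangle \langle z_{\infty}(\tau, \omega'), \phi_{0}(\omega)\rangle \bigl(1 + O(e^{-\tau(\lambda_{0} - \lambda_{1})(\omega)})\bigr),
\end{equs}
so $\tfrac{1}{\tau}\log(\cdots) \to \lambda_{0}(\omega)$ provided that $\langle z_{\infty}(\tau, \omega'), \phi_{0}(\omega)\rangle$ stays bounded below in a quantitative, integrable sense. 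This is the main technical obstacle of the part, and is handled by the Harnack-type quantitative lower bounds on the fundamental solution referenced in the introduction, which yield a uniform-in-$\tau$ pointwise lower bound on $z_{\infty}$. Dominated convergence then gives $\lim_{\tau \to \infty} \lambda(\tau) = \EE[\lambda_{0}]$.

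\textbf{Continuity and strict positivity.} Continuity of $\lambda$ on $(0, \infty)$ follows from continuity of $\tau \mapsto z_{\infty}(\tau)$ inside the Furstenberg formula together with dominated convergence against the moment bounds. Strict positivity at the endpoints is a byproduct of parts (1) and (2): the nondegeneracy clause in Assumption~\ref{assu:smooth-noise}(1) makes the leading $\tau \to 0$ coefficient strictly positive, while $\EE[\max \sigma(\kappa\Delta + \xi_{\mathrm{stat}})] > 0$ follows by a Rayleigh-quotient comparison exploiting spatial inhomogeneity of $\xi_{\mathrm{stat}}$. The remaining point, and the deepest of the three, is strict positivity \emph{throughout the bulk}. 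Here I would apply the Bou\'e--Dupuis variational formula to $\log \int e^{\tau H}(z_{\infty}) \ud x$, writing it as an infimum over adapted controls and constructing one that exploits the spatial inhomogeneity of $\xi_{\mathrm{stat}}$ to produce a strictly positive lower bound uniform on compact $\tau$-intervals. I expect this variational step, combining a careful choice of control with the moment bounds on $z_{\infty}$, to be the main obstacle.
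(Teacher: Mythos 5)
Your overall architecture matches the paper's: Furstenberg formula plus properties of $z_{\infty}(\tau)$, a Taylor expansion near $\tau=0$, Doob's $H$-transform near $\tau=\infty$, and Bou\'e--Dupuis for bulk positivity. The two places where you diverge, one of which hides a genuine difficulty, are worth flagging.

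For $\tau\to 0$ your expansion $\int e^{\tau H}(z_\infty)=1+\tau\int\xi z_\infty+\tfrac{\tau^2}{2}\int H^2 z_\infty+O(\tau^3)$ differs in form from the paper's, which instead keeps the semigroup intact and writes $\zeta=\int_0^\tau\int_{\TT}\xi_{\mathrm{stat}}\,e^{sH}z_\infty\ud x\,\ud s$ (the Laplacian removed by integration by parts) and then Taylor-expands $e^{sH}$ via Feynman--Kac in Lemma~\ref{lem:moment-estimate-for-zeta}. That detour is not cosmetic: the regularity clause (3) of Assumption~\ref{assu:smooth-noise} is used precisely to replace $\int_0^s\xi(B_r)\ud r$ by $s\xi(x)$ with a quantified error $s^{1+\gamma}$, which produces the clean $\tau^2\mathfrak{r}(\tau)$ term with an $O(\tau^{2+\gamma})$ remainder. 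Your version, which expands $e^{\tau H}$ to second order in $\tau$, produces $\int H^2 z_\infty$ containing the cross-term $\kappa\int\xi\,\Delta z_\infty$; you need $z_\infty(\tau)\to 1$ in a strong enough topology (at least $\mC^{2}$-ish moments) and at a quantified rate to make this term $o(1)$, and you also need $\int\xi^2 z_\infty\to\int\xi^2$ with a rate. This is doable using Lemma~\ref{lem:moment-bound-invariant-measure} and Proposition~\ref{prop:averaging-invariant-measure}, but the rate of convergence of $z_\infty(\tau)\to 1$ is never quantified in the paper and would have to be extracted. So your route is plausible for regular noise but requires nontrivial extra quantitative control that you have not identified.

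The more serious concern is your $\tau\to\infty$ argument. You propose a spectral decomposition and then dominated convergence, noting that ``the main technical obstacle'' is a uniform-in-$\tau$, integrable lower bound on $\langle z_\infty(\tau),\phi_0\rangle$. The paper's route is genuinely different and deliberately avoids that obstacle. After the Doob transform $e^{\tau H}=e^{\tau\zeta}\psi\,e^{\tau\overline{H}}M_{\psi^{-1}}$, the transformed semigroup $e^{t\overline{H}}$ is a Markov semigroup (constants are invariant), so it satisfies the deterministic sandwich $\min_x(e^{t\overline{H}}\varphi)\geq\min_x\varphi$, $\max_x(e^{t\overline{H}}\varphi)\leq\max_x\varphi$. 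Telescoping over the $n$ blocks yields
\begin{equs}
\min_{x}u(n\tau,x)\geq\exp\Big(\tau\sum_{i=1}^n\zeta^i-\tfrac{1}{\tau}\sum_{i=1}^n\mu^i\Big),\qquad\mu^i=d_H(\psi^i,1),
\end{equs}
and the ergodic theorem then gives $\lambda(\tau)\geq\EE[\zeta]-\tfrac{1}{\tau}\EE[\mu]$, with the matching upper bound $\lambda(\tau)\leq\EE[\zeta]$ coming from the $L^2$ norm. This bypasses $z_\infty$ entirely. The ingredient you would need for your route --- a positive, integrable, uniform-in-$\tau$ pointwise lower bound on $z_\infty(\tau)$ --- is not established anywhere in the paper (the moment estimates in Section~\ref{sec:proj-invariant-measures} are upper bounds on $\|z_\infty\|_{\mC^\alpha}$), and it is not obviously available without additional work. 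Conversely, the ingredient the paper's route does need, and which you omit, is the finiteness $\EE[\mu]<\infty$: this is Lemma~\ref{lem:bdd-avrg-mu}, proven by writing the Riccati-type ODE for $\log\psi$ and controlling $\|\partial_x\log\psi\|_\infty$ by $\|\Xi\|_\infty+|\zeta|$. That estimate is genuinely needed and is not a consequence of anything you have set up.

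Your remarks on continuity and on positivity via Bou\'e--Dupuis are consistent with the paper; you correctly identify that the construction of a suitable control is the main work in the positivity proof, though you do not indicate the paper's specific mechanism (a drift of order $\ve$ built from $\partial_x(-\Delta)^{-2}\Pi_\times\xi^i_{\mathrm{stat}}$, a perturbative expansion of the resulting density, and the one-force-one-solution/synchronization principle to replace $\ola{p}^n$ by $\ola{p}^\infty$). That is a substantial piece of the argument and not a byproduct.
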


\begin{proof}
The continuity of $ \lambda $ follows from Lemma~\ref{lem:continuity} and the
positivity from Lemma~\ref{lem:positivity}. Then the
first statement is proven in Lemma~\ref{lem:regular-noise-small-time}, while
the second statement follows from Proposition~\ref{prop:averaging-invariant-measure}
as well as Lemma~\ref{lem:positive-avrg-lyap}.
\end{proof} \\
Instead in the case in which $ \xi $ has the
law of space white noise the behavior near zero follows a different power law.
\begin{theorem}\label{thm:lyapunov-white}
Under Assumption~\ref{assu:white-noise} the map \( \lambda : (0, \infty) \rightarrow \RR, \)
with $ \lambda(\tau) $ as in Lemma~\ref{lem:furst}, is continuous, strictly positive (i.e. $ \lambda(\tau)> 0 $ for all $ \tau > 0
$)  and satisfies:  
\begin{enumerate}
    \item The limit $\lim_{\tau \rightarrow 0^+} \lambda (\tau) = 0$ holds,
    with:
    \[ \lim_{\tau \rightarrow 0^+} \frac{\lambda (\tau)}{\sqrt{\tau} } =
    \sqrt{\frac{\pi}{\kappa}}. \]
    \item For large values of $\tau$:
    \[ \lim_{\tau \rightarrow \infty} \lambda (\tau) =\mathbf{E}
       [\mathrm{max} \ \sigma (\Delta + \xi_{\mathrm{stat}})] \in (0, \infty) . \]
  \end{enumerate}
\end{theorem}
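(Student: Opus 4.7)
The plan is to follow the four-step structure of the proof of Theorem~\ref{thm:lyapunov-smooth}: continuity, strict positivity, the small-$\tau$ asymptotics, and the large-$\tau$ asymptotics, with the Furstenberg--Khasminskii formula~\eqref{eqn-furstenberg-formula} and the projective invariant measure $z_\infty(\tau)$ of Section~\ref{sec:proj-invariant-measures} as the central tools throughout. The two structurally easy pieces, continuity and strict positivity, will follow by the same arguments that give Lemmas~\ref{lem:continuity} and~\ref{lem:positivity}, now applied to the Fukushima--Nakao Hamiltonian $H = \kappa\Delta + \xi_{\mathrm{stat}}$. For strict positivity in particular I would run the Bou\'e--Dupuis variational argument sketched in the introduction by constructing an explicit control that concentrates the mass of $u$ on a bump where the suitably regularised noise is favourable; the only modification relative to the regular case is that the estimates must be carried out in Besov spaces of negative regularity using the paracontrolled bounds collected in Sections~\ref{sec:proj-invariant-measures} and~\ref{sec:stochastic-estimates}.

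The large-$\tau$ limit will follow from the white-noise analogues of Proposition~\ref{prop:averaging-invariant-measure} and Lemma~\ref{lem:positive-avrg-lyap}: Doob's $H$-transform reduces the long-time behaviour of $e^{\tau H(\omega)}$ to its ground state, averaging over $\omega$ produces $\mathbf{E}[\max\sigma(\Delta+\xi_{\mathrm{stat}})]$, and finiteness together with strict positivity of this quantity is a standard spectral fact for the 1D Anderson Hamiltonian with white noise. The main effort here is producing moment estimates on the ground state uniformly in the noise realisation, for which the Besov bounds coming from Fukushima--Nakao theory suffice.

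The heart of the matter, and where the argument truly departs from the smooth case, is the small-$\tau$ asymptotics. The formal expansion used for Theorem~\ref{thm:lyapunov-smooth} produces a limit of the shape $\frac{1}{4}\iint \mathbf{E}|\xi(x)-\xi(y)|^2\,dx\,dy$, which diverges for white noise; the divergence is exactly the Stratonovich-to-It\^o correction that, under the rescaling heuristic of the introduction, blows up at rate $\tau^{-1/2}$ and, in the original variables, produces a leading contribution of order $\sqrt{\tau}$. My plan is to Taylor expand the Furstenberg integrand, replace $z_\infty(\tau)$ by the uniform measure plus a small controlled error, and exploit the Feynman--Kac representation $\int_{\TT} e^{\tau H}(z_\infty)(x)\,dx = \mathbf{E}_{B}\bigl[z_\infty(B_\tau)\,e^{\langle \xi, L_\tau\rangle}\bigr]$, with $B$ a Brownian motion of generator $\kappa\Delta$ and $L_\tau$ its occupation density on $\TT$. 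Conditionally on $B$ the pairing $\langle \xi, L_\tau\rangle$ is a centred Gaussian of variance $\|L_\tau\|_{L^2}^2$, whose expectation scales like $\tau^{3/2}/\sqrt{\kappa}$; pushing this through the logarithm in~\eqref{eqn-furstenberg-formula} and combining with the stochastic estimates of Section~\ref{sec:stochastic-estimates} should reproduce the constant $\sqrt{\pi/\kappa}$.

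The main obstacles I expect are, in order: (i) showing that $z_\infty(\tau) \to 1$ in a topology strong enough, and with uniform moment bounds as $\tau \to 0^+$, that replacing it by the uniform measure does not disturb the leading $\sqrt{\tau}$ term; (ii) making rigorous the heat-kernel and self-intersection-local-time computation within the paracontrolled framework, since the naive Taylor expansion of $e^{\tau H}$ produces ill-defined products of the form $\xi\cdot\xi$ that only make sense after subtracting divergent counterterms; and (iii) proving that the higher-order corrections in the expansion are genuinely $o(\sqrt{\tau})$, which is delicate precisely because several intermediate stochastic objects require renormalisation before they can be bounded.
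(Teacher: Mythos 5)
Your outline follows the paper's four-fold reduction exactly (continuity, strict positivity via Bou\'e--Dupuis, large-$\tau$ via Doob's $H$-transform, small-$\tau$ via Taylor expansion of the Furstenberg formula~\eqref{eqn-furstenberg-formula} around $z_\infty(\tau)\approx 1$), and the large-$\tau$ and positivity parts track the paper's arguments closely. Where you genuinely diverge is in the small-$\tau$ asymptotics: the paper (Lemmas~\ref{lem:taylor-white-noise}, \ref{lem:para-rest-estimate}, \ref{lem:stochastic-product}) works on the \emph{Fourier side}, decomposing $e^{sH}z_\infty$ via the paracontrolled ansatz $(e^{\cdot H}\varphi)\ppara\mathcal I(\xi_{\mathrm{stat}})+(\cdot)^\sharp$ and isolating the renormalisation constant as the zeroth Wiener chaos of the resonant product $\xi\reso\mathcal I(\xi)(s)/\sqrt{s}$; you work on the \emph{physical side}, via Feynman--Kac and occupation densities. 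These are dual pictures of the same object, and your instinct that $\lambda(\tau)\approx\frac{1}{2\tau}\mathbf E_{\mathbf Q}\|L_\tau\|_{L^2}^2$ does reproduce the paper's formula $\frac{1}{\tau}\int_0^\tau\sqrt{s}\,\mathfrak s(s)\,\ud s$ once one observes that $\mathbf E_{\mathbf Q}\|L_\tau\|_{L^2}^2 = 2\int_0^\tau\int_0^s p_{s-r}(0,0)\,\ud r\,\ud s$ and expands the torus heat kernel on the diagonal.

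Two points deserve sharpening before you claim the argument closes. First, the order of expectations: you write the conditional law of $\langle\xi,L_\tau\rangle$ given $B$ as Gaussian with variance $\|L_\tau\|^2_{L^2}$, which invites the computation $\mathbf E^\xi[e^{\langle\xi,L_\tau\rangle}\mid B]=e^{\|L_\tau\|^2/2}$; but Furstenberg's formula has the logarithm \emph{inside} the $\xi$-expectation and \emph{outside} the $\mathbf Q$-expectation, so you cannot interchange. What actually works is a cumulant expansion in $B$: $\log\mathbf E_{\mathbf Q}[e^{\langle\xi,L_\tau\rangle}]=\mathbf E_{\mathbf Q}\langle\xi,L_\tau\rangle+\tfrac12\operatorname{Var}_{\mathbf Q}\langle\xi,L_\tau\rangle+\dots$, and only after averaging in $\xi$ does the first cumulant vanish and the second produce $\tfrac12\bigl(\mathbf E_{\mathbf Q}\|L_\tau\|^2-\|\mathbf E_{\mathbf Q}L_\tau\|^2\bigr)$. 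The subtracted term is harmless precisely because $B$ is started at the uniform measure (so $\mathbf E_{\mathbf Q}L_\tau\equiv\tau$ and the subtraction is $O(\tau^2)$), but this is exactly the point where replacing $z_\infty(\tau)$ by the constant $1$ must be done \emph{before} any conditional Gaussian calculus, and with the quantitative rate supplied by Lemma~\ref{lem:moment-bound-invariant-msr-1D-1n}. Second, your concern in obstacle~(ii) about "divergent counterterms" is overstated in dimension one: the self-intersection local time $\mathbf E_{\mathbf Q}\|L_\tau\|_{L^2}^2$ is finite and the resonant product $\xi\reso\mathcal I(\xi)$ has a finite mean, so no renormalisation in the singular-SPDE sense is required -- what the paper's modified paraproduct and Wiener-chaos machinery are actually delivering is the \emph{rate} $\mathcal O(s^\delta)$ of convergence of $\xi\reso\mathcal I(\xi)(s)/\sqrt s$ to its limit, which you will need in the FK picture as a rate of convergence of $\tfrac12\mathbf E_{\mathbf Q}\|L_t\|^2/t^{3/2}$, and that is the genuinely delicate estimate (together with the analogue of Lemma~\ref{lem:para-rest-estimate} to discard the non-leading part of $e^{sH}z_\infty$).
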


\begin{proof}
The continuity of $ \lambda $ follows from Lemma~\ref{lem:continuity}.
Similarly to above, the
first statement follows from Lemma~\ref{lem:taylor-white-noise}, while
the second statement is a consequence of Proposition~\ref{prop:averaging-invariant-measure}
as well as Lemma~\ref{lem:positive-avrg-lyap}.
\end{proof}
In the next sections we will collect all the results needed to prove the
previous two claims.

\section{Behavior near zero}

This section is devoted to the proof of the small $ \tau $ behavior stated in
Theorems~\ref{thm:lyapunov-smooth}, \ref{thm:lyapunov-white}.
We start with the simpler setting of Assumption~\ref{assu:smooth-noise}.

\begin{lemma}\label{lem:regular-noise-small-time}
  For $\tau > 0$ and under Assumption~\ref{assu:smooth-noise} consider $\lambda (\tau)$ as in
Lemma~\ref{lem:furst}. Then
  \[ \lim_{\tau \rightarrow 0^+} \lambda (\tau) = 0, \qquad
     \lim_{\tau\rightarrow 0^+} \frac{\lambda
     (\tau)}{\tau} = \frac{1}{4} \int_{\TT} \int_{\TT} \EE | \xi_{\mathrm{stat}} (x) -
\xi_{\mathrm{stat}}(y) |^{2}\ud y \ud x.\]
\end{lemma}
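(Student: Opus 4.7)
The plan is to Taylor-expand the Furstenberg formula \eqref{eqn-furstenberg-formula} to second order in $\tau$ and identify the leading coefficient. Writing $A = \int_{\TT} \xi(\omega) z_\infty(\tau,\omega') \ud x$ and $B = \int_{\TT} H(\omega)^2 z_\infty(\tau,\omega') \ud x$, the Dyson expansion $e^{\tau H} = \mathrm{Id} + \tau H + \tfrac{\tau^2}{2} H^2 + O(\tau^3)$, combined with $\int_{\TT} z_\infty(\tau,\omega') \ud x = 1$ and $\int_{\TT} \Delta f \ud x = 0$ on the torus, should give
\begin{equs}
\int_{\TT} e^{\tau H(\omega)} z_\infty(\tau,\omega')(x) \ud x = 1 + \tau A + \frac{\tau^2}{2} B + r_{\tau},
\end{equs}
with a remainder $r_\tau = O(\tau^{3})$ in a suitable sense. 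Applying $\log(1+y) = y - y^2/2 + O(y^3)$, dividing by $\tau$, and averaging over the independent copies $(\omega,\omega')$, the first-order contribution $\EE A$ vanishes by centeredness of $\xi_{\mathrm{stat}}$ and independence of $\omega$ from $\omega'$, so that
\begin{equs}
\lambda(\tau) = \frac{\tau}{2}\,\EE \big[ B - A^{2}\big] + o(\tau).
\end{equs}

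Using the identity $\int_{\TT} H^{2} z \ud x = \kappa \int_{\TT} \xi \Delta z \ud x + \int_{\TT} \xi^{2} z \ud x$ (equivalently, $H^{*} 1 = \xi$ under periodic boundary conditions), and taking the $\omega$-expectation first while exploiting independence from $\omega'$, centeredness of $\xi_{\mathrm{stat}}$ annihilates the $\kappa \int \xi \Delta z$ piece and one is left with
\begin{equs}
\EE_{\omega}[B - A^{2}] = \int_{\TT} \EE[\xi_{\mathrm{stat}}^{2}(x)]\, z_\infty(\tau,\omega')(x) \ud x - \int_{\TT^{2}} \EE[\xi_{\mathrm{stat}}(x)\xi_{\mathrm{stat}}(y)]\, z_\infty(\tau,\omega')(x) z_\infty(\tau,\omega')(y) \ud x \ud y.
\end{equs}
If $z_\infty(\tau,\cdot) \to 1$ as $\tau \to 0$ in a mode strong enough to pass to the limit in this bilinear expression, then, since $|\TT|=1$, the right-hand side symmetrises into $\tfrac{1}{2}\int_{\TT^{2}} \EE|\xi_{\mathrm{stat}}(x)-\xi_{\mathrm{stat}}(y)|^{2} \ud x \ud y$, and the factor $\tau/2$ outside produces the announced constant $\tfrac{1}{4}$. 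The convergence $\lambda(\tau) \to 0$ then follows from the same expansion.

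The main obstacle is precisely justifying the replacement $z_\infty(\tau,\cdot) \to 1$, which is an averaging statement for the projective invariant measure: on any fixed macroscopic time window the product of $\sim T/\tau$ independent short-step propagators $e^{\tau H^{i}}$ should converge, by a law-of-large-numbers type argument, to the deterministic heat semigroup $e^{T\kappa\Delta}$, whose unique projective invariant state is the uniform density on $\TT$. This, together with uniform-in-$\tau$ moment estimates on $z_\infty(\tau,\cdot)$ strong enough to handle the quadratic term in $z_\infty$, should be delivered by Section~\ref{sec:proj-invariant-measures} and the quantitative lower bounds of Section~\ref{sec:an-analytic-estimate}. Controlling the Taylor remainder $r_\tau$ in expectation is easier: the quadratic exponential moment bound on $\|\xi_{\mathrm{stat}}\|_{\infty}$ from Assumption~\ref{assu:smooth-noise} gives more than enough room to dominate $\|e^{\tau H}\|_{L^{1} \to L^{1}}$ and its Dyson remainders uniformly in $\omega$, making the passage to the limit a routine application of dominated convergence.
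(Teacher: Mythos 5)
Your outline captures the right conceptual skeleton — Taylor-expand the Furstenberg formula, observe that the first-order term $\EE_\omega A$ vanishes by centeredness, identify the second-order coefficient, and then send $z_\infty(\tau)\to 1$ — and your symmetrisation giving $\tfrac{1}{4}\int_{\TT^2}\EE|\xi(x)-\xi(y)|^2$ from $\int\EE\xi^2 - \int\int\EE[\xi(x)\xi(y)]$ is correct. But the Dyson expansion $e^{\tau H}=\mathrm{Id}+\tau H+\tfrac{\tau^2}{2}H^2+O(\tau^3)$ applied to $z_\infty$ cannot be made rigorous in the form you propose, and this is where the real work of the lemma is hiding. Lemma~\ref{lem:moment-bound-invariant-measure} gives $z_\infty(\tau)\in\mC^\alpha$ only for $\alpha<2$; the quantity $B=\kappa\int_\TT\xi\,\Delta z_\infty + \int_\TT\xi^2 z_\infty$ already lives at the edge of well-posedness (and is genuinely problematic under the piecewise-constant option~\eqref{eqn:piecewise-noise}, where $\xi$ is merely $L^\infty$ and $\Delta z_\infty$ has negative H\"older regularity), while the remainder $r_\tau$ schematically requires $H^3 z_\infty$, i.e.\ six spatial derivatives of $z_\infty$, which is simply not available. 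Your claim that the quadratic exponential moment bound on $\|\xi_{\mathrm{stat}}\|_\infty$ together with $\|e^{\tau H}\|_{L^1\to L^1}$ control makes $r_\tau=O(\tau^3)$ by dominated convergence does not address the actual obstruction: those bounds control bounded multiplication and the solution semigroup, but they do nothing for the unbounded powers of $\Delta$ that appear in the Dyson remainder. If one instead inserts a smoothing factor and writes the remainder with $H^k e^{sH}$, the operator norm blows up like $s^{-k}$, and the iterated time integral fails to tame the $s^{-2}$ singularity coming from the $\Delta^2$ piece.

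The paper avoids this entirely by \emph{never differentiating $z_\infty$}. First it integrates by parts in time to get $\int_\TT e^{\tau H}z_\infty - 1 = \int_0^\tau\int_\TT \xi\, e^{sH}z_\infty\,\ud x\,\ud s =: \zeta(\tau)$, in which the Laplacian only ever acts after the regularising semigroup $e^{sH}$. It then Taylor-expands the logarithm of $1+\zeta$ (on a good event $A_\tau$, with the complement controlled by moment bounds), reducing to $\EE[\zeta-\tfrac{1}{2}\zeta^2]$. The coefficient is identified in Lemma~\ref{lem:moment-estimate-for-zeta} by representing $e^{sH}$ via Feynman--Kac, so the only expansion performed is of the \emph{scalar} exponential $\exp\big(\int_0^s\xi(B_r)\ud r\big)$, whose argument is bounded by $s\|\xi\|_\infty$; no derivatives of $z_\infty$ appear. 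Crucially, the step that replaces $\int_0^s\xi_{\mathrm{stat}}(B_r)\ud r$ by $s\xi_{\mathrm{stat}}(x)$ is where the regularity alternatives (3a)/(3b) of Assumption~\ref{assu:smooth-noise} are invoked — via H\"older continuity of $\xi$ or via a Gaussian escape estimate for $B$ from the intervals where $\xi$ is constant — and this is what produces the $\tau^\gamma$ error power. Your proposal makes no use of Assumption~\ref{assu:smooth-noise}(3), which should be a red flag: without it one cannot localise $\int_0^s \xi(B_r)\ud r$ around $s\xi(x)$ and extract the covariance $\EE[\xi(x)\xi(y)]$ cleanly. So while your endgame (pass $z_\infty(\tau)\to 1$ via Proposition~\ref{prop:averaging-invariant-measure} and the uniform moment bounds) is the same as the paper's, the expansion machinery you propose is not sound, and fixing it essentially forces you back to the Feynman--Kac route.
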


\begin{proof}
  By Lemma \ref{lem:furst} we have for any $ \tau >0 $:
\begin{equs}
\lambda (\tau)  & =  \frac{1}{\tau} \int_{\Omega \times \Omega} \log \left( \int_{\TT}
      e^{\tau H (\omega)} (z_{\infty} (\tau,
      \omega')) (x) \ud x \right) \ud \PP (\omega) \ud
      \PP (\omega').
\end{equs}
  Using the definition of the semigroup $e^{\tau H
  (\omega)}$, we can rewrite the quantity inside the logarithm as:
\begin{equs}
\int_{\TT} e^{\tau H (\omega)} & (z_{\infty} (\tau, \omega')) (x) \ud x  \\
      &  = \int_{\TT}z_{\infty} (\tau, \omega') (x) \ud x + \int_{0}^{\tau}
      \int_{\TT} H (\omega) e^{s H (\omega)} (z_{\infty} (\tau ,
      \omega')) (x) \ud x \ud s \\
 & = 1 + \int_{0}^{\tau} \int_{\TT} H (\omega) e^{s H (\omega)} (z_{\infty} (\tau ,
      \omega')) (x) \ud x \ud s,
\end{equs}
where in the last step we used that $ \int_{\TT} z_{\infty}(\tau,
\omega^{\prime} , x) \ud x = 1 $ by construction (cf.
Proposition~\ref{prop:existence-proj-invariant-measure}). Now let us define
\begin{equs}
\zeta(\tau, \omega, \omega^{\prime}) & = \int_{0}^{\tau} \int_{\TT} H
     (\omega) e^{s H (\omega)} (z_{\infty} (\tau, \omega')) (x) \ud
     x \ud s \\
& =\int_{0}^{\tau} \int_{\TT} \xi_{\mathrm{stat}}(\omega, x)
      e^{s H (\omega)} (z_{\infty} (\tau, \omega')) (x) \ud x \ud s,
\end{equs}
where we used integration by parts to remove the Laplacian.
With this definition we observe that
\begin{equs}
\lambda(\tau) = \frac{1}{\tau} \int_{\Omega \times \Omega} \log{ \big( 1 +
\zeta(\tau, \omega, \omega^{\prime})\big)} \ud \PP(\omega)
\ud \PP(\omega^{\prime}),
\end{equs}
and now our result will follow by a Taylor expansion of the logarithm. The key
observation is that although for $ \tau \to 0, \ \zeta(\tau, \omega, \omega^{\prime}) \simeq \tau $, as
the potential is centered \(\int_{\Omega} \zeta(\tau, \omega, \omega^{\prime})
\ud \PP(\omega) \simeq \tau^{2}, \) so that we obtain a term of the correct
order (here we will use Lemma~\ref{lem:moment-estimate-for-zeta}). 

To rigorously motivate the Taylor expansion we start with some moment bounds
for $ \zeta $. By a maximum principle we can bound:
  \[ | \zeta (\tau, \omega, \omega') | \leqslant \tau \|
     \xi_{\mathrm{stat}} (\omega) \|_{\infty} e^{\tau \|
     \xi_{\mathrm{stat}} (\omega) \|_{\infty}} \| z_{\infty} (\tau,
     \omega') \|_{\infty} . \]
  In particular by Lemma~\ref{lem:moment-bound-invariant-measure} on the
projective invariant measure and the finite exponential moments of
Assumption~\ref{assu:smooth-noise}, we obtain that for any $p \geqslant 1$ there exists a
  $C (p) > 0$ such that
  \[ \sup_{\tau \in (0, 1)} \mathbf{E}^{\PP \otimes \PP}
     [| \zeta (\tau) / \tau |^p] < C (p) . \]
  We can conclude that the set $A_{\tau} = \left\{ | \zeta (\tau) |
  \leqslant \frac{1}{2} \right\} \subseteq \Omega \times \Omega$ satisfies for any $p \geqslant 1$
  \[ \PP (A_{\tau}) \geqslant 1 - 2^p \mathbf{E} [| \zeta
     (\tau) |^p] \geqslant 1 - 2^p C (p) \tau^p, \]
  so that
  \[ \lambda (\tau) = \frac{1}{\tau} \int_{A_{\tau}} \log
     \left( 1 + \zeta(\tau, \omega, \omega^{\prime}) \right) \ud
     \PP (\omega) \ud \PP (\omega') +\mathcal{O}
     (\tau^2), \]
  since
\begin{equs}
 \frac{1}{\tau} \int_{A_{\tau}^c} \log \left( 1 +
    \zeta(\tau, \omega, \omega^{\prime}) \right)  \ud \PP
    (\omega) \ud \PP (\omega^{\prime} ) \leqslant & \frac{1}{\tau} \PP (A_{\tau}^c)
    \mathbf{E}^{\PP \otimes \PP} [\log (1 + \zeta
    (\tau))^2]^{\frac{1}{2}}  \\
    \leqslant & \frac{1}{\tau} \PP
    (A_{\tau}^c)^{\frac{1}{2}} \mathbf{E}^{\PP \otimes  
    \PP} [\zeta (\tau)^2]^{\frac{1}{2}} \lesssim  \frac{1}{\tau} \tau^{\frac{p}{2}}
    \tau \lesssim \tau^2  
\end{equs}
  by choosing $p = 4$. Now we expand the logarithm to
  obtain:
  \[ \lambda (\tau) = \frac{1}{\tau} \int_{A_{\tau}}
     \zeta (\tau, \omega, \omega') - \frac{1}{2} \zeta^2 (\tau,
     \omega, \omega') + R (\tau, \omega, \omega') \ud \PP 
     (\omega) \ud \PP (\omega') +\mathcal{O} (\tau^2), \]
  where the rest $R$ is the Lagrange rest of for the Taylor expansion
  \[ R (\tau, \omega, \omega') = \frac{1}{3 \kappa (\tau,
     \omega, \omega')^3} (\zeta (\tau, \omega, \omega'))^3, \]
  for some $\kappa$ which, using the definition of $A_{\tau},$
  satisfies the bound
  $ \kappa (\tau, \omega, \omega') \in [1/2, 3/2].$
  Hence the rest term is controlled by
  $ | R (\tau, \omega, \omega') | 1_{A_{\tau}}(\omega, \omega^{\prime})  \lesssim | \zeta (\tau,
     \omega, \omega') |^3, $
  so that from the bound $\mathbf{E}^{ \PP \otimes \PP} | \zeta
  (\tau) |^3 \lesssim \tau^3$ we obtain:
  \begin{equs}
    \lambda (\tau) & =  \frac{1}{\tau} \int_{A_{\tau}}
    \zeta (\tau, \omega, \omega') - \frac{1}{2} \zeta^2 (\tau,
    \omega, \omega') \ud \PP (\omega) \ud \PP (\omega')
    +\mathcal{O} (\tau^2)\\
    & =  \frac{1}{\tau} \int_{\Omega \times \Omega} \zeta
    (\tau, \omega, \omega') - \frac{1}{2} \zeta^2 (\tau, \omega,
    \omega') \ud \PP (\omega) \ud \PP (\omega')
    +\mathcal{O} (\tau^2) .
  \end{equs}
  In the last step we followed similar arguments to those already used to
  control the integral on $A_{\tau}^c$. At this point we have reduced
  the problem to an estimate for the first two terms in the Taylor expansion
  of the logarithm. Now we apply Lemma \ref{lem:moment-estimate-for-zeta} to
  obtain
\begin{equs}
    \frac{1}{\tau}\int_{\Omega \times \Omega} \zeta
    (\tau, \omega, \omega') & - \frac{1}{2} \zeta^2 (\tau, \omega,
    \omega')   \ud \PP (\omega) \ud \PP (\omega') = \tau
\mf{r}(\tau)+ \mathcal{O} \left( \tau^{1 +
\gamma} \right),
\end{equs}
for some $ \gamma > 0 $, with $ \lim_{\tau \to 0^{+}} \mf{r}(\tau) =
\frac{1}{4} \int_{\TT} \int_{\TT} \EE | \xi_{\mathrm{stat}}(x) -
\xi_{\mathrm{stat}}(y) |^{2} \ud x \ud y $. This concludes the proof of the lemma.
\end{proof} \\
Next we treat the behaviour near zero in the white noise case. Here we will
skip some parts of the proof that are identical to the arguments we just used.
\begin{lemma}\label{lem:taylor-white-noise}
  For $\tau > 0$ and under Assumption~\ref{assu:white-noise} consider $\lambda (\tau)$ as in
Lemma~\ref{lem:furst}. Then
  \[ \lim_{\tau \rightarrow 0^+} \lambda (\tau) = 0, \qquad
     \lim_{\tau \rightarrow 0^+} \frac{\lambda
   (\tau)}{\sqrt{\tau}} = \sqrt{\frac{\pi}{\kappa} }. \]
\end{lemma}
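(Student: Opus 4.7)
My plan is to follow the template of Lemma~\ref{lem:regular-noise-small-time}, replacing the $L^{\infty}$ control on $\xi$, unavailable here, by distributional estimates from Besov spaces and paraproducts. I would start again from the Furstenberg formula~\eqref{eqn-furstenberg-formula}:
\begin{equs}
\lambda(\tau) = \frac{1}{\tau} \int_{\Omega \times \Omega} \log \bigl( 1 + \zeta(\tau, \omega, \omega') \bigr) \, \ud \PP(\omega) \, \ud \PP(\omega'),
\end{equs}
with $\zeta(\tau, \omega, \omega') = \int_{0}^{\tau} \langle \xi(\omega), e^{s H(\omega)} z_{\infty}(\tau, \omega') \rangle \, \ud s$, now interpreted as a distributional pairing of $\xi \in \mC^{-\frac{1}{2} - \epsilon}$ against the positively regular $e^{sH} z_{\infty}$. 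The first task is to establish polynomial moment bounds $\EE |\zeta|^{p} \lesssim \tau^{p}$ by combining Gaussian hypercontractivity for $\xi$, the smoothing effect of $e^{sH}$, and the Besov moment bounds on $z_{\infty}$ from Section~\ref{sec:proj-invariant-measures}. Exactly as in the regular case, these let me restrict to $A_{\tau} = \{|\zeta| \leqslant 1/2\}$, whose complement has arbitrarily high polynomial decay in $\tau$, and Taylor-expand $\log(1+\zeta) = \zeta - \zeta^{2}/2 + \mathcal{O}(|\zeta|^{3})$; both the cubic remainder on $A_{\tau}$ and the integral on $A_{\tau}^{c}$ contribute $\mathcal{O}(\tau^{2})$, negligible versus the target $\sqrt{\tau}$ scaling.

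The heart of the proof, and the step I expect to be the main obstacle, is identifying the leading $\sqrt{\tau}$ behaviour of $\frac{1}{\tau}\EE[\zeta - \zeta^{2}/2]$. In contrast with the regular case, where the leading $\tau$ order came from the centred quadratic piece, here the dominant contribution is produced inside $\EE\zeta$ itself. A Duhamel expansion $e^{sH} = e^{s\kappa\Delta} + \int_{0}^{s} e^{(s-r)\kappa\Delta} \xi\, e^{rH} \, \ud r$ yields, inside $\EE \langle \xi, e^{sH} z_{\infty}\rangle$, a resonant pairing of the form $\EE \langle \xi, e^{(s-r)\kappa\Delta}(\xi \cdot e^{rH}z_{\infty})\rangle$, whose expectation produces the diagonal 1D heat kernel $p_{s-r}^{\kappa}(0) = (4\pi\kappa(s-r))^{-\frac{1}{2}}$. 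Integrating this singular diagonal in $s$ and $r$, using $\int z_{\infty} = 1$, gives a contribution of order $\tau^{3/2}$ from which the advertised constant $\sqrt{\pi/\kappa}$ is extracted.

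The genuine technical difficulty is making the previous identification rigorous: the product $\xi \cdot (e^{rH} z_{\infty})$ is classically ill-defined and must be interpreted via the paracontrolled ansatz of \cite{GubinelliImkellerPerkowski2015, GubinelliPerkowski2017KPZ}, relying on the singular stochastic estimates collected in Section~\ref{sec:stochastic-estimates}. Once the resonant piece is isolated and its deterministic leading behaviour identified, all remaining Duhamel iterates and the $\zeta^{2}$ contribution can be shown to be $o(\tau^{3/2})$ by straightforward size-counting combined with the moment bounds on $z_{\infty}$, and one concludes $\lambda(\tau) / \sqrt{\tau} \to \sqrt{\pi/\kappa}$.
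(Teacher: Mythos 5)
Your proposal follows the same strategy as the paper: start from the Furstenberg formula, Taylor-expand the logarithm, and observe that--in contrast to the regular case--the leading $\sqrt{\tau}$ contribution is already produced by $\EE\zeta$, via a resonant pairing of $\xi$ against the rough component of the propagated solution, which the paper identifies through the paracontrolled decomposition $e^{sH}\varphi = (e^{\cdot H}\varphi)\ppara\mI(\xi_{\mathrm{stat}}) + (\cdot)^\sharp$ (Lemma~\ref{lem:para-rest-estimate}) and the Fourier-mode computation of Lemma~\ref{lem:stochastic-product}. Two small caveats: your phrase ``classically ill-defined'' is not literally correct here, since in 1D the solution lies in $\mC^{1/2+\gamma}$ so the product with $\xi\in\mC^{-1/2-\ve}$ is defined by paraproducts without renormalization (the paracontrolled structure is needed to \emph{extract the constant}, not to give meaning to the product); and showing that the non-resonant Duhamel remainder is $o(\tau^{3/2})$ is not really ``straightforward size-counting''--it is exactly the content of Lemma~\ref{lem:para-rest-estimate} plus the commutator estimates for $\ppara$ and $\reso$, which is where the paracontrolled machinery does its real work.
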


\begin{proof}
  Once again we use Lemma \ref{lem:furst} to write, for any $\tau > 0$:
\begin{equs}
\lambda (\tau) & =  \frac{1}{\tau} \int_{\Omega
       \times \Omega} \log \left( \int_{\TT} e^{\tau H (\omega)}
       (z_{\infty} (\tau, \omega')) (x) \ud x \right) \ud
       \PP (\omega) \ud \PP (\omega')
\end{equs}
Following the calculations for the regular case, we rewrite the quantity in
the logarithm by integration by parts
\begin{equs}
\int_{\TT} [e^{\tau H(\omega)} z_{\infty} ( \tau, \omega^{\prime}) ] (x) \ud x & = 1+ \int_{0}^{\tau}
\int_{\TT}  H(\omega) [ e^{s H(\omega)} z_{\infty} (\tau, \omega^{\prime} )] (x) \ud x \ud s \\
& =1+ \int_{0}^{\tau} \int_{\TT} \xi_{\mathrm{stat}}(\omega, x)[ e^{s H(\omega)}z_{\infty} (
\tau, \omega^{\prime})] (x) \ud x \ud s.
\end{equs}
Hence let us define
\begin{equs}
\eta(\tau, \omega, \omega^{\prime}) = \int_{0}^{\tau} \int_{\TT}
\xi_{\mathrm{stat}}(\omega, x)[ e^{s H(\omega)} z_{\infty}(\tau, \omega^{\prime})]( x)\ud x \ud s.
\end{equs}
To define the product inside the integral, since $
\xi_{\mathrm{stat}} $ is a distribution, we can use the product estimates in
Lemma~\ref{lem:paraproduct-estimate} for any $ \ve> 0 $:
\begin{equs}
| \eta(\tau, \omega, \omega^{\prime}) | & \leqslant \int_{0}^{\tau} \|
\xi_{\mathrm{stat}} (\omega) e^{s H(\omega)} z_{\infty}(\tau, \omega^{\prime})  \|_{\mC^{-
\frac{1}{2} - \ve}} \ud s \\
& \lesssim \tau \| \xi_{\mathrm{stat}} (\omega) \|_{\mC^{-
\frac{1}{2} - \ve}} \sup_{0 \leqslant s \leqslant \tau}  \| e^{s H (\omega)} z_\infty (\tau, \omega^{\prime}) \|_{\mC^{\frac{1}{2} + 2 \ve}}.
\end{equs}
Now, for any $ p \geqslant 1 $ and $ \gamma \in (0,1) $ we can estimate, via
Lemma~\ref{lem:moment-bound-solution-1D-wn}:
\begin{equs}
\int_{\Omega} \sup_{0 \leqslant s \leqslant 1} \| e^{s H(\omega)} z_{\infty}(\tau, \omega^{\prime})
\|_{\mC^{\frac{1}{2} + \gamma}}^{p} \ud \PP(\omega) \lesssim \|
z_{\infty}(\tau, \omega^{\prime}) \|_{\mC^{\frac{1}{2} + \gamma}}^{p},
\end{equs}
and by Lemma~\ref{lem:moment-bound-invariant-msr-1D-1n} 
\begin{equs}
\sup_{\tau \in (0, 1)} \EE^{\PP}  \|
z_{\infty}(\tau)\|_{\mC^{\frac{1}{2} + \gamma}} < \infty,
\end{equs}
so that overall, for any $ \ve \in (0, 1/2) $
\begin{equs}
\sup_{\tau \in (0, 1)} \EE^{\PP \otimes \PP} | \eta(\tau)/ \tau |^{p}
& \lesssim \big( \EE \| \xi_{\mathrm{stat}} \|_{\mC^{-\frac{1}{2} - \ve
}}^{2p}\big)^{\frac{1}{2}} \big( \EE^{\PP \otimes \PP} \sup_{0 \leqslant s
\leqslant 1} \| e^{s H} z_{\infty}(\tau)
\|_{\mC^{\frac{1}{2} + 2 \ve}}^{2p} \big)^{\frac{1}{2}} < \infty.
\end{equs}
As a consequence of this bound, following the same arguments as in the proof of
Lemma~\ref{lem:regular-noise-small-time} to motivate the Taylor expansion, we
obtain that
\begin{equs}
\lambda(\tau) = \frac{1}{\tau} \int_{\Omega \times \Omega} \eta(\tau, \omega,
\omega^{\prime}) \ud \PP( \omega) \ud \PP ( \omega^{\prime}) + \mathcal{O}(\tau), \qquad \forall \tau \in (0,
1).
\end{equs}
To establish the limiting behavior for $ \tau \to 0 $ we have to further
decompose $ \eta $ into a leading term of order $ \simeq \tau^{\frac{3}{2}} $
and a better behaved rest term. For fixed $ \varphi \in \mC^{\frac{1}{2}
+ \gamma} $ (for some $ \gamma \in (1/2, 1) $) and $ s>0 $ we write the solution
$ e^{s H(\omega)} \varphi $ as
\begin{equs}
e^{s H (\omega)} \varphi = (e^{ \cdot H(\omega)} \varphi) \ppara
\mathcal{I}(\xi_{\mathrm{stat}}(\omega)) (s) + (e^{s H(\omega)} \varphi)^{\sharp},
\end{equs}
where the modified paraproduct $ \ppara $ is defined as in
\eqref{eqn:modified-paraproduct} (it is in many ways equivalent to the
paraproduct $ \para $, with the crucial difference that the commutator $
[\psi \ppara (\cdot), (\partial_{t} - \Delta)] $, for \(\psi\) fixed, satisfies some nice
regularity estimates) and $ \mI(\xi_{\mathrm{stat}})(t) = \int_{0}^{t} P_{t - s}
\xi_{\mathrm{stat}}\ud s $ as in Lemma~\ref{lem:stochastic-product}.
Then by Lemma~\ref{lem:para-rest-estimate} we have that for any $ \gamma \in
(1/2, 1) $ there exists a $ \delta > 0 $ such that
\begin{equs}
\EE^{\PP} \|( e^{s H} \varphi )^{\sharp}  - P_{s} \varphi
\|_{\mC^{\frac{1}{2} + \delta}} \lesssim s^{\frac{1}{2} + \delta}  \| \varphi \|_{C^{\frac{1}{2} + \gamma}}.
\end{equs}
In this way we find via Lemma~\ref{lem:moment-bound-invariant-msr-1D-1n}:
\begin{equs}
& \EE^{\PP \otimes \PP}  \bigg[  \int_{0}^{\tau} \int_{\TT} \xi_{\mathrm{stat}}
e^{s H} z_{\infty}(\tau) \ud s \ud x \bigg] \\
& \, = \EE^{\PP \otimes \PP} \bigg[ \int_{0}^{t}
\int_{\TT} \xi_{\mathrm{stat}} \big( e^{\cdot H} z_{\infty}(\tau) \ppara
\mathcal{I}(\xi_{\mathrm{stat}}) \big) + \xi_{\mathrm{stat}}
P_{s} z_{\infty}(\tau) + \xi_{\mathrm{stat}} \big( (e^{s H}
z_{\infty}(\tau))^{\sharp} - P_{s} z_{\infty}(\tau) \big)  \ud x
\ud s\bigg] \\
& \,=  \EE^{\PP \otimes \PP} \bigg[ \int_{0}^{t} \int_{\TT} \xi_{\mathrm{stat}}
\big( e^{\cdot H} z_{\infty}(\tau) \ppara
\mathcal{I}(\xi_{\mathrm{stat}}) \big) + \xi_{\mathrm{stat}}
P_{s} z_{\infty}(\tau)  \ud x \ud s \bigg] + \mathcal{O}(\tau^{1 + \frac{1}{2} + \delta}).
\end{equs}
The second term in the decomposition above, $ \xi_{\mathrm{stat}} P_{s}
z_{\infty}(\tau) $, vanishes on average by using the independence of $
\xi_{\mathrm{stat}} $ and $ z_{\infty}(\tau) $. We are left with studying, for
$ s \in [0, t] $ :
\begin{equs}
\EE^{\PP \otimes \PP} \bigg[ \frac{1}{ \sqrt{s}} \int_{\TT}
\xi_{\mathrm{stat}} \big( e^{ \cdot H}z_{\infty}(\tau) \ppara
\mathcal{I}(\xi_{\mathrm{stat}})\big)(s)  \ud x \bigg] =
\EE^{\PP \otimes \PP} \bigg[ \frac{1}{ \sqrt{s}}\int_{\TT}
\xi_{\mathrm{stat}} \reso \big( e^{\cdot H}z_{\infty}(\tau) \ppara
\mathcal{I}(\xi_{\mathrm{stat}}) \big)(s) \ud x \bigg],
\end{equs}
where we used the resonant product as in Lemma~\ref{lem:paraproduct-estimate},
observing that $ \int_{\TT} f (x) g (x) \ud x = \mF(fg)(0) = \int_{\TT}  (f
\reso g)\, (x) \ud x$ (with $ \mF $ the Fourier transform).
Now we can further decompose
\begin{equs}
\frac{1}{\sqrt{s}} \xi_{\mathrm{stat}} \reso & \big[ (e^{ \cdot H(\omega)}
z_{\infty}(\tau)) \ppara
\mathcal{I}(\xi_{\mathrm{stat}}) \big](s)
= (e^{s H(\omega)} z_{\infty}(\tau)) \cdot \bigg(  \xi_{\mathrm{stat}} \reso
\frac{\mathcal{I}(\xi_{\mathrm{stat}} )(s)}{ \sqrt{s} } \bigg) \\
 & \qquad \qquad + \frac{1}{\sqrt{s}} \Big(  C^{\reso}(\xi_{\mathrm{stat}},
e^{s H}z_{\infty}(\tau),
\mathcal{I}(\xi_{\mathrm{stat}}) ) + \xi_{\mathrm{stat}} \reso
C^{\ppara}(e^{\cdot H}z_{\infty}(\tau) ,
\mathcal{I}(\xi_{\mathrm{stat}}))(s) \Big),
\end{equs}
with the commutators
\begin{equs}
C^{\reso}(f,g,h) = f \reso (g \para h) - g(f \reso h), \qquad
C^{\ppara}(f, g)(s) =( f \ppara g ) \, (s) - f(s) \para g(s).
\end{equs}
For the first resonant product we can use \cite[Lemma 2.4]{GubinelliImkellerPerkowski2015} to bound, for
$\delta, \gamma \in (0, 1) $:
\begin{equs}
\| C^{\reso}(\xi_{\mathrm{stat}}, e^{s H} z_{\infty}(\tau),
\mathcal{I}(\xi_{\mathrm{stat}} )(s)) \|_{\mC^{\frac{1}{2} + \gamma -
4\delta }} \lesssim \| \xi_{\mathrm{stat}} \|_{\mC^{- \frac{1}{2}
- \delta}} \| e^{s H} z_{\infty}(\tau)\|_{\mC^{\frac{1}{2} + \gamma }} \| \mathcal{I}(\xi_{\mathrm{stat}})(s)
\|_{\mC^{\frac{1}{2} - 3\delta}},
\end{equs}
where the parameters must satisfy $ \frac{1}{2} + \gamma - 4\delta> 0 $
(which is true for $ \delta$ sufficiently small: in particular, we can
assume that $ \delta $ is the same as chosen in the calculations above). Now we can estimate
\begin{equs}[eqn:prf-wn-small-tau-Ixi]
\| \mathcal{I}(\xi_{\mathrm{stat}})(t) \|_{\mC^{\frac{1}{2} -3\delta} } &\lesssim
\int_{0}^{t} s^{- \frac{1}{2} + \delta} \| \xi_{\mathrm{stat}}
\|_{\mC^{-\frac{1}{2} - \delta}}
\ud s  \lesssim t^{\frac{1}{2}+ \delta} \| \xi_{\mathrm{stat}}
\|_{\mC^{-\frac{1}{2} - \delta}}
\end{equs}
so that overall for some $ \delta > 0 $
\begin{equs}
\EE^{\PP \otimes \PP}\bigg\vert \int_{\TT} C^{\reso}(\xi_{\mathrm{stat}}, e^{s H}
z_{\infty}(\tau),
\mathcal{I}(\xi_{\mathrm{stat}})(s)) \ud x
\bigg\vert \lesssim s^{\frac{1}{2}+ \delta}.
\end{equs}
As for the second commutator term, we use \cite[Lemma 2.8]{GubinelliPerkowski2017KPZ} and \eqref{eqn:prf-wn-small-tau-Ixi} to find:
\begin{equs}
\| C^{\ppara} ( e^{\cdot H}z_{\infty}(\tau) ,
\mathcal{I}(\xi_{\mathrm{stat}}))(s)
\|_{\mC^{1 + \gamma - 3 \delta}} & \lesssim \| e^{\cdot H} z_{\infty}(\tau)
\|_{C^{\frac{1/2+ \gamma}{2}}_{s}L^{\infty} \cap L^{\infty}_{s}
\mC^{\frac{1}{2} + \gamma} } \|
\mathcal{I} (\xi_{\mathrm{stat}})(s) \|_{\mC^{\frac{1}{2} - 3\delta}} \\
& \lesssim s^{\frac{1 }{2} + \delta}\| e^{\cdot H} z_{\infty}(\tau)
\|_{C^{\frac{1/2+ \gamma}{2}}_{s}L^{\infty} \cap L^{\infty}_{s}
\mC^{\frac{1}{2} + \gamma} } \| \xi_{\mathrm{stat}} \|_{\mC^{-
\frac{1}{2} - \delta}},
\end{equs}
so that (assuming $ \delta $ is sufficiently small) we obtain:
\begin{equs}
\EE^{\PP \otimes \PP}\bigg\vert \int_{\TT} \xi_{\mathrm{stat}} \reso C^{\ppara}(e^{\cdot
H}z_{\infty}(\tau),
\mathcal{I}(\xi_{\mathrm{stat}})
)(s) \ud x \bigg\vert \lesssim s^{\frac{1}{2} + \delta}.
\end{equs}
We have thus deduced the following estimate on the Lyapunov exponent
\begin{equs}
\lambda(\tau) = \frac{1}{ \tau}\int_{0}^{\tau} \sqrt{s}\, \EE^{\PP \otimes \PP} \bigg[
\int_{\TT} (e^{s H}z_{\infty}(\tau)) \cdot \Big(
\xi_{\mathrm{stat}} \reso \frac{\mathcal{I}(\xi_{\mathrm{stat}})(s)}{\sqrt{s}} \Big)  \ud x \bigg] \ud s +
\mathcal{O}(\tau^{\frac{1}{2} + \delta}).
\end{equs}
The proof of the lemma is concluded if we show that
\begin{equs}
   \EE^{\PP \otimes \PP}\bigg\vert\int_{\TT} (e^{s
H}z_{\infty}(\tau)) \cdot \Big(
\xi_{\mathrm{stat}} \reso \frac{\mathcal{I}(\xi_{\mathrm{stat}})(s)}{\sqrt{s}}
\Big)  \ud x - \sqrt{\frac{\pi}{\kappa}}
\bigg\vert = \mathcal{O}(s^{\delta}).
\end{equs}
Indeed, for $ \delta > 0 $
sufficiently small (and uniformly over $ \tau \in (0, \tau_{*}) $ and $ s \in (0, \tau) $):
\begin{equs}
  \EE^{\PP \otimes \PP} & \bigg\vert\int_{\TT}(e^{s
H}z_{\infty}(\tau)) \cdot \Big(
\xi_{\mathrm{stat}} \reso \frac{\mathcal{I}(\xi_{\mathrm{stat}} )(s)}{\sqrt{s}}
\Big)(x)  - \sqrt{\frac{\pi}{\kappa}} \,e^{s H}z_{\infty}(\tau) \, (x) \ud x 
\bigg\vert \\
& \lesssim \EE^{\PP \otimes \PP}  \Big[ \| e^{s H} z_{\infty}(\tau) \|_{\mC^{
4\delta}}^2 \Big]^{\frac{1}{2}}
\EE \bigg[ \bigg\| \Big( \xi_{\mathrm{stat}} \reso
\frac{\mathcal{I}(\xi_{\mathrm{stat}} )(s)}{\sqrt{s}} \Big)  -
\sqrt{\frac{\pi}{\kappa}} \bigg\|_{\mC^{- 3\delta}}^2 \bigg]^{\frac{1}{2}} =
\mathcal{O}(s^{\delta}),
\end{equs}
by Lemma~\ref{lem:stochastic-product}
by using for the last term and Lemmata~\ref{lem:moment-bound-solution-1D-wn},
\ref{lem:moment-bound-invariant-msr-1D-1n} for the first term. The last step is
then to show that
\begin{align*}
  \EE^{\PP \otimes \PP} \bigg\vert \int_{\TT} \sqrt{\frac{\pi}{\kappa}}
  e^{s H} z_{\infty} (\tau) (x) \ud x - \sqrt{\frac{\pi}{\kappa}} \bigg\vert =
  \mathcal{O}(s^{\delta}).
\end{align*}
Since $ \int_{\TT} P_{s} z_{\infty}(\tau) \, (x) \ud x = 1 $ it is enough to
prove, once more for $ \delta >0$ sufficiently small, that $ \EE^{\PP \otimes \PP} \| e^{s H} z_{\infty} (\tau) -
P_{s} z_{\infty}(\tau) \|_{\mC^{\delta}} = \mathcal{O}(s^{\delta}) $. Here we can
bound
\begin{align*}
   \| e^{s H} z_{\infty} (\tau) - P_{s} z_{\infty}(\tau) \|_{\mC^{\delta}}
   \leqslant \| (e^{s H} z_{\infty} (\tau))^{\sharp} - P_{s} z_{\infty}(\tau)
   \|_{\mC^{\delta}} + \| (e^{ \cdot H} z_{\infty} (\tau)) \ppara
   \mI(\xi_{\mathrm{stat}}) \, (s) \|_{\mC^{\delta}},
\end{align*}
so that the claimed result follows along the same arguments explained above, by
applying Lemma~\ref{lem:para-rest-estimate} and
\eqref{eqn:prf-wn-small-tau-Ixi}.
\end{proof} \\
To conclude this section we establish an estimate on the paracontrolled decomposition
of the solution used in the previous lemma.
\begin{lemma}\label{lem:para-rest-estimate}
Under Assumption~\ref{assu:white-noise}, fix $ T> 0 $ and $ \gamma \in (1/2, 1)
$. For any $ \varphi \in \mC^{\frac{1}{2} + \gamma} $ and $ t \in [0, T] $ define
\begin{equs}
(e^{t H (\omega)} \varphi)^{\sharp} = e^{tH(\omega)} \varphi - \big[ (e^{\cdot
  H(\omega)} \varphi) \ppara
\mathcal{I}(\xi_{\mathrm{stat}} (\omega)) \big] (t),
\end{equs}
where $ \mathcal{I}(f) = \int_{0}^{t} P_{t-s} f \ud s. $ Then there exists a $
\delta > 0 $ such that
\begin{equs}
\EE \Big[ \sup_{0 \leqslant t \leqslant T} t^{- \big( \frac{1}{2} + \delta \big)} \| (e^{t
H(\omega) } \varphi )^{\sharp} - P_{t} \varphi \|_{\mC^{\frac{1}{2} + \delta}}
\Big] \lesssim_{T} \| \varphi \|_{\mC^{\frac{1}{2} + \gamma}}.
\end{equs}
\end{lemma}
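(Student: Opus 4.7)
The plan is to derive a Duhamel representation for $(e^{tH(\omega)}\varphi)^{\sharp} - P_{t}\varphi$ starting from the paracontrolled ansatz, then bound each remainder term by combining standard paraproduct estimates with heat-kernel Schauder bounds and the stochastic bounds on $\xi_{\mathrm{stat}}$ and $\mI(\xi_{\mathrm{stat}})$ already established in the proof above. Set $u(t) = e^{tH(\omega)}\varphi$, which solves $(\partial_{t} - \kappa\Delta)u = \xi_{\mathrm{stat}} u$ with $u(0) = \varphi$. Since $\mI(\xi_{\mathrm{stat}})$ solves the heat equation with source $\xi_{\mathrm{stat}}$, the commutator rule for the modified paraproduct (the design property of $\ppara$, cf. \cite{GubinelliPerkowski2017KPZ}) gives an identity of the form
\begin{equs}
(\partial_{t} - \kappa \Delta)\big[u \ppara \mI(\xi_{\mathrm{stat}})\big] = u \ppara \xi_{\mathrm{stat}} + \Psi(u, \mI(\xi_{\mathrm{stat}})),
\end{equs}
with a remainder $\Psi$ of strictly positive spatial regularity that enjoys extra time decay of order $s^{1/2+\delta}$. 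Subtracting from the PDE for $u$ and using $u^{\sharp}(0)=\varphi$, Duhamel's formula gives
\begin{equs}
(e^{tH(\omega)}\varphi)^{\sharp} - P_{t}\varphi = \int_{0}^{t} P_{t-s}\Big[\xi_{\mathrm{stat}} u - u \ppara \xi_{\mathrm{stat}} - \Psi(u, \mI(\xi_{\mathrm{stat}}))\Big](s)\, \ud s.
\end{equs}

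Bony's decomposition $\xi_{\mathrm{stat}} u = \xi_{\mathrm{stat}} \para u + u \para \xi_{\mathrm{stat}} + \xi_{\mathrm{stat}} \reso u$ splits the integrand into four pieces: the paraproduct $\xi_{\mathrm{stat}} \para u$, the paraproduct commutator $u \para \xi_{\mathrm{stat}} - u \ppara \xi_{\mathrm{stat}}$, the resonant product $\xi_{\mathrm{stat}} \reso u$, and $\Psi$. The key a priori input is a bound on $u$ from the 1D Anderson solution theory (Lemma~\ref{lem:moment-bound-solution-1D-wn}): for some $\gamma' \in (1/2, \gamma)$ one has $\sup_{s \in [0,T]}\|u(s)\|_{\mC^{1/2+\gamma'}} \lesssim \|\varphi\|_{\mC^{1/2+\gamma}}$ in every $L^{p}(\Omega)$. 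Combined with the Gaussian moments of $\|\xi_{\mathrm{stat}}\|_{\mC^{-1/2-\delta}}$ and the bound $\|\mI(\xi_{\mathrm{stat}})(s)\|_{\mC^{1/2-3\delta}} \lesssim s^{1/2+\delta}\|\xi_{\mathrm{stat}}\|_{\mC^{-1/2-\delta}}$ from \eqref{eqn:prf-wn-small-tau-Ixi}, the standard paraproduct estimates place $\xi_{\mathrm{stat}} \para u$ and $\xi_{\mathrm{stat}} \reso u$ in $\mC^{\gamma'-\delta}$, a space of strictly positive regularity since $\gamma' > 1/2$; the remaining two pieces additionally carry a factor $s^{1/2+\delta}$ by construction of $\ppara$ and by the remainder estimate for $\Psi$.

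To finish, apply the Schauder estimate $\|P_{t-s}g\|_{\mC^{1/2+\delta}} \lesssim (t-s)^{-(1/2+\delta-\alpha)_{+}/2}\|g\|_{\mC^{\alpha}}$ piece by piece and integrate in $s \in [0,t]$ using $\int_{0}^{t}(t-s)^{-\mu}s^{-\nu}\ud s \lesssim t^{1-\mu-\nu}$ for $\mu,\nu < 1$. For $\delta$ small enough relative to $\gamma - 1/2$, each contribution yields an exponent on $t$ of at least $1/2+\delta$; Cauchy--Schwarz combined with the moment bounds on $\|u\|$ and $\|\xi_{\mathrm{stat}}\|$ then closes the $L^{1}(\Omega)$ bound. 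The main obstacle is the uniform-in-$s$ estimate on $\|u(s)\|_{\mC^{1/2+\gamma'}}$ with no small-time singularity, since a factor $s^{-\nu}$ with $\nu \geqslant 1/2$ in the integrand would destroy the $t^{1/2+\delta}$ gain. Such a bound is the standard output of the subcritical 1D Anderson solution theory, obtained via a short-time contraction fixed point for the semigroup in $C_{T}\mC^{1/2+\gamma'}$, and should be exactly the content of the invoked Lemma.
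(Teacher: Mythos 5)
Your proposal follows essentially the paper's route: Duhamel for $(e^{tH}\varphi)^{\sharp} - P_{t}\varphi$ via the defining property of $\ppara$, Bony decomposition of $\xi_{\mathrm{stat}} u$, the paraproduct and commutator estimates, Schauder, and the moment bounds from Lemma~\ref{lem:moment-bound-solution-1D-wn} together with the Gaussian moments of $\|\xi_{\mathrm{stat}}\|_{\mC^{-1/2-\ve}}$. One claim should be corrected though: the paraproduct commutator $u\para\xi_{\mathrm{stat}} - u\ppara\xi_{\mathrm{stat}}$ does \emph{not} carry a factor $s^{1/2+\delta}$. At $s=0$ one has $(u\ppara\xi_{\mathrm{stat}})(0)=0$ by construction of the kernel $\mf{f}$, while $u(0)\para\xi_{\mathrm{stat}} = \varphi\para\xi_{\mathrm{stat}}\neq 0$, so this difference is order one near $s=0$. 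This is harmless, because the integrand is nevertheless in $\mC^{\gamma-\ve}$ with $\gamma-\ve>1/2+\delta$, so the convolution with $P_{t-s}$ already supplies the factor $t$ (or $t^{1-\mu}$ with $\mu$ small for the lower-regularity commutator term), which dominates $t^{1/2+\delta}$ on $[0,T]$. This is exactly where the paper gets the time gain: from $\int_0^t(t-s)^{-2\ve}\ud s\lesssim t^{1-2\ve}$ after choosing $\ve$ small, rather than from any small-$s$ vanishing of the remainders.
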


\begin{proof}
We have that
\begin{equs}
(e^{t H } \varphi)^{\sharp} = P_{t} \varphi + \int_{0}^{t} P_{t-s} \Big[
\xi_{\mathrm{stat}} \para
e^{s H} \varphi + \xi_{\mathrm{stat}} \reso e^{s H} \varphi + C(e^{\cdot H} \varphi, \mathcal{I}(\xi_{\mathrm{stat}}) ) \Big] \ud
s,
\end{equs}
with 
\begin{equs}
C(f, g) = (\partial_{t} - \Delta) ( f \ppara g) - f 
\para ( \partial_{t} - \Delta) g.
\end{equs}
Hence we obtain, for any choice of $ \delta > 0 $:
\begin{equs}
\| (e^{t H(\omega) } \varphi )^{\sharp}&  - P_{t} \varphi
\|_{\mC^{ \frac{1}{2} + \delta }} \\
& = \bigg\| \int_{0}^{t} P_{t-s}  \Big[ \xi_{\mathrm{stat}} \para e^{s H} \varphi + \xi_{\mathrm{stat}} \reso
e^{s H} \varphi +
C(e^{\cdot H} \varphi, \mathcal{I}(\xi_{\mathrm{stat}}) ) \Big] \ud s
\bigg\|_{\mC^{ \frac{1}{2} + \delta }} \\
& \lesssim t\| \xi_{\mathrm{stat}} \para e^{ \cdot H} \varphi +
\xi_{\mathrm{stat}} \reso e^{ \cdot H}
 \varphi \|_{L^{\infty}_{T} \mC^{\frac{1}{2} + \delta}}
+ \bigg\| \int_{0}^{t} P_{t-s} C( e^{\cdot H} \varphi, \mathcal{I}(\xi_{\mathrm{stat}})) \ud s
\bigg\|_{\mC^{\frac{1}{2} + \delta} }.
\end{equs}
Next fix some $ \ve \in (0, 1/2) $ such that $ \delta_{1} = \gamma -
\frac{1}{2} - \ve >0 $, then
\begin{equs}
\| \xi_{\mathrm{stat}} \para e^{ \cdot H} \varphi + \xi_{\mathrm{stat}} \reso
e^{ \cdot H} \varphi \|_{L^{\infty}_{T} \mC^{\frac{1}{2} + \delta_{1}}} \lesssim
\| \xi_{\mathrm{stat}} \|_{\mC^{-\frac{1}{2} - \ve}} \| e^{\cdot H} \varphi
\|_{L^{\infty}_{T} \mC^{\frac{1}{2} + \gamma}}.
\end{equs}
On the other hand, for the commutator term we can use
\cite[Lemma 2.8]{GubinelliPerkowski2017KPZ}, which guarantees that:
\begin{equs}
\| C(e^{\cdot H} \varphi, \mathcal{I}(\xi_{\mathrm{stat}})) \|_{L^{\infty} \mC^{ (1 - \ve)+
(\frac{3}{2} - \ve) -  2 }} \lesssim \| e^{\cdot H} \varphi
\|_{C^{\frac{1 - \ve}{2}}_{T} L^{\infty}\cap L^{\infty}_{T} \mC^{1 - \ve}} \|
\mathcal{I}(\xi_{\mathrm{stat}}) \|_{\mC^{\frac{3}{2} - \ve }}.
\end{equs}
Then define $ \delta_{2} = 2 \ve $ and choose $ \delta = \min \{ \delta_{1},
\delta_{2} \} $. We find via Lemma~\ref{lem:schauder-estiamtes}, collecting all
the previous estimates:
\begin{equs}
\| (e^{t H(\omega) } \varphi )^{\sharp}  - P_{t} \varphi
\|_{\mC^{ \frac{1}{2} + \delta }} & \lesssim \bigg( t  + \int_{0}^{t}(t-s)^{- 2\ve} \ud
s \bigg) \| e^{\cdot H} \varphi
\|_{C^{\frac{1 - \ve}{2}}_{T} L^{\infty}\cap L^{\infty}_{T}
\mC^{\frac{1}{2} + \gamma}} \|
\xi_{\mathrm{stat}} \|_{\mC^{-\frac{1}{2} - \ve}} \\
& \lesssim t^{1 - 2 \ve} \| e^{\cdot H} \varphi
\|_{C^{\frac{1 - \ve}{2}}_{T} L^{\infty}\cap L^{\infty}_{T}
\mC^{\frac{1}{2} + \gamma}} \| \xi_{\mathrm{stat}} \|_{\mC^{-\frac{1}{2} -
\ve}}.
\end{equs}
If $ \ve $ is chosen sufficiently small we have $ 1 - 2 \ve \leqslant
\frac{1}{2} + \delta $ (of course, this choice is far from optimal), so that
the result follows now by Lemma~\ref{lem:moment-bound-solution-1D-wn}, since $
\EE \big[ \| \xi_{\mathrm{stat}} \|_{\mC^{- \frac{1}{2} - \ve}}^{p} \big]<
\infty $ for any $ p \geqslant 1 $ (for example because $
\xi_{\mathrm{stat}} $ is the distributional derivative of a Brownian motion).


\end{proof}

\section{Bulk behavior}
In this section we study the behaviour of $ \lambda(\tau) $ in the bulk $
(0, \infty) $ and establish the positivity of the Lyapunov exponent. Our
argument is based on the Bou\'e--Dupuis formula and on a perturbation argument
to construct a suitable control.

\begin{lemma}\label{lem:positivity}
  Under Assumption~\ref{assu:smooth-noise} or Assumption~\ref{assu:white-noise}
  it holds that $ \lambda(\tau) > 0 $ for all $ \tau \in (0, \infty) $.
\end{lemma}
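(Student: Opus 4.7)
The plan is to exploit the Furstenberg--Khasminskii formula from Lemma~\ref{lem:furst} and bound its integrand from below by applying the Bou\'e--Dupuis variational formula to the exponential-moment structure of the semigroup, then by constructing an explicit small perturbative control. Setting $u(\tau, \omega, x) = [e^{\tau H(\omega)} 1](x)$ and exploiting self-adjointness of $H(\omega)$ on $L^{2}(\TT)$ together with the Feynman--Kac representation, one rewrites
\begin{equs}
\int_{\TT} [e^{\tau H(\omega)} z_{\infty}(\tau, \omega')](x) \, \ud x = \int_{\TT} z_{\infty}(\tau, \omega', x) \, u(\tau, \omega, x) \, \ud x = \EE^{B_{0} \sim z_{\infty}(\tau, \omega')}\Bigl[e^{\int_{0}^{\tau} \xi_{\mathrm{stat}}(\omega, B_{s}) \, \ud s}\Bigr],
\end{equs}
where $B$ is Brownian motion generated by $\kappa \Delta$. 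The quantity inside the outer logarithm in \eqref{eqn-furstenberg-formula} is thus an exponential moment of a linear functional of $\xi_{\mathrm{stat}}$, which is precisely the setting in which Bou\'e--Dupuis can be brought to bear.

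For any admissible adapted control $\theta$, the Bou\'e--Dupuis formula yields a variational lower bound of the form
\begin{equs}
\log \EE^{B}\bigl[e^{\int_{0}^{\tau} \xi_{\mathrm{stat}}(\omega, B_{s}) \, \ud s}\bigr] \geqslant \EE^{\tilde B}\Bigl[\int_{0}^{\tau} \xi_{\mathrm{stat}}(\omega, \tilde B_{s}) \, \ud s\Bigr] - \frac{1}{4 \kappa} \EE^{\tilde B}\int_{0}^{\tau} |\theta_{s}|^{2} \, \ud s,
\end{equs}
where $\tilde B$ is the Brownian motion drifted by $\theta$. I would then choose $\theta = \varepsilon \, \Theta(\omega)$ with $\Theta$ a mollified gradient-type functional of $\xi_{\mathrm{stat}}(\omega)$, measurable in the noise and of unit order in amplitude. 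After taking $\EE_{\omega}$ the linear-in-$\xi_{\mathrm{stat}}$ contribution vanishes by centeredness, and a perturbation expansion in $\varepsilon$ produces a leading positive quadratic term bounded below by a multiple of the nontriviality constant $\int_{\TT^{2}} \EE |\xi_{\mathrm{stat}}(x) - \xi_{\mathrm{stat}}(y)|^{2} \ud x \ud y > 0$ of order $\varepsilon$, against an energy cost of order $\varepsilon^{2}$. Optimizing over $\varepsilon$ delivers a strictly positive lower bound on $\EE_{\omega} \log \int [e^{\tau H(\omega)} z_{\infty}(\tau, \omega')](x) \ud x$; uniformity in $\omega'$, needed to integrate against the projective invariant measure and conclude $\lambda(\tau) > 0$, follows from the quantitative bounds on $z_{\infty}(\tau, \omega')$ of Lemma~\ref{lem:moment-bound-invariant-measure}, which themselves rest on the contraction property of positive operators on the projective cone.

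The main obstacle I anticipate is the construction of the control $\theta$: it must be adapted with finite quadratic energy, and must produce a genuinely positive contribution after averaging over $\omega$. Under Assumption~\ref{assu:smooth-noise} a mollified spatial gradient of $\xi_{\mathrm{stat}}$ should suffice; under Assumption~\ref{assu:white-noise}, where $\xi_{\mathrm{stat}}$ is only a distribution, I expect the paracontrolled and stochastic estimates from Section~\ref{sec:stochastic-estimates} to be necessary to give meaning to the stochastic products generated by $\Theta(\omega)$ and to isolate the leading positive quadratic term, in the same spirit as in the proof of Lemma~\ref{lem:taylor-white-noise}. A secondary technical difficulty is preserving uniformity in $\omega'$ across the integration against $z_{\infty}(\tau, \omega')$, which is precisely where the moment bounds on the projective invariant measure enter in a crucial way.
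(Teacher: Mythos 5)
Your plan genuinely diverges from the paper's: you apply Bou\'e--Dupuis to a \emph{single} time window $[0,\tau]$ by first invoking the Furstenberg formula \eqref{eqn-furstenberg-formula} and rewriting the integrand as $\EE^{B_0\sim z_\infty(\tau,\omega')}\bigl[e^{\int_0^\tau\xi_{\mathrm{stat}}(\omega,B_s)\,\ud s}\bigr]$, then perturb with a small drift $\varepsilon Z(\xi(\omega))$ and expand in $\varepsilon$. The paper instead applies Bou\'e--Dupuis to $\frac{1}{n}\log\int u(n\,\cdot\,)$ over a long horizon $n\to\infty$ (fixing $u_0=1$, never touching \eqref{eqn-furstenberg-formula} in this proof), so the density entering the leading-order quadratic form is the \emph{invariant density $\overline{q}=\EE\,\ola{p}^\infty$ of the $\varepsilon$-controlled process}, not $z_\infty(\tau,\omega')$. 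This distinction is not cosmetic: it is what makes the paper's leading term a manifest square. By Lemma~\ref{lem:Lipschitz-Invariant} the controlled invariant density satisfies $\|\overline{q}-1\|_\infty\lesssim\varepsilon$, so the weight collapses to $1$ up to $\mathcal{O}(\varepsilon^2)$, yielding $\varepsilon\,\EE\int\!\!\int\!\!\int\bigl|P_{(s-r)/2}\partial_x(-\Delta)^{-1}\Pi_\times\xi_{\mathrm{stat}}\bigr|^2\geqslant 0$.

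In your single-window route the leading term after averaging over $\omega$ reads, for the natural gradient control $Z=\partial_x(-\Delta)^{-2}\Pi_\times\xi_{\mathrm{stat}}$,
\begin{equs}
\varepsilon\int_0^\tau\!\!\int_0^s\!\!\int_\TT \EE_\omega\bigl[\partial_x P_{s-r}\xi_{\mathrm{stat}}(x)\cdot Z(\xi_{\mathrm{stat}})(x)\bigr]\, P_r\overline{z_\infty(\tau)}(x)\,\ud x\,\ud r\,\ud s,
\end{equs}
and here lies the gap. Under Assumption~\ref{assu:white-noise} (or any spatially stationary field) the inner expectation is a positive constant in $x$, the weight integrates to $1$, and positivity follows just as you anticipate. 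But Assumption~\ref{assu:smooth-noise} imposes no spatial stationarity, so the pointwise covariance contraction $\EE_\omega[\partial_x P_{s-r}\xi\cdot Z(\xi)](x)$ is a genuine function of $x$ with sign depending on $\partial_x\partial_y P_{s-r}\otimes(-\Delta)^{-2}\kappa(x,y)|_{y=x}$, and it is paired against the \emph{fixed, $\varepsilon$-independent} density $P_r\overline{z_\infty(\tau)}$, which can be far from uniform for $\tau=O(1)$. The integral then has no a priori sign; the nontriviality constant controls only the $x$-average of the inner expectation, not the weighted average. The moment bounds on $z_\infty$ that you invoke constrain its size, not the sign of this pairing. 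This is precisely the problem the paper sidesteps by using the long horizon: there the density is attached to the controlled dynamics and can be steered to $1$ as $\varepsilon\to 0$, whereas in your formulation $z_\infty(\tau,\omega')$ is a frozen ingredient you cannot adjust. To rescue your approach you would need either to restrict to stationary noise, or to build a density-adapted control (e.g.\ $Z$ depending on $\overline{z_\infty(\tau)}$), or to prove directly that the contraction $\EE_\omega[\partial_x P_t\xi\cdot Z(\xi)]$ can be chosen nonnegative pointwise for a suitable $Z$, none of which is automatic.
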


\begin{proof}
  Since the exact value of $ \tau, \kappa >0 $ is irrelevant for this discussion, let us
  assume that $ \tau = 1 $ and $ \kappa = \frac{1}{2} $ to simplify the
notation. Moreover, since the
  Lyapunov exponent does not depend on the initial condition, we will fix $
  u_{0} = 1 $. And finally, we will work only in the case $
\xi_{\mathrm{stat}} $ being space white noise, since the arguments we will use
will work identically also under Assumption~\ref{assu:smooth-noise}. We will use the Feynman--Kac representation of the solution:
  \begin{equs}
    u(n , x) = \EE^{\QQ}_{x} \Big[ \exp \Big( \sum_{i = 0}^{n-1}
	\int_{i}^{i +1} \xi_{\mathrm{stat}}^{i}(B_{n - s})
    \ud s\Big) \Big],
  \end{equs}
  where under $ \EE^{\QQ}_{x}$ the process $ B $ is a Brownian motion started
  in $ B_{0} = x $, while under $ \EE^{\QQ} $ the process $ B $ is a Brownian
  motion started in the uniform measure on $ \TT $.
  Then the  Bou\'e--Dupuis formula \cite{Boue1998} implies that, for any fixed realization of
  the noise $ \xi $:
  \begin{equs}
    \frac{1}{n} \log{ \int_{\TT} u (n, x) \ud x } \geqslant \frac{1}{n} \sup_{u \in
    \mathbb{H}^{n}_{s}} \EE^{\QQ} \bigg[ \sum_{i = 0}^{n-1} \int_{i}^{i +1}
      \xi_{\mathrm{stat}}^{i}(X_{n-s}) \ud s - \frac{1}{2} 
    \int_{0}^{n} | u_{s} |^{2} \ud s \bigg],
  \end{equs}
  with $ X_{t} = B_{t} + \int_{0}^{t} u_{s} \ud s $ and $
  \mathbb{H}^{n}_{s} $ the space of
  controls $ u $ adapted to the filtration of $ B $ such that $
  \int_{0}^{n} | u_{s} |^{2} \ud s < \infty$ and the law of $ X_{t} $ is smooth
  for any $ t \geqslant 0 $. We observe that as long as the
  law of $ X_{t} $ is smooth the above formula makes sense also for \(
  \xi_{\mathrm{ stat}} \) space-time white noise - so that the lower bound can
  be directly derived from the Bou\'e--Dupuis formula for smooth potentials $
  \xi $ by approximation. Now, if we take $ u = 0 $ we immediately obtain non-negativity of $ \lambda $, since
  \begin{equs}
    \lambda(\tau) & \geqslant
    \limsup_{n \to \infty} \frac{1}{n} \EE^{\QQ} \bigg[ \sum_{i = 0}^{n-1} \int_{i}^{i +1}
   \xi_{\mathrm{stat}}^{i}(B_{n - s}) \ud s \bigg] = \int_{\Omega} \int_{\TT} \xi_{\mathrm{stat}}(\omega, x)\ud x \ud  \PP(\omega) =0,
  \end{equs}
  where we used that the law of $ B_{t} $ converges to the uniform measure on
  $ \TT $ for $ t \to \infty $. To prove strict positivity we have to choose a
  slightly better control. Define, for $ \ve \in (0, 1) $ that will be chosen
  small later on and any $ i \in \NN $
\begin{equs}
  u^{n}_{s} =  \ve \partial_{x} (- \Delta)^{-2} \Pi_{\times} \xi^{n- i}_{\mathrm{stat}} (X_{s}) =:
\ve Z^{n-i}(X_{s}), & \qquad  s \in (i, i+1],
\end{equs}
with $ \Pi_{\times} \xi^{i}_{\mathrm{stat}} = \xi_{\mathrm{stat}} - \langle
\xi_{\mathrm{stat}},1 \rangle$. This definition means 
that $ X $ is the unique strong solution to the SDE $ \ud X_{s} = \ve Z^{n-i}(X_{s})
(s)\ud s + \ud B_{s} $ on $ (i-1, i] $. 

Our aim will be
to prove that the cost of the control and several error terms are of order $
\mO(\ve^{2}) $ (this is why the parameter $ \ve $ is multiplying the drift):
then a zeroth order term will vanish in the limit by averaging and we will be
left with a positive leading term of order $ \mO(\ve) $. 
With this aim in mind we rewrite the quantity in question as
\begin{equs}[eqn:pos-to-estmt]
\frac{1}{n} \sum_{i = 0}^{n-1} \int_{i}^{i + 1} \int_{\TT}
\xi^{i}_{\mathrm{stat}}(y) p^{n}(n-s, y) \ud y \ud s,
\end{equs}
where $ p^{n} $ is the solution to 
\begin{equ}
(\partial_{t} - \frac{1}{2} \Delta ) p^{n} = - \ve \partial_{x}(Z^{n-1-i} \cdot
p^{n}), \qquad \forall t \in (i, i+1],
\end{equ}
for all $ i \in \NN \cap [0,n] $, with initial condition $ p^{n}(0, y) = 1.$ Now we would like to use
that $ p^{n} $ converges to an invariant $ p^{\infty} $ as $ n \to \infty$. But
for clarity let us fix first some notation. We may assume that the probability
space is as in Assumption~\ref{assu:probability-space}: then we observe that $
\ola{p}^{n}(\omega, i, \cdot) := p^{n}(\omega, n-i, \cdot)$ is just a function of the future $(\omega_{j})_{j
\geqslant i} $: in short $\ola{p}^{n}( \omega, i , \cdot) = \ola{p}^{n}( (\omega_{j})_{j
\geqslant i}, i, \cdot) $. Under this time change we can view $
\ola{p}^{n}(\omega, t) $ as the solution to
\begin{equ}[eqn:fund-sol]
(\partial_{t} + \frac{1}{2} \Delta ) \ola{p}^{n} =  \ve \partial_{x}(Z^{i} \cdot
\ola{p}^{n}),
\end{equ}
on the interval $ (i, i + 1] $,
and we observe that this definition makes sense for all $ i \in \ZZ \cap (-
\infty, n-1] $, with terminal condition $ \ola{p}^{n}(n,y)=1 $.
Now the one-force-one-solution 
principle in \cite[Theorem 3.4]{Rosati2019synchronization} (applied in the
present time-reversed setting, observing that the solution map to
\eqref{eqn:fund-sol} is strictly positive) guarantees the
existence of a $ \ola{p}^{\infty}( (\omega_{j})_{j \geqslant 0}, x) $ such that $
\PP- $almost surely
\begin{equs}
  \limsup_{n \to \infty} \frac{1}{n} \log{ d_{H}( \ola{p}^{\infty}(
      (\omega_{j})_{j \geqslant 0}, \cdot), \ola{p}^{n}( (\omega_{j})_{j \geqslant 0}, 0,
  \cdot) )} < - \mf{c} < 0,
\end{equs}
for a deterministic constant $ \mf{c} > 0$ (here $ d_{H} $ is Hilbert's
projective distance as in Section~\ref{sec:proj-invariant-measures}). In particular we define $
\ola{p}^{\infty}( \omega, i , x) = \ola{p}^{\infty} ( (\omega_{j+ i})_{j \geqslant 0}, x)
$. We observe once more, to avoid confusion, that the time arrow is running backwards when
dealing with $ \ola{p}^{\infty} $, namely $ \ola{p}^{\infty}(\omega, 0) $ is
the evolution under \eqref{eqn:fund-sol} of $ \ola{p}^{\infty}(\omega, 1)$.

Now, the synchronization principle in \cite[Theorem
3.4]{Rosati2019synchronization} (which is the same as the
one-force-one-solution principle,
only seen for fixed initial time, instead of fixed terminal time) implies that for any fixed $ \ve \in (0, 1) $
and almost all $ \omega \in \Omega$ there exists a
random constant $ \mf{d}( \omega, \ve) \in \NN $ such that
\begin{equs}
  d_{H} ( \ola{p}^{\infty} ( \omega,i, \cdot), \ola{p}^{n}( \omega, i, \cdot) ) \leqslant
  \ve^{2}, \qquad \forall i \leqslant  n - \mf{d}( \vt^{n}\omega , \ve).
\end{equs}
We remark that the constant $ \mf{d} (\vt^{n} \omega , \ve) $ depends on $
n $, but its law is invariant.
Finally, let us denote with $ S^{\ve}_{t} (\omega_{i}) p $ the solution at time
$ t \in [-1, 0] $ to \eqref{eqn:fund-sol} with terminal condition $  S^{
\ve}_{0}(\omega_{i}) p = p$, so that $ \ola{p}^{n} (\omega, i) =
S^{\ve}_{-1} (\omega_{i}) \ola{p}^{n}(\omega, i + 1) $.

We can then rewrite \eqref{eqn:pos-to-estmt} with the notation we have
introduced so far to find
\begin{equs}
  \frac{1}{n} \sum_{i = 0}^{n-1} \int_{i}^{i + 1} \int_{\TT}
  \xi^{i}_{\mathrm{stat}}(y) p^{n}(n-s, y) \ud y \ud s = \frac{1}{n} \sum_{i =
  0}^{n-1} \int_{-1}^{0} \int_{\TT} \xi_{\mathrm{stat}}(\omega_{i}, y)
  S_{s}^{\ve} (\omega_{i}) [\ola{p}^{n}(i +1, \cdot) ] (y) \ud y \ud s.
\end{equs}
We would like to replace $ \ola{p}^{n} $ with \( \ola{p}^{\infty} \), so let us
further decompose the sum into
\begin{equs}
 \frac{1}{n} \sum_{i = 0}^{n-1} \int_{-1}^{0} \int_{\TT} & \xi_{\mathrm{stat}}(\omega_{i}, y)
 S_{s}^{\ve} (\omega_{i}) [\ola{p}^{\infty}(i +1) ] (y) \ud y \ud s
 \label{eqn:frst-trm}\\
& + \frac{1}{n} \sum_{i = 0}^{n-1} \int_{-1}^{0} \int_{\TT} \xi_{\mathrm{stat}}(\omega_{i}, y)
S_{s}^{\ve}(\omega_{i}) \bigg[ \ola{p}^{\infty}(i+1) \bigg(
\frac{\ola{p}^{n}(i + 1)}{ \ola{p}^{\infty}(i+1)} -1\bigg) \bigg] (y) \ud y \ud
s. \label{eqn:rest-term}
\end{equs}
We now have to treat all the error terms, as well as the cost of the control.

\textit{Step 1: Cost of the control.} For the cost of the control we have to
prove that
\begin{equs}
\limsup_{n \to \infty} \frac{1}{n} \EE \int_{0}^{n} | u^{n}_{s} |^{2} \ud s =
\mO(\ve^{2}).
\end{equs}
Indeed we have
\begin{equs}
\frac{1}{n} \EE \int_{0}^{n} | u^{n}_{s} |^{2} \ud s = \frac{1}{n} \sum_{i = 0}^{n -1}
\ve^{2} \int_{-1}^{0} \int_{\TT} | Z^{i}(\omega_{i}, x) |^{2}
S^{\ve}_{s}(\omega_{i}) \ola{p}^{n} ( (\omega_{j} )_{j \geqslant i}, i+1, x) \ud
x \ud s,
\end{equs}
so that we can immediately bound
\begin{equs}
\limsup_{n \to \infty} \frac{1}{n} \EE \int_{0}^{n} | u^{n}_{s} |^{2} \ud s =
\mO(\ve^{2}) \lesssim \ve^{2} \EE \| Z \|_{\infty}^{2}.
\end{equs}

\textit{Step 2: Rest term.} Now let us consider the term appearing in
\eqref{eqn:rest-term}. We further divide divide this sum into
\begin{equs}
  \frac{1}{n} &\sum_{i = 0}^{n - \mf{d}(\vt^{n} \omega, \ve)-1}  \int_{-1}^{0} \int_{\TT} \xi_{\mathrm{stat}}(\omega_{i}, y)
S_{s}^{\ve}(\omega_{i}) \bigg[ \ola{p}^{\infty}(i+1) \bigg(
\frac{\ola{p}^{n}(i + 1)}{ \ola{p}^{\infty}(i+1)} -1\bigg) \bigg] (y) \ud y \ud
s \\
& + \frac{1}{n} \sum_{i = n - \mf{d}(\vt^{n} \omega , \ve)-1}^{n-1}
\int_{-1}^{0} \int_{\TT} \xi_{\mathrm{stat}}(\omega_{i}, y)
S_{s}^{\ve} (\omega_{i}) \bigg[ \ola{p}^{\infty}(i+1) \bigg(
\frac{\ola{p}^{n}(i + 1)}{ \ola{p}^{\infty}(i+1)} -1\bigg) \bigg] (y) \ud y \ud
s.
\end{equs}
As for the first term, from the definition of $ \mf{d}(\vt^{n} \omega, \ve) $
we have that for $ i \leqslant n - \mf{d}(\vt^{n} \omega, \ve)-1 $:
\begin{equs}
  \bigg\| \frac{\ola{p}^{n}(i + 1)}{ \ola{p}^{\infty}(i+1)} -1
  \bigg\|_{\infty} \lesssim \ve^{2}.
\end{equs}
In addition, define for some parameters $ \alpha \in (1/2, 1), \delta > 0 $ with $
\frac{\alpha + 1}{2} + \delta < 1$
\begin{equs}
  \eta(\omega_{i}) = \sup_{s \in [-1, 0]} |s|^{ \frac{\alpha + 1}{2} + \delta}
      \sup_{p_{0} \in \mathrm{Pr} } \sup_{\ve \in (0, 1)}  \| S_{s}^{\ve}
    (\omega_{i})  p_{0} \|_{\mC^{\alpha}} \lesssim  \sup_{s \in [-1, 0]} |s|^{
    \frac{\alpha+1}{2} + \delta}
      \sup_{p_{0} \in \mathrm{Pr} } \sup_{\ve \in (0, 1)}  \| S_{s}^{\ve}
      (\omega_{i}) p_{0} \|_{\mC^{\alpha+1}_{1}},
\end{equs}
where the previous inequality follows by Besov embedding. Then we find:
\begin{equs}
  \frac{1}{n} \sum_{i = 0}^{n - \mf{d}(\vt^{n} \omega, \ve)-1} & \int_{-1}^{0} \int_{\TT} \xi_{\mathrm{stat}}(\omega_{i}, y)
S_{s}^{\ve}(\omega_{i}) \bigg[ \ola{p}^{\infty}(i+1) \bigg(
\frac{\ola{p}^{n}(i + 1)}{ \ola{p}^{\infty}(i+1)} -1\bigg) \bigg] (y) \ud y \ud
s \\
& \lesssim_{\alpha, \delta}\frac{1}{n} \sum_{i = 0}^{n -1}  \|
\xi_{\mathrm{stat}}(\omega_{i}) \|_{\mC^{- \alpha}} \eta (\omega_{i})
\bigg\|  \ola{p}^{\infty}(i+1) \bigg(
\frac{\ola{p}^{n}(i + 1)}{ \ola{p}^{\infty}(i+1)} -1\bigg)
\bigg\|_{L^{1}} \\
& \lesssim \ve^{2} \frac{1}{n} \sum_{i = 0}^{n -1}  \|
\xi_{\mathrm{stat}}(\omega_{i}) \|_{ \mC^{- \alpha}} \eta (\omega_{i}),
\end{equs}
and by the ergodic theorem the last quantity converges, as $ n \to \infty$, to $\ve^{2} \EE \big[ \|
\xi_{\mathrm{stat}} \|_{\infty} \cdot \eta \big]$: the latter is finite by
Lemma~\ref{lem:mmt-sol-div}, since $ \EE | \eta |^{\sigma} < \infty $ for any $ \sigma
\geqslant 1$.

Instead, for the rest of the sum we bound similarly to above
\begin{equs}
  \sum_{i = n - \mf{d}(\vt^{n} \omega , \ve) 
  - 1}^{n-1} & \int_{-1}^{0} \int_{\TT} \xi_{\mathrm{stat}}(\omega_{i}, y)
S_{s}^{\ve}(\omega_{i}) \bigg[ \ola{p}^{\infty}(i+1) \bigg(
\frac{\ola{p}^{n}(i + 1)}{ \ola{p}^{\infty}(i+1)} -1\bigg) \bigg] (y) \ud y \ud
s \\
& \lesssim \sum_{i = n - \mf{d}(\vt^{n} \omega , \ve) 
- 1}^{n-1} \| \xi_{\mathrm{stat}}(\omega_{i}) \|_{\mC^{- \alpha}} \eta
(\omega_{i}) \bigg\| 
\frac{\ola{p}^{n}(i + 1)}{ \ola{p}^{\infty}(i+1)} -1\bigg\|_{\infty}.
\end{equs}
Then as an application of the ergodic theorem (see e.g.
\cite[Lemma A.4]{Rosati2019synchronization}) we have
\begin{equs}
  \lim_{n \to \infty} \frac{1}{n} \bigg\{ \sum_{i = n - \mf{d}(\vt^{n} \omega , \ve) 
    - 1}^{n-1} \| \xi_{\mathrm{stat}}(\omega_{i}) \|_{\mC^{- \alpha}}
    \eta(\omega_{i}) \bigg\| 
\frac{\ola{p}^{n}(i + 1)}{ \ola{p}^{\infty}(i+1)} -1\bigg\|_{\infty}
\bigg\} = 0 
\end{equs}
if we can prove that
\begin{equs}
  \EE \bigg[\sum_{i =  - \mf{d}( \omega , \ve) 
    - 1}^{-1} \| \xi_{\mathrm{stat}}(\omega_{i}) \|_{\mC^{- \alpha}}
    \eta(\omega_{i}) \bigg\| 
\frac{\ola{p}^{0}(i + 1)}{ \ola{p}^{\infty}(i+1)}
-1\bigg\|_{\infty}\bigg] < \infty.
\end{equs}
In fact, by independence we can bound the above through
\begin{equs}
  \EE \big[ \| \xi_{\mathrm{stat}} \|_{\mC^{- \alpha}
  } \eta \big]\sum_{i = - \infty}^{-1}  & \EE \bigg[
  \bigg\|  \frac{\ola{p}^{0}(i + 1)}{
\ola{p}^{\infty}(i+1)} -1\bigg\|_{\infty}\bigg],
\end{equs}
where the first term is finite by Lemma~\ref{lem:mmt-sol-div}.
As for the last quantity inside the sum, denoting with $ \mu $ the contraction constant of
Theorem~\ref{thm:birkhoff}, we have, via Lemma~\ref{lem:hlbrt-dst-bd}:
\begin{equs}
 \EE \bigg\|  \frac{\ola{p}^{0}(i + 1)}{
\ola{p}^{\infty}(i+1)} -1\bigg\|_{\infty} & \lesssim \EE \bigg[ \sinh \Big(
d_{H}( \ola{p}^{0}(i+1), \ola{p}^{\infty}(i+1)) \Big) \bigg] \\
& \lesssim \EE \bigg[ \sinh \Big(  \prod_{j = 0}^{|i|} \mu (S_{-1}^{
\ve} (\omega_{-j})) d_{H}( \ola{p}^{0}(0), \ola{p}^{\infty}(0)) \Big) \bigg]\\
& \lesssim \EE\bigg[  \prod_{j = 0}^{|i|} \mu(S_{-1}^{
\ve} (\omega_{-j})) d_{H}( \ola{p}^{0}(0), \ola{p}^{\infty}(0)) \exp \Big(  d_{H}(
\ola{p}^{0}(0), \ola{p}^{\infty}(0)) \Big) \bigg] \\
& \lesssim \gamma^{|i|} \EE \Big[ \exp \left\{ 2 d_{H}( \ola{p}^{0}(0),
\ola{p}^{\infty}(0)) \right\} \Big],
\end{equs}
with $ \gamma = \EE [ \mu(S_{-1}^{\ve}) ] \in (0,1) $. Here we have used
independence as well as a Taylor expansion. To conclude, since $
\ola{p}^{0}(0) = 1$ we observe that
\begin{equs}
M := \EE \big[\exp \left\{ 2 d_{H}(1,
\ola{p}^{\infty}(0)) \right\}  \big] < \infty,
\end{equs}
by Lemma~\ref{lem:Lipschitz-Invariant}, provided $ \ve $ is sufficiently small.
We therefore deduce
\begin{equs}
  \EE  \bigg[\sum_{i =  - \mf{d}( \omega , \ve) 
    - 1}^{-1} \| \xi_{\mathrm{stat}}(\omega_{i}) \|_{\mC^{- \alpha} }
    \eta(\omega_{i}) & \bigg\| 
\frac{\ola{p}^{0}(i + 1)}{ \ola{p}^{\infty}(i+1)}
-1\bigg\|_{\infty}\bigg] \leqslant \sum_{i = 0}^{\infty} \gamma^{i} M \EE \big[ \| \xi_{\mathrm{stat}}
\|_{\mC^{- \alpha}} \eta \big] < \infty,
\end{equs}
which is the desired bound. 
Hence we have obtained that
\begin{equs}
\limsup_{n \to \infty} \frac{1}{n} \sum_{i = 0}^{n-1} \int_{-1}^{0} \int_{\TT} \xi_{\mathrm{stat}}(\omega_{i}, y)
S_{s}^{\ve}(\omega_{i}) \bigg[ \ola{p}^{\infty}(i+1) \bigg(
\frac{\ola{p}^{n}(i + 1)}{ \ola{p}^{\infty}(i+1)} -1\bigg) \bigg] (y) \ud y \ud
s = \mO(\ve^{2}).
\end{equs}

\textit{Step 3: First order term.} Finally, we consider \eqref{eqn:frst-trm}.
This term converges by the ergodic theorem, since $
\ola{p}^{\infty}(i+1) $ is invariant under $ S^{\ve}_{-1} (\omega_{i}) $ and independent of $ \omega_{i} $, to
\begin{equs}
\int_{\Omega \times \Omega} & \int_{-1}^{0} \int_{\TT} \xi_{\mathrm{stat}}  ( \omega)
S^{\ve}_{s}(\omega) [ \ola{p}^{\infty}(\omega^{\prime}, \cdot )]  \ud x \ud s \ud
\PP( \omega) \ud \PP(\omega^{\prime})  = \int_{\Omega }  \int_{-1}^{0}
\int_{\TT} \Pi_{\times} \xi_{\mathrm{stat}}  ( \omega)
S^{\ve}_{s}(\omega) [ \ola{q}]  \ud x \ud s \ud
\PP( \omega),
\end{equs}
with $ \Pi_{\times} \xi_{\mathrm{stat}} = \xi_{\mathrm{stat}} - \langle
\xi_{\mathrm{stat}}, 1 \rangle$ and $ \ola{q} (x) = \EE \ola{p}^{\infty}(x) $, assuming for the moment that
all the products are well-defined. Here we used that $ \EE \langle
\xi_{\mathrm{stat}}, 1 \rangle =0$ and $ \int_{\TT} S^{\ve}_{s}(\omega)
[\ola{q}] (x) \ud x =1$ since \eqref{eqn:fund-sol} is mass preserving.
Now, define for $ s \in [-1,0] $: $$ T^{\ve}_{s} (\omega) [q] = P_{|s|} q - \ve
\partial_{x} \int_{0}^{| s |} P_{| s | - r}
\big[ Z(\omega) \cdot
P_{r}[q] \big] \ud r. $$
Then we claim that and uniformly over all $ q \in \mathrm{Pr}$
$$ \EE \int_{-1}^{0} \| S_{s}^{\ve} q - T_{s}^{\ve} q
\|_{\mC^{\frac{3}{4}}_{1}} \ud s \lesssim \ve^{2}.$$ 
Here $ \frac{3}{4} $ is an arbitrary number larger that $ \frac{1}{2}
 $, so that the product with $ \xi_{\mathrm{stat}} $ is well posed.
To prove the above estimate we can use the Duhamel representation of the solution $
S^{\ve}_{s}, s \in [-1, 0]$, so that for sufficiently small $ \delta > 0 $ and
by Lemma~\ref{lem:schauder-estiamtes}
\begin{equs}
  \| S^{\ve}_{s} (\omega) q - T^{\ve}_{s} (\omega) q
\|_{\mC^{\frac{3}{4}}_{1}} & = \ve^{2} \bigg\|
\partial_{x} \int_{0}^{| s |} P_{| s |-r} \Big[ Z(\omega)  \int_{0}^{r} \partial_{x}
P_{r -z} \big[ Z (\omega) S^{\ve}_{-z} (\omega) q \big] \ud z \Big]\ud r
\bigg\|_{\mC^{\frac{3}{4}}_{1}} \\
& \lesssim \ve^{2} \int_{0}^{| s |} (| s | - r)^{- \frac{1}{2}(
\frac{7}{4} + \delta) } \|
Z(\omega) \|_{\infty} \bigg\|\int_{0}^{r} \partial_{x}
P_{r -z} \big[ Z (\omega) S^{\ve}_{-z} (\omega) q \big] \ud z \Big]
 \bigg\|_{L^{1}} \ud r \\
& \lesssim \ve^{2} \| Z(\omega) \|_{\infty}^{2}\int_{0}^{| s |} (| s | - r)^{-
\frac{1}{2}( \frac{7}{4} + \delta)} \int_{0}^{r} (r - z)^{- \frac{1}{ 2} - \delta}\ud z \ud r
\\
& \lesssim \ve^{2} \| Z(\omega) \|_{\infty}^{2},
\end{equs}
where we used that $ \| S^{\ve}_{s} (\omega) q \|_{L^{1}} = 1 $ for $ q \in
\mathrm{Pr}.$ So the claimed bound is proven.

\textit{Step 4: Conclusion.} Overall, we have obtained that
\begin{equs}
\lambda(\tau) \geqslant \int_{\Omega} \int_{0}^{1}\int_{\TT} \Pi_{\times}
\xi_{\mathrm{stat}}(\omega,x ) T^{\ve}_{s} (\omega) [ \ola{q}](x) \ud
x \ud s \ud \PP(\omega) + \mathcal{O}(\ve^{2}).
\end{equs}
Since $ \EE \xi_{\mathrm{stat}}(x) = 0 $, we can further reduce this, with the
definition of $ T_{s}^{\ve}(\omega) $, to obtain
\begin{equs}
\lambda(\tau) \geqslant - \ve \int_{\Omega} \int_{0}^{1}\int_{\TT} \Pi_{\times}
\xi_{\mathrm{stat}}(\omega,x ) \partial_{x} \int_{0}^{s} P_{s - r} [
Z(\omega) \cdot P_{r} \ola{q}](x)  \ud r \ud
x \ud s \ud \PP(\omega) + \mathcal{O}(\ve^{2}).
\end{equs}
In addition, by Lemma~\ref{lem:Lipschitz-Invariant} and
Lemma~\ref{lem:hlbrt-dst-bd} we have that
\begin{equs}
  \| \ola{q} - 1 \|_{\infty} \lesssim \ve,
\end{equs}
so that following similar calculations to the one above we finally conclude
\begin{equs}
  \lambda(\tau) & \geqslant \ve \EE \int_{0}^{1} \int_{0}^{s} \int_{\TT} \partial_{x}
  \Pi_{\times}\xi_{\mathrm{stat}}(x) P_{s - r}[Z] (x) \ud r  \ud s \ud x +
  \mO(\ve^{2}) \\
  & \geqslant \ve \EE \int_{0}^{1} \int_{0}^{s} \int_{\TT} | P_{(s-r)/2} \partial_{x} (- \Delta)^{ -1 } \Pi_{\times}
  \xi_{\mathrm{stat}}|^{2} (x) \ud r \ud s \ud x + \mO(\ve^{2}),
\end{equs}
where in the last line we used the definition of $ Z $ together with the fact
that the heat semigroup is selfadjoint. We observe that the average appearing
in the first order term is strictly positive both under
Assumption~\ref{assu:white-noise} and under Assumption~\ref{assu:smooth-noise}
(in the latter case, because the field is chosen to be non-trivial). Hence
sending $ \ve \to 0 $ proves the desired result.
\end{proof} \\
In the previous result we required an approximation of the invariant measure $
\ola{p}^{\infty} $ for small \( \ve \). This is the content of the next
result.

\begin{lemma}\label{lem:Lipschitz-Invariant}
  Under Assumption~\ref{assu:probability-space}, and in the setting of either
  Assumption~\ref{assu:smooth-noise} or Assumption~\ref{assu:white-noise}, consider $ \ola{p}^{\infty}$
  the invariant probability measure, in the sense of
  Proposition~\ref{prop:existence-proj-invariant-measure}, to the equation
  \begin{equs}[eqn:prob-dens-back]
    (\partial_{t} + \kappa \Delta ) \ola{p} = \ve \partial_{x} \big[ [ \partial_{x} (-
    \Delta)^{- 2} \xi_{\mathrm{stat}}^{\lfloor t \rfloor} ] \ola{p}\big], \qquad t \in \RR
  \end{equs}
  for $ \ve \in (0, 1) $. Then for every $ \sigma \geqslant 1 $ there exists an
  $ \ve_{0}(\sigma) \in (0, 1) $ such that
  for all \( \ve \in (0, \ve_{0}) \), with $ d_{H} $ the Hilbert distance as in
  Section~\ref{sec:proj-invariant-measures}:
  \begin{equs}
    \EE \big[ \sinh \{ \sigma d_{H} ( \ola{p}^{\infty}, 1)\}\big] \lesssim \ve.
  \end{equs}
\end{lemma}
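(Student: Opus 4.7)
The plan is to view $\ola{p}^{\infty}$ as the $d_{H}$-limit of repeated applications of $S^{\ve}_{-1}$ to the constant function $1$ and to combine two ingredients: the Birkhoff coefficient of the unperturbed heat semigroup satisfies $\mu (P_{1}) < 1$, and the one-step error $S^{\ve}_{-1}(\omega) 1 - 1$ is of order $\mathcal{O}(\ve)$ with excellent stochastic moments, as already exhibited in Step~3 of the proof of Lemma~\ref{lem:positivity}. By the one-force-one-solution principle invoked there, $\ola{p}^{\infty}(\omega, 0) = \lim_{N \to \infty} p_{N}$ with $p_{N} := S^{\ve}_{-1}(\omega_{0}) \cdots S^{\ve}_{-1}(\omega_{N-1}) 1$. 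Writing $\mu_{j} := \mu (S^{\ve}_{-1}(\omega_{j})) \in [0, 1]$ and $X_{j} := d_{H}(S^{\ve}_{-1}(\omega_{j}) 1, 1)$, the telescoping bound $d_{H}(p_{N}, 1) \leqslant \sum_{k=0}^{N-1} d_{H}(p_{k+1}, p_{k})$ and the contraction of Theorem~\ref{thm:birkhoff} give
\begin{equs}
Y := d_{H}(\ola{p}^{\infty}, 1) \leqslant \sum_{k=0}^{\infty} \Big( \prod_{j=0}^{k-1} \mu_{j} \Big) X_{k}.
\end{equs}

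From the Duhamel expansion in the proof of Lemma~\ref{lem:positivity}, $S^{\ve}_{-1}(\omega) 1 = 1 - \ve \partial_{x} \int_{0}^{1} P_{1-r} Z(\omega) \ud r + \mathcal{O}(\ve^{2} \| Z(\omega) \|_{\infty}^{2})$, whence, reading $d_{H}$ as the logarithm of $\sup/\inf$ ratios, $X_{0} \lesssim \ve \| Z(\omega) \|_{\infty} (1 + \ve \| Z(\omega) \|_{\infty})$; since $\| Z \|_{\infty}$ has exponential moments of every order, so does $X_{0}/\ve$ uniformly in $\ve$ small. The pairs $(\mu_{j}, X_{j})$ are i.i.d.\ and $X_{k}$ is independent of $\{ \mu_{j} \}_{j < k}$, so Minkowski's inequality yields
\begin{equs}
\| Y \|_{L^{2}} \leqslant \sum_{k=0}^{\infty} (\EE \mu_{0}^{2})^{k/2} \| X_{0} \|_{L^{2}}.
\end{equs}
At $\ve = 0$, $\mu_{0}$ is deterministic and equal to $\mu(P_{1}) < 1$, so by continuity of the Birkhoff coefficient under the perturbation we have $\EE \mu_{0}^{2} < 1$ for $\ve$ sufficiently small, and therefore $\| Y \|_{L^{2}} \lesssim \ve$.

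To control exponential moments I exploit the fixed-point identity $\ola{p}^{\infty}(\omega, 0) = S^{\ve}_{-1}(\omega_{0}) \tilde{p}$ with $\tilde{p} = \ola{p}^{\infty}(\vt \omega, 0)$ independent of $\omega_{0}$ and $\tilde{Y} := d_{H}(\tilde{p}, 1) \stackrel{d}{=} Y$. This yields $Y \leqslant \mu_{0} \tilde{Y} + X_{0}$ with $\tilde{Y} \perp (\mu_{0}, X_{0})$. The convexity of $e^{\sigma \cdot}$ gives $e^{\sigma \mu_{0} \tilde{Y}} \leqslant \mu_{0} e^{\sigma \tilde{Y}} + (1 - \mu_{0})$ for $\tilde{Y} \geqslant 0$ and $\mu_{0} \in [0,1]$; taking expectations and using independence one finds
\begin{equs}
\EE e^{\sigma Y} \leqslant \EE[\mu_{0} e^{\sigma X_{0}}] \EE e^{\sigma Y} + \EE[(1 - \mu_{0}) e^{\sigma X_{0}}].
\end{equs}
Since $\EE[\mu_{0} e^{2 \sigma X_{0}}] \to \mu(P_{1}) < 1$ as $\ve \to 0$, choosing $\ve_{0}(\sigma)$ small enough keeps this quantity below $1$ for all $\ve \in (0, \ve_{0})$, and solving the inequality gives $\EE e^{2 \sigma Y} \leqslant C(\sigma) < \infty$. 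Using $\sinh(x) \leqslant x e^{x}$ for $x \geqslant 0$ and Cauchy--Schwarz,
\begin{equs}
\EE \sinh\{ \sigma Y \} \leqslant \sigma \EE [Y e^{\sigma Y}] \leqslant \sigma \| Y \|_{L^{2}} ( \EE e^{2 \sigma Y} )^{1/2} \lesssim_{\sigma} \ve.
\end{equs}

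The main technical point will be making precise the continuity statements $\EE [\mu(S^{\ve}_{-1}(\omega))^{p} e^{p \sigma X_{0}}] \to \mu(P_{1})^{p}$ as $\ve \to 0$ for $p \in \{1, 2\}$, which underpin the two inequalities $\EE \mu_{0}^{2} < 1$ and $\EE [\mu_{0} e^{2\sigma X_{0}}] < 1$ used above. These demand uniform-in-$\ve$ moment bounds on $\| Z \|_{\infty}$ and on the solution map $S^{\ve}_{-1}$, both of which are available through the Duhamel estimates already used in Step~3 of the proof of Lemma~\ref{lem:positivity}.
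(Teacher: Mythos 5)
Your proposal is correct in broad architecture but takes a genuinely different organizational route from the paper's. The paper does not use a telescoping sum over the orbit: it instead exploits the stationarity identity
$d_{H}( \ola{p}^{\infty}(\vt^{-1} \omega), 1) = d_{H}( S_{-1}^{\ve}(\omega) \ola{p}^{\infty}(\omega), 1)$,
splits by the triangle inequality into the heat-semigroup part $d_{H}(S_{-1} \ola{p}^{\infty}, S_{-1} 1)$ (absorbed via Birkhoff's contraction $\mu(P_{1}) < 1$) plus the one-step perturbation $d_{H}(S_{-1}^{\ve}\ola{p}^{\infty}, S_{-1}\ola{p}^{\infty})$, and then estimates the latter via the Duhamel remainder $R^{\ve}_{-1}\ola{p}^{\infty}$, heat-kernel lower bounds from \cite{perkowski2020quantitative}, and Fernique. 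Your route instead represents $\ola{p}^{\infty}$ as a pullback limit, telescopes, and closes an i.i.d.\ renewal inequality $Y \leqslant \mu_{0}\tilde Y + X_{0}$ for the distribution of $Y = d_{H}(\ola{p}^{\infty}, 1)$, then combines a first-order $L^{2}$ bound with an exponential-moment bound via $\sinh(x) \leqslant x e^{x}$. Conceptually both proofs rest on the same ingredients (Birkhoff contraction of $P_{1}$, Duhamel estimate on $R^{\ve}_{-1}$, stochastic moments of the solution map); your version is perhaps cleaner in style but requires more careful bookkeeping, whereas the paper's single-step stationarity identity avoids the infinite sum and hence needs less uniformity.

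There are, however, two genuine technical gaps you should be aware of. First, the bound $X_{0} \lesssim \ve \| Z \|_{\infty}(1 + \ve\|Z\|_{\infty})$ relies on linearizing the logarithm in $d_{H}(1 + R^{\ve}_{-1}1, 1)$, which only works when $\| R^{\ve}_{-1}1 \|_{\infty}$ is bounded away from $1$. On the complementary tail event (small but non-zero probability), $\min S^{\ve}_{-1}1$ is controlled only by the heat-kernel lower bound $\Gamma_{-1}(x,y) \geqslant C_{1} e^{ -\ve^{2} C_{3} \| \partial_{x}(-\Delta)^{-2}\xi_{\mathrm{stat}} \|_{\mC^{\alpha}}^{2}}$, giving $X_{0} \lesssim \ve^{2}\|\xi\|^{2} + O(1)$ rather than $O(\ve)$. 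Thus the claim that $X_{0}/\ve$ has exponential moments of every order \emph{uniformly in small $\ve$} is false; what you actually have (and what suffices) is $\EE[e^{\sigma X_{0}}] \leqslant 1 + O(\ve)$, proved by splitting on the good event and estimating the bad event via its polynomially-small probability. Second, the statement $\EE[\mu_{0} e^{2\sigma X_{0}}] < 1$ for small $\ve$ cannot be obtained from mere ``continuity of the Birkhoff coefficient'': for any fixed $\ve > 0$ the random contraction coefficient $\mu(S^{\ve}_{-1}(\omega))$ approaches $1$ on the event of large noise, and one must argue quantitatively using the same kernel bounds to show that this event is small enough in probability. You flag the second point yourself, but the first deserves the same treatment. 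Also, to legitimize solving the inequality $\EE e^{\sigma Y} \leqslant a\,\EE e^{\sigma Y} + b$ for $\EE e^{\sigma Y}$ you need an a priori finiteness, which should be obtained by truncating $Y \wedge N$ and passing to the limit.
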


\begin{proof}
  Note that we should expect that $ \ola{p}^{\infty} \to 1 $ as $ \ve \to 0 $, since the
  latter is the eigenfunction associated to the top eigenvalue of the heat
  equation. Therefore, let us denote with $ S_{s}^{\ve}(\omega), S_{s}= P_{-s} $ respectively the
  solution map to \eqref{eqn:prob-dens-back} and to the heat equation started
  at time $ 0 $ and computed at time $ s < 0 $.
  We will prove that
  \begin{equs}
    \EE \Big[ \exp \{\sigma d_{H}(\ola{p}^{\infty}, 1) \}\Big] -1
    \lesssim_{\sigma} \ve.
  \end{equs}
  Following the same arguments, one can prove that $ 1-\EE \big[ \exp \{- \sigma
  d_{H}(\ola{p}^{\infty}, 1 ) \} \big] \lesssim_{\sigma} \ve $, which together
with the previous bound implies the desired result. To obtain our bound we compute:
  \begin{equs}
    d_{H}( \ola{p}^{\infty}(\vt^{-1} \omega), 1)& = d_{H}( S_{-1}^{\ve}(\omega)
    \ola{p}^{\infty}(\omega), 1) \\
    & \leqslant d_{H} \big(S_{-1}^{\ve}(\omega)
    \ola{p}^{\infty}(\omega), S_{-1} \ola{p}^{\infty}(\omega) \big) +
    d_{H}\big(S_{-1} \ola{p}^{\infty}(\omega), S_{-1}1 \big).
  \end{equs}
  Now let $ \mu = \mu (S_{-1}) \in (0, 1) $ be the contraction constant of the
  heat semigroup in the Hilbert distance, as in Theorem~\ref{thm:birkhoff}. Then
  \begin{equs}
    d_{H}( \ola{p}^{\infty}(\vt^{-1} \omega), 1) \leqslant \frac{1}{1 - \mu}
    d_{H} \big(S_{-1}^{\ve}(\omega) \ola{p}^{\infty}(\omega), S_{-1}
    \ola{p}^{\infty}(\omega) \big).
  \end{equs}
  Now, from the definition of $ d_{H} $, it suffices to prove that
  \begin{equs}[eqn:mmt-to-prv]
    \EE \bigg[ \bigg\vert \max\frac{ S_{-1}^{\ve} \ola{p}^{\infty}}{
      S_{-1} \ola{p}^{\infty}} \bigg\vert^{\frac{\sigma}{1 - \mu}} \cdot  
    \bigg\vert \max\frac{ S_{-1} \ola{p}^{\infty}}{
  S_{-1}^{\ve} \ola{p}^{\infty}} \bigg\vert^{\frac{\sigma}{1 - \mu}} \bigg] -1 
    \lesssim_{\mu} \ve.
  \end{equs}
  We can then decompose, for $ - 1 \leqslant  s \leqslant 0 $,  $ S^{\ve}_{s} (\omega) [p] = S_{s}[p] +
  R_{s}^{\ve}(\omega)[p]$, with
  \begin{equs}
    R_{s}^{\ve}(\omega)[p] = - \ve \partial_{x} \int_{0}^{|s|} P_{|s| - r} \Big[  [\partial_{x} (-
    \Delta)^{- 2} \xi_{\mathrm{stat}}(\omega_{-1}) ] S_{-r}^{\ve}(\omega)[p]\Big]\ud r.
  \end{equs}
  For the first term we thus have
  \begin{equs}
    \frac{S^{\ve}_{-1} \ola{p}^{\infty}}{S_{-1} \ola{p}^{\infty}} \leqslant 1 +
    C_{1} \| R^{\ve}_{-1} \ola{p}^{\infty} \|_{\infty},
  \end{equs}
  where we used that $ \int_{\TT} \ola{p}^{\infty}(x) \ud x =1$ and $ C_{1} > 0
  $ is a constant such that $ \texttt{p}_{1}(x) \geqslant C_{1}, \
  \forall x \in \TT $ with $ \texttt{p}_{t}(x) $ the heat kernel at time $ t > 0$.
  For the second term a Taylor expansion guarantees, for any fixed $ \alpha \in
  (0, 1) $:
  \begin{equs}
    \frac{S_{-1} \ola{p}^{\infty}}{S_{-1}^{\ve}\ola{p}^{\infty}} \leqslant 1 +
    \| R^{\ve}_{-1} \|_{\infty} \frac{ \max S_{-1} \ola{
    p}^{\infty}}{ \big( \min S_{-1}^{\ve} \ola{p}^{\infty} \big)^{2} }
    \leqslant 1 + \| R^{\ve}_{-1} \ola{p}^{\infty} \|_{\infty} C_{1}
    C_{2} e^{\ve^{2} 2 C_{3} \| \partial_{x} (- \Delta)^{-2} \xi_{\mathrm{stat}}
    \|_{\mC^{\alpha}}^{2}},
  \end{equs}
  with $ C_{1} $ as above and $ C_{2},C_{3} > 0 $ deterministic so that the fundamental
  solution $ \Gamma_{s}(x, y) $ to \eqref{eqn:prob-dens-back} (i.e. with initial
  condition $ \Gamma_{0}(x, y) = \delta_{x}(y) $ ) satisfies $
  \Gamma_{-1}(x, y) \geqslant C_{1}e^{-\ve^{2} C_{3} \| \partial_{x} (- \Delta)^{-2} \xi_{\mathrm{stat}}
  \|_{\mC^{\alpha}}^{2}}$. That such constants exist follows as in the proof of
Lemma~\ref{lem:moment-bound-invariant-msr-1D-1n}, via the results of
\cite{perkowski2020quantitative}. Then we can estimate \eqref{eqn:mmt-to-prv} as follows, for some
  deterministic $ C > 0 $
  \begin{equs}
    \EE \bigg[  \bigg\vert &\max\frac{ S_{-1}^{\ve} \ola{p}^{\infty}}{
      S_{-1} \ola{p}^{\infty}} \bigg\vert^{\frac{\sigma}{1 - \mu}} \cdot  
    \bigg\vert \max\frac{ S_{-1} \ola{p}^{\infty}}{
  S_{-1}^{\ve} \ola{p}^{\infty}} \bigg\vert^{\frac{\sigma}{1 - \mu}} \bigg] -1  \\
  & \leqslant \EE \bigg[ \bigg\vert 1 + C \| R^{\ve}_{-1} \ola{p}^{\infty}
    \|_{\infty} \Big( 1 + e^{\ve^{2} C\| \partial_{x} (- \Delta)^{-2} 
  \xi_{\mathrm{stat}}  \|_{\mC^{\alpha}}^{2}}  \Big) \bigg\vert^{\frac{2 \sigma}{1 -
\mu}} \bigg] -1 \\
& \lesssim \EE \Big[  \| R^{\ve}_{-1} \ola{p}^{\infty}
  \|_{\infty} \Big( 1 +\| R^{\ve}_{-1} \ola{p}^{\infty}
  \|_{\infty}\Big)^{M} \Big( 1 + e^{\ve^{2} C\| \partial_{x} (- \Delta)^{-2} 
\xi_{\mathrm{stat}}  \|_{\mC^{\alpha}}^{2}}  \Big)^{M}\Big] ,
\end{equs}
for some $ M (\mu, \sigma) > 0 $, where in the last step we used a Taylor expansion.
Now we observe that via Lemma~\ref{lem:schauder-estiamtes} and Lemma~\ref{lem:mmt-sol-div}, for any $ \delta > 0 $
sufficiently small
\begin{equs}
  \| R^{\ve}_{-1}(\omega) \ola{p}^{\infty} \|_{\infty} & \lesssim \ve \int_{0}^{1} \Big\|
  P_{1 - r} \Big[ [\partial_{x} (-
    \Delta)^{- 2} \xi_{\mathrm{stat}}(\omega_{-1}) ]
    S_{-r}^{\ve}(\omega_{- 1})[\ola{p}^{\infty}(\omega)]
  \Big] \Big\|_{\mC^{1 + \delta}} \ud r \\
& \lesssim \ve \int_{0}^{1} (1 -r)^{- \frac{1 + 2 \delta}{2}}  \big\|
  [\partial_{x} (-
    \Delta)^{- 2} \xi_{\mathrm{stat}}(\omega_{-1}) ]
    S_{-r}^{\ve}(\omega)[\ola{p}^{\infty}] \big\|_{\mC^{- \delta}}\ud r \\
& \lesssim \ve \int_{0}^{1} (1 -r)^{- \frac{1 + 2 \delta}{2}}  \big\|
  [\partial_{x} (-
    \Delta)^{- 2} \xi_{\mathrm{stat}}(\omega_{-1}) ]
    S_{-r}^{\ve}(\omega_{-1})[\ola{p}^{\infty}(\omega)] \big\|_{\mC^{ 1- \delta}_{1}}\ud r
\\
& \lesssim \ve \int_{0}^{1} (1 -r)^{- \frac{1 + 2 \delta}{2}} \|
\xi_{\mathrm{stat}} (\omega_{-1}) \|_{\mC^{- \frac{1}{2} - \delta}}
s^{- \frac{1 - 3 \delta}{2}} \eta (\omega_{-1})\ud r \lesssim \ve \eta (\omega_{-1})
\| \xi_{\mathrm{stat}} (\omega_{-1}) \|_{\mC^{- \frac{1}{2} - \delta}}
\end{equs}
where in the third line we used Besov embeddings. Hence an application of
Fernique's theorem as well as  Lemma~\ref{lem:mmt-sol-div} guarantees the
required estimate.
\end{proof}\\
Next we show a moment estimate for the solution map to
\eqref{eqn:prob-dens-back}.

\begin{lemma}\label{lem:mmt-sol-div}
  Let $ S_{s}^{\ve} p_{0} $ be the solution to \eqref{eqn:prob-dens-back} with
  initial condition $ p_{0} \in \mathrm{Pr} $ (defined as in
  Section~\ref{sec:proj-invariant-measures}) at time $ s < 0 $. Then for any $ \sigma
  \geqslant 1 $ and $ \alpha \in (0, 2), \delta > 0 $ such that $
  \frac{\alpha}{2} + \delta < 1 $
  \begin{equs}
    \EE \Big[ \Big(  \sup_{s \in [-1, 0]} |s|^{ \frac{\alpha}{2} + \delta}
      \sup_{p_{0} \in \mathrm{Pr} } \sup_{\ve \in (0, 1)}  \| S_{s}^{\ve} p_{0}
  \|_{\mC^{\alpha}_{1}} \Big)^{\sigma} \Big] < \infty .
  \end{equs}
\end{lemma}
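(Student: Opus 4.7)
The plan is to combine the Duhamel representation of the solution map with a bootstrap argument, treating $Z(\omega) := \partial_x(-\Delta)^{-2} \xi_{\mathrm{stat}}^{\lfloor t \rfloor}(\omega)$ as a smooth-in-space, random drift whose relevant H\"older norms have finite moments of all orders by Fernique (since $\partial_x(-\Delta)^{-2}$ is smoothing of order three while under Assumption~\ref{assu:white-noise} $\xi_{\mathrm{stat}} \in \mC^{-\frac{1}{2}-\varepsilon_0}$, and the regular case of Assumption~\ref{assu:smooth-noise} is only easier). First I would write
\begin{equs}
S^{\ve}_s p_0 = P_{|s|} p_0 - \ve \int_0^{|s|} P_{|s|-r} \partial_x[Z \cdot S^{\ve}_{-r} p_0] \, \ud r,
\end{equs}
and observe that the divergence structure guarantees mass conservation: testing the equation against $1$ gives $\|S^{\ve}_s p_0\|_{L^1} = \|p_0\|_{L^1} = 1$. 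This gives the a priori bound to start the iteration from.

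Next, for $\alpha \in (0, 1)$ I would combine the parabolic smoothing $\|P_t \partial_x g\|_{\mC^{\alpha}_1} \lesssim t^{-(\alpha+1)/2} \|g\|_{L^1}$ (Lemma~\ref{lem:schauder-estiamtes}) with mass preservation to obtain directly
\begin{equs}
\|S^{\ve}_s p_0\|_{\mC^{\alpha}_1} \lesssim |s|^{-\alpha/2} + \ve \|Z\|_{\infty} \int_0^{|s|} (|s|-r)^{-(\alpha+1)/2} \ud r \lesssim |s|^{-\alpha/2}(1 + \ve \|Z\|_{\infty}),
\end{equs}
the integrability being exactly what restricts this step to $\alpha<1$. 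For $\alpha \in [1,2-2\delta)$ I would then bootstrap: pick $\alpha_0 \in (\alpha-1, 1)$ (a nonempty window precisely because $\alpha<2$), use the product bound $\|Z h\|_{\mC^{\alpha_0}_1} \lesssim \|Z\|_{\mC^{\alpha_0}} \|h\|_{\mC^{\alpha_0}_1}$ together with the base estimate, and apply Schauder to get
\begin{equs}
\|P_{|s|-r} \partial_x[Z S^{\ve}_{-r} p_0]\|_{\mC^{\alpha}_1} \lesssim (|s|-r)^{-(\alpha-\alpha_0+1)/2}\, \|Z\|_{\mC^{\alpha_0}}(1 + \ve \|Z\|_{\infty})\, r^{-\alpha_0/2}.
\end{equs}
Both singularities are integrable by the choice of $\alpha_0$ and the resulting beta integral is bounded by $|s|^{(1-\alpha)/2}$, yielding an overall bound $\|S^{\ve}_s p_0\|_{\mC^{\alpha}_1} \lesssim |s|^{-\alpha/2}(1 + \ve\|Z\|_\infty\|Z\|_{\mC^{\alpha_0}}(1+\ve\|Z\|_\infty))$, which is uniform in $p_0 \in \mathrm{Pr}$ and $\ve \in (0,1)$.

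Finally I would take expectations: the deterministic pointwise bound
\begin{equs}
\sup_{s \in [-1,0]} |s|^{\alpha/2+\delta} \sup_{p_0 \in \mathrm{Pr}} \sup_{\ve \in (0,1)} \|S^{\ve}_s p_0\|_{\mC^{\alpha}_1} \lesssim 1 + \|Z\|_{\infty}^2 \|Z\|_{\mC^{\alpha_0}}
\end{equs}
combined with Fernique (all moments of $\|Z\|_{\mC^{\gamma}}$ for any $\gamma<5/2$ are finite) and H\"older's inequality gives the claimed $\sigma$-th moment bound. The main obstacle is precisely the bootstrap window $(\alpha-1,1)$ closing at $\alpha=2$: this is what forces the constraint $\alpha/2+\delta<1$ in the statement and is the structural reason one cannot push the regularity arbitrarily high in a single step. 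A secondary technical point is the interchange of the sup in $p_0$ and $\ve$ with the expectation, which is harmless because the deterministic bound is uniform in these parameters.
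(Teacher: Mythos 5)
Your proposal follows essentially the same route as the paper: Duhamel representation, mass preservation $\|S^\ve_s p_0\|_{L^1}=1$, Schauder smoothing, and iteration/bootstrap to reach $\alpha\in[1,2)$, finishing with Fernique for the moments of $\|Z\|$. You are in fact more explicit about the bootstrap window $\alpha_0\in(\alpha-1,1)$ than the paper, which is terse (and contains a couple of typos) at exactly that step; apart from a harmless extra factor of $\|Z\|_\infty$ in your final deterministic bound, the argument matches.
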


\begin{proof}
  First, we observe that by mass conservation $ S^{\ve}_{s} p_{0} \in
  \mathrm{Pr} $ for all $ s < 0 $. The parameter $ \delta > 0 $ is arbitrary
  small and chose only because we will embed $ L^{1} \subseteq \mC^{- \delta}_{1} $.  Now, let us first assume that $ \alpha + \delta< 1 $, so
  that using Duhamel's formula and
  Lemma~\ref{lem:schauder-estiamtes} we obtain
  \begin{equs}
    \| S^{\ve}_{s} p_{0} \|_{\mC^{\alpha}_{1}} & \lesssim | s |^{-
    \frac{\alpha + \delta}{2}} \| p_{0} \|_{L^{1}}+ \int_{0}^{| s |}(| s | - r )^{-
    \frac{\alpha + 1 + \delta}{2}} \| [ \partial_{x} \Delta^{- 2}
    \xi_{\mathrm{stat}} ] S^{\ve}_{-r} p_{0} \|_{\mC^{- \delta}_{1}} \ud r \\
    & \lesssim |s|^{-\frac{\alpha+ \delta}{2} } + \| \xi_{\mathrm{stat}}
    \|_{\mC^{-1}},
  \end{equs}
  where the regularity $ -1 $ far from an optimal choice, but the associated
  norm is finite under both Assumption~\ref{assu:smooth-noise} and
  Assumption~\ref{assu:white-noise}.
  Now one can iterate the bound to obtain the result for any $ \alpha \in
  (0, 1) $:
  \begin{equs}
    \sup_{s \in [-1, 0]} |s|^{ \frac{\alpha}{2} + \delta}
      \sup_{p_{0} \in \mathrm{Pr} } \sup_{\ve \in (0, 1)} \| S^{\ve}_{s} p_{0}
      \|_{L^{1}} \lesssim 1 + \| \xi_{\mathrm{stat}} \|_{\mC^{-1}}^{2}, 
  \end{equs}
  which implies the desired result.
\end{proof} \\
To conclude this section, let us note the following elementary result. 
\begin{lemma}\label{lem:hlbrt-dst-bd}
  Consider the distance $ d_{H} $ as in
  Section~\ref{sec:proj-invariant-measures} and let $ \sinh(x) =
  \frac{1}{2} (e^{x} - e^{- x}) $. Then, for any $ f, g \in
  \mathrm{Pr}$:
  \begin{equs}
    \| f/g -1 \|_{\infty} \leqslant 2 \sinh (d_{H}(f, g))
  \end{equs}
\end{lemma}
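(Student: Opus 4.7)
The plan is a direct elementary computation from the definition of the Hilbert projective metric. Recall that for positive $f,g$ one has
\begin{equs}
d_H(f,g) = \log\Big(\sup_{x\in\TT} \tfrac{f(x)}{g(x)}\Big) - \log\Big(\inf_{x\in\TT} \tfrac{f(x)}{g(x)}\Big),
\end{equs}
so writing $a = \sup_x (f/g)(x)$ and $b = \inf_x (f/g)(x)$ we have $e^{d_H(f,g)} = a/b$ and $e^{-d_H(f,g)} = b/a$, hence
\begin{equs}
2\sinh(d_H(f,g)) = \frac{a}{b} - \frac{b}{a}.
\end{equs}
If $d_H(f,g) = \infty$ the inequality is trivial, so we may assume $0 < b \leqslant a < \infty$.

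The first observation is that since $f,g \in \mathrm{Pr}$ integrate to one, a mass argument gives $b \leqslant 1 \leqslant a$. Indeed, if $f/g \geqslant b$ pointwise then $1 = \int f \geqslant b \int g = b$, and similarly for $a$.

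From here, since $(f/g)(x) - 1 \in [b-1, a-1]$ pointwise and $b \leqslant 1 \leqslant a$, I would bound
\begin{equs}
\|f/g - 1\|_\infty \leqslant \max(a-1, 1-b) \leqslant a - b.
\end{equs}
The final step is the elementary inequality $a - b \leqslant a/b - b/a$. Factoring gives $a/b - b/a = (a-b)(a+b)/(ab)$, so the claim reduces to $ab \leqslant a + b$, and this follows from $b \leqslant 1$, which yields $ab \leqslant a \leqslant a + b$. Chaining the two inequalities gives the desired bound $\|f/g - 1\|_\infty \leqslant 2\sinh(d_H(f,g))$.

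There is no real obstacle here; the only point to be careful about is that the mass-normalization $\int f = \int g = 1$ really is what forces $b \leqslant 1 \leqslant a$, which in turn is exactly what makes the elementary inequality $ab \leqslant a + b$ work. Without this normalization one would only get the weaker bound on $\|f/g - c\|_\infty$ for an appropriate constant $c$ determined by the ratio.
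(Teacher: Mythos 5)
Your proof is correct and follows essentially the same route as the paper's. The paper bounds $f/g - 1 \leqslant e^{d_H(f,g)} - 1$ and $1 - f/g \leqslant 1 - e^{-d_H(f,g)}$, then notes both are $\leqslant e^{d_H} - e^{-d_H} = 2\sinh(d_H)$; this implicitly uses that $\min f/g \leqslant 1 \leqslant \max f/g$, which you make explicit via the mass-normalization argument $\int f = \int g = 1$. Your variant packages the two one-sided bounds into $\|f/g-1\|_\infty \leqslant a-b$ and then checks $a - b \leqslant a/b - b/a$ via $ab \leqslant a + b$, which is an equivalent amount of elementary work; the explicit flagging of where the normalization enters is a small expositional improvement over the paper's terser version.
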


\begin{proof}
  We have the upper bound
    $\frac{f(x)}{g(x)} - 1 \leqslant e^{ \log{ \big( \max \frac{f}{g}} \big) }-1 \leqslant
    e^{d_{H}(f, g)} -1,$
  and the lower bound
    $1 - \frac{f(x)}{g(x)} \leqslant 1 - e^{ \log{ \big( \min \frac{f}{g}
    \big)}} \leqslant 1 - e^{- d_{H}(f, g)}.$
  Combining the two bounds proves the claim.
\end{proof}

\section{Behavior near $\infty$}
In this section we provide a short proof of the convergence for $ \tau \to
\infty $ described in the main results. A key tool will be Doob's $
H$-transform, which has its roots in the Krein-Rutman theorem. 

\begin{lemma}\label{lem:eigenfunction}
Under Assumption~\ref{assu:smooth-noise} or Assumption~\ref{assu:white-noise}
consider, for $ \omega \in \Omega, $ $ H(\omega) $ as in
Definition~\ref{def:hamiltonians}. Then there exists a unique $ \psi (\omega)
\in C(\TT) $ with $ \psi (\omega, x) > 0 , \ \forall x \in \TT $ and $ \int_{\TT}
\psi( \omega, x) \ud x =1 $ such that for some $ \zeta(\omega) \in \RR $
\begin{equs}
e^{t H (\omega)} \psi (\omega) = e^{ t \zeta(\omega) } \psi (\omega), \qquad
\forall t > 0.
\end{equs}
In particular, $ \zeta(\omega) = \max \sigma(H (\omega)) $. Finally, there
exists an \( \alpha > 1 \) so that $ \| \psi(\omega) \|_{\mC^{\alpha}} < \infty $ \(
\PP- \)almost surely.
\end{lemma}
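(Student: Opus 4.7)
My plan is to derive this as an application of the Krein--Rutman/Perron--Frobenius theorem to the self-adjoint operator $H(\omega)$, combined with standard elliptic bootstrap for the regularity. I would proceed in four steps.

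First, I would set up the spectral theory for $H(\omega)$. Under Assumption~\ref{assu:smooth-noise} this is classical: $H(\omega) = \kappa \Delta + \xi_{\mathrm{stat}}(\omega)$ with $\xi_{\mathrm{stat}} \in L^\infty$ is a bounded perturbation of $\kappa\Delta$, hence self-adjoint on $H^2(\TT)$ with compact resolvent, so its spectrum is discrete, real and bounded above. Under Assumption~\ref{assu:white-noise}, the operator constructed by Fukushima--Nakao (as in Definition~\ref{def:hamiltonians}) is self-adjoint, bounded above, and has a compact resolvent as well, since the quadratic-form domain embeds compactly into $L^2(\TT)$. In both cases I denote $\zeta(\omega) = \max \sigma(H(\omega))$, which is finite and attained by an eigenfunction because the spectrum accumulates only at $-\infty$.

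Second, I would establish strict positivity of the semigroup $e^{tH(\omega)}$ for $t>0$, i.e. that it maps nonnegative, nontrivial $L^2$ functions to strictly positive continuous functions. In the regular case this is an immediate consequence of the Feynman--Kac formula and a strong maximum principle; in the white noise case the same conclusion follows from the quantitative lower heat-kernel bounds in the spirit of \cite{perkowski2020quantitative} already invoked in the proof of Lemma~\ref{lem:moment-bound-invariant-msr-1D-1n}. Combined with self-adjointness, this makes $e^{tH(\omega)}$ an irreducible (positivity-improving) compact semigroup on $L^2(\TT)$.

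Third, I would invoke the Perron--Frobenius/Krein--Rutman theorem for such semigroups: the spectral radius $e^{t\zeta(\omega)}$ is a simple eigenvalue, and the unique (up to normalization) eigenfunction $\psi(\omega)$ can be chosen strictly positive; imposing $\int_\TT \psi(\omega,x)\,\ud x = 1$ fixes it. This gives the identity $e^{tH(\omega)}\psi(\omega) = e^{t\zeta(\omega)}\psi(\omega)$ for all $t>0$ and shows $\zeta(\omega) = \max \sigma(H(\omega))$.

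Finally, for the regularity I would bootstrap from $\psi(\omega) = e^{-\zeta(\omega)} e^{H(\omega)}\psi(\omega)$. Under Assumption~\ref{assu:smooth-noise}, since $\psi(\omega) \in L^\infty$ a priori (being continuous) and $\xi_{\mathrm{stat}} \in \mC^\alpha$ or piecewise constant, Schauder estimates applied to $\kappa\Delta \psi = \zeta \psi - \xi_{\mathrm{stat}} \psi$ yield $\psi \in \mC^{\min(2, \alpha+2)}$, in particular $\psi \in \mC^{\alpha}$ for some $\alpha > 1$. Under Assumption~\ref{assu:white-noise}, $\xi_{\mathrm{stat}} \in \mC^{-1/2 - \varepsilon}$ and the eigenfunction belongs to the paracontrolled domain of the Anderson Hamiltonian, which embeds into $\mC^{3/2 - \varepsilon}$ by the same paracontrolled decomposition used in Lemma~\ref{lem:para-rest-estimate}, so in particular $\psi(\omega) \in \mC^{\alpha}$ for some $\alpha > 1$ almost surely. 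The main technical obstacle is the white noise setting, where the positivity of the semigroup and the regularity of the eigenfunction both require the singular SPDE machinery rather than classical elliptic arguments, but both are covered by the quantitative tools already in use in the preceding sections.
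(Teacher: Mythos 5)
Your proposal is correct and follows essentially the same route as the paper: Krein--Rutman (Perron--Frobenius) applied to the compact, positivity-improving semigroup $e^{tH(\omega)}$, with compactness and positivity coming from Feynman--Kac/maximum principle in the regular case and from the quantitative estimates of Lemma~\ref{lem:moment-bound-solution-1D-wn} (and \cite{perkowski2020quantitative}) in the white noise case, and regularity of $\psi$ obtained from the smoothing of the semigroup. The only cosmetic difference is that you phrase the argument on $L^2(\TT)$ and invoke self-adjointness, whereas the paper works directly with the compact semigroup on $C(\TT)$; both lead to the same conclusion.
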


\begin{proof}
This is a simple consequence of the Krein-Rutman theorem and the strong maximum
principle for parabolic equations. In fact, under both possible
assumptions, for fixed $ \omega $, $ s \mapsto e^{s H (\omega)} $ is a
compact semigroup on $ C (\TT) $ (see e.g.
Lemma~\ref{lem:moment-bound-solution-1D-wn} for the white noise case, which
implies also the required regularity estimate).
\end{proof} \\
We can use the eigenvalue--eigenfunction pair $(\zeta, \psi)$ as just
constructed to introduce Doob's H-transform.
\begin{lemma} \label{lem-doobs-H-transform}
Under Assumption~\ref{assu:smooth-noise} or Assumption~\ref{assu:white-noise}
consider the pair \( (\zeta, \psi) \colon \Omega \to \RR \times C(\TT)\) as
defined in Lemma~\ref{lem:eigenfunction}. One can decompose the semigroup $
e^{t H} $ as
 \[ e^{t H} u_0 = e^{t \zeta } \psi e^{t \overline{H}} (u_0 / \psi), \]
  where $e^{t \bar{H}}$ is the semigroup associated to the Hamiltonian
  \[ \overline{H} = \Delta \varphi + 2 \frac{\nabla \psi}{\psi} \nabla \varphi . \]
\end{lemma}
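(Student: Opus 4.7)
The plan is to verify the identity by showing that both sides of the claimed decomposition solve the same Cauchy problem, and then appeal to uniqueness of the semigroup $e^{t\bar H}$. Concretely, set $u(t,x) = e^{tH(\omega)} u_0(x)$ and define the candidate transformed function
\begin{equs}
v(t,x) = e^{-t\zeta(\omega)}\, \frac{u(t,x)}{\psi(\omega,x)}.
\end{equs}
By Lemma~\ref{lem:eigenfunction} we have $\psi > 0$ pointwise and $\psi \in \mC^{\alpha}$ for some $\alpha > 1$, so the division is well defined and $v$ is a bona fide function. It then suffices to prove $v(0) = u_0/\psi$ (which is immediate) and $\partial_t v = \bar H v$, and to observe that $e^{t\bar H}$ is characterized by this Cauchy problem on $\TT$ since $\bar H$ is a uniformly elliptic operator with H\"older-continuous drift $2 \nabla \psi/\psi$.

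For the case of regular noise (Assumption~\ref{assu:smooth-noise}), this is a direct calculation. Writing $u = e^{t\zeta}\psi v$, the product rule gives
\begin{equs}
\Delta u = e^{t\zeta}\bigl[(\Delta \psi)\, v + 2\,\nabla \psi\cdot \nabla v + \psi\, \Delta v\bigr], \qquad \partial_t u = e^{t\zeta}\bigl[\zeta\, \psi v + \psi\, \partial_t v\bigr].
\end{equs}
Substituting into $\partial_t u = \kappa \Delta u + \xi_{\mathrm{stat}} u$ and using the eigenrelation $\kappa \Delta \psi + \xi_{\mathrm{stat}} \psi = \zeta \psi$ makes the $\zeta \psi v$ and $\xi_{\mathrm{stat}} \psi v$ terms cancel, leaving $\psi\, \partial_t v = 2\kappa\, \nabla \psi \cdot \nabla v + \kappa \psi\, \Delta v$, which after division by $\psi$ is exactly $\partial_t v = \bar H v$. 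This establishes the decomposition in the regular setting.

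The main obstacle is the white noise case (Assumption~\ref{assu:white-noise}), where $\xi_{\mathrm{stat}}$ is a distribution and $H$ is defined only in the Fukushima--Nakao sense; the bare product $\xi_{\mathrm{stat}} \psi$ appearing in the computation above is not classically meaningful. The plan is to argue by approximation: mollify to obtain $\xi^{n}_{\mathrm{stat}} \to \xi_{\mathrm{stat}}$ in $\mC^{-1/2-\varepsilon}$, and use standard spectral perturbation together with the compactness of the resolvent (as in Lemma~\ref{lem:eigenfunction}) to extract approximating pairs $(\zeta_n, \psi_n)$ with $\zeta_n \to \zeta$ in $\RR$ and $\psi_n \to \psi$ in $\mC^{\alpha}$ for some $\alpha > 1$. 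For each $n$ the smooth-noise calculation yields the decomposition with drift $2\nabla\psi_n/\psi_n$, which converges uniformly on $\TT$ to $2 \nabla \psi/\psi$ since $\psi > 0$ is continuous and bounded below by a strictly positive constant. Stability of linear parabolic semigroups with continuous coefficients then gives convergence of $e^{t \bar H_n}$ to $e^{t \bar H}$, while convergence of $e^{tH_n} u_0$ to $e^{tH} u_0$ follows from the very definition of the Fukushima--Nakao semigroup used in Definition~\ref{def:hamiltonians}. Passing to the limit in $e^{tH_n} u_0 = e^{t \zeta_n} \psi_n\, e^{t \bar H_n}(u_0/\psi_n)$ concludes the proof.
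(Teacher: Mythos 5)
Your proof is correct and takes essentially the same approach as the paper: both establish the decomposition via conjugation by multiplication with $\psi$, the paper writing it as the operator identity $\bar H = M_{\psi^{-1}}(H-\zeta)M_\psi$ and you verifying equivalently that $v = e^{-t\zeta}u/\psi$ solves $\partial_t v = \bar H v$. Your mollification argument for the white noise case is a reasonable elaboration of the paper's brief remark that the derivation is classical, and correctly exploits that the conjugated operator $\bar H$ has a bounded H\"older drift $2\nabla\psi/\psi$ and no remaining singular product.
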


\begin{proof}
First we observe that by Lemma~\ref{lem:eigenfunction}, since $ \| \psi
\|_{\mC^{\alpha}} < \infty $ for some $ \alpha > 1 $ and $ \psi > 0 $, the
definition of $ \overline{H} $ makes sense. The derivation of the $
H-$transform is classical, but we provide the salient points for the reader.
  For any smooth $\varphi$ we have, from the definition of $ \psi$:
  \begin{equs}
    \frac{1}{\psi} [H - \zeta] (\varphi \cdot \psi) & = 
    \frac{1}{\psi} (\Delta + \xi - \zeta) (\varphi \cdot \psi )=  \Delta \varphi + \frac{\varphi}{\psi}(\Delta + \xi - \zeta) \psi + 2 \frac{\nabla
    \psi}{\psi} \cdot \nabla \varphi\\
    & =  \Delta \varphi + 2 \frac{\nabla \psi}{\psi} \nabla \varphi,
  \end{equs}
  meaning that
  \[ \overline{H} = \frac{1}{\psi} [H - \zeta] \psi = M_{\psi^{-1}} [H - \zeta]
M_{\psi}, \]
where $M_{\psi}$ is the operator defined by point-wise multiplication with $\psi$.
  In particular, we have the decomposition
  \[ e^{t H} u_0 = e^{t \zeta} e^{t M_{\psi} \bar{H} M_{\psi^{- 1}}}
     u_0, \]
  Now since $M_{\psi^{- 1}} = M_{\psi}^{- 1}$ commutes with $ \overline{H} $,
we eventually find
  \begin{equs}
    e^{t H} u_0  =  e^{t \zeta} e^{t M_{\psi} \overline{H}  M_{\psi^{-
    1}}} u_0  =  e^{t \zeta} M_{\psi} e^{t \overline{H}} M^{- 1}_{\psi} u_0  =
e^{t \zeta} \psi e^{t \bar{H}} (u_0 / \psi),
  \end{equs}
which concludes the proof.
\end{proof}\\
The next result is a moment estimate on the Hilbert distance (cf.
Section~\ref{sec:proj-invariant-measures}) between $ \psi $ and $
1 $, the latter intended as the constant function.
\begin{lemma}\label{lem:bdd-avrg-mu}
  In the same setting as above, define
  \[ \mu (\omega) = \log \Big( \max_{x \in \TT } \psi(\omega, x) \Big) - \log \Big( \min_{x \in
     \TT} \psi (\omega, x) \Big) = d_{H}(\psi (\omega), 1 ) \in (0, \infty) . \]
  Then
  \[ 0 <\mathbf{E} [\mu]  < \infty. \]
\end{lemma}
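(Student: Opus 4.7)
The plan is to treat the two bounds separately. For the lower bound $\EE[\mu] > 0$, I would argue by contradiction: since $\mu(\omega) \geqslant 0$ and $\mu(\omega) = 0$ exactly when $\psi(\omega, \cdot)$ is constant, $\EE[\mu] = 0$ would force $\psi(\omega) \equiv 1$ for $\PP$-almost every $\omega$ (the normalization $\int_{\TT} \psi = 1$ fixes the constant). Plugging into the eigenvalue identity $H(\omega) \psi(\omega) = \zeta(\omega) \psi(\omega)$ from Lemma~\ref{lem:eigenfunction} would then give $\xi_{\mathrm{stat}}(\omega, x) = \zeta(\omega)$ for almost every $(\omega, x)$. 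Under Assumption~\ref{assu:smooth-noise} this contradicts the nontriviality requirement $\int_{\TT^2} \EE|\xi_{\mathrm{stat}}(x) - \xi_{\mathrm{stat}}(y)|^2 \ud x \ud y > 0$, and under Assumption~\ref{assu:white-noise} the statement $\xi_{\mathrm{stat}} = \zeta(\omega)$ is manifestly incompatible with $\xi_{\mathrm{stat}}$ being (nontrivial) space white noise.

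For the upper bound, I would exploit the fixed-point identity obtained by evaluating the eigenvalue equation at $t = 1$:
\begin{equs}
\psi(\omega, x) = e^{-\zeta(\omega)} \int_{\TT} \Gamma_{1}(\omega, x, y) \psi(\omega, y) \ud y,
\end{equs}
where $\Gamma_t(\omega, x, y)$ denotes the fundamental solution to \eqref{eqn:time-Anderson} with the time-independent potential $\xi_{\mathrm{stat}}(\omega)$. The key input is the quantitative two-sided Gaussian bound on $\Gamma_{1}$ developed in \cite{perkowski2020quantitative} and already invoked earlier in the paper (see, e.g., the proof of Lemma~\ref{lem:Lipschitz-Invariant} and Lemma~\ref{lem:moment-bound-invariant-msr-1D-1n}): there exist a random constant $K(\omega) > 0$ and a deterministic $p \geqslant 1$ such that
\begin{equs}
K(\omega)^{-1} \leqslant \Gamma_{1}(\omega, x, y) \leqslant K(\omega), \qquad \forall x, y \in \TT,
\end{equs}
with $\log K(\omega) \lesssim 1 + \| \xi_{\mathrm{stat}}(\omega) \|_{X}^{p}$ for a norm $X$ in which $\xi_{\mathrm{stat}}$ has all exponential moments (the $L^\infty$-norm with exponential quadratic moment under Assumption~\ref{assu:smooth-noise}, a negative Besov norm under Assumption~\ref{assu:white-noise}, where Fernique's theorem supplies the required tails).

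Combining the two displays, the factor $e^{-\zeta(\omega)}$ and the eigenfunction $\psi$ cancel in any ratio, yielding
\begin{equs}
\frac{\max_{x \in \TT} \psi(\omega, x)}{\min_{x \in \TT} \psi(\omega, x)} \leqslant \frac{\max_{x,y} \Gamma_{1}(\omega, x, y)}{\min_{x,y} \Gamma_{1}(\omega, x, y)} \leqslant K(\omega)^{2},
\end{equs}
so that $0 \leqslant \mu(\omega) \leqslant 2 \log K(\omega)$, and taking expectations gives $\EE[\mu] < \infty$ by the moment bounds on $\|\xi_{\mathrm{stat}}\|_{X}$. I expect the only delicate step to be the white noise case, where the fundamental solution must be interpreted in the Fukushima--Nakao sense and the quantitative Gaussian bound is nontrivial; however, since the required estimate has already been used in the proof of Lemma~\ref{lem:Lipschitz-Invariant} and is part of Lemma~\ref{lem:moment-bound-invariant-msr-1D-1n}, I would simply invoke it, so the bulk of the argument reduces to the two short observations above.
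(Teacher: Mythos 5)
Your proof is correct, but it takes a genuinely different route from the paper's for the upper bound. The paper passes to $v = \log\psi$, which solves the Riccati-type ODE $\partial_x^2 v + \xi - \zeta = -(\partial_x v)^2$, chooses a zero $x_0$ of $\partial_x v$ (guaranteed by periodicity), and bounds $\|\partial_x v\|_\infty$ by integrating from $x_0$: the crucial point is that the sign of $-(\partial_x v)^2$ makes the quadratic term harmless, giving $\mu \leqslant \|\partial_x v\|_\infty \lesssim |\zeta| + \|\Xi\|_\infty$ with $\Xi$ a primitive of $\xi$. This needs only $\EE|\zeta| < \infty$ (Corollary~\ref{cor:bound-eigenvalue} under white noise, a maximum principle under regular noise) and is quite elementary. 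Your argument instead goes through the fixed-point identity $\psi = e^{-\zeta}\,e^{H}\psi$ and a two-sided bound $K^{-1} \leqslant \Gamma_1(x,y) \leqslant K$ on the fundamental solution, giving $\mu \leqslant 2\log K$. This is conceptually clean and does generalize more readily, but the two-sided estimate is not literally available off the shelf in the paper: what is proved there is (i) a \emph{lower} bound for the fundamental solution of a transformed transport-type equation (proof of Lemma~\ref{lem:moment-bound-invariant-msr-1D-1n}, Step 1, via \cite{perkowski2020quantitative}), and (ii) an \emph{upper} bound on $\|e^{H}u_0\|$ from rougher initial data via Lemma~\ref{lem:sub-gaussian-sol-map}, which one would still need to push to Dirac initial data (e.g.\ via $\delta_y \in B^0_{1,\infty}$ and Besov embedding). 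Both pieces are indeed in reach with the paper's toolkit, but stitching them into the stated two-sided bound with integrable $\log K$ is more work than the ODE argument, and you should flag that rather than citing the estimate as already established. Your contradiction argument for $\EE[\mu]>0$ is correct and actually fills in something the paper's proof leaves implicit (it only proves finiteness, with strict positivity following from the assertion $\mu\in(0,\infty)$ in the statement); the only small gap is that you assert but do not verify the dichotomy $\mu(\omega)=0 \Leftrightarrow \psi(\omega)\equiv 1$, which is immediate from the normalization $\int_\TT\psi=1$.
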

\begin{proof}
Consider $v = \log (\psi) .$ Since $\psi$ solves $  \partial_x^2 \psi + \xi
\psi - \zeta \psi =  0$, we obtain that $v$ solves
  \begin{equ}
    \partial_x^2 v + \xi - \zeta =  - (\partial_x v)^2 .
  \end{equ}
Now we view $ v $ as a periodic function on $ \RR $. We can choose $ x_{0} \in
\RR$ so that $ \partial_{x} v (x_{0}) = 0 $ and for every $ x \in \TT $ there exists a $
z_{+}(x) \in \RR , z_{+}(x) > x_{0} $ so that $ \partial_{x} v (x) =
\partial_{x} v (z_{+}(x)) $, and we can bound $ | z_{+} (x) | \leqslant c $ for all
$ x $, for some $ c > 0 $. We find:
  \begin{equs}
    \partial_x v (x)  =  \int_{x_{0}}^{z(x)} \partial^2_x v (y) \ud y =
\int_{x_{0}}^{z(x)} - \xi (y) - | \partial_x v |^2 (y) + \zeta \ud y
\lesssim_{c} \| \Xi \|_{\infty} + |\zeta|,
  \end{equs}
with $ \Xi $ a primitive of $ \xi $ with $ \Xi (x_{0}) = 0 $.
  Similarly we also find a $ z_{-}(x) < x_{0} $ such that $
\partial_{x} v (x) = \partial_{x} v(z_{-}(x)) $, implying:
  \begin{equs}
    - \partial_x v (x)  =  \int_{z_{-}(x)}^{x_{0}} \partial^2_x v (y) \ud y =
\int_{z_{-}(x)}^{x_{0}}  - \xi (y) - | \partial_x v |^2 (y) + \zeta 
    \ud y  \lesssim_{c} \| \Xi \|_{\infty} + |\zeta|.
  \end{equs}
  Now we can bound
  \[ \max_{x \in \TT} v (x) - \min_{x \in \TT} v (x) \leqslant \|
     \partial_x v \|_{\infty} \lesssim | \zeta | + \| \Xi \|_{\infty}
     . \]
To conclude we have to guarantee that $ \EE \big[ | \zeta | + \| \Xi
\|_{\infty} \big] < \infty $. Clearly the second term is bounded (under
Assumption~\ref{assu:white-noise} $ \Xi $ is a Brownian motion). That $ \EE |
\zeta | < \infty$ follows, under Assumption~\ref{assu:white-noise}, from
Corollary~\ref{cor:bound-eigenvalue} (under Assumption~\ref{assu:smooth-noise}
one can use a simpler argument, through a maximum principle).
\end{proof} \\
The following result establishes the behaviour of $ \lambda (\tau)$ for
large $\tau$.

\begin{proposition}
In the setting of Lemma~\ref{lem:furst}, with $ \mu $ as in
Lemma~\ref{lem:bdd-avrg-mu} and $ \zeta $ as in Lemma~\ref{lem:eigenfunction} we have:
  \begin{equation}
   \EE [\zeta] \geqslant \lambda(\tau) \geqslant \liminf_{t \rightarrow \infty} \frac{1}{t} \log
(\min_{x \in \TT} u (t, x)) \geqslant \mathbf{E} \left[\zeta - \frac{\mu}{\tau}
    \right]. \label{eqn:bound-lyapunov}
  \end{equation}
  In particular
  \[ \lim_{\tau \rightarrow \infty} \lambda (\tau) =\mathbf{E}
     [\zeta] =\mathbf{E} [\max \sigma (\Delta + \xi_{\mathrm{stat}})]. \]
\end{proposition}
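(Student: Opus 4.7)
The plan is to establish~\eqref{eqn:bound-lyapunov} as a two-sided sandwich---the outer bounds obtained by iterating one-step estimates across the intervals $[i\tau, (i+1)\tau]$ on which the equation is driven by the fixed Hamiltonian $H^i(\omega)$---and then to pass to the limit $\tau \to \infty$ using the integrability of $\mu$ from Lemma~\ref{lem:bdd-avrg-mu}. The middle inequality $\lambda(\tau) \geq \liminf_{t\to\infty}\tfrac{1}{t}\log\min_x u(t,x)$ is immediate because $\int_{\TT} u(t,x) \ud x \geq \min_x u(t,x)$ on the unit torus, and because by Lemma~\ref{lem:furst} we may choose $u_0 > 0$ as we wish.

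For the lower bound, I apply Doob's $H$-transform from Lemma~\ref{lem-doobs-H-transform} on each interval. Writing $\psi^i, \zeta^i, \mu^i$ for the eigenfunction, eigenvalue and Hilbert distance attached to $H^i(\omega)$, we have $e^{\tau H^i}v = e^{\tau \zeta^i}\psi^i \cdot e^{\tau \overline{H}^i}(v/\psi^i)$. Since $\overline{H}^i$ is a pure drift--diffusion operator annihilating constants, the associated semigroup is Markov and preserves minima, which yields the one-step estimate
\begin{equs}
\min_x e^{\tau H^i} v(x) \geq e^{\tau \zeta^i}\min_x \psi^i \cdot \min_x(v/\psi^i) \geq e^{\tau \zeta^i - \mu^i}\min_x v(x).
\end{equs}
Iterating over $n$ intervals and taking logarithms, the strong law of large numbers applied to the i.i.d.\ pairs $(\zeta^i, \mu^i)$---justified because $\EE|\zeta| < \infty$ and $\EE \mu < \infty$ by Lemma~\ref{lem:bdd-avrg-mu} and its proof---gives the almost sure lower bound $\EE[\zeta - \mu/\tau]$. (The positive lower bound on $\min u_0$ is obtained, if necessary, by first evolving over a single interval and using strict positivity of the semigroup, which contributes only a finite additive term that disappears after division by $n\tau$.)

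For the upper bound, I exploit the self-adjointness of each $H^i$ on $L^2(\TT)$: since $\zeta^i$ is the top eigenvalue, the spectral theorem yields $\|e^{\tau H^i}\|_{L^2 \to L^2} \leq e^{\tau \zeta^i}$, and iterating over $n$ intervals together with Cauchy--Schwarz on the torus gives
\begin{equs}
\int_{\TT} u(n\tau,x) \ud x \leq \|u(n\tau)\|_{L^2} \leq \prod_{i=0}^{n-1}e^{\tau \zeta^i} \cdot \|u_0\|_{L^2},
\end{equs}
so dividing the logarithm by $n\tau$ and invoking the SLLN produces $\lambda(\tau) \leq \EE[\zeta]$. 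Sandwiching the three inequalities and sending $\tau \to \infty$, the error $\EE[\mu]/\tau$ vanishes by Lemma~\ref{lem:bdd-avrg-mu} and we conclude $\lim_{\tau \to \infty}\lambda(\tau) = \EE[\zeta] = \EE[\max \sigma(\Delta + \xi_{\mathrm{stat}})]$.

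The main technical point to verify is the minimum principle for $\overline{H}^i$ in the singular white-noise setting of Assumption~\ref{assu:white-noise}, where $\xi^i_{\mathrm{stat}}$ is only a distribution. This reduces, however, to observing that $\psi^i \in \mC^{\alpha}$ with $\alpha > 1$ by Lemma~\ref{lem:eigenfunction}, combined with $\psi^i > 0$ uniformly on the compact torus $\TT$, so that $\nabla \psi^i/\psi^i$ is a bounded continuous drift and $\overline{H}^i$ is a classical drift--diffusion generator to which standard parabolic maximum principles apply without issue.
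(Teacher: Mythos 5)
Your proof is correct and follows essentially the same route as the paper: Doob's $H$-transform combined with the min/max-preservation of the conjugated semigroups $e^{t\bar H^i}$ for the lower bound, self-adjointness together with $\|\cdot\|_{L^1(\TT)}\leq\|\cdot\|_{L^2(\TT)}$ for the upper bound, and the strong law (the paper calls it the ergodic theorem) to identify the limit and send $\tau\to\infty$. The paper writes out the full nested $n$-step transform and applies the maximum principle once, whereas you iterate a one-step estimate; these are equivalent, and your extra care in handling a possibly vanishing $\min_x u_0(x)$ by evolving one step first is a small detail the paper leaves implicit.
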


\begin{proof}
The proof follows by representing the solution via Doob's $H-$transform and an
  application of maximum principles and the ergodic theorem. We indicate with
$ (\zeta^{i}, \psi^{i} ) $ the eigenvalue-eigenfunction pair as in
Lemma~\ref{lem:eigenfunction}, associated to the Hamiltonian $ H^{i} $ as
\ref{def:hamiltonians}. Furthermore, we restrict to considering the limit $
\lim_{n \to \infty} \frac{1}{n \tau} \min_{x \in \TT} u (n \tau, x)  $. Then
extension to arbitrary $ t $ is straightforward. 
 For $n \in \NN$, using Doob's transformation as
  in Lemma \ref{lem-doobs-H-transform} we can represent $u (n \tau,
  x)$ by
  \[ u (n \tau, x) = \left[ e^{\tau \sum_{i = 1}^n \zeta^i} \psi^n e^{\tau
     \bar{H}^n} (\psi^{n - 1} / \psi^n) e^{\tau \bar{H}^{n - 1}} \cdots
     e^{\tau \bar{H}^2} (\psi^1 / \psi^2) e^{\tau \bar{H}^1} (u_0 / \psi^1)
     \right] (x), \]
  with
  \[ \bar{H}^i (\varphi) = \Delta \varphi + 2 \frac{\nabla \psi^i}{\psi^i}
     \nabla \varphi . \]
  Now for any $i \in \NN$ the semigroup $e^{t \bar{H}^i}$ is strictly positive:
  \[ \bigg( \varphi (x) \geqslant 0 \quad \forall x \in \TT, \ \int_{\TT} \varphi(x) \ud x >
0 \bigg) \quad \Longrightarrow \quad (e^{t \bar{H}^i} \varphi) (x) \geqslant 0 \quad
     \forall x \in \TT . \]
  In addition for any $ c \in \RR $ (we identify the constant $c$ with
  the constant function $ c(x)=c $) $ e^{t \bar{H}^i} c = c$. In particular we observe that for any continuous $\varphi$:
  \begin{equ}
      \min_{x \in \TT}  (e^{t \bar{H}} \varphi) (x)  \geqslant 
      \min_{x \in \TT} \varphi (x), \qquad
      \max_{x \in \TT}  (e^{t \bar{H}} \varphi) (x)  \leqslant 
      \max_{x \in \TT} \varphi (x) .
  \end{equ}
The last maximum principles provide
  the estimate:
  \begin{equs}
    \min_{x \in \TT } u ( n \tau , x) & \geqslant  e^{ \tau \sum_{i =
1}^{n} \zeta^{i}} \prod_{i = 1}^{n (t)} \left[ \frac{\min_{x \in \TT}
    \psi^i (x)}{\max_{x \in \TT} \psi^i (x)} \right] \geqslant  \exp
\left( \tau \sum_{i = 1}^{n
    } \zeta^{i} - \frac{ \mu^i}{\tau} \right) .
  \end{equs}
  where $ \mu^{i} = \log (\max_{x \in \TT} \psi^i (x)) - \log (\min_{x \in
    \TT } \psi^i (x))$.
  In particular, by the ergodic theorem $ \PP- $almost surely 
  \begin{equ}
    \lim_{t \rightarrow \infty} \frac{1}{\tau n } \log \left( \tau \sum_{i = 1}^{n
    } \zeta^{i} - \frac{\mu^i}{\tau} \right)  =  \mathbf{E} \left[
\lambda - \frac{\mu}{\tau} \right].
  \end{equ}
As for the upper bound in \eqref{eqn:bound-lyapunov}, it's a simple consequence
of the inequality $ \|u(n \tau) \|_{L^{1}} \leqslant \| u(n \tau)
\|_{L^{2}} \lesssim \exp(\sum_{i = 1}^{n} \tau \zeta^{i})$.
  \end{proof} \\
We conclude by proving that the average top eigenvalue is positive.
\begin{lemma}\label{lem:positive-avrg-lyap}
Consider $ \xi_{\mathrm{stat}} $ as in Assumption~\ref{assu:smooth-noise}
or~\ref{assu:white-noise}. Then
\begin{equs}
\EE \big[ \max \sigma( \Delta + \xi_{\mathrm{stat}}) \big] > 0.
\end{equs}
\end{lemma}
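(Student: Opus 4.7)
The plan is to exploit the variational characterization of the top eigenvalue: since $H(\omega) = \Delta + \xi_{\mathrm{stat}}(\omega)$ is self-adjoint with compact resolvent on $L^2(\TT)$, we have
\begin{equs}
\max \sigma(H(\omega)) = \sup_{\varphi \in H^1, \, \|\varphi\|_{L^2} = 1} \Big[ -\|\nabla \varphi\|_{L^2}^2 + \langle \xi_{\mathrm{stat}}(\omega), \varphi^2 \rangle \Big],
\end{equs}
with an analogous formula valid in the Fukushima--Nakao sense under Assumption~\ref{assu:white-noise}, modulo passing through a smooth regularization. Testing with the constant function $\varphi \equiv 1$ gives only the trivial bound $\max \sigma(H) \geqslant \langle \xi_{\mathrm{stat}}, 1\rangle$, whose expectation vanishes by centeredness, so I would take a $\xi_{\mathrm{stat}}$-dependent trial function.

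Concretely, set $\psi(\omega) = (1 - \Delta)^{-1} \Pi_{\times} \xi_{\mathrm{stat}}(\omega)$ with $\Pi_{\times}$ the projection onto mean-zero functions. Then $\int_{\TT} \psi \, dx = 0$, and under both assumptions $\psi \in \mC^{3/2 - \delta}$ almost surely with all moments finite (trivial under Assumption~\ref{assu:smooth-noise}; under Assumption~\ref{assu:white-noise} this uses that $\xi_{\mathrm{stat}} \in \mC^{-1/2-\delta}$ has Gaussian moments and that $(1-\Delta)^{-1}$ gains two derivatives). Take the trial function $\varphi_{\ve} = (1 + \ve \psi)/\|1 + \ve \psi\|_{L^2}$ and expand the Rayleigh quotient:
\begin{equs}
\max \sigma(H) \geqslant \langle \xi_{\mathrm{stat}}, 1\rangle + 2\ve \langle \xi_{\mathrm{stat}}, \psi\rangle - \ve^2 \|\nabla \psi\|_{L^2}^2 + \ve^2 R(\omega),
\end{equs}
where $R$ collects the remaining quadratic-in-$\ve$ terms $\langle \xi_{\mathrm{stat}}, \psi^2\rangle - \|\psi\|_{L^2}^2 \langle \xi_{\mathrm{stat}}, 1\rangle$ divided by the normalization, which are integrable under either assumption (the product $\xi_{\mathrm{stat}} \cdot \psi^2$ is classical under Assumption~\ref{assu:smooth-noise} and well-defined via paraproducts under Assumption~\ref{assu:white-noise}, since $\psi^2 \in \mC^{3/2 - \delta}$).

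Taking expectations, the first term vanishes, and the key input is
\begin{equs}
\EE \langle \xi_{\mathrm{stat}}, \psi\rangle = \EE \|(1 - \Delta)^{-1/2} \Pi_{\times} \xi_{\mathrm{stat}}\|_{L^2}^2,
\end{equs}
which is strictly positive: under Assumption~\ref{assu:white-noise} it equals $\sum_{k \neq 0} (1 + k^2)^{-1}$, and under Assumption~\ref{assu:smooth-noise} it is positive by the nontriviality clause $\int_{\TT^2} \EE|\xi(x) - \xi(y)|^2\, dx\, dy > 0$, which forces the mean-zero component of $\xi_{\mathrm{stat}}$ to be nonzero. Since $\EE\|\nabla \psi\|_{L^2}^2$ and $\EE|R|$ are finite, choosing $\ve$ small enough yields $\EE[\max \sigma(H)] \geqslant \ve \EE \langle \xi_{\mathrm{stat}}, \psi\rangle + \mathcal{O}(\ve^2) > 0$.

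The only nontrivial step is the white noise case, where one must justify the variational bound for the Fukushima--Nakao Hamiltonian. I would proceed by truncating $\xi_{\mathrm{stat}}$ to a smooth $\xi^{n}_{\mathrm{stat}}$, applying the above argument to each $H^n$ to obtain $\EE[\max \sigma(H^n)] \geqslant c > 0$ uniformly in $n$, and then passing to the limit using convergence of the top eigenvalue together with the uniform moment control supplied by Corollary~\ref{cor:bound-eigenvalue}. I expect this regularization argument to be the only delicate point; everything else is bookkeeping of a quadratic expansion.
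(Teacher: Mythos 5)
Your proof is correct and follows essentially the same route as the paper: both start from the variational characterization $\max\sigma(H) = \sup_{\|\varphi\|_{L^2}=1}\langle H\varphi,\varphi\rangle$, expand the Rayleigh quotient around the constant function $1$ in a small random perturbation $\ve\psi$, use centeredness to kill the zeroth-order term, and exploit $\EE\langle\xi_{\mathrm{stat}},\psi\rangle > 0$ to get a positive first-order coefficient. The one genuine difference is the choice of $\psi$. The paper keeps $\psi$ abstract, demanding only that it be a random $C^\infty$ field with $\|\psi\|_{C^2}=1$ deterministically and $\EE\langle\xi_{\mathrm{stat}},\psi\rangle>0$; the deterministic $C^2$ bound makes every coefficient in the $\alpha$-expansion trivially integrable (even paired with white noise) and makes the normalizing constant $c_\alpha$ converge to $1$ uniformly, so there is nothing to check. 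You instead take the concrete $\psi = (1-\Delta)^{-1}\Pi_\times\xi_{\mathrm{stat}}$, which makes $\EE\langle\xi_{\mathrm{stat}},\psi\rangle = \EE\|(1-\Delta)^{-1/2}\Pi_\times\xi_{\mathrm{stat}}\|_{L^2}^2>0$ automatic (a nice touch), but at the price of an unbounded random $\psi$, so you must track moments of the quadratic remainders $\langle\xi_{\mathrm{stat}},\psi^2\rangle$, $\|\nabla\psi\|_{L^2}^2$, $\|\psi\|_{L^2}^2\langle\xi_{\mathrm{stat}},1\rangle$ and the normalization $D(\ve)=1+\ve^2\|\psi\|_{L^2}^2$; these are all finite Gaussian chaos quantities, so your bookkeeping goes through, but it is work the paper avoids by design. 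Finally, your regularization argument for Assumption~\ref{assu:white-noise} is more than is needed: since $\psi\in H^1$, and $\langle\xi_{\mathrm{stat}},\varphi^2\rangle = -2\langle W,\varphi\varphi'\rangle$ for a continuous primitive $W$ of $\xi_{\mathrm{stat}}$ makes the form directly continuous on $H^1$, the variational formula already holds over $H^1$ test functions by density of $C^\infty$; you do not need to smooth $\xi_{\mathrm{stat}}$ and prove stability of the top eigenvalue, which would require a uniform-in-$n$ version of Corollary~\ref{cor:bound-eigenvalue} that the paper does not supply.
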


\begin{proof}
Consider a smooth random function $\psi \colon \Omega \to C^{\infty}(\TT)$ 
such that
\begin{equs}
\EE [\langle \xi, \psi \rangle] > 0, \qquad \| \psi \|_{C^{2}} = 1.
\end{equs}
We observe that it's always possible to construct such $ \psi $ under
both Assumption~\ref{assu:smooth-noise} and Assumption~\ref{assu:white-noise}.
We want to use the representation
  \[ \max \sigma(\Delta + \xi_{\mathrm{stat}}) = \sup_{ \eta \in C^{\infty}  
\colon  \| \eta \|_{L^2 (\TT)} = 1}
     \langle (\Delta + \xi_{\mathrm{stat}} ) \eta, \eta \rangle, \]
which follows from the fact that $ C^{\infty} $ is dense in the domain of the
Hamiltonians in Definition~\ref{def:hamiltonians}. Then for $ \alpha \in
(0, 1) $ define
\begin{equs}
\eta_{\alpha}(x) = c_{\alpha} \left( 1 + \alpha \psi (x) \right), \qquad
\forall x \in \TT,
\end{equs}
with $ c_{\alpha} > 0 $ so that $ \| \eta_{\alpha} \|_{L^{2}} =1 $: in particular
\[ (1 + \alpha)^{-1} \leqslant (1 + \alpha \| \psi \|_{L^{2}})^{-1} \leqslant c_{\alpha} \leqslant (1 - \alpha \| \psi \|_{L^{2}})^{-1}  \leqslant (1 -
\alpha)^{-1}. \] 
Then
\begin{equs}
\EE[ \langle (\Delta + \xi_{\mathrm{stat}}) \eta_{\alpha}, \eta_{\alpha} \rangle
] = -\alpha^{2}  \EE \big[ c_{\alpha}^{2} \| \nabla \psi \|^{2}_{L^{2}} \big] + 2 \alpha \EE
\big[ c_{\alpha}^{2} \langle \xi_{\mathrm{stat}}, \psi \rangle \big] +
\alpha^{2} \EE \big[ c_{\alpha}^{2}\langle \xi_{\mathrm{stat}} , \psi^{2}
\rangle \big],
\end{equs}
so that by the previous bound, since $ \lim_{\alpha \to 0} c_{\alpha} =1 , $ we
obtain $$ \lim_{\alpha \to 0} \alpha^{-1} \EE \big[ \langle
(\Delta + \xi_{\mathrm{stat}}) \eta_{\alpha}, \eta_{\alpha} \rangle
\big] = 2 \EE \big[ \langle \xi_{\mathrm{stat}}, \psi \rangle \big] > 0,$$
which proves the desired result.
\end{proof}

\section{Projective invariant measures}\label{sec:proj-invariant-measures}

In this section we study the projective space
\[ \mathrm{Pr} = \left\{ \varphi \in C (\TT ; [0, \infty)) \quad
   s.t. \quad \varphi (x) \geqslant 0, \quad \forall x \in \TT \quad
   \mathrm{and} \quad \int_{\TT} \varphi (x) \ud x = 1 \right\}, \]
and some of its fundamental properties.
This space, endowed with Hilbert's projective distance
\[ d_H (\varphi, \psi) = \max_{x \in \TT} \log (\varphi (x) / \psi (x)) -
   \min_{x \in \TT} \log (\psi (x) / \varphi (x)), \]
is a complete metric space (see e.g. \cite[Lemma 2.2]{Rosati2019synchronization}). Our purpose
is to understand properties of the invariant measure associated to
\eqref{eqn:time-Anderson}, when seen as a cocycle on $
\mathrm{Pr} $. For the needs of this section it will be convenient to work with
product probability spaces (a stronger requirement than in
Assumptions~\ref{assu:smooth-noise},\ref{assu:white-noise}, but we
can always modify the probability space $ \Omega $ to meet this requirement).

\begin{assumption}\label{assu:probability-space}
Under either Assumption~\ref{assu:smooth-noise} or
Assumption~\ref{assu:white-noise}, consider a probability space $ (\Omega_{\mathrm{stat}}, \mF_{\mathrm{stat}},
\PP_{\mathrm{stat}}) $ supporting a random variable $ \xi_{\mathrm{stat}} $
with law as required by \ref{assu:smooth-noise} or \ref{assu:white-noise}.
Then assume that the probability space $ (\Omega, \mF, \PP)$ is the following product space endowed with the product sigma-algebra
and the product measure:
\begin{equs}
\Omega =  \Omega_{\mathrm{stat}}^{\otimes \ZZ}, \quad \mF =
 \mF_{\mathrm{stat}}^{\otimes \ZZ}, \quad \PP = \PP^{\otimes
\ZZ}_{\mathrm{stat}}.
\end{equs}
In this way every $ \omega \in \Omega $ is of the form $ \omega =
(\omega_{i})_{i \in \ZZ}, $ with $ \omega_{i} \in \Omega_{\mathrm{stat}} $, and
we can assume that the maps $ \xi^{i}_{\mathrm{stat}} $ of
Definition~\ref{def:tau-potential} are given by:
\begin{equs}
\xi^{i}_{\mathrm{stat}}(\omega) = \xi_{\mathrm{stat}} (\omega_{i}).
\end{equs}
Finally define the map $ \vt \colon \ZZ \times \Omega \to \Omega $ by
\begin{equs}
\vt (z, \omega) = \vt^{z}(\omega) = (\omega_{i + z})_{i \in \ZZ}.
\end{equs}
\end{assumption}
In this setting we associate to any strictly
positive (meaning $ A \varphi (x) > 0, \ \forall x \in \TT $ for all $
\varphi \in \mathrm{Pr}$ ) and bounded
operator $ A \colon C(\TT) \to C(\TT)$ a projective map
\begin{equs}
A^{\pi} \colon \mathrm{Pr} \to \mathrm{Pr}, \qquad A^{\pi}(\varphi) =
\frac{A \varphi}{\int_{\TT} A(\varphi)(x) \ud x}.
\end{equs}
The reason why we consider the distance $ d_{H} $ is the following contraction
property.
\begin{theorem}[Birkhoff's contraction]\label{thm:birkhoff}
  If $ A $ is a strictly positive operator on $ C(\TT) $, then there exists a constant $
  \mu(A) \in [0, 1) $ such that
  \begin{equs}
    d_{H}(A^{\pi} \varphi, A^{\pi} \psi) \leqslant \mu(A) d_{H}(\varphi, \psi),
    \qquad \forall \varphi, \psi \in \mathrm{Pr}.
  \end{equs}
\end{theorem}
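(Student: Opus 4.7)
The plan is to follow Birkhoff's classical cone-contraction argument, specialized to $\mathrm{Pr} \subseteq C(\TT)$. The proof splits naturally into three steps, culminating in the explicit rate $\mu(A) = \tanh(\Delta(A)/4)$.

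First, I would establish that the projective diameter of the image
\[
\Delta(A) \assign \sup\{ d_H(A^\pi \varphi, A^\pi \psi) : \varphi, \psi \in \mathrm{Pr}\}
\]
is finite. Since $A$ is bounded and strictly positive on $C(\TT)$ and $\TT$ is compact, for any $\varphi \in \mathrm{Pr}$ the function $A \varphi$ is continuous and strictly positive, so it attains a positive minimum and a finite maximum. The crucial point is that these bounds can be made \emph{uniform} in $\varphi \in \mathrm{Pr}$: this comes from the concrete kernel representation of $A$, in the applications of interest the semigroup $e^{t H(\omega)}$, for which the kernel $\Gamma_t(x,y)$ satisfies two-sided Gaussian-type bounds. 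Those give constants $0 < c_A \leq C_A < \infty$ such that $c_A \leq A\varphi(x) \leq C_A$ for all $x \in \TT$, $\varphi \in \mathrm{Pr}$, and hence $\Delta(A) \leq 2\log(C_A/c_A) < \infty$.

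Second, I would use the elementary monotonicity of $A$. Given $\varphi, \psi \in \mathrm{Pr}$, set $m = \inf_x \varphi(x)/\psi(x)$ and $M = \sup_x \varphi(x)/\psi(x)$, so $m \psi \leq \varphi \leq M \psi$ pointwise. Linearity and positivity of $A$ give $m A \psi \leq A \varphi \leq M A \psi$, hence the non-strict bound $d_H(A^\pi \varphi, A^\pi \psi) \leq \log(M/m) = d_H(\varphi, \psi)$.

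The decisive third step upgrades this to a strict contraction. The idea is to decompose $\varphi = m\psi + (\varphi - m\psi)$ and $M\psi = \varphi + (M\psi - \varphi)$, apply $A$, and note that $A(\varphi - m\psi)$ and $A(M\psi - \varphi)$ are nonnegative; their projectivizations sit inside $A^\pi(\mathrm{Pr})$, so they are $\Delta(A)$-close in Hilbert's metric. Expressing $A\varphi / A\psi$ as a convex combination along the ``projective segment'' from $m \cdot A\psi$ to $M \cdot A\psi$ and optimizing the resulting M\"obius expression yields the classical identity
\[
d_H(A^\pi \varphi, A^\pi \psi) \leq \tanh\!\big(\Delta(A)/4\big)\, d_H(\varphi, \psi),
\]
which gives $\mu(A) \assign \tanh(\Delta(A)/4) \in [0,1)$.

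The main obstacle will be Step 1 in the white-noise setting of Assumption~\ref{assu:white-noise}, where the distributional nature of $\xi_{\mathrm{stat}}$ makes a priori uniform heat-kernel lower bounds nontrivial; this is where the quantitative bounds of \cite{perkowski2020quantitative} already invoked in the text are essential. Steps 2 and 3 are then algebraic and essentially Birkhoff's original argument.
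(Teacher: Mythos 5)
The paper does not actually prove Theorem~\ref{thm:birkhoff}: immediately after Proposition~\ref{prop:existence-proj-invariant-measure} it refers to \cite[Theorem 3.4]{Rosati2019synchronization} for the proof and for ``a more detailed discussion also of Theorem~\ref{thm:birkhoff}'', so there is no in-paper argument to compare against. Your three-step plan is the classical Birkhoff cone-contraction argument, which is the standard route and almost certainly what the cited reference uses.

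One point deserves emphasis, and you half-noticed it in Step~1. Finite projective diameter $\Delta(A)<\infty$ does \emph{not} follow from the hypothesis as literally written (``bounded'' and ``$A\varphi(x)>0$ for all $x\in\TT$, $\varphi\in\mathrm{Pr}$''). Take a continuous $a\colon\TT\to[0,1]$ with $a(0)=1$ and $\min_{\TT}a=0$, and set $A\varphi=\int_\TT\varphi+a\cdot\varphi(0)$. Then $A$ is bounded on $C(\TT)$ and $A\varphi\geqslant 1>0$ for every $\varphi\in\mathrm{Pr}$, yet with $\psi\equiv 1$ and $\varphi\in\mathrm{Pr}$ a bump at $0$ with $\varphi(0)=N$ one finds
\begin{equs}
d_H(A^\pi\varphi,A^\pi\psi)=\max_{x\in\TT}\log\frac{1+a(x)N}{1+a(x)}-\min_{x\in\TT}\log\frac{1+a(x)N}{1+a(x)}=\log\frac{1+N}{2}\longrightarrow\infty,
\end{equs}
so $\Delta(A)=\infty$ and no $\mu(A)<1$ can exist for this $A$. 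Your Step~1 therefore cannot be run from the stated hypothesis alone; it is genuinely an extra input, either a stronger notion of ``strictly positive'' (as in the cited reference) or the concrete kernel structure. In the applications in the paper it is supplied for free: the operators $e^{\tau H(\omega)}$ have continuous, everywhere-positive kernels with two-sided Gaussian-type bounds (in the white-noise case via \cite{perkowski2020quantitative}, exactly as exploited in Lemma~\ref{lem:moment-bound-invariant-msr-1D-1n}), whence $c_A\leqslant A\varphi\leqslant C_A$ uniformly over $\mathrm{Pr}$ and $\Delta(A)\leqslant 2\log(C_A/c_A)<\infty$. Granting this, your Steps~2 and~3 are correct; Step~3 is compressed, but the decomposition $\varphi-m\psi\geqslant 0$, $M\psi-\varphi\geqslant 0$ followed by the M\"obius optimization is Birkhoff's original computation and gives $\mu(A)=\tanh(\Delta(A)/4)\in[0,1)$.
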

In particular, a consequence of this proposition is the following result.

\begin{proposition}\label{prop:existence-proj-invariant-measure}
  Under Assumption~\ref{assu:smooth-noise} or Assumption~\ref{assu:white-noise}
and in the setting of Assumption~\ref{assu:probability-space}, for any $\tau > 0$ there
  exists a unique map
  $ z_{\infty} (\tau) : \Omega \rightarrow \mathrm{Pr} $
  that satisfies either of the following for all $ \omega \in \Omega$ outside a
$ \vt- $invariant null-set:
  \begin{enumerate}
    \item \textbf{(Synchronization)} For any $\varphi \in \mathrm{Pr}$:
    \[ \limsup_{n \rightarrow \infty} \frac{1}{n} \log \left( d_H \left(
       z_{\infty} (\tau, \vartheta^n \omega), \left( \prod_{i = 1}^n e^{\tau
       H^i (\omega)} \right)^{\pi} \varphi \right) \right) < 0. \]
    \item \textbf{(Invariance)} For every $\ell, n \in \NN \cup \{
0 \}$
    \[ \left( \prod_{i = n + 1}^{n + \ell} e^{\tau H^i (\omega)}
\right)^{\pi} z_{\infty} (\tau, \vartheta^n \omega) = z_{\infty} (\tau, \vartheta^{n
       + \ell} \omega) . \]
\end{enumerate} 
In addition, for every $ \omega \in \Omega $ with $ \omega =
(\omega_{i})_{i \in \ZZ} $ define $ \omega_{\leqslant} =
(\omega_{i})_{i \leqslant 0} \in \Omega^{-} : = \prod_{i \leqslant 0}
\Omega_{\mathrm{stat}} $ and $ \omega_{>} =
(\omega_{i})_{i > 0} \in \Omega^{+} : = \prod_{i > 0}
\Omega_{\mathrm{stat}}. $ Then there exists a map $ z_{\infty}^{\leqslant}
(\tau) \colon \Omega_{\leqslant} \to \mathrm{Pr} $ such that $$ z_{\infty}^{\leqslant}
(\tau, \omega_{\leqslant}) = z_{\infty}(\tau, \omega).$$
\end{proposition}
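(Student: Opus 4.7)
My plan is to construct $z_\infty(\tau,\omega)$ by a pullback procedure and to combine Birkhoff's contraction theorem (Theorem~\ref{thm:birkhoff}) with Birkhoff's ergodic theorem applied to the i.i.d.\ shift $\vartheta$. Fix a reference element $\varphi_0 \in \mathrm{Pr}$ with $\varphi_0 > 0$ everywhere (for instance $\varphi_0 \equiv 1$) and, for each $n \in \NN$, define
\begin{equs}
z_n(\tau, \omega) := \bigg( \prod_{i = -n+1}^{0} e^{\tau H^i(\omega)} \bigg)^{\!\pi} \varphi_0,
\end{equs}
where the factors are composed in order of increasing $i$ so that the evolution runs forward in time. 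Each factor $e^{\tau H^i(\omega)}$ is bounded and strictly positive on $C(\TT)$: under Assumption~\ref{assu:smooth-noise} this follows from the Feynman--Kac representation combined with the strong maximum principle, and under Assumption~\ref{assu:white-noise} from the Fukushima--Nakao construction of $H$ together with the quantitative lower bounds invoked in Lemma~\ref{lem:moment-bound-invariant-msr-1D-1n}. Hence Theorem~\ref{thm:birkhoff} assigns a contraction coefficient $\mu_i(\omega) := \mu(e^{\tau H^i(\omega)}) \in [0,1)$ to every factor.

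Using the elementary identity $(AB)^\pi = A^\pi B^\pi$ and iterating Birkhoff's contraction, I obtain
\begin{equs}
d_H\bigl( z_{n+1}(\tau,\omega), z_n(\tau,\omega) \bigr) \leqslant \bigg( \prod_{i=-n+1}^{0} \mu_i(\omega) \bigg)\, d_H\bigl( (e^{\tau H^{-n}(\omega)})^\pi \varphi_0, \varphi_0 \bigr).
\end{equs}
The sequence $(\log \mu_i)_{i \in \ZZ}$ is i.i.d.\ under $\vartheta$, so Birkhoff's ergodic theorem gives $n^{-1}\sum_{i=-n+1}^{0} \log \mu_i \to \EE[\log \mu_0]$ almost surely; assuming $\EE[\log \mu_0] < 0$ with mild integrability, the product above decays exponentially in $n$. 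The remaining factor $d_H((e^{\tau H^{-n}})^\pi \varphi_0, \varphi_0)$ has a law independent of $n$ by stationarity and grows at most sub-exponentially a.s. Consequently $\sum_n d_H(z_{n+1}, z_n) < \infty$ a.s., and since $(\mathrm{Pr}, d_H)$ is complete on its strictly positive fiber the limit $z_\infty(\tau,\omega) := \lim_n z_n(\tau,\omega)$ exists and lies in $\mathrm{Pr}$.

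The synchronization and invariance properties then follow from further applications of Theorem~\ref{thm:birkhoff}. By stationarity one checks $z_n(\tau, \vartheta^n \omega) = (\prod_{i=1}^{n} e^{\tau H^i(\omega)})^\pi \varphi_0$, so for any $\varphi \in \mathrm{Pr}$ Birkhoff's contraction gives $d_H\bigl( (\prod_{i=1}^{n} e^{\tau H^i(\omega)})^\pi \varphi, z_n(\tau, \vartheta^n \omega) \bigr) \leqslant \prod_{i=1}^{n} \mu_i(\omega)\, d_H(\varphi, \varphi_0)$, and combining with the a.s.\ convergence $z_n(\tau, \vartheta^n \omega) \to z_\infty(\tau, \vartheta^n \omega)$ yields the required exponential rate. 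Invariance is similarly obtained by noting that $(\prod_{i=n+1}^{n+\ell} e^{\tau H^i(\omega)})^\pi z_n(\tau, \vartheta^n \omega) = z_{n+\ell}(\tau, \vartheta^{n+\ell} \omega)$ and passing to the limit $n \to \infty$. Uniqueness follows by applying synchronization to two candidate solutions. Finally, by construction $z_n(\tau,\omega)$ depends only on $(\omega_{-n+1}, \dots, \omega_0)$, so the almost sure limit defines a measurable function $z_\infty^\leqslant(\tau, \omega_\leqslant)$ of the past coordinates alone.

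The main obstacle is verifying $\EE[\log \mu_0] < 0$, i.e., a genuine spectral gap in the Hilbert-projective contraction constant, together with the integrability needed to invoke the ergodic theorem. In the regular case this is a routine consequence of the strong maximum principle applied to the Feynman--Kac representation; under Assumption~\ref{assu:white-noise} it relies on the quantitative pointwise lower bounds for the fundamental solution developed in Section~\ref{sec:proj-invariant-measures}, built on the Fukushima--Nakao theory and the techniques of \cite{perkowski2020quantitative}. Once this strict contraction is in place, the remainder of the argument reduces to elementary manipulations in the projective cocycle.
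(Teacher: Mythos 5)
Your proof follows the same pullback-and-contraction construction that the paper delegates to \cite[Theorem 3.4]{Rosati2019synchronization}; the paper itself gives no self-contained argument for this proposition. The skeleton is sound, but two details are glossed over and deserve to be spelled out.

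First, the phrase ``assuming $\EE[\log\mu_0]<0$ with mild integrability'' conflates two separate issues. Negativity of $\EE[\log\mu_0]$ is automatic: strict positivity of $e^{\tau H^i}$ gives $\mu_i<1$ a.s.\ by Theorem~\ref{thm:birkhoff}, so $\log\mu_0<0$ a.s.\ and $(\log\mu_0)_+\equiv 0$, hence Birkhoff's ergodic theorem applies with no additional moment hypothesis (allowing the limit to be $-\infty$, which only helps). The hypothesis that genuinely needs checking sits in the \emph{other} factor of your Cauchy increment: for $\sum_n\big(\prod_{i=-n+1}^0\mu_i\big)\,d_H\big((e^{\tau H^{-n}})^\pi\varphi_0,\varphi_0\big)$ to converge a.s.\ you need $d_H\big((e^{\tau H^{-n}})^\pi\varphi_0,\varphi_0\big)$ to be a.s.\ subexponential in $n$, which for an i.i.d.\ sequence requires $\EE\big[\big(\log d_H\big((e^{\tau H^0})^\pi\varphi_0,\varphi_0\big)\big)_+\big]<\infty$. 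That is where the moment bounds of Assumption~\ref{assu:smooth-noise} or Assumption~\ref{assu:white-noise}, and in the white-noise case the quantitative kernel lower bounds, actually enter; this should be stated explicitly rather than folded into the contraction step.

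Second, the synchronization step is written loosely: $z_n(\tau,\vartheta^n\omega)\to z_\infty(\tau,\vartheta^n\omega)$ does not follow from the fixed-$\omega$ convergence $z_n(\tau,\omega)\to z_\infty(\tau,\omega)$, because the base point moves with $n$. What holds, and is what you need, is the tail bound obtained by evaluating the Cauchy estimate at $\vartheta^n\omega$ and re-indexing $j=n-k$:
\begin{equs}
d_H\big(z_n(\tau,\vartheta^n\omega),z_\infty(\tau,\vartheta^n\omega)\big)\leqslant \Big(\prod_{i=1}^n\mu_i(\omega)\Big)\sum_{j\leqslant 0}\Big(\prod_{i=j+1}^0\mu_i(\omega)\Big)\,d_H\big((e^{\tau H^j(\omega)})^\pi\varphi_0,\varphi_0\big),
\end{equs}
where the sum on the right is the a.s.\ finite $m=0$ tail and the prefactor supplies the exponential rate; the triangle inequality together with $d_H\big((\prod_{i=1}^n e^{\tau H^i(\omega)})^\pi\varphi,\,z_n(\tau,\vartheta^n\omega)\big)\leqslant\prod_{i=1}^n\mu_i(\omega)\,d_H(\varphi,\varphi_0)$ then gives the claimed $\limsup<0$. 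The invariance, uniqueness and measurability arguments are correct as written.
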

We refer the reader to
\cite[Theorem 3.4]{Rosati2019synchronization} for a
proof of the proposition above and in general for a more detailed discussion also of Theorem~\ref{thm:birkhoff}.  The
``convergence in direction'' that the previous proposition proves is useful
to derive some classical statements regarding Lyapunov exponents. We
start with a proof of Lemma~\ref{lem:furst}.

\vspace{1em}

\begin{proof}(of Lemma~\ref{lem:furst})
From the subadditive ergodic theorem we have that
\begin{equs}
\limsup_{t \to \infty} \frac{1}{t} \log \left( \int_{\TT} u (t, x) \ud x
\right) \in [- \infty, \infty ),
\end{equs}
since $ \EE \sup_{0 \leqslant t \leqslant 1} \Big( \log \left( \int_{\TT} u (t, x)
\ud x \right) \vee 0 \Big) < \infty $ by calculations simpler than those in
Lemma~\ref{lem:moment-bound-invariant-measure} for regular noise and
Lemma~\ref{lem:moment-bound-solution-1D-wn} for white noise. Next, let us prove
Equation~\eqref{eqn-furstenberg-formula}, which proves that the limsup is
really a limit. Let $ \overline{\lambda}  \in [- \infty, \infty) $ be defined
by the following limit (up to taking a subsequence of $ n $, to ensure that the limit exists):
  \begin{equ}
    \bar{\lambda}  =  \lim_{n \to \infty} \frac{1}{ \tau
    n} \log \left( \int_{\TT} u (n \tau, x) \ud x \right) = \frac{1}{\tau} \lim_{n \rightarrow \infty} \frac{1}{n}  \sum_{i =
    1}^n \log \left( \int_{\TT} e^{\tau H^i} (z^{i - 1}) (x) \ud x \right),
  \end{equ}
  where we have just rewritten the first term via a telescopic sum with:
  \[ z^i (x) = \frac{u (i \tau, x)}{\int_{\TT} u (i \tau, x) \ud x} . \]
  Now consider $ z_{\infty} $ as in Proposition~\ref{prop:existence-proj-invariant-measure} and let $
z_{\infty}^{i} (\omega) = z_{\infty}(\vt^{i} \omega) $ for $ i \in \NN $. Then for almost all $
\omega \in \Omega $ there exist some $b (\omega), c (\omega) > 0$ such that
  \[ d_H (z^i (\omega), z_{\infty}^i (\omega)) \leqslant b (\omega) e^{- c (\omega) i} . \]
We can thus rewrite the terms in the telescopic sum, for almost all $ \omega \in
\Omega $, as:
  \begin{equs}
    \frac{1}{n}  \sum_{i = 1}^n \log \left( \int_{\TT} e^{\tau H^i} (z^i) (x)
    \ud x \right) & =  \frac{1}{n}  \sum_{i = 1}^n \log \left( \int_{\TT}
    e^{\tau H^i} \left( z_{\infty}^{i - 1} \frac{z^{i - 1}}{z_{\infty}^{i -
    1}} \right) (x) \ud x \right)\\
    & \leqslant  \frac{1}{n}  \sum_{i = 1}^n \left[ \log \left( \int_{\TT}
    e^{\tau H^i} (z_{\infty}^{i - 1}) (x) \ud x \right) + \log \left(
    \max_{x \in \TT } \frac{z^{i - 1} (x)}{z_{\infty}^{i - 1} (x)} \right)
    \right]\\
    & \leqslant  \frac{1}{n} \sum_{i = 1}^n \left[ \log \left( \int_{\TT}
    e^{\tau H^i} (z_{\infty}^{i - 1}) (x) \ud x \right) \right] +
    \frac{1}{n} \sum_{i = 1}^n b e^{- c i} .
  \end{equs}
  So that passing to the limit, using independence and the ergodic theorem:
  \begin{equs}
    \lim_{n \rightarrow \infty} \frac{1}{n}  \sum_{i = 1}^n \log \left(
    \int_{\TT} e^{\tau H^i} (z^i) (x) \ud x \right) & \leqslant  \lim_{n
    \rightarrow \infty} \frac{1}{n} \sum_{i = 1}^n \left[ \log \left( \int_{\TT}
    e^{\tau H^i} (z_{\infty}^{i - 1}) (x) \ud x \right) \right]\\
    & \leqslant  \int_{\Omega \times \Omega} \log \left( \int_{\TT}
    e^{\tau H^1 (\omega)} (z_{\infty}^0 (\omega')) (x) \ud x \right) \ud
    \PP (\omega) \ud \PP (\omega'),
  \end{equs}
which is the required upper bound for~\eqref{eqn-furstenberg-formula}. The
lower bound follows analogously, so that $ \overline{\lambda} (\tau) =
\lambda(\tau) $ and~\eqref{eqn-furstenberg-formula} is
proven. We are left with proving that $ \lambda (\tau) > - \infty $. In the
case of regular noise, this now follows by Furstenberg's formula and similar calculations as in
Lemma~\ref{lem:moment-bound-solution-1D-wn}, while for white noise this is
implied for example by Corollary~\ref{cor:bound-eigenvalue}.
\end{proof} \\
The following results establishes instead the continuity of the Lyapunov exponent.

\begin{lemma}\label{lem:continuity}
Under Assumption~\ref{assu:smooth-noise} or Assumption~\ref{assu:white-noise}
and in the setting of Assumption~\ref{assu:probability-space} the map $ \lambda
\colon (0, \infty) \to \RR $ as in Lemma~\ref{lem:furst} is continuous.
\end{lemma}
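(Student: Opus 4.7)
The plan is to combine Furstenberg's formula with dominated convergence. Fix $\tau_{0} > 0$ and let $\tau_n \to \tau_0$ with the sequence contained in a compact interval $I = [a,b] \subset (0, \infty)$ with $a < \tau_0 < b$. The prefactor $1/\tau$ in~\eqref{eqn-furstenberg-formula} is continuous on $I$, so continuity of $\lambda$ at $\tau_0$ reduces to showing
\begin{equs}
  \int_{\Omega \times \Omega} \log\!\bigg( \int_\TT e^{\tau_n H(\omega)} z_\infty(\tau_n, \omega') (x) \ud x \bigg) \ud \PP(\omega) \ud \PP(\omega') \longrightarrow \int_{\Omega \times \Omega} \log\!\bigg( \int_\TT e^{\tau_0 H(\omega)} z_\infty(\tau_0, \omega') (x) \ud x \bigg) \ud \PP(\omega) \ud \PP(\omega'),
\end{equs}
which I would establish via dominated convergence by verifying (i) pointwise convergence of the integrand in $(\omega, \omega')$ and (ii) an integrable majorant uniform in $\tau \in I$.

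For (i), strong continuity of $\tau \mapsto e^{\tau H(\omega)}$ on $L^1$ is standard since the semigroup is compact on $C(\TT)$ (cf.~Lemma~\ref{lem:eigenfunction}); the substantive issue is continuity of $\tau \mapsto z_\infty(\tau, \omega')$ in Hilbert's distance $d_H$. Using the synchronization clause of Proposition~\ref{prop:existence-proj-invariant-measure}, for any $\varphi \in \mathrm{Pr}$ one has
\begin{equs}
z_\infty(\tau, \omega') = \lim_{M \to \infty} \big[ (e^{\tau H(\omega'_{-1})})^\pi \circ \cdots \circ (e^{\tau H(\omega'_{-M})})^\pi \big] \varphi,
\end{equs}
with exponential rate driven by Birkhoff's contraction (Theorem~\ref{thm:birkhoff}). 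For each fixed $M$, this finite composition is continuous in $\tau$. Combining the triangle inequality, invariance, and the contraction property gives the stationary estimate
\begin{equs}
d_H(z_\infty(\tau_n, \omega'), z_\infty(\tau_0, \omega')) \leq \varepsilon_M(\tau_n, \omega') + \prod_{i=1}^M \mu(e^{\tau_0 H(\omega'_{-i})}) \, d_H(z_\infty(\tau_n, \vt^{-M}\omega'), z_\infty(\tau_0, \vt^{-M}\omega')),
\end{equs}
with $\varepsilon_M(\tau_n, \omega') \to 0$ as $n \to \infty$ for each fixed $M$. Taking expectations, exploiting stationarity of $d_H(z_\infty(\tau_n), z_\infty(\tau_0))$, and choosing $M$ large enough so that $\EE \prod_{i=1}^M \mu(e^{\tau_0 H(\omega'_{-i})}) < 1/2$ (possible by continuity and strict negativity of $\tau \mapsto \EE[\log \mu(e^{\tau H})]$ on $I$) yields $\EE\, d_H(z_\infty(\tau_n), z_\infty(\tau_0)) \to 0$, hence pointwise convergence along subsequences.

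For (ii), the maximum principle and $\int_\TT z_\infty \ud x = 1$ give the upper bound $\log \int_\TT e^{\tau H(\omega)} z_\infty \ud x \leq \tau \|\xi_{\mathrm{stat}}(\omega)\|_\infty$ in the regular case, while strict positivity of the semigroup (via Feynman--Kac under Assumption~\ref{assu:smooth-noise}, or via the quantitative estimates of \cite{perkowski2020quantitative} already invoked in the proof of Lemma~\ref{lem:Lipschitz-Invariant} under Assumption~\ref{assu:white-noise}, with $\|\xi_{\mathrm{stat}}\|_\infty$ replaced by $\|\xi_{\mathrm{stat}}\|_{\mC^{-1/2-\varepsilon}}$) produces a matching lower bound. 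Exponential moments from Assumption~\ref{assu:smooth-noise} or Gaussian moments in the white noise case then provide an integrable majorant uniform in $\tau \in I$. The main obstacle is the uniformity of the Birkhoff contraction in $\tau$: the coefficient $\mu(e^{\tau H})$ degenerates to $1$ as $\tau \to 0$, so confinement to $I$ bounded away from zero and continuity of $\tau \mapsto \EE[\log \mu(e^{\tau H})]$ on $I$ are essential; everything else reduces to standard semigroup continuity and the moment estimates already developed in the paper.
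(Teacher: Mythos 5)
Your argument takes a genuinely different route from the paper. The paper establishes $\lim_{\tau\to\sigma} z_\infty(\tau)=z_\infty(\sigma)$ \emph{in distribution} purely softly: the $\mC^\alpha$-moment bounds of Lemmata~\ref{lem:moment-bound-invariant-measure},~\ref{lem:moment-bound-invariant-msr-1D-1n} give tightness in $C(\TT)$, and any limit point must satisfy the invariance clause of Proposition~\ref{prop:existence-proj-invariant-measure} at $\tau=\sigma$, which identifies it uniquely as $z_\infty(\sigma)$. Independence of $H(\omega)$ and $z_\infty(\tau,\omega')$ then lets one push this through the semigroup, and the limit of the Furstenberg integral follows from uniform integrability (via Corollary~\ref{cor:bound-eigenvalue}). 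You instead aim for a \emph{quantitative} statement, $\EE\,d_H(z_\infty(\tau_n),z_\infty(\tau_0))\to 0$, by exploiting Birkhoff's contraction and the telescoping over a window of length $M$, and then close with dominated convergence. This buys a rate but costs you two technical lemmas that the paper's route deliberately avoids.

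The first gap is the claim $\EE\,\varepsilon_M(\tau_n,\omega')\to 0$ for fixed $M$. Since $\varepsilon_M(\tau_n,\omega')=d_H\big(F_M^{\tau_n}\psi_n,F_M^{\tau_0}\psi_n\big)$ with $\psi_n=z_\infty(\tau_n,\vt^{-M}\omega')$ itself moving with $n$, you need continuity of $\tau\mapsto\big(e^{\tau H(\omega)}\big)^\pi\varphi$ in $d_H$ \emph{uniformly over $\varphi\in\mathrm{Pr}$} (plus an integrable $\sup_{\tau\in I}$-majorant for the nested dominated-convergence step). This is plausible --- one would write $e^{\tau_n H}\varphi/e^{\tau_0 H}\varphi - 1 = (e^{(\tau_n-\tau_0)H}-\mathrm{Id})e^{\tau_0 H}\varphi/e^{\tau_0 H}\varphi$ and use smoothing of $e^{\tau_0 H}$ together with the strict-positivity lower bound on $e^{\tau_0 H}\varphi$ over the probability simplex --- but none of this is proved, and it is precisely the kind of pointwise lower-bound control in $d_H$ that the paper's tightness-plus-uniqueness argument sidesteps.

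The second gap is the \emph{a priori} finiteness needed to close the contraction inequality. From $\EE\,d_H(z_\infty(\tau_n),z_\infty(\tau_0))\le \EE\,\varepsilon_M(\tau_n)+(\EE\mu)^M\,\EE\,d_H(z_\infty(\tau_n),z_\infty(\tau_0))$, you need $\EE\,d_H(z_\infty(\tau_n),z_\infty(\tau_0))<\infty$ before you can subtract the right-hand term; equivalently, $\sup_{\tau\in I}\EE\,d_H(z_\infty(\tau),1)<\infty$. The stated moment lemmata control $\EE\|z_\infty(\tau)\|_{\mC^\alpha}^p$, hence $\log\max z_\infty$, but not $\log\min z_\infty$: a lower-bound moment estimate on $\inf_x z_\infty(\tau,x)$ would have to be extracted separately from the Feynman--Kac/\cite{perkowski2020quantitative}-type arguments (the paper proves lower bounds on $\|u_t\|_{L^1}$ in Step~1 of Lemma~\ref{lem:moment-bound-invariant-msr-1D-1n}, not pointwise ones). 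Finally, a cosmetic point: the condition you actually need is $\EE\,\mu(e^{\tau_0 H})<1$, not strict negativity of $\EE\log\mu$; the former follows because $\mu\le 1$ with $\mu<1$ on a set of positive measure, so the conclusion is fine, but the stated justification conflates the two.
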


\begin{proof}
If suffices to establish the continous dependence on \(\tau\) of
Equation~\eqref{eqn-furstenberg-formula}. First observe that for any $ \sigma
\in (0, \infty), \ \lim_{\tau \to \sigma} z_{\infty}(\tau) =
z_{\infty}(\sigma)$ in distribution in $ \mathrm{Pr} $. Indeed, by
Lemmata~\ref{lem:moment-bound-invariant-measure},~\ref{lem:moment-bound-invariant-msr-1D-1n} the sequence $ \{
z_{\infty}(\tau) \}_{| \tau - \sigma | \leqslant 1} $ is tight in $ C(\TT) $
and one can easily check that any limit point for $ \tau \to \sigma$ must satisfy the invariance
property of Proposition~\ref{prop:existence-proj-invariant-measure} (for $
\tau = \sigma $). Since only $ z_{\infty}(\sigma)$ satisfies this property we deduce the required
convergence. Using the independence between $ H(\omega)$, and $z_{\infty}(\tau,
\omega^{\prime})$, together with Lemma~\ref{lem:moment-bound-solution-1D-wn}
(and a similar but simpler result if the noise is regular) we also observe that 
\begin{equs}
\lim_{\tau \to \sigma} e^{\tau H (\omega)} z_{\infty}(\tau, \omega^{\prime})
= e^{\sigma H (\omega)} z_{\infty}(\sigma, \omega^{\prime})
\end{equs}
in distribution. Then the claimed convergence holds by uniform integrability, observing that for any $
p \geqslant 1 $
\begin{equs}
\sup_{| \tau - \sigma | \leqslant 1} \EE^{\PP \otimes \PP} \Big\vert  \log
    \left( \int_{\TT} e^{\tau H (\omega)} (z_{\infty} (\tau, \omega')) (x)
    \ud x \right) \ud \PP (\omega) \ud \PP (\omega') 
\Big\vert^{p} < \infty.
\end{equs}
This follows from Corollary~\ref{cor:bound-eigenvalue} under
Assumption~\ref{assu:white-noise} and by analogous but simpler calculations under
Assumption~\ref{assu:smooth-noise}.
\end{proof} \\
Next we study some properties of the invariant measure that we will need for
our results. In particular we prove certain moment bounds and study the
convergence of the behaviour of the measures for $ \tau \to 0 $. The results as
well as their proofs will be slightly different under
either Assumption~\ref{assu:smooth-noise} or Assumption~\ref{assu:white-noise}.
Therefore we distinguish the two settings, starting with the latter.

\subsection{Moment estimates for regular noise}

Now we concentrate on moment estimates and on the convergence for $ \tau \to 0 $ of the invariant
measure associated to \eqref{eqn:time-Anderson}. We start with the regular
setting of Assumption~\ref{assu:smooth-noise}.

\begin{lemma}\label{lem:moment-bound-invariant-measure}
Under Assumption~\ref{assu:smooth-noise}, for any $p\geqslant 1$ and $\alpha \in
  (0, 2)$ one can bound
  \[ \sup_{\tau \in (0, \infty)} \EE \| z_{\infty} (\tau)
     \|_{\mC^{\alpha}}^p < \infty . \]
  In particular, the sequence $\{z_{\infty} (\tau)\}_{\tau \in (0, 1)} $ is tight in $C (\TT)$ and any limit
  point for $\tau \rightarrow 0$ is supported in $ \mathrm{Pr} $.
\end{lemma}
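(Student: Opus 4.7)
The plan is to exploit the invariance of $ z_\infty(\tau) $ from Proposition~\ref{prop:existence-proj-invariant-measure} to represent it as the normalization of a parabolic solution over a time interval of unit order, and then apply Feynman--Kac and Schauder estimates while carefully tracking moments as $ \tau \to 0 $. Set $ n = \lceil 1/\tau \rceil $ and $ T = n\tau \in [1,2] $: by iterated invariance,
\begin{equs}
z_\infty(\tau, \vt^n \omega) = \frac{u(T)}{\| u(T) \|_{L^1}},
\end{equs}
where $ u $ solves \eqref{eqn:time-Anderson} on $ [0,T] $ with the piecewise-constant-in-time potential $ \xi^\tau $ and initial datum $ v = z_\infty(\tau,\omega) \in \mathrm{Pr} $. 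Since $ \PP $ is $ \vt $-invariant, it is enough to bound $ \EE \big[ \| u(T) \|_{\mC^\alpha}^p \| u(T) \|_{L^1}^{-p} \big] $ uniformly in $ \tau > 0 $.

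Using the Feynman--Kac representation $ u(t,x) = \EE_x \big[ \exp\big( \int_0^t \xi^\tau(t-s, B_s) \ud s \big) v(B_t) \big] $ and $ \| v \|_{L^1} = 1 $, one reads off the two-sided bounds $ \| u(T) \|_{L^1} \geqslant e^{-Y} $ and $ \| u(s) \|_\infty \lesssim e^Y s^{-1/2} $ for $ s \in (0,T] $, with $ Y := \int_0^T \| \xi^\tau(r) \|_\infty \ud r = \sum_{i=1}^n \tau \| \xi^i_{\mathrm{stat}} \|_\infty $. I would then write Duhamel's formula against the heat semigroup and apply Schauder estimates (Lemma~\ref{lem:schauder-estiamtes}) to obtain, for any $ \alpha \in (0,2) $,
\begin{equs}
\| u(T) \|_{\mC^\alpha} \lesssim T^{-\frac{\alpha+1}{2}} \| v \|_{L^1} + \int_0^T (T-s)^{-\alpha/2} \| \xi^\tau(s) \|_\infty \| u(s) \|_\infty \ud s \lesssim 1 + e^Y I,
\end{equs}
where $ I := \int_0^T (T-s)^{-\alpha/2} s^{-1/2} \| \xi^\tau(s) \|_\infty \ud s $. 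The singularities at $ s = 0 $ and $ s = T $ are integrable because $ \alpha/2 < 1 $.

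The hard part will be to verify that $ e^Y $ and $ I $ have moments of every order \emph{uniformly} in $ \tau $, despite the fact that the number of i.i.d.\ increments $ n $ diverges as $ \tau \to 0 $. For $ e^Y $, the quadratic exponential moment from Assumption~\ref{assu:smooth-noise} gives $ \EE [ e^{\sigma \tau \| \xi_{\mathrm{stat}} \|_\infty} ] \leqslant 1 + C \tau $ for small $ \tau $ (via Taylor expansion with remainder controlled by $ \EE [ \| \xi \|_\infty^2 e^{\sigma\tau \| \xi \|_\infty} ] $), so by independence
\begin{equs}
\EE \big[ e^{\sigma Y} \big] = \big( \EE \big[ e^{\sigma \tau \| \xi_{\mathrm{stat}} \|_\infty} \big] \big)^n \leqslant (1 + C\tau)^n \leqslant e^{C n \tau} \leqslant e^{2C}.
\end{equs}
For $ I $, I would split into blocks on which $ \xi^\tau $ is constant and apply Minkowski's inequality,
\begin{equs}
\| I \|_{L^p(\Omega)} \leqslant \| \| \xi_{\mathrm{stat}} \|_\infty \|_{L^p(\Omega)} \int_0^T (T-s)^{-\alpha/2} s^{-1/2} \ud s < \infty,
\end{equs}
which is bounded uniformly in $ \tau $ with no loss from summing over the $ n $ time blocks. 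A Cauchy--Schwarz then combines the two bounds.

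Finally, the moment estimate on $ \| z_\infty(\tau) \|_{\mC^\alpha} $ yields tightness of $ \{ z_\infty(\tau) \}_{\tau \in (0,1)} $ in $ C(\TT) $ via the compact embedding $ \mC^\alpha \hookrightarrow C(\TT) $ for $ \alpha > 0 $; since $ \mathrm{Pr} = \{ \varphi \in C(\TT;[0,\infty)) : \int_\TT \varphi \, \ud x = 1 \} $ is closed in $ C(\TT) $, every weak limit point as $ \tau \to 0^+ $ is automatically supported in $ \mathrm{Pr} $.
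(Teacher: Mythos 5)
Your proposal reaches the same conclusion by a genuinely different and arguably cleaner route. Both proofs start identically: use the invariance from Proposition~\ref{prop:existence-proj-invariant-measure} to write $ z_\infty(\tau) $ (up to a shift) as the normalized solution at time $ T = n\tau $, bound the $ L^1 $ normalizer from below by $ e^{-Y} $ via the maximum principle, and then apply Duhamel and Schauder to the numerator. Where you part ways with the paper is in the numerator estimate. The paper works in $ B^\alpha_{1,\infty} $ using the $ L^1 $-maximum-principle bound $ \| u(s) \|_{L^1} \lesssim e^Y $, and then upgrades to $ \mC^\alpha $ by iterating with the Besov--Sobolev embedding $ B^\alpha_{1,\infty} \hookrightarrow \mC^{\alpha - 1/2} $. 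You avoid the iteration entirely by extracting the pointwise bound $ \| u(s) \|_\infty \lesssim e^Y s^{-1/2} $ from Feynman--Kac plus the heat-kernel on-diagonal decay, which makes a single Duhamel/Schauder step in $ \mC^\alpha $ sufficient for all $ \alpha \in (0,2) $. This is more direct; the tradeoff is that the Feynman--Kac $ L^\infty $ route is specific to bounded potentials, which is exactly the scope of this lemma under Assumption~\ref{assu:smooth-noise} (for white noise the paper treats the analogue separately in Lemma~\ref{lem:moment-bound-invariant-msr-1D-1n}). Your moment bookkeeping --- the $ (1+C\tau)^n \leqslant e^{2C} $ bound for $ \EE e^{\sigma Y} $ and the Minkowski estimate for $ I $ that loses nothing over the $ n $ time blocks --- is also spelled out more explicitly than in the paper.

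One point to flag: the claim $ T = \lceil 1/\tau\rceil\,\tau \in [1,2] $ holds only for $ \tau \leqslant 2 $; for $ \tau > 2 $ one has $ n = 1 $ and $ T = \tau $, in which case $ Y = \tau \| \xi^1_{\mathrm{stat}} \|_\infty $ and $ \EE e^{\sigma Y} $ is not bounded uniformly in $ \tau $. The paper's own write-up has the same issue for large $ \tau $ (the factor $ \exp(Cp\sum_{i \leqslant n(1)} \tau \|\xi^i\|_\infty) = \exp(Cp\tau\|\xi^1\|_\infty) $ is likewise not uniformly integrable there). The fix is the one the paper uses in the white-noise case: for $ \tau $ large, renormalize the solution at time $ \tau - 1 $ and run the estimate only over the final unit window, so that the cost is governed by a single i.i.d.\ block of unit duration rather than $ \tau $. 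In any case the downstream uses (tightness as $ \tau \to 0 $, continuity of $ \lambda $, and Lemma~\ref{lem:moment-estimate-for-zeta}) only need uniformity on bounded $ \tau $-sets, which your argument delivers.
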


\begin{proof}
  Let us fix $\tau > 0$ and $ n \in \NN $, then by
Proposition~\ref{prop:existence-proj-invariant-measure}, for almost every $ \omega \in \Omega$
  \begin{equs}
    z_{\infty} (\tau, \vartheta^n \omega) & = & \frac{\prod_{i = 1}^n e^{\tau
    (\Delta + \xi_{\mathrm{stat}}^{i})} z_{\infty} (\tau, \omega)}{\int_{\TT} \big[ \prod_{i = 1}^n
    e^{\tau (\Delta + \xi_{\mathrm{stat}}^{i})} z_{\infty} (\tau, \omega ) \big] (x) \ud x} .
  \end{equs}
  To lighten the notation we avoid writing explicitly the dependence on $ \omega, $ as long
as no confusion can arise. Using a maximum principle we can bound the denominator by:
  \[ \int_{\TT} \prod_{i = 1}^n \Big[ e^{\tau (\Delta +
\xi_{\mathrm{stat}}^{i})} z_{\infty} (\tau
     ) \Big]( x) \ud x \geqslant e^{- \tau \sum_{i = 1}^n \|
     \xi^i_{\mathrm{stat}} \|_{\infty}} \int_{\TT} z_{\infty} (\tau, x)
     \ud x = e^{- \tau \sum_{i = 1}^n \| \xi^i_{\mathrm{stat}} \|_{\infty}},
  \]
  while for the nominator we observe that
  \[ \left[ \prod_{i = 1}^n e^{\tau (\Delta + \xi_{\mathrm{stat}}^{i})} z_{\infty} (\tau)
     \right] (x) = u (n \tau, x), \]
  where the latter is the solution to
  \[ \partial_t u = \Delta u + \xi^{\tau} u, \qquad u (0, x) = z_{\infty} (\tau,
      x) . \]
  Let us write $n (t)$ for the smallest integer such that $\tau n (t) \geqslant
t$. Via Duhamel's formula
\begin{equs}
u(t) = P_{t} z_{\infty}(\tau) + \int_{0}^{t} P_{t -s}
(\xi^{\tau}(s) u(s)) \ud s.
\end{equs}
Then by the Schauder estimates in Lemma~\ref{lem:schauder-estiamtes} we have
for any $ \alpha \in (0, 2) $ and $ \ve>0 $ such that $ \alpha + \ve < 2 $:
\begin{equs}
\| u (t) \|_{B^{\alpha}_{1, \infty}} \lesssim t^{-
\frac{\alpha + \ve}{2}} \| z_{\infty}(\tau) \|_{L^{1}} + \int_{0}^{t}
(t-s)^{- \frac{\alpha + \ve}{2}} \| \xi^{\tau}(s) \|_{\infty} \| u
(s) \|_{L^{1}} \ud s.
\end{equs}
By a maximum principle and since $ \| z_{\infty}(\tau) \|_{L^{1}} = 1 $ we have $ \| u (s)
\|_{L^{1}} \leqslant  \exp \Big( \sum_{i = 1}^{n(t)} \tau \|
\xi_{\mathrm{stat}}^{i} \|_{\infty} \Big)$, so that
\begin{equs}
\| u (t) \|_{B^{\alpha}_{1, \infty}} \lesssim t^{-
\frac{\alpha + \ve}{2}}  + \exp \Big( C \sum_{i = 1}^{n(t)} \tau \|
\xi_{\mathrm{stat}}^{i} \|_{\infty} \Big) \int_{0}^{t} (t-s)^{- \frac{\alpha + \ve}{ 2}}
\| \xi^{\tau}(s) \|_{\infty}  \ud s.
\end{equs}
Hence overall for some $ C> 0 $
\begin{equs}
\EE \| z_{\infty} (\tau) \|_{B^{\alpha}_{1, \infty}}^{p} & \lesssim \EE \bigg[
 \exp \Big( C p \sum_{i = 1}^{n(1)} \tau \|
\xi_{\mathrm{stat}}^{i} \|_{\infty} \Big) \bigg( 1 +  \int_{0}^{\tau
n(1)} ( \tau n(1) -s)^{- \frac{\alpha + \ve}{ 2}}
\| \xi^{\tau}(s)\|_{\infty}  \ud s \bigg)^{p}\bigg] \\
& \lesssim \EE \bigg[
 \exp \Big( 2 C p \sum_{i = 1}^{n(1)} \tau \|
\xi_{\mathrm{stat}}^{i} \|_{\infty} \Big) \bigg]^{\frac{1}{2}} \EE\bigg[ \bigg( 1 +  \int_{0}^{\tau
n(1)} ( \tau n(1) -s)^{- \frac{\alpha + \ve}{ 2}}
\| \xi^{\tau}(s)\|_{\infty}  \ud s \bigg)^{2 p}\bigg]^{\frac{1}{2}}.
\end{equs}
The first term is bounded by the exponential moment bound of
Assumption~\ref{assu:smooth-noise}. Similarly the second term, since:
\begin{equs}
\EE\bigg[ \bigg( \int_{0}^{\tau
n(1)} ( \tau n(1) -s)^{- \frac{\alpha + \ve}{ 2}}
\| \xi^{\tau}(s)\|_{\infty}  \ud s \bigg)^{2 p}\bigg]^{\frac{1}{2}} \lesssim \int_{0}^{\tau
n(1)} ( \tau n(1) -s)^{- \frac{\alpha + \ve}{ 2}}
\EE \| \xi_{\mathrm{stat} }\|_{\infty}^{2p}  \ud s \lesssim 1.
\end{equs}
This is not quite enough, since our aim is to bound $ z_{\infty}(\tau) $ in
\(\mathcal{C}^{\alpha}\) and not in $ B^{\alpha}_{1, \infty} $. But we can
iterate the argument by using the bound we established, together with the
Besov-Sobolev embedding $ B^{\alpha}_{1 , \infty} \subseteq \mC^{\alpha -
\frac{1}{2}} $ (in dimension $ d=1 $).
\end{proof} \\
From the previous tightness we can deduce the convergence for $ \tau \to 0 $.

\begin{proposition}\label{prop:averaging-invariant-measure}
  The following convergence holds in distribution, as a sequence of random
variables with values in $ \mC^{\alpha}( \TT)$ for any $\alpha \in (0, 2)$:
  \[ \lim_{\tau \rightarrow 0} z_{\infty} (\tau) = 1. \]
\end{proposition}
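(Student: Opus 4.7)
The plan is to combine the uniform tightness of $\{z_\infty(\tau)\}_{\tau \in (0,1)}$ from Lemma~\ref{lem:moment-bound-invariant-measure} with an averaging argument based on the invariance property of Proposition~\ref{prop:existence-proj-invariant-measure}. By tightness in $\mathcal{C}^{\alpha'}$ for any $\alpha' < 2$ and Arzel\`a--Ascoli, it is enough to identify every subsequential limit in $C(\TT)$ as the constant $1$; convergence in $\mathcal{C}^\alpha$ for $\alpha < 2$ then follows by interpolation. So fix a subsequence $\tau_k \to 0$ along which $z_\infty(\tau_k) \to z^*$ in distribution as random elements of $C(\TT)$.

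Fix $T > 0$ and set $n_k = \lfloor T/\tau_k \rfloor$, so $T_k := n_k \tau_k \to T$. By the invariance property,
\begin{equs}
z_\infty(\tau_k, \vartheta^{n_k}\omega) = \Big( \prod_{i=1}^{n_k} e^{\tau_k H^i(\omega)}\Big)^\pi z_\infty(\tau_k, \omega).
\end{equs}
The crucial observation is that $z_\infty(\tau_k, \omega)$ depends only on $\omega_{\leqslant 0}$ (last part of Proposition~\ref{prop:existence-proj-invariant-measure}), while the operator $U^{n_k}(\omega) := \prod_{i=1}^{n_k} e^{\tau_k H^i(\omega)}$ depends only on $\omega_1, \dots, \omega_{n_k}$, so under Assumption~\ref{assu:probability-space} the two are independent. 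The left-hand side has the same law as $z_\infty(\tau_k, \omega)$ and thus converges in distribution to $z^*$.

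The key technical step is to show that $U^{n_k}(\omega) \varphi \to e^{T \Delta} \varphi$ in probability in $C(\TT)$, uniformly for $\varphi$ in bounded sets of $\mathcal{C}^\alpha$. To see this, note that $v_k(t) := \prod_{i=1}^{\lfloor t/\tau_k\rfloor} e^{\tau_k H^i} \, e^{(t - \tau_k \lfloor t/\tau_k\rfloor) H^{\lfloor t/\tau_k\rfloor+1}}\varphi$ solves the parabolic equation with piecewise constant potential $\xi^{\tau_k}$, so by Duhamel
\begin{equs}
v_k(T_k) = e^{T_k \Delta}\varphi + \int_0^{T_k} e^{(T_k - s)\Delta}\big( \xi^{\tau_k}(s) v_k(s) \big)\, \ud s.
\end{equs}
The $L^\infty$ norm of $v_k(s)$ is controlled by $\exp(\sum_i \tau_k \|\xi^i_{\mathrm{stat}}\|_\infty)$, which has bounded moments by Assumption~\ref{assu:smooth-noise}; substituting $v_k(s) \approx e^{s\Delta}\varphi$ in the Duhamel term and using the centeredness and independence of the $\xi^i_{\mathrm{stat}}$, one sees that the stochastic integral above is a sum of $n_k$ independent centered random variables of size $\tau_k$ (after smoothing by the heat kernel), hence is $O(\sqrt{\tau_k})$ in $L^2(\PP; C(\TT))$ by a second moment computation. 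The iterated Duhamel terms are controlled analogously, and $e^{T_k\Delta} \varphi \to e^{T\Delta}\varphi$ by continuity of the heat semigroup.

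Combining independence with the joint convergence in distribution of $\bigl(z_\infty(\tau_k, \omega), U^{n_k}(\omega)\bigr)$ to $(z^*, e^{T\Delta})$ and the fact that $e^{T\Delta}$ preserves mass (so $(e^{T\Delta})^\pi = e^{T\Delta}$ on $\mathrm{Pr}$), we pass to the limit in the invariance identity to obtain
\begin{equs}
z^* \stackrel{d}{=} e^{T\Delta} z^*.
\end{equs}
Iterating this along $T, 2T, 3T, \dots$ yields $z^* \stackrel{d}{=} e^{kT\Delta} z^*$ for every $k \in \NN$. Since $z^*$ takes values in $\mathrm{Pr}$ and $e^{kT\Delta} \psi \to 1$ in $C(\TT)$ for any $\psi \in \mathrm{Pr}$ by the spectral gap of the heat semigroup on $\TT$, we conclude $z^* = 1$ almost surely. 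As every subsequential limit equals the deterministic constant $1$, the whole family $z_\infty(\tau)$ converges to $1$ in distribution in $C(\TT)$; tightness in $\mathcal{C}^{\alpha'}$ for any $\alpha' \in (\alpha, 2)$ upgrades this to convergence in $\mathcal{C}^\alpha$. The main obstacle is the averaging step showing $U^{n_k} \to e^{T\Delta}$: this is a quantitative law-of-large-numbers estimate for the driven PDE, made tractable by the uniform $L^\infty$--exponential moments on $\xi_{\mathrm{stat}}$ and its centeredness.
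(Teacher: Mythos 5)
Your proposal follows the same overall strategy as the paper: tightness from Lemma~\ref{lem:moment-bound-invariant-measure}, the invariance identity combined with independence between $z_\infty(\tau_k)$ and the forward evolution operator, averaging of the solution map to the heat semigroup $P_t$, and uniqueness of the Dirac mass at $1$ as the only $P_t$-invariant probability on $\mathrm{Pr}$. The one place where you depart is the averaging step, which the paper isolates as Lemma~\ref{lem:averaging-smooth-op}: there the solution is decomposed multiplicatively as $S_\tau(t)u_0 = e^{\mathcal{I}(\xi^\tau)(t)} Q_\tau(t)u_0$, with $\mathcal{I}(\xi^\tau)\to 0$ in probability (via Rosenthal's inequality, Lemma~\ref{lem-convergence-to-zero-Xi}) and $Q_\tau(t)\to P_t$ by a Gronwall argument. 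You instead sketch an additive Duhamel expansion and assert that the iterated terms are ``controlled analogously.'' That claim is true in spirit, but the iterated terms contain both diagonal contributions (same noise block, hence no cancellation, giving $O(\tau_k)$ per block) and off-diagonal contributions (products of independent centered variables), and summing the full Duhamel series requires a Gronwall or resummation step that you do not spell out; the paper's multiplicative decomposition avoids having to control this series term by term. Neither approach is wrong, but if you pursue the Duhamel route you should make explicit the a priori bound on $\|v_k(s)\|_\infty$ that closes the series, which in the paper is supplied by the exponential moment assumption through a maximum principle.
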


\begin{proof}
  We have already proven in Lemma~\ref{lem:moment-bound-invariant-measure}
  that the sequence $\{ z_{\infty} (\tau) \}_{\tau \in (0, 1)}$ is tight
  in $ \mC^{\alpha} (\TT)$. To establish the limit as $\tau \rightarrow 0$ we
  observe that for any $t > 0$ and $ n(t) $ the smallest integer such that $
\tau n (t) \geqslant t $:
  \[ z_{\infty} (\tau) \overset{d}{=} S_{\tau} ( \tau n (t)) z_{\infty} (\tau) . \]
  Here $S_{\tau} (t) u_0$ is the solution at time $t \geqslant 0$ to 
Equation~\eqref{eqn:time-Anderson} with $\xi = \xi^{\tau}$ and initial
condition $ u_{0}$, and is chosen to be independent of $ z_{\infty}(\tau) $ on
the right-hand side. Now let
  $z_{\infty} (0)$ be any limit point of the sequence $z_{\infty} (\tau)$ for
$ \tau \to 0 $.
Then by Lemma~\ref{lem:averaging-smooth-op} and by the independence of $
S_{\tau} $ and $ z_{\infty}( \tau)$, for every $ t \geqslant 0 $ there exists a
subsequence $ \tau_{k} > 0, \tau_{k} \to 0 $, such that 
  \begin{equation}
    \lim_{k \to \infty } S_{\tau_{k}} (\tau_{k} n (t)) z_{\infty} (\tau_{k}) = P_t
    z_{\infty} (0). 
  \end{equation}
Hence we conclude
  \[ z_{\infty} (0) \overset{d}{=} P_t z_{\infty} (0) . \]
  Since the Dirac measure in the function that is constantly $1$ is the
  only invariant measure for $P_t$ we have proven our result.
\end{proof}\\
We conclude the subsection with two lemmata used in the previous proof. We
start with an averaging result for the solution map to
\eqref{eqn:time-Anderson}.

\begin{lemma}\label{lem:averaging-smooth-op}
  Fix any $\alpha \in (0, 2)$ and consider a bounded sequence $(t_{\tau})_{\tau \in (0,
  1)}$ of positive real numbers $t_{\tau} > 0$ such that for some $t > 0$
  \[ \lim_{\tau \rightarrow 0} t_{\tau} = t. \]
  Then for every $u_0 \in \mC^{\alpha} (\TT)$
  \[ S_{\tau} (t_{\tau}) u_0 \rightarrow P_t u_0 \ \ \text{ in
probability in } \  \mC^{\alpha}(\TT).\]
\end{lemma}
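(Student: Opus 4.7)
The plan is to apply Duhamel's formula to write
\begin{equs}
S_\tau(t_\tau) u_0 - P_{t_\tau} u_0 = R_\tau, \qquad R_\tau := \int_0^{t_\tau} P_{t_\tau - s}\big(\xi^\tau(s)\, S_\tau(s) u_0\big)\ud s.
\end{equs}
Strong continuity of the heat semigroup on $\mC^\alpha$ yields $P_{t_\tau} u_0 \to P_t u_0$ in $\mC^\alpha$, so the statement reduces to showing that $R_\tau \to 0$ in probability in $\mC^\alpha$.

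I would then partition $[0, t_\tau]$ into intervals $I_i = [i\tau, (i+1)\tau)$ on which $\xi^\tau \equiv \xi_{\mathrm{stat}}^i$, plus a residual piece of length at most $\tau$ whose contribution is a trivial $\mO(\tau)$, and freeze the solution at the left endpoint of each $I_i$ via $S_\tau(s) u_0 = S_\tau(i\tau) u_0 + \mathrm{err}_i(s)$, with $\mathrm{err}_i(s) = \int_{i\tau}^s H^i S_\tau(r) u_0 \ud r$. The terms involving $\mathrm{err}_i$ would be controlled by parabolic regularity estimates for $S_\tau(i\tau) u_0$ of the sort that underlie Lemma~\ref{lem:moment-bound-invariant-measure}, together with the exponential moments of $\|\xi_{\mathrm{stat}}\|_\infty$ from Assumption~\ref{assu:smooth-noise}; integrated against the kernel $P_{t_\tau - s}$ they contribute at higher order in $\tau$.

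The leading term is $M_\tau := \sum_i T_i$ with $T_i := \int_{I_i} P_{t_\tau - s}\big(\xi_{\mathrm{stat}}^i\, S_\tau(i\tau) u_0\big) \ud s$, and the key structural observation is that $S_\tau(i\tau) u_0$ is measurable with respect to $\mF_i := \sigma(\xi_{\mathrm{stat}}^j : j < i)$ while $\xi_{\mathrm{stat}}^i$ is independent of $\mF_i$ and centered. Hence $\EE[T_i \mid \mF_i] = 0$, so that in a Hilbert space $H$ (say $H^\beta(\TT)$ with $\beta > \alpha + 1/2$) the $T_i$ are pairwise orthogonal in $L^2(\Omega; H)$. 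From the pointwise bound $\|T_i\|_H \lesssim \tau \|\xi_{\mathrm{stat}}^i\|_\infty \|S_\tau(i\tau) u_0\|_H$ and $n_\tau \lesssim \tau^{-1}$ I would conclude $\EE \|M_\tau\|_H^2 = \sum_i \EE \|T_i\|_H^2 \lesssim \tau$, so $M_\tau \to 0$ in $L^2(\Omega; H)$.

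To promote mean-square convergence in $H^\beta$ to convergence in probability in $\mC^\alpha$, I would combine it with tightness of $\{S_\tau(t_\tau) u_0\}_{\tau \in (0, 1)}$ in $\mC^{\alpha'}$ for some $\alpha' > \alpha$, which follows from the same Duhamel bootstrap as in Lemma~\ref{lem:moment-bound-invariant-measure}, and interpolate. The step I expect to be the main obstacle is the quantitative control of $\mathrm{err}_i$: naively it demands $\mC^2$-regularity of $S_\tau(i\tau) u_0$, whose norm blows up as $i\tau \to 0$, so one must carefully balance the short-time singularity against the factor $\tau$ from $|I_i|$, which is the kind of Schauder bookkeeping already carried out repeatedly in the preceding sections.
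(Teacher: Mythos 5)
Your route is genuinely different from the paper's. The paper first performs the exponential change of unknown $S_\tau(t)u_0 = e^{\mathcal{I}(\xi^\tau)(t)}Q_\tau(t)u_0$, which removes the multiplicative noise and replaces it by coefficient terms involving $\partial_x\mathcal{I}(\xi^\tau)$; all the stochastic work is then concentrated in Lemma~\ref{lem-convergence-to-zero-Xi} (Rosenthal's inequality applied to the additive process $\mathcal{I}(\xi^\tau)$), after which a deterministic Gronwall/Schauder bootstrap for $Q_\tau$ finishes the argument. You instead expand $R_\tau$ directly, freeze the solution on each interval $I_i$, and observe that the leading terms $T_i$ are martingale differences, giving an $L^2(\Omega;H)$ orthogonality and a mean-square estimate of order $\tau$. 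Both approaches hinge on the same probabilistic input (centeredness and independence of $\xi^i_{\mathrm{stat}}$): the paper exploits it on the "linearized" object $\mathcal{I}(\xi^\tau)$, you exploit it on the Duhamel integral itself. The paper's packaging has the advantage of decoupling the stochastic estimate from the nonlinear PDE bookkeeping, and it reuses Lemma~\ref{lem-convergence-to-zero-Xi} elsewhere; your route is more direct but requires tracking the short-time singularity and error terms inside the martingale sum, as you yourself note.

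Two points in your sketch need more care than the phrasing suggests. First, the choice $H=H^\beta$ with $\beta>\alpha+\tfrac12$ cannot be taken literally: under the piecewise-constant case (b) of Assumption~\ref{assu:smooth-noise}, $\xi_{\mathrm{stat}}$ fails to be in $H^\beta$ for $\beta\geqslant\tfrac12$, and the bound $\|\xi\,g\|_{H^\beta}\lesssim\|\xi\|_\infty\|g\|_{H^\beta}$ you invoke is false; you must retreat to a weak Hilbert space (say $L^2$) where $\|\xi\,g\|_{L^2}\leqslant\|\xi\|_\infty\|g\|_{L^2}$ holds, and then the tightness-plus-interpolation step you mention is not optional but essential to recover $\mC^\alpha$. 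Second, the error terms $\mathrm{err}_i$ force you to control $\|\Delta S_\tau(r)u_0\|$ down to $r\to0$, which blows up like $r^{-(2-\alpha)/2}$; this exponent is integrable so the sum over $i$ remains $\mO(\tau)$, but the argument needs to make this balance explicit (and keep track of the multiplicative factor $\exp(\sum_j\tau\|\xi^j\|_\infty)$ from the maximum principle, using the exponential moments and independence to bound its moments uniformly in $\tau$ for $i\tau\leqslant t$). With those two repairs the argument should go through.
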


\begin{proof}
 Consider the process $\mI(\xi^{\tau})$ solving $(\partial_t -
\Delta) \mI(\xi^{\tau})
  = \xi^{\tau}$, with $\mI(\xi^{\tau})(0) = 0$:
  \[ \mI(\xi^{\tau})(t) = \int_0^t P_{t - s} [\xi^{\tau} (s)] \ud s. \]
 We can write $S_{\tau} (t) u_0 = e^{\mI(\xi^{\tau}) (t)} Q_{\tau} (t) u_0,$ where
  $Q_{\tau} (t) u_0$ is the solution to
  \[ (\partial_t - \Delta) [Q_{\tau} (t) u_0] = 2 (\partial_x
\mI(\xi^{\tau })_{t} ) \partial_x [Q_{\tau} (t) u_0] +  (\partial_x
\mI(\xi^{\tau} )_{t} )^2 [Q_{\tau} (t) u_0], \qquad Q_{\tau} (0) u_0 = u_0 . \]
We observe that by the second statement of
Lemma~\ref{lem-convergence-to-zero-Xi} we have (by Besov embedding, choosing $
p \geqslant 1 $ sufficiently large) $ \lim_{\tau \to 0} \|
\mathcal{I}(\xi^{\tau}) (t_{\tau})\|_{\mC^{\alpha}}= 0$ in probability, so
that our result follows if we show that $ Q_{\tau}(t_{\tau}) u_{0} \to P_{t}
u_{0} $ in probability in $ \mC^{\alpha}. $ Let us fix a $ T > \sup_{\tau}
 t_{\tau}$. Now by Lemma \ref{lem-convergence-to-zero-Xi} we have that for any $ p
\geqslant 2 $, $ \lim_{\tau \to 0}
M (\tau, p) =0 $ in probability, where 
\begin{equs}
M(\tau, p) = \| \mathcal{I}(\xi^{\tau}) \|_{L^{p}_{T} \mC^{\alpha}}.
\end{equs}
In particular, assuming $ \alpha \in (1, 2) $, there exists
a $ q(p) \geqslant 2 $ such that
\begin{equs}
\| \partial_{x} \mI (\xi^{\tau}) \|_{L^{p}_{T} \mC^{\alpha -1}} \lesssim
M(\tau, p), \qquad \| ( \partial_{x} \mI (\xi^{\tau}) )^{2} \|_{L^{p}_{T}
\mC^{\alpha-1}} \lesssim_{\beta} M (\tau, 2p)^{2}.
\end{equs}
These estimates provide us with an a priori bound on $ Q_{\tau}(t) u_{0} $. We
find for any $ \ve>0 $ such that $ \frac{\alpha + \ve}{ 2} < 1 $, and assuming
again that $ \alpha \in (1, 2) $, via Lemma~\ref{lem:schauder-estiamtes}:
\begin{equs}
\| Q_{\tau} & (t) u_{0}  \|_{\mC^{\alpha}} \\
&  \lesssim \| u_{0} \|_{\mC^{\alpha}} +
\int_{0}^{t}(t -s)^{- \frac{\alpha + \ve}{2}} \Big[ \|  (\partial_x
\mI(\xi^{\tau })_{s} ) \partial_x [Q_{\tau} (s) u_0] \|_{\mC^{- \ve}} + \|(\partial_x
\mI(\xi^{\tau} )_{s} )^2 [Q_{\tau} (s) u_0] \|_{\mC^{- \ve}} \Big] \ud s \\
& \lesssim \| u_{0} \|_{\mC^{\alpha}} +
\sup_{0 \leqslant s \leqslant t} \| Q_{\tau}(s) u_{0} \|_{\mC^{\alpha}}\int_{0}^{t}(t -s)^{- \frac{\alpha + \ve}{2}} \Big[ \|  (\partial_x
\mI(\xi^{\tau })_{s} )  \|_{\mC^{\alpha -1}} + \|(\partial_x
\mI(\xi^{\tau} )_{s} )^2  \|_{\mC^{\alpha-1}} \Big] \ud s\\
& \lesssim \| u_{0} \|_{\mC^{\alpha}} +t^{\zeta} 
\sup_{0 \leqslant s \leqslant t} \| Q_{\tau}(s) u_{0} \|_{\mC^{\alpha}}
\Big( M(\tau, p) + M (\tau, 2p)^{2} \Big), \label{eqn:prf-homogenization-support}
\end{equs}
for some $ \zeta > 0 $, which is obtained by applying H\"older's inequality,
assuming $ p \geqslant 1 $ is sufficiently large.
Using a Gronwall argument we conclude that
\begin{equs}
\sup_{0 \leqslant s \leqslant t}\| Q_{\tau}(s) u_{0} \|_{\mC^{\alpha}} \lesssim
\| u_{0} \|_{\mC^{\alpha}} \exp\Big(C(T) \big(1 + M(\tau, p) + M (\tau,
2p)^{2} \big)^{\zeta^{\prime}} \Big), 
\end{equs}
for some $ \zeta^{\prime} > 0 $.
In particular, we can now bound by \eqref{eqn:prf-homogenization-support}:
\begin{equs}
\| Q_{\tau}(t) u_{0} - P_{t} u_{0} \|_{\mC^{\alpha}} & = \bigg\| \int_{0}^{t} P_{t-s} \Big(
  2 (\partial_x \mI(\xi^{\tau })_{s} ) \partial_x [Q_{\tau} (s) u_0] + 2 (\partial_x
\mI(\xi^{\tau} )_{s} )^2 [Q_{\tau} (s) u_0]\Big) \ud s \bigg\|_{\mC^{\alpha}}
\\
& \lesssim \Big( M(\tau, p) + M (\tau, 2p)^{2}\Big) \exp\Big(C(T) \big(1 + M(\tau, p) + M (\tau,
2p)^{2} \big)^{\zeta^{\prime}} \Big).
\end{equs}
Since $ M(\tau, p) \to 0  $ in probability, our result follows.
\end{proof} \\
Finally we establish some bounds for the solution to the linear equation.
\begin{lemma}
  \label{lem-convergence-to-zero-Xi} The process $\mI(\xi^{\tau})$ defined for any
  $\tau > 0$ by
  \[ \mI(\xi^{\tau})(t)= \int_0^t P_{t - s} [\xi^{\tau} (s)] \ud s, \qquad t > 0
  \]
  satisfies for any $T > 0$, any $\alpha \in (0, 2)$ and any $ p \geqslant 2 $:
\begin{equs}
\lim_{\tau \to 0} \EE \| \mathcal{I}(\xi^{\tau}) \|_{L^{p}([0,T];
B^{\alpha}_{p,p})}^{p} = 0.
\end{equs}
Moreover, for any bounded sequence $ (t_{\tau})_{\tau \in (0, 1)} $ of positive
real numbers, we have
\begin{equs}
\lim_{\tau \to 0} \EE \| \mI(\xi^{\tau})(t_{\tau})
\|_{B^{\alpha}_{p,p}}^{p} = 0.
\end{equs}
\end{lemma}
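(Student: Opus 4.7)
The plan is to decompose $\mI(\xi^{\tau})(t)$ as a sum of independent centred contributions indexed by the time windows on which $\xi^{\tau}$ is constant, and to combine Rosenthal's moment inequality for sums of iid centred variables with parabolic smoothing on Littlewood--Paley blocks in order to extract a positive power of $\tau$. Concretely, setting $n(t) = \lceil t / \tau \rceil$ and
\[ M_{i}(t, x) = \int_{i \tau}^{(i + 1) \tau \wedge t} [P_{t - s} \xi^{i}_{\mathrm{stat}}] (x) \ud s, \qquad 0 \leqslant i \leqslant n(t) - 1, \]
one has $\mI(\xi^{\tau})(t, x) = \sum_{i = 0}^{n(t) - 1} M_{i}(t, x)$, where the $\{ M_{i}(t, x) \}_{i}$ are jointly independent by Definition~\ref{def:tau-potential} and centred because of the centeredness requirement in Assumption~\ref{assu:smooth-noise}.

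For a Littlewood--Paley block $\Delta_{j}$ at dyadic scale $2^{j}$ with $j \geqslant 0$, the spectrally localised heat kernel bound $\| \Delta_{j} P_{t - s} f \|_{\infty} \lesssim e^{- c 2^{2 j} (t - s)} \| f \|_{\infty}$ yields, after integrating in $s$, the pointwise bound $| \Delta_{j} M_{i}(t, x) | \lesssim \| \xi^{i}_{\mathrm{stat}} \|_{\infty}\, \min (\tau, 2^{- 2 j})\, e^{- c 2^{2 j} (t - (i + 1) \tau)_{+}}$. Together with the exponential moment bound on $\| \xi_{\mathrm{stat}} \|_{\infty}$ from Assumption~\ref{assu:smooth-noise}, summing the geometric series in $i$ produces, for every $q \geqslant 2$,
\[ \sum_{i} \EE | \Delta_{j} M_{i}(t, x) |^{q} \lesssim_{q} \min \Big( \frac{\tau^{q - 1}}{2^{2 j}},\; 2^{- 2 j q} \Big). \]
Applying Rosenthal's inequality for iid centred sums to $\Delta_{j} \mI(\xi^{\tau})(t, x) = \sum_{i} \Delta_{j} M_{i}(t, x)$ then gives $\EE | \Delta_{j} \mI(\xi^{\tau})(t, x) |^{p} \lesssim_{p} \big( \sum_{i} \EE | \Delta_{j} M_{i} |^{2} \big)^{p / 2} + \sum_{i} \EE | \Delta_{j} M_{i} |^{p}$.

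Next I weight the previous estimate by $2^{j \alpha p}$ and sum over $j$, splitting at the critical scale $j_{0}$ defined by $2^{2 j_{0}} \tau = 1$. The high-frequency tail $\sum_{j > j_{0}} 2^{j (\alpha - 2) p}$ is summable when $\alpha < 2$ and yields a contribution of order $\tau^{(2 - \alpha) p / 2}$; the two low-frequency sums $\tau^{p / 2} \sum_{j \leqslant j_{0}} 2^{j (\alpha - 1) p}$ and $\tau^{p - 1} \sum_{j \leqslant j_{0}} 2^{j (\alpha p - 2)}$ similarly each produce a positive power of $\tau$ (up to logarithmic factors at the borderline values $\alpha = 1$ or $\alpha p = 2$). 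Collecting these bounds yields
\[ \EE \| \mI(\xi^{\tau})(t) \|_{B^{\alpha}_{p, p}}^{p} \lesssim \tau^{(2 - \alpha) p / 2} + \tau^{p / 2} + \tau^{p - 1}, \]
uniformly in $t \in [0, T]$. The first statement of the lemma follows by Fubini after integrating in $t$, while the second follows by specialising at $t = t_{\tau}$ using the boundedness of $(t_{\tau})$.

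The main difficulty lies in the bookkeeping of the frequency split across the whole range $\alpha \in (0, 2)$ and $p \geqslant 2$: the $L^{2}$ and $L^{p}$ Rosenthal terms scale differently in $j$ and degenerate into logarithms at isolated borderline exponents, but in every regime the combined bound is a strictly positive power of $\tau$, which is all that the qualitative conclusion requires.
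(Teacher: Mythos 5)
Your proof is correct and follows essentially the same route as the paper: decompose over the i.i.d.\ noise windows, exploit centredness and independence via Rosenthal's inequality, and combine with parabolic smoothing on Littlewood--Paley blocks to extract a positive power of $\tau$. The paper reaches the same conclusion by deriving two separate moment estimates (a $\tau$-uniform regularity bound of the form $2^{-\alpha j p}$ and a crude small-$\tau$ bound of order $\tau^{p-1}t + (\tau t)^{p/2}$) and interpolating between them; this is just a different bookkeeping of your unified $\min$-estimate followed by a split of the dyadic sum at the critical scale $2^{2 j_0}\tau = 1$, and the underlying cancellation being used is identical. One small omission: you restrict to $j \geqslant 0$ when invoking the spectral decay $\| \Delta_j P_{t-s} f \|_\infty \lesssim e^{-c 2^{2j}(t-s)} \| f \|_\infty$, which is indeed false for $j = -1$ because $\Delta_{-1}$ captures the zero Fourier mode. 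That block should be handled separately using the trivial bound $|\Delta_{-1} M_i(t,x)| \lesssim \tau \| \xi^i_{\mathrm{stat}} \|_\infty$, which via Rosenthal contributes $\mathcal{O}\big( (\tau t)^{p/2} + \tau^{p-1} t \big)$ and still vanishes as $\tau \to 0$, so the gap is easily closed.
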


\begin{proof}
Let us define $ K^{x}_{j}(y) = \mF^{-1} [\varrho_{j} ](x - y). $ Then we can
rewrite the $ B^{\alpha}_{p,p} $ norm as:
\begin{equs}
\| \varphi \|_{B^{\alpha}_{p,p}}^{p}  = \sum_{j \geqslant  -1} 2^{\alpha j p}
\int_{\TT} | \langle \varphi, K_{j}^{x} \rangle |^{p} \ud x,
\end{equs}
therefore, by Fubini, our objective will be to bound $ \EE | \langle
\mI(\xi^{\tau}) (t), K_{j}^{x} \rangle |^{p} $ uniformly over $ x \in \TT$.
As before, let $ n(t) $ be the smallest integer such that $ \tau n(t) \geqslant
t. $ We can use Rosenthal's inequality \cite[Theorem 2.9]{Petrov1995}, which we can apply because
the sequence $ \{ \xi_{\mathrm{stat}}^{i} \}_{i \in \NN} $ is independent, and $ \EE P_{t}
\xi_{\mathrm{stat}} = 0 $, to bound for $ p
\geqslant 2 $ 
\begin{equs}
\EE | \langle & \mI(\xi^{\tau}) (t), K_{j}^{x} \rangle |^{p}  = \EE \bigg\vert
\sum_{i = 0}^{n(t)-1} \int_{i \tau}^{(i + 1) \tau \wedge t}  \langle
P_{t - s} [\xi^{\tau} (s)], K_j^x \rangle \ud s \bigg\vert^{p} \\
& = \EE \bigg\vert
\sum_{i = 0}^{n(t)- 1} \int_{i \tau}^{(i + 1) \tau \wedge t} \langle
P_{t - s} [\xi_{\mathrm{stat}}^{i}], K_j^x \rangle \ud s \bigg\vert^{p} \\
& \lesssim \sum_{i = 0}^{n(t)-1} \EE \bigg\vert \int_{i \tau}^{(i + 1) \tau \wedge t} \langle
P_{t - s} [\xi_{\mathrm{stat}}^{i}], K_j^x \rangle \ud s \bigg\vert^{p} + \bigg(
\sum_{i= 0}^{n(t)-1} \EE \bigg\vert \int_{i \tau}^{(i + 1) \tau \wedge t} \langle
P_{t - s} [\xi_{\mathrm{stat}}^{i}], K_j^x \rangle \ud s \bigg\vert^{2}
\bigg)^{\frac{p}{2}}.
\end{equs}
We observe that in addition the following estimate holds for any $ \ve> 0 $
and $ \alpha \in (0, 2) $ with $ \frac{\alpha + \ve}{2} < 1 $:
\begin{equs}
\bigg\vert\int_{i \tau}^{(i + 1) \tau \wedge t} \langle
P_{t - s} [\xi_{\mathrm{stat}}^{i}], K_j^x \rangle \ud s \bigg\vert & \lesssim \int_{i \tau}^{(i+1) \tau \wedge t} \| P_{t-s}
\xi_{\mathrm{stat}^{i}} \|_{\mC^{\alpha+ \ve}} \| K_{j}^{x} \|_{B^{-
\alpha}_{1, 1}} \ud s \\
& \lesssim \| \xi_{\mathrm{stat}}^{i} \|_{\infty} \| K_{j}^{x} \|_{B^{-
\alpha}_{1, 1}} \int_{i \tau}^{(i + 1) \tau \wedge t} (t -s)^{- \frac{\alpha + \ve}{2}}
\ud s.
\end{equs}
In particular, if we now define $ G_{\tau}(i,t) = \int_{i \tau}^{(i+1) \tau \wedge
t} (t -s)^{- \frac{\alpha + \ve}{ 2}} \ud s $, we obtain:
\begin{equs}
\EE | \langle \mI(\xi^{\tau}) (t), K_{j}^{x} \rangle |^{p} \lesssim \| K_{j}^{x}
\|_{B^{- \alpha}_{1,1}}^{p} \EE \| \xi^{1}_{\mathrm{stat}}
\|_{\infty}^{p} \bigg( \sum_{i = 0}^{n(t)-1} G_{\tau}^{p}(i, t) + \bigg(
\sum_{i = 0}^{n(t)-1} G_{\tau}^{2} (i, t)
\bigg)^{\frac{p}{2}} \bigg).
\end{equs}
At this point, using the inequality $ \sum_{i} |a_{i}|^{p} \leqslant \big(
\sum_{i} | a_{i} | \big)^{p}$ and since $ \sum_{i = 0}^{n(t)-1}
G_{\tau}(i, t) = \int_{0}^{t} (t -s)^{- \frac{\alpha + \ve}{2}} \ud s $, we
conclude
\begin{equs}[eqn:prf-homogenization-1]
\EE | \langle \mI(\xi^{\tau}) (t), K_{j}^{x} \rangle |^{p} \lesssim \| K_{j}^{x}
\|_{B^{- \alpha}_{1,1}}^{p} \EE \| \xi^{1}_{\mathrm{stat}} \|_{\infty}^{p}
t^{p - p \frac{\alpha + \ve}{ 2}} \lesssim 2^{- \alpha j p},
\end{equs}
by \eqref{eqn:bound-K-j}. On the other hand, we can also bound
\begin{equs}
\bigg\vert\int_{i \tau}^{(i + 1) \tau \wedge t} \langle
P_{t - s} [\xi_{\mathrm{stat}}^{i}], K_j^x \rangle \ud s \bigg\vert & \lesssim
\tau \| \xi_{\mathrm{stat}}^{i} \|_{\infty} \| K_{j}^{x} \|_{L^{1}},
\end{equs}
which leads, following the previous steps, to the bound
\begin{equs}[eqn:prf-homogenization-2]
\EE | \langle \mI(\xi^{\tau}) (t), K_{j}^{x} \rangle |^{p} \lesssim \| K_{j}^{x}
\|_{L^{1}}^{p} \EE \| \xi^{1}_{\mathrm{stat}} \|_{\infty}^{p} \Big(
\tau^{p -1} t + (\tau t)^{\frac{p}{2}} \Big) \lesssim \tau^{p -1} t + (\tau t)^{\frac{p}{2}} .
\end{equs}
Now, interpolating between \eqref{eqn:prf-homogenization-1} and
\eqref{eqn:prf-homogenization-2} delivers all the required results.
\end{proof}

\subsection{Moment estimates for white noise}

We now treat similar bounds as in the previous subsection in the case of space
white noise. This will require some more involved estimates. We start with a
bound on the solution map.

\begin{lemma}\label{lem:moment-bound-solution-1D-wn}
Under Assumption~\ref{assu:white-noise}, for any $
\gamma \in (0,1) $ consider $ u_{0} \in
\mC^{\frac{1}{2} + \gamma } $. Then the solution to
\begin{equ}
\partial_{t} u = \Delta u + \xi_{\mathrm{stat}} u, \quad u(0, \cdot) =
u_{0}(\cdot)
\end{equ}
satisfies the following moment bound for any $ p \geqslant 1, T >0 $:
\begin{align*}
\EE \bigg[ \| u \|_{C^{\frac{1/2 + \gamma}{2}}_{T}
L^{\infty}}^{p} +  \| u \|_{ L^{\infty}_{T} \mC^{\frac{1}{2} +
\gamma}}^{p} \bigg] \lesssim_{p, T} \| u_{ 0} \|_{\mC^{\frac{1}{2} +
\gamma}}^{p}.
\end{align*}
\end{lemma}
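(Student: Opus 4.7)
My strategy is a standard combination of Duhamel's formula and paraproduct estimates, with the key difficulty of handling the critical product $\xi_{\mathrm{stat}} \cdot u$ by exploiting the assumption $u_0 \in \mathcal{C}^{1/2+\gamma}$ with $\gamma > 0$. Concretely, for fixed $\omega$, I would fix a small $\varepsilon \in (0, \gamma)$ and note that $\xi_{\mathrm{stat}}(\omega) \in \mathcal{C}^{-1/2-\varepsilon}$ almost surely (being the distributional derivative of a Brownian motion), with $p$-th moments of this norm for all $p \geqslant 1$.

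I would then work from the Duhamel representation
\begin{equs}
u(t) = P_t u_0 + \int_0^t P_{t-s}\bigl(\xi_{\mathrm{stat}} u(s)\bigr)\, ds,
\end{equs}
where the product inside the integral is well-defined as long as $u(s) \in \mathcal{C}^{1/2+\gamma}$, by the paraproduct estimates of Lemma~\ref{lem:paraproduct-estimate} (so that $\|\xi_{\mathrm{stat}} u(s)\|_{\mathcal{C}^{-1/2-\varepsilon}} \lesssim \|\xi_{\mathrm{stat}}\|_{\mathcal{C}^{-1/2-\varepsilon}} \|u(s)\|_{\mathcal{C}^{1/2+\gamma}}$). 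Combining with the Schauder estimates of Lemma~\ref{lem:schauder-estiamtes} yields a Volterra-type inequality
\begin{equs}
\|u(t)\|_{\mathcal{C}^{1/2+\gamma}} \leqslant \|u_0\|_{\mathcal{C}^{1/2+\gamma}} + C \|\xi_{\mathrm{stat}}\|_{\mathcal{C}^{-1/2-\varepsilon}} \int_0^t (t-s)^{-(1+\gamma+\varepsilon)/2} \|u(s)\|_{\mathcal{C}^{1/2+\gamma}}\, ds.
\end{equs}
Since the exponent of the singular kernel is strictly less than one, a singular Gronwall (Mittag--Leffler) inequality gives a pathwise bound of the form $\|u\|_{L^\infty_T \mathcal{C}^{1/2+\gamma}} \lesssim \|u_0\|_{\mathcal{C}^{1/2+\gamma}}\, \Phi(\|\xi_{\mathrm{stat}}\|_{\mathcal{C}^{-1/2-\varepsilon}})$ for an explicit nonnegative $\Phi$.

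The main obstacle is that the naive output of singular Gronwall is $\Phi(x) = \exp(C T x^{2/(1-\gamma-\varepsilon)})$, and the super-quadratic exponent in $x$ cannot be absorbed into the Fernique/Gaussian moments of $\|\xi_{\mathrm{stat}}\|_{\mathcal{C}^{-1/2-\varepsilon}}$. To bypass this I would first establish an $L^\infty$ bound with the correct Gaussian integrability by means of Doob's $H$-transform (Lemma~\ref{lem-doobs-H-transform}): writing $u(t) = e^{t\zeta} \psi \cdot e^{t\bar H}(u_0/\psi)$ and using that $e^{t\bar H}$ is a Markov semigroup, one obtains $\|u(t)\|_{L^\infty} \leqslant e^{t\zeta} \|\psi\|_\infty \min(\psi)^{-1}\|u_0\|_\infty$, which has all $p$-th moments thanks to Corollary~\ref{cor:bound-eigenvalue} and (the exponential version of) Lemma~\ref{lem:bdd-avrg-mu}. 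Once this linear-in-$\|\xi\|$-type control is in hand, a bootstrapping using the Duhamel formula upgrades the $L^\infty$ bound to $\mathcal{C}^{1/2+\gamma}$ with only Fernique-integrable random constants, by splitting $\int_0^t P_{t-s}(\xi u)\, ds$ on short and long time intervals and using the $L^\infty$ bound for $u(s)$ inside the paraproduct estimate.

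Finally, the time-Hölder norm $C^{(1/2+\gamma)/2}_T L^\infty$ follows from the spatial bound via the standard parabolic scaling: for $0 \leqslant s < t \leqslant T$,
\begin{equs}
\|u(t) - u(s)\|_{L^\infty} \leqslant \|(P_{t-s} - \mathrm{Id}) u(s)\|_{L^\infty} + \Bigl\|\int_s^t P_{t-r}(\xi_{\mathrm{stat}} u(r))\, dr\Bigr\|_{L^\infty},
\end{equs}
where the first term is bounded by $(t-s)^{(1/2+\gamma)/2} \|u(s)\|_{\mathcal{C}^{1/2+\gamma}}$ via the heat-semigroup smoothing, and the second by Schauder applied to the restricted integration interval. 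Taking expectations and using the moment bound from the first part concludes the proof.
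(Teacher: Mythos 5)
Your proposal correctly identifies the central obstruction — the naive singular Gronwall inequality outputs a bound of the form $\exp\bigl(CT\|\xi_{\mathrm{stat}}\|_{\mC^{-1/2-\ve}}^{2/(1-\gamma-\ve)}\bigr)$ whose super-quadratic exponent cannot be integrated against the Gaussian tails of $\|\xi_{\mathrm{stat}}\|_{\mC^{-1/2-\ve}}$ — but the route you propose around it is circular. The paper's own proof of this lemma is a one-line application of the deterministic analytic estimate Lemma~\ref{lem:sub-gaussian-sol-map}, whose entire purpose is to fix that exponent: by substituting the milder $L^p$ bound \eqref{eqn:prf-moment-1d-2} back into \eqref{eqn:prf-moment-1d-1} a finite number of times, the exponent governed by $\alpha_1 \simeq 1 - (\beta+\alpha)/2 = (1-\gamma-\ve)/2$ is replaced by one governed by $\alpha_3 \simeq 1-\alpha/2$, yielding $\exp\bigl(C\|\xi\|^{2/(2-\alpha-\delta)}\bigr)$ with $2/(2-\alpha-\delta)<2$, so Fernique applies. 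Your Doob-$H$-transform detour does not actually avoid this. The moment bound on $\zeta$ you cite from Corollary~\ref{cor:bound-eigenvalue} is proved in the paper via Step~2 of Lemma~\ref{lem:moment-bound-invariant-msr-1D-1n}, which relies on Lemma~\ref{lem:sub-gaussian-sol-map}; and the construction of the eigenpair $(\zeta,\psi)$ in Lemma~\ref{lem:eigenfunction} itself cites the very Lemma~\ref{lem:moment-bound-solution-1D-wn} you are proving for the compactness and regularity of the semigroup.

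Beyond the circularity there are two concrete gaps. The ``exponential version of Lemma~\ref{lem:bdd-avrg-mu}'' is not available in the paper — only $\EE[\mu]<\infty$ is proved — and Corollary~\ref{cor:bound-eigenvalue} gives $\EE e^{\sigma|\zeta|}<\infty$ only for \emph{some small} $\sigma>0$; since $\mu \lesssim |\zeta|+\|\Xi\|_\infty$, this does not deliver $\EE e^{p\mu}<\infty$ for arbitrary $p\geqslant 1$, which your bound for $\|u(t)\|_{L^\infty}$ requires. More seriously, the bootstrap from $L^\infty$ to $\mC^{1/2+\gamma}$ does not close: the resonant product $\xi_{\mathrm{stat}} \reso u(s)$ in the Duhamel integral is only estimated via $\|\xi_{\mathrm{stat}}\|_{\mC^{-1/2-\ve}}\|u(s)\|_{\mC^{1/2+\gamma}}$ (the sum of regularities must be strictly positive to apply Lemma~\ref{lem:paraproduct-estimate}), so the $L^\infty$ a priori bound on $u(s)$ cannot replace the $\mC^{1/2+\gamma}$ norm inside the Volterra kernel, and one falls back onto exactly the singular Gronwall from which you started. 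The missing idea is precisely the iteration carried out in the proof of Lemma~\ref{lem:sub-gaussian-sol-map}.
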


\begin{proof}
For every $ \omega \in \Omega$ we can apply
Lemma~\ref{lem:sub-gaussian-sol-map} with $ \xi(t,x) =
\xi_{\mathrm{stat}}(\omega, x) $ and parameters $ \alpha =
-\frac{1}{2} - \ve,$ for some $ \ve \in (0, 1) $, and $\beta = \beta_{0} = \frac{1}{2} + \gamma $ to find that
for any $ \delta \in (0, 1) $ for a deterministic constant $ C(\delta, T)>0 $
\begin{equs}
 \| u (\omega) \|_{C^{\frac{1/2 + \gamma}{2}}_{T}
L^{\infty}}^{p} + \| u(\omega) \|_{ L^{\infty}_{T} \mC^{\frac{1}{2} +
\gamma}}^{p} \lesssim C \exp \Big( C p \| \xi_{\mathrm{stat}}(\omega) \|_{\mC^{- \frac{1}{2} - \ve }}^{
\frac{4}{3-2 \ve - \delta}}  \Big).
\end{equs}
If $ \ve, \delta $ are chosen sufficiently small we find that \(
\frac{4}{3 - 2 \ve - \delta} < 2 \). In this case we obtain $$ \EE \Big[\exp
\Big( C p \| \xi_{\mathrm{stat}} \|_{\mC^{- \frac{1}{2} - \ve }}^{\frac{4}{3-2 \ve - \delta}}
\Big) \Big] < \infty$$by Fernique's theorem \cite[Theorem 4.1]{Ledoux} since $ \xi_{\mathrm{stat}} $ is a Gaussian random
variable that satisfies $ \EE \| \xi_{\mathrm{stat}}
\|_{\mC^{-\frac{1}{2} - \ve}}^{2} < \infty $ for any $ \ve>0 $ (this follows
along standard computations, or by observing that $ \xi_{\mathrm{stat}} $ is
the distributional derivative of a periodic Brownian motion).
\end{proof} \\
Next we establish a uniform bound on the projective invariant measure.

\begin{lemma}\label{lem:moment-bound-invariant-msr-1D-1n}
Under Assumption~\ref{assu:white-noise}, for any $ \tau >0 $ let $
z_{\infty}(\tau) \colon \Omega \to \mathrm{Pr} $ be defined as in
Proposition~\ref{prop:existence-proj-invariant-measure}. Then for any $ p
\geqslant 2,  \gamma \in
(0, 1)$
\begin{equs}
\sup_{\tau \in (0, \infty)} \EE \| z_{\infty}(\tau) \|_{\mC^{\frac{1}{2} + \gamma}}^{p} < \infty.
\end{equs}
\end{lemma}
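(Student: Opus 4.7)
The plan is to mirror the structure of Lemma~\ref{lem:moment-bound-invariant-measure}, but since $\|\xi_{\mathrm{stat}}\|_{\infty}=\infty$ a.s. under Assumption~\ref{assu:white-noise}, the maximum-principle lower bound used in the regular case must be replaced by the quantitative bounds on the fundamental solution of the Anderson Hamiltonian from \cite{perkowski2020quantitative}. The starting point is the invariance property in Proposition~\ref{prop:existence-proj-invariant-measure}: fix $n=n(\tau)\in\NN$ with $n\tau\in[1,2]$ and write
\[
z_\infty(\tau,\vartheta^n\omega) \;=\; \frac{\big[\prod_{i=1}^n e^{\tau H^i(\omega)}\big] z_\infty(\tau,\omega)}{\int_{\TT}\big[\prod_{i=1}^n e^{\tau H^i(\omega)}\big] z_\infty(\tau,\omega)(x)\,\ud x} \;=\; \frac{u(n\tau,\cdot)}{\int_{\TT} u(n\tau,x)\,\ud x}.
\]
The crucial point, which we exploit throughout, is that the driving noise on $[0,n\tau]$ is independent of $z_\infty(\tau,\omega)$, viewed as sampled on the independent probability space $\omega'$.

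For the \emph{denominator}, I would invoke the quantitative lower bound on the fundamental solution $\Gamma_t(\omega;x,y)$ of $\partial_t - H$ obtained by the techniques of \cite{perkowski2020quantitative}, already used in Lemma~\ref{lem:Lipschitz-Invariant}: there exists a random constant $c(\omega)>0$ with $\EE c^{-p}<\infty$ for every $p\geqslant 1$ such that $\Gamma_{n\tau}(\omega;x,y)\geqslant c(\omega)$ for every $x,y\in\TT$ and every $n\tau\in[1,2]$. Since $u(n\tau,x)=\int_{\TT}\Gamma_{n\tau}(\omega;x,y)z_\infty(\tau,\omega')(y)\,\ud y$ and $\int z_\infty(\tau,\omega')=1$, this yields $\int_{\TT} u(n\tau,x)\,\ud x\geqslant c(\omega)$ pathwise.

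For the \emph{numerator}, I would use parabolic smoothing to bootstrap the regularity. On $[0,1/2]$, starting from $z_\infty(\tau,\omega')\in L^1$ with unit mass, I would combine the corresponding upper bound on $\Gamma_t$ and a regularity estimate for $\Gamma_t(\cdot,y)$ (uniform in $y$ and with moments, again from \cite{perkowski2020quantitative} and Lemma~\ref{lem:sub-gaussian-sol-map}) to deduce
\[
\EE\big[\|u(1/2)\|^p_{\mathcal{C}^{\frac{1}{2}+\gamma}}\big] \;\lesssim\; \EE\big[\sup_{y\in\TT}\|\Gamma_{1/2}(\omega;\cdot,y)\|^p_{\mathcal{C}^{\frac{1}{2}+\gamma}}\big]\;<\;\infty,
\]
with a bound that depends on $z_\infty(\tau,\omega')$ only through $\|z_\infty(\tau,\omega')\|_{L^1}=1$, hence is uniform in $\tau$. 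On the remaining interval $[1/2,n\tau]\subseteq[1/2,2]$, I would apply Lemma~\ref{lem:moment-bound-solution-1D-wn} with initial condition $u(1/2)$ and the (independent) increments of the noise on this interval to get $\EE\|u(n\tau)\|^p_{\mathcal{C}^{\frac{1}{2}+\gamma}}\lesssim \EE\|u(1/2)\|^p_{\mathcal{C}^{\frac{1}{2}+\gamma}}<\infty$, uniformly in $\tau$.

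Combining the two estimates via H\"older's inequality gives
\[
\EE\big[\|z_\infty(\tau)\|^p_{\mathcal{C}^{\frac{1}{2}+\gamma}}\big] \;\leqslant\; \big(\EE c(\omega)^{-2p}\big)^{1/2}\big(\EE\|u(n\tau)\|^{2p}_{\mathcal{C}^{\frac{1}{2}+\gamma}}\big)^{1/2},
\]
which is finite uniformly in $\tau\in(0,\infty)$. The main obstacle is the smoothing step on $[0,1/2]$: the initial condition $z_\infty(\tau,\omega')$ has no a priori control beyond $\int z_\infty=1$, so one cannot apply Lemma~\ref{lem:moment-bound-solution-1D-wn} directly. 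The resolution relies on the fact that the Perkowski bounds give regularity of the fundamental solution $\Gamma_t(\cdot,y)$ uniformly in $y$ with good stochastic moments, which in turn transfers to a priori smoothing for arbitrary $L^1$ initial data; this is precisely the place where the singular-SPDE tool kit (in the guise of \cite{perkowski2020quantitative}) is essential and where the regular-noise argument of Lemma~\ref{lem:moment-bound-invariant-measure} breaks down.
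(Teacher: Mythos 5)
Your overall architecture (lower bound on the $L^1$-mass via a quantitative heat-kernel lower bound, upper bound on the H\"older norm via parabolic smoothing, combine by Cauchy--Schwarz, exploit independence of $z_\infty(\tau,\omega')$ from the forward noise) matches the paper's proof. However, there is a genuine gap in the denominator step. You fix $n\tau\in[1,2]$ and claim a lower bound $\Gamma_{n\tau}(\omega;x,y)\geqslant c(\omega)$ with $\EE\,c^{-p}<\infty$ for every $p\geqslant 1$. The quantitative bound coming from \cite{perkowski2020quantitative} (applied, after the transform $u=e^{\mathcal{I}(\xi^\tau)}w$, to the drift equation) has the form $\Gamma_t(x,y)\gtrsim \exp(-tC_2 A_t^2)\,p_{\kappa t}(x-y)$ with $A_t=\|\mathcal{I}(\xi^\tau)\|_{L^\infty_t\mC^{1+\gamma}}$ a Gaussian-type norm. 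Then $c(\omega)^{-p}\sim\exp(ptC_2 A_t^2)$, and by Fernique this is integrable only when $ptC_2$ is below a fixed threshold $\sigma$; at $t\sim 1$ this restricts $p\lesssim\sigma/C_2$ and cannot hold for all $p$. The paper resolves exactly this by evaluating at a time $t_*(p)\sim 1/p$ chosen small enough that $\EE\,\|u_{t_*(p)}\|_{L^1}^{-2p}<\infty$, and then handles $\tau>\tau_*(p)$ by restarting the argument on a short window of length $\sim 1/p$ inside a single noise block, using that only $\|u_0\|_{L^1}=1$ was used. Your fixed window $[1,2]$ does not allow this $p$-dependent tuning and the lower-bound step breaks for large $p$.

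A secondary remark: for the numerator you propose to control $\sup_y\|\Gamma_{1/2}(\cdot,y)\|_{\mC^{1/2+\gamma}}$ via \cite{perkowski2020quantitative}, but that reference gives pointwise Gaussian-type upper and lower bounds on the kernel, not stochastic moment bounds on its H\"older norm. The tool the paper actually uses is the analytic Lemma~\ref{lem:sub-gaussian-sol-map}, which is designed to start from $L^1$ data: it produces a bound $t^{\zeta}\|u_t\|_{\mC^{\beta}_{p}}\lesssim\|u_0\|_{L^1}\exp(C\|\xi\|^{4/(3-4\varepsilon)}_{L^q\mC^{-1/2-\varepsilon}})$ with exponent strictly less than $2$, so Fernique gives all positive moments uniformly in $\tau$; bootstrapping on $[0,t/2]$ and $[t/2,t]$ then lands in $\mC^{1/2+\gamma}$. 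So the obstacle you flag at the end (``one cannot apply Lemma~\ref{lem:moment-bound-solution-1D-wn} directly'') is precisely what Lemma~\ref{lem:sub-gaussian-sol-map} is built to handle, and leaning on it is more robust than trying to extract H\"older moments of the kernel from the PvZ bounds.
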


\begin{proof}
From the invariance of $ z_{\infty} $ we know that for any $ t > 0 $, $ z_{\infty}(\tau) =
u(n(t) \tau ) / \| u (n (t) \tau) \|_{L^{1}} $ in distribution, where $ n(t) =
\min \{ n \in \NN  \ \colon \  \tau n \geqslant t \} $ and for every $ \omega \in \Omega $,  $ u(\omega) $ is the solution to the equation
\begin{equs}
\partial_{t} u(\omega) = \Delta u(\omega) + \xi^{\tau}(\omega) u(\omega) ,
\qquad u(\omega, 0, \cdot) = z_{\infty}(\tau, \omega).
\end{equs}
Hence our result follows from an upper bound on the moments of $ \| u(n(t) \tau)
\|_{\mC^{\frac{1}{2} + \gamma}} $ and a lower bound on the moments of $ \|
u(n(t) \tau) \|_{L^{1}} $, for some $ t>0$ appropriately chosen, and uniformly
over $ \tau $. Here the point is
that at any positive time the heat semigroup has smoothened the initial
condition, from $ L^{1} $ to $ \mC^{\frac{1}{2} + \gamma} $, while at the same time
our bounds show that the total mass may decrease at most by a factor $ \exp(- C
n(t) \tau| x|^{2}) $, where $ x $ is roughly some linear
functional of the Gaussian noise and $ C>0 $ deterministic, so $ t $ needs
to be small enough to ensure the integrability of negative moments of this
quantity.

\textit{Step 1: Lower bound on $ \| u \|_{L^{1}} $.} Recall that by the
strong maximum principle $ u(\omega, \tau n(t), x) >0, \ \forall x \in \TT.$ We will show
that for every $ \omega \in \Omega $
\begin{equs}
\int_{\TT} u(\omega, \tau n(t), x) \ud x \geqslant C(\omega, t, \tau) \int_{\TT} u(0, x) \ud
x = C(\omega, t, \tau),
\end{equs}
where we used that \( \int_{\TT} z_{\infty}(\tau, x) \ud x =1 \), and
the crux of the argument will be that $ C(\omega, t, \tau) $ satisfies $
\sup_{\tau \in (0, 1)}  \EE \frac{1}{| C(t, \tau) |^{p}} < \infty $ for certain
combinations of $ p $ and $ t $.

In the following calculations we consider $
\omega \in \Omega $ fixed, so we omit writing the dependence on it.
Let $ \mathcal{I}(\xi^{\tau})(t) = \int_{0}^{t} P_{t-s} (\xi^{\tau}(s)) \ud s
$. Then we can decompose $ u_{t} = e^{\mI(\xi^{\tau})_{t}} w_{t} $, with $
w $ the solution to
\begin{equs}
\partial_{t} w = \Delta w + 2 (\partial_{x} \mI(\xi^{\tau}))
\partial_{x} w + ( \partial_{x} \mI(\xi^{\tau}))^{2} w, \qquad w(0, x) =
z_{\infty}(\tau, x).
\end{equs}
By Lemma~\ref{lem:bounds-linear-solution-wn}, $ \mI(\xi^{\tau}) $ takes values
in $ \mC^{1 + \gamma} , $ for any $ \gamma \in (0, 1/2) $. Hence let us define
$ A_{t} = \| \mI(\xi^{\tau}) \|_{L^{\infty}_{t}
\mC^{1 + \gamma}(\TT)} $. By comparison we find that \( w_{t} (x) \geqslant
\widetilde{w}_{t}(x), \) with $ \widetilde{w} $ the solution to
\begin{equs}
\partial_{t} \widetilde{w} = \Delta \widetilde{w} + 2(\partial_{x}
\mI(\xi^{\tau})) \partial_{x} \widetilde{w}, \qquad \widetilde{w}(0, x) =
z_{\infty}(\tau, x).
\end{equs}
We can write $ \widetilde{w} (t, x) = \int_{\TT} \Gamma_{t}(x, y)
z_{\infty}(\tau, y) \ud y $, where $ \Gamma_{t} $ is the fundamental solution
to the previous PDE:
\begin{equs}
\partial_{t} \Gamma = \Delta_{x} \Gamma + 2 (\partial_{x} \mI(\xi^{\tau}))
\partial_{x} \Gamma , \qquad \Gamma_{0}(x, y) = \delta_{y}(x),
\end{equs}
with \( \delta_{y} \) the Dirac delta function centered at $ y. $ Now one can
find quantitative lower bounds to $ \Gamma $ in terms of the heat kernel, see
e.g.\ \cite[Theorem 1.1]{perkowski2020quantitative}. The quoted article
considers the more complicated setting of a
distribution valued drift on infinite volume, but the same arguments show that
\begin{equs}
\Gamma_{t}(x, y) \geqslant C_{1} \exp \Big( - t C_{2} A_{t}^{2} \Big)
p_{ \kappa t}(x-y),
\end{equs}
where $ C_{1}, C_{2}, \kappa > 0 $ are deterministic constants and $
p_{t}(x) $ is the periodic heat kernel. In particular, we obtain
\begin{equs}
\int_{\TT} \widetilde{w}_{t}(x) \ud x \geqslant C_{1} e^{- C_{2} t
A_{t}^{2}} \int_{\TT} z_{\infty}(y) \ud y = C_{1} e ^{- C_{2} t A_{t}^{2}}.
\end{equs}
Overall, we have obtained that
\begin{equs}
\int_{\TT} u_{t} (x) \ud x & \geqslant e^{- t A_{t}} \int_{\TT} w_{t}(x) \ud x
\geqslant \exp \Big( - t  A_{t} \Big) \int_{\TT}
\widetilde{w}_{t}(x) \ud x \geqslant C_{1} \exp \Big( - t \Big( A_{t} + C_{2} A_{t}^{2} \Big) \Big).
\end{equs}
Now, by Lemma~\ref{lem:bounds-linear-solution-wn} there exists a $
\sigma(\gamma) > 0 $ such that for any $ p
\geqslant 1 $ and $ t_{*}(p) = \frac{\sigma}{2 (1 + C_{2} ) p} \wedge 1 $ we have
\begin{equs}[eqn:prf-invariant-wn-1]
\sup_{\tau \in (0, \infty)} \EE \sup_{0 \leqslant t \leqslant t_{*}(p)} & \| u_{t}
\|_{L^{1}}^{- 2 p} \\
& \lesssim \sup_{\tau \in (0, \infty)} \EE \exp \Big(
\sigma  \| \mI(\xi^{\tau}) \|_{C([0, 1]; \mC^{1 +\gamma} ) }^{2}  \Big) < \infty.
\end{equs}

\textit{Step 2: Upper bound on $ \| u \|_{\mC^{\frac{1}{2} + \gamma}} $.} 
Let us start by observing that for any $ \ve > 0, \ q \geqslant 1 $
\begin{equs}[eqn:prf-wn-invariant-uniform-bound-Lq]
\sup_{\tau \in (0, \infty)}\EE \int_{0}^{1} \| \xi^{\tau}(s) \|^{q}_{\mC^{-
\frac{1}{2} - \ve}} \ud s = \EE \| \xi_{\mathrm{stat}} \|_{\mC^{-
\frac{1}{2} - \ve}}^{q} < \infty.
\end{equs}
Hence we see that $ \xi^{\tau} $ takes values in $ L^{q}([0, 1]; \mC^{-
\frac{1}{2} - \ve}) $ for all $ \ve>0 , q \geqslant 1 $. In particular, for any
$ t \in (0,1) $ we can apply
Lemma~\ref{lem:sub-gaussian-sol-map} to obtain that for all $ \ve >0 $
sufficiently small
\begin{equs}
\| u_{t/2} \|_{B^{\frac{3}{2} - 2 \ve }_{1, \infty}} \leqslant C_{3}(\ve,
t) \| u_{0} \|_{L^{1}} \exp \Big( C_{4}(\ve) \| \xi^{\tau} \|^{\frac{4}{3-4
\ve}}_{L^{q(\ve)}( [0, 1]; \mC^{- \frac{1}{2} - \ve} )} \Big),
\end{equs}
where $ C_{3}, C_{4} > 0 $ are deterministic constants and we allow $ C_{3} $
to depend on $ t $ to incorporate the explosion at time $ t = 0 $. Now, by
Besov embedding we have that $ B^{\frac{3}{2} - 2 \ve}_{1, \infty} \subseteq
\mC^{\frac{1}{2} - 2 \ve} $, so that we can follow the same argument on the
interval $ [t/2, t] $ to obtain (up to increasing the value of $ C_{3},
C_{4} $):
\begin{equs}
\| u_{t} \|_{\mC^{\frac{3}{2} - 2 \ve}} & \leqslant C_{3}(\ve,
t) \| u_{t/2} \|_{L^{\infty}} \exp \Big( C_{4}(\ve) \| \xi^{\tau} \|^{\frac{4}{3-4
\ve}}_{L^{q(\ve)}([0,1]; \mC^{- \frac{1}{2} - \ve} )} \Big) \\
& \leqslant  C_{3}(\ve,
t)^{2} \| u_{0} \|_{L^{1}} \exp \Big( 2 C_{4}(\ve) \| \xi^{\tau} \|^{\frac{4}{3-4
\ve}}_{L^{q(\ve)}( [0, 1]; \mC^{- \frac{1}{2} - \ve} )} \Big).
\end{equs}
Since $ \| u_{0} \|_{L^{1}} = 1 $, and since for $ \ve $ small both $
\frac{4}{3-2 \ve} < 2 $ and $ \frac{1}{2} + \gamma \leqslant \frac{3}{2}
- 2 \ve $, by Fernique's theorem \cite[Theorem 4.1]{Ledoux} and
\eqref{eqn:prf-wn-invariant-uniform-bound-Lq}, for any $ a > 0 $
\begin{equs}
\sup_{\tau \in (0, \infty)} \EE \sup_{t \in (a,1)}  \| u_{t} \|_{\mC^{\frac{1}{2} +
\gamma}}^{2p} < \infty.
\end{equs}

\textit{Step 3: Conclusion.} Now there exists a $ \tau_{*} (p) $ such that for
all $ \tau \in (0, \tau_{*}(p)) $ we have
\begin{equs}
\tau n( t_{*}(2p)/2) \leqslant t_{*}(2p).
\end{equs}
Then the results at the previous points imply:
\begin{equs}
\sup_{\tau \in (0, \tau_{*}(p)]} \EE \| z_{\infty}(\tau) \|_{\mC^{\frac{1}{2} +
\gamma}}^{p} & = \sup_{\tau \in (0, \tau_{*}(p)]} \EE \|
u_{\tau n ( \tau_{*}(2p)/2 ) } / \| u_{\tau n ( \tau_{*}(2p)/2 ) } \|_{L^{1}}
\|_{\mC^{\frac{1}{2} + \gamma}}^{p} \\
& \leqslant \sup_{\tau \in (0, \tau_{*}(p)]} \Big( \EE \Big[ \| u_{\tau n ( \tau_{*}(2p)/2 ) } \|_{\mC^{\frac{1}{2} + \gamma}}^{2p} \Big]
\Big)^{\frac{1}{2}} \Big( \EE \Big[ \| u_{\tau n ( \tau_{*}(2p)/2 )} \|_{L^{1}}^{- 2 p} \Big]
\Big)^{\frac{1}{2}} < \infty.
\end{equs}
To conclude the proof of the lemma we have to consider the case $ \tau \in
(\tau_{*}(p), \infty) $. Here we observe that in
all the bounds in steps $ 1 $ and $ 2 $ we did not use any other information on
the initial condition $ z_{\infty} (\tau) $ than $ \| z_{\infty} (\tau)
\|_{L^{1}}  = 1.$ Then for $ \zeta =  \tau_{*}(p) \wedge t_{*}(2p) $ we have
$ z_{\infty}(\tau) \stackrel{d}{=} S_{\tau} (\zeta) u_{0}  / \| S_{\tau}
(\zeta) u_{0} \|_{L^{1}} $, where $ u_{0} = S_{\tau}(\tau - \zeta) z_{\infty}(\tau) / \| S_{\tau}
(\tau - \zeta) z_{\infty}(\tau) \|_{L^{1}}   $ and $ S_{\tau} $ is the solution map to
Equation~\eqref{eqn:time-Anderson} with $ \xi = \xi^{\tau} $, chosen
independent of $ z_{\infty}(\tau) $. Then we can follow verbatim the
calculations above, by using that $ \| u_{0} \|_{L^{1}} =1 $ to obtain the
required result. 
\end{proof} \\
A consequence of this result if the following bound on the largest eigenvalue
of the operator $ \Delta + \xi_{\mathrm{ stat}} $.

\begin{corollary}\label{cor:bound-eigenvalue}
Under Assumption~\ref{assu:white-noise}, let $ \gamma $ be the largest eigenvalue of the operator $ \Delta +
\xi_{\mathrm{stat}} $, as in Lemma~\ref{lem:eigenfunction}. Then there exists a
$ \sigma > 0 $ such that
\begin{equs}
 \EE e^{\sigma | \gamma |} < \infty.
\end{equs}
\end{corollary}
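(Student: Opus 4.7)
The plan is to reduce the two-sided control of $\gamma$ to pointwise upper and lower bounds for the solution $u(t) = e^{t(\Delta + \xi_{\mathrm{stat}})} 1$ at a fixed time $t=1$, and then to apply to each side the sub-Gaussian estimates already developed for white noise, together with Fernique's theorem.

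The starting point is self-adjointness of $(\Delta + \xi_{\mathrm{stat}})$ in the Fukushima--Nakao sense. Denoting by $\psi$ the top eigenfunction of Lemma~\ref{lem:eigenfunction} (normalised by $\int_{\TT} \psi\, dx = 1$), we have
\begin{equs}
e^{\gamma} = e^{\gamma} \int_{\TT} \psi(x)\, dx = \int_{\TT} \big[ e^{(\Delta + \xi_{\mathrm{stat}})} \psi \big](x)\, dx = \int_{\TT} u(1, x)\, \psi(x)\, dx,
\end{equs}
where the last equality uses self-adjointness of the semigroup. Since $\psi \geqslant 0$ and integrates to one, this yields
\begin{equs}
\min_{x \in \TT} u(1, x) \;\leqslant\; e^{\gamma} \;\leqslant\; \max_{x \in \TT} u(1, x),
\end{equs}
and therefore
\begin{equs}
|\gamma| \;\leqslant\; \log \| u(1) \|_{\infty} + \log \frac{1}{\min_{x} u(1, x)}.
\end{equs}
It is thus enough to exhibit $\sigma > 0$ for which $\| u(1) \|_{\infty}^{\sigma}$ and $(\min u(1))^{-\sigma}$ are integrable.

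For the upper bound on $u(1)$, I would invoke Lemma~\ref{lem:moment-bound-solution-1D-wn} with $u_0 = 1$, $\gamma = \frac{1}{4}$ (say) and the sub-Gaussian estimate of Lemma~\ref{lem:sub-gaussian-sol-map} that underlies it: it gives a pointwise bound
\begin{equs}
\| u(1) \|_{\infty} \lesssim \exp\bigl( C \| \xi_{\mathrm{stat}} \|_{\mC^{-\frac{1}{2} - \varepsilon}}^{\frac{4}{3 - 2\varepsilon - \delta}} \bigr),
\end{equs}
with exponent strictly less than $2$ for $\varepsilon, \delta$ small. Since $\xi_{\mathrm{stat}}$ is a Gaussian field with finite second moment in $\mC^{-\frac{1}{2} - \varepsilon}$, Fernique's theorem yields $\EE \| u(1) \|_{\infty}^{\sigma} < \infty$ for a sufficiently small $\sigma > 0$, and hence $\EE \exp(\sigma\, \gamma_+) < \infty$.

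For the lower bound $\min_x u(1, x)$, I would repeat the argument from Step 1 of the proof of Lemma~\ref{lem:moment-bound-invariant-msr-1D-1n} in the time-independent setting: factorise $u_t = e^{\mathcal{I}(\xi_{\mathrm{stat}})(t)} w_t$ with $w$ solving a drift-only equation, compare $w$ with the solution $\widetilde{w}$ of the same equation without the zeroth-order term, and apply the quantitative heat-kernel lower bound of \cite{perkowski2020quantitative} to obtain
\begin{equs}
u(1, x) \geqslant C_1 \exp\bigl( - C_2 A^2 - A \bigr), \qquad A := \| \mathcal{I}(\xi_{\mathrm{stat}}) \|_{L^{\infty}_1 \mC^{1 + \gamma}},
\end{equs}
for some $\gamma \in (0, 1/2)$ and deterministic constants $C_1, C_2 > 0$. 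Schauder estimates give $A \lesssim \| \xi_{\mathrm{stat}} \|_{\mC^{-\frac{1}{2} - \varepsilon}}$, so $A^2$ is quadratic in a Gaussian quantity and Fernique's theorem yields $\EE \exp(\sigma(C_2 A^2 + A)) < \infty$ for some $\sigma > 0$. Combining the two estimates proves the claim.

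The main obstacle is the lower bound on $u(1, x)$: all the analytical work is already encoded in the quantitative fundamental-solution estimate borrowed from \cite{perkowski2020quantitative} and applied as in Lemma~\ref{lem:moment-bound-invariant-msr-1D-1n}. Once that ingredient is in place, the remainder of the argument is a clean application of self-adjointness and Fernique's theorem, and the value of $\sigma$ is dictated by the smallest admissible Fernique constant between the two steps.
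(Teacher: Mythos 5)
Your proof follows essentially the same route as the paper's own (very terse) argument: the upper tail of $\gamma$ is controlled by the sub-Gaussian bound on $\|u(1)\|_\infty$ coming from Lemma~\ref{lem:sub-gaussian-sol-map} (as in Step 2 of the proof of Lemma~\ref{lem:moment-bound-invariant-msr-1D-1n}), while the lower tail is controlled by the quantitative heat-kernel lower bound of \cite{perkowski2020quantitative} (as in Step 1), followed in both cases by Fernique. The self-adjointness identity $e^{\gamma} = \int_{\TT} u(1,x)\psi(x)\,\ud x$ giving the pinching $\min_{x} u(1,x) \leqslant e^{\gamma} \leqslant \max_{x} u(1,x)$ is a clean and explicit way to reduce to these two estimates, and usefully fills in a step that the paper leaves implicit.

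One small slip: the inequality $|\gamma| \leqslant \log\|u(1)\|_{\infty} + \log\frac{1}{\min_{x}u(1,x)}$ as written is false in general — for instance, if $u(1)\equiv 2$ the right-hand side vanishes while $|\gamma| = \log 2 > 0$. The correct deduction from the pinching is $|\gamma| \leqslant \max\bigl(|\log\max_{x} u(1,x)|,\ |\log\min_{x} u(1,x)|\bigr)$, or more simply one can treat $\EE e^{\sigma\gamma}$ and $\EE e^{-\sigma\gamma}$ separately (as the paper implicitly does). Either fix is immediate and the two moment bounds you invoke are exactly what is needed, so the argument goes through.
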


\begin{proof}
We can bound $ \EE e^{\sigma \gamma} < \infty$ for all $ \sigma > 0 $ by
similar calculations as in the upper bound presented in Step $ 2 $ of the proof
of Lemma~\ref{lem:moment-bound-invariant-msr-1D-1n}. In addition, there exists
a $ \sigma_{*} $ such that for all \( \sigma \in (0, \sigma_{*}) \) we have $
\EE e^{- \sigma \gamma} < \infty $ by following the same arguments that lead to
\eqref{eqn:prf-invariant-wn-1}.
\end{proof} \\
The previous bound builds on the following estimate on the linear equation with
additive noise.
\begin{lemma}\label{lem:bounds-linear-solution-wn}
Consider $ \mathcal{I}(\xi^{\tau}) $ defined by
\begin{equs}
\mathcal{I}(\xi^{\tau})(t) = \int_{0}^{t} P_{t-s}( \xi^{\tau} (s)) \ud s.
\end{equs}
Then for any $ \gamma \in (0, 1) $ and $ T>0 $ there exists a $
\sigma(\gamma, T) > 0 $ such that:
\begin{equs}
 \sup_{\tau \in (0, \infty) }  \EE \exp \Big( \sigma(\gamma, T) \|
\mathcal{I}(\xi^{\tau})  \|^{2}_{C([0, T];
\mC^{\frac{1}{2} + \gamma})} \Big) < \infty.
\end{equs}
\end{lemma}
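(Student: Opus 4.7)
Since $\mathcal{I}$ acts linearly on the Gaussian field $\xi^{\tau}$, the image $\mathcal{I}(\xi^{\tau})$ is a centered Gaussian random variable in the separable Banach space $B := C([0,T]; \mC^{\frac{1}{2}+\gamma}(\TT))$. By the quantitative form of Fernique's theorem (cf.\ \cite[Theorem 4.1]{Ledoux}), the admissible exponent in $\EE \exp(\sigma \|X\|_B^{2}) < \infty$ depends only on the weak second moment $\sigma_0^2 = \sup_{\ell \in B^*,\ \|\ell\|_{B^*}\le 1} \EE |\ell(X)|^2$, which is itself bounded by $\EE \|X\|_B^2$. Hence the lemma reduces to the uniform bound
\[
\sup_{\tau \in (0, \infty)} \EE \|\mathcal{I}(\xi^{\tau})\|_{B}^{2} < \infty.
\]

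To access the supremum norm in $(t,x)$ I would replace $B$ by a slightly larger ambient space that embeds continuously into it. Fix $\gamma' \in (\gamma, 1)$, $\beta \in (0, \tfrac{1}{2})$ and $p$ large so that $p^{-1} < \min(\gamma'-\gamma, \beta)$; in dimension one Besov embedding gives $B^{\frac{1}{2}+\gamma'}_{p,p}(\TT) \hookrightarrow \mC^{\frac{1}{2}+\gamma}(\TT)$, and Kolmogorov's continuity criterion reduces matters to the finiteness, uniformly in $\tau$, of
\[
\sup_{t \in [0,T]} \EE \|\mathcal{I}(\xi^{\tau})(t)\|_{B^{\frac{1}{2}+\gamma'}_{p,p}}^{p} + \EE \int_0^T \int_0^T \frac{\|\mathcal{I}(\xi^{\tau})(t) - \mathcal{I}(\xi^{\tau})(s)\|_{B^{\frac{1}{2}+\gamma'}_{p,p}}^{p}}{|t-s|^{1+\beta p}}\,ds\,dt.
\]
By Gaussian hypercontractivity applied blockwise, the $p$-th moment of each Littlewood--Paley piece $\Delta_j \mathcal{I}(\xi^{\tau})(t,x)$ is controlled by the $(p/2)$-power of its variance, so everything collapses to pointwise variance estimates.

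The core Fourier computation is as follows. Independence of the blocks $\xi^i_{\mathrm{stat}}$ together with the white-noise normalisation $\EE|\widehat{\xi^i_{\mathrm{stat}}}(k)|^2 = 1$ gives, for $k \in \ZZ \setminus \{0\}$,
\[
\mathrm{Var}\bigl(\widehat{\mathcal{I}(\xi^{\tau})}(t,k)\bigr) = \sum_{i=0}^{n(t)-1} \bigg( \int_{(i\tau)\vee 0}^{((i+1)\tau)\wedge t} e^{-(t-s)|k|^2}\,ds \bigg)^{2} \lesssim \min(\tau, |k|^{-2}) \cdot |k|^{-2},
\]
where the first factor bounds each summand pointwise and the second uses $\sum_i \int \cdot \le |k|^{-2}$. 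Summing against the Besov weight $(1+|k|^2)^{\frac{1}{2}+\gamma'}$ and splitting at $|k| \sim \tau^{-1/2}$: the high-frequency tail contributes $\sum_{|k|\gtrsim\tau^{-1/2}} |k|^{2\gamma'-3}$, convergent iff $\gamma' < 1$, while the low-frequency bulk contributes $\tau \sum_{|k|\lesssim\tau^{-1/2}} |k|^{2\gamma'-2} \lesssim \tau^{(3-2\gamma')/2}$, which is bounded as $\tau \to 0$. The time-increment variance follows by an analogous bookkeeping applied to the decomposition $\mathcal{I}(\xi^{\tau})(t) - \mathcal{I}(\xi^{\tau})(s) = (P_{t-s} - \mathrm{Id})\mathcal{I}(\xi^{\tau})(s) + \int_s^t P_{t-r}\xi^{\tau}(r)\,dr$, with the $|t-s|^{\beta}$ H\"older gain obtained by trading off a fraction of spatial regularity through $(P_{t-s}-\mathrm{Id})$.

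The main obstacle is ensuring uniformity across the scale transition $|k| \sim \tau^{-1/2}$, which is precisely where the two regimes meet (piecewise-constant-in-time noise behaving like a $\sqrt{\tau}$-rescaled space-time white noise at low frequencies, versus static spatial white noise at high frequencies). The restriction $\gamma < 1$ in the statement is exactly what keeps the high-frequency Besov sum convergent and hence closes the argument.
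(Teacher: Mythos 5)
Your proof is correct and follows essentially the same route as the paper: decompose the time increments as $(P_{t-r}-\mathrm{Id})\mathcal{I}(\xi^\tau)(r)$ plus the residual $\int_r^t P_{t-s}\xi^\tau(s)\,ds$, bound $p$-th moments of the Littlewood--Paley blocks, push through Besov embedding and the Kolmogorov criterion to reach $\sup_\tau\EE\|\mathcal{I}(\xi^\tau)\|_{C([0,T];\mathcal{C}^{1/2+\gamma})}^2 < \infty$, and finish with Fernique's theorem. The one visible deviation is how you pass from second moments of each block to $p$-th moments: you invoke Gaussian hypercontractivity, whereas the paper applies Rosenthal's inequality \cite[Theorem~2.9]{Petrov1995} to the independent-in-$i$ sum $\sum_i \int_{i\tau}^{(i+1)\tau}\langle P_{t-s}\xi^i_{\mathrm{stat}}, K_j^x\rangle\,ds$. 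Both are legitimate here; the paper's choice of Rosenthal is deliberate so that the identical inequality can be reused in the non-Gaussian regular-noise estimate of Lemma~\ref{lem-convergence-to-zero-Xi}, whereas hypercontractivity would only cover the white-noise case. Your explicit Fourier variance bound $\mathrm{Var}(\widehat{\mathcal{I}(\xi^\tau)}(t,k)) \lesssim \min(\tau,|k|^{-2})\,|k|^{-2}$ is correct and exposes the $|k|\sim\tau^{-1/2}$ crossover very cleanly; note only a small arithmetic slip in the low-frequency accounting: after weighting by $(1+|k|^2)^{1/2+\gamma'}\simeq |k|^{1+2\gamma'}$ the exponent is $2\gamma'-1$, not $2\gamma'-2$, so the bulk contribution is $\tau^{1-\gamma'}$ rather than $\tau^{(3-2\gamma')/2}$, which is still uniformly bounded for $\gamma'<1$ and does not affect the argument.
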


\begin{proof}
Our aim is to apply the Kolmogorov continuity criterion to control the time
continuity of $ \mathcal{I}(\xi^{\tau}) $. We can decompose an increment of the process as:
\begin{equs}
\mI(\xi^{\tau})(t) - \mI(\xi^{\tau})(r) = \int_{r}^{t}
P_{t-s}(\xi^{\tau}(s)) \ud s + (P_{t-r} - \mathrm{Id}) \int_{0}^{r}
P_{r-s}(\xi^{\tau}(s))  \ud s.
\end{equs}
For any $ \delta \in (0, 1) $ define $ \zeta =
\frac{1}{2} + \gamma + 2 \delta $. Then by Lemma~\ref{lem:schauder-estiamtes}:
\begin{equs}
\EE \| \mathcal{I}(\xi^{\tau})(t) - \mI(\xi^{\tau})(r)
\|_{\mC^{\frac{1}{2} + \gamma}}^{p} \lesssim & \EE \bigg( \int_{r}^{t}\|
P_{t-r}(\xi^{\tau}(s))\|_{\mC^{\frac{1}{2} + \gamma}}  \ud s \bigg)^{p} \\
& + (t-s)^{\delta p} \EE \bigg( \bigg\| \int_{0}^{r} P_{r-s}(\xi^{\tau}(s))
 \ud s \bigg\|_{\mC^{\zeta}}^{p}\bigg).
\end{equs}
Now we can assume $ \delta >0$ sufficiently small and $ p \geqslant 2 $
sufficiently large, so that for some $ \ve \in (0, 1) $ and $ \zeta^{\prime} =
\frac{3}{2} - 3 \ve $ we have the continuous embedding $ B^{\frac{3}{2} -
\ve}_{p,p} \subseteq \mC^{\zeta}  $. Then
\begin{equs}
\EE \bigg( \bigg\| \int_{r}^{t} P_{t-s} \xi^{\tau}(s) \ud s \bigg\|_{\mC^{\zeta}}^{p}
\bigg) & \lesssim \EE \bigg( \bigg\|
\sum_{i = n(r)-1}^{n(t)-1} \int_{i \tau \vee r}^{(i+1) \tau
\wedge t} P_{t-s} 
\xi_{\mathrm{stat}}^{i} \ud s \bigg\|_{B^{\zeta^{\prime}}_{p,p}}^{p}\bigg) \\
& \lesssim \sum_{j \geqslant -1} 2^{ j \zeta^{\prime} p}\int_{\TT}  \EE \bigg( \bigg| \sum_{i =
n(r)-1}^{n(t)-1} \int_{i \tau \vee r}^{(i+1) \tau \wedge t} \langle P_{t-s} 
\xi_{\mathrm{stat}}^{i}, K_{j}^{x} \rangle \ud s \bigg|^{p}\bigg) \ud
x,
\end{equs}
where $ n(t) = \min \{ n \in \NN  \ \colon \ \tau n \geqslant t\} $ and $
K_{j}^{x} $ is as in \eqref{eqn:paley-block}.
Next, since over $ i $ we have a sum of
independent random variables, we can use Rosenthal's inequality
\cite[Theorem 2.9]{Petrov1995} to bound uniformly in $ x , j $, similarly to the proof of
Lemma~\ref{lem-convergence-to-zero-Xi}:
\begin{equs}
\EE \bigg( \bigg| \sum_{i =
n(r)-1}^{n(t)-1} \int_{i \tau \vee r}^{(i+1) \tau \wedge t} & \langle P_{t-s} 
\xi_{\mathrm{stat}}^{i}, K_{j}^{x} \rangle \ud s \bigg|^{p}\bigg) \\
 & \lesssim_{p} 2^{ - \big( \frac{3}{ 2} - 2 \ve \big) j p} \bigg\{ \sum_{i = n(r)-1}^{n(t)-1}  \EE \Big[ \|
\xi_{\mathrm{stat}} \|_{\mC^{- \frac{1}{2} - \ve} }^{p} \Big] \bigg( \int_{i \tau \vee r}^{(i+1) \tau
\wedge t} (t-s)^{- \frac{2 - \ve}{2} } \ud s \bigg)^{p} \\
& \qquad \quad \quad \quad + \bigg( \sum_{i = n(r)-1}^{n(t)-1}\EE \Big[ \|
\xi_{\mathrm{stat}} \|_{\mC^{- \frac{1}{2} - \ve} }^{2} \Big] \bigg( \int_{i \tau \vee r}^{(i+1) \tau
\wedge t} (t-s)^{- \frac{2 - \ve}{ 2} } \ud s \bigg)^{2} \bigg)^{\frac{p}{2}} \bigg\} \\
& \lesssim  2^{ - \big( \frac{3}{ 2} - 2 \ve \big) j p} \bigg(
\int_{r}^{t}(t-s)^{- \frac{2 - \ve}{ 2} } \ud s \bigg)^{p}\simeq  2^{ - \big(
\frac{3}{ 2} - 2 \ve \big) j p} (t-r)^{p \ve /2},
\end{equs}
where we used the bound $ \sum_{i} a_{i}^{p} \leqslant \Big( \sum_{i} a_{i}
\Big)^{p} $ for $ a_{i} \geqslant 0 $ together with the bound
\begin{equs}
| \langle P_{t - s} \xi_{\mathrm{stat}}^{i}, K_{j}^{x} \rangle | \lesssim \|
P_{t- s} \xi_{\mathrm{stat}}^{i} \|_{\mC^{\frac{3}{2} - 2 \ve}} 2^{ -
( \frac{3}{2}- 2 \ve)j} \lesssim (t -s)^{- \frac{2 - \ve}{2}} \|
\xi_{\mathrm{stat}}^{i} \|_{\mC^{-\frac{1}{2} - \ve}}.
\end{equs}
Putting all the bounds together, we have proven that for $ p \geqslant 1
$ sufficiently large there exists a $ \delta^{\prime} > 0 $ such that
\begin{equs}
\sup_{\tau \in (0, \infty)} \EE \| \mathcal{I}(\xi^{\tau})(t) - \mI(\xi^{\tau})(r) \|_{\mC^{\frac{1}{2} +
\gamma}}^{p} \lesssim (t-r)^{1 + \delta^{\prime} p}.
\end{equs}
Since $ \mI(\xi^{\tau})(0) = 0 $ by the Kolmogorov continuity criterion this implies
\begin{equs}[eqn:prf-mI-uniform-bound] 
\sup_{\tau \in (0, \infty)} \EE \| \mI(\xi^{\tau}) \|_{C([0, T]; \mC^{\frac{1}{2}
+ \gamma})}^{2} < \infty.
\end{equs}
Finally, since $ \mI(\xi^{\tau}) $ is a Gaussian process, an application of Fernique's theorem
\cite[Theorem 4.1]{Ledoux} together with the uniform bound \eqref{eqn:prf-mI-uniform-bound}
complete the proof of the result.
\end{proof}

\section{An analytic estimate}\label{sec:an-analytic-estimate}

In this section we prove an analytic bound that is useful to control the
invariant projective measure associated to Assumption~\ref{assu:white-noise}
uniformly over small $ \tau $.

\begin{lemma}\label{lem:sub-gaussian-sol-map}
Consider $ \alpha \in (0, 1) $ and $ T > 0 $. Let $ u $ be the unique solution to
\begin{equs}
(\partial_{t} - \Delta) u (t,x) = \xi (t,x) u (t,x), \qquad u(0, x) =
u_{0}(x),
\end{equs}
on $ [0, T] \times \TT $, with $ \xi \in L^{q}([0,T]; \mC^{-\alpha}) $ for all $ q \geqslant
1 $, and with $ u_{0} \in \mC^{\beta_{0}}_{p} $, for some $
\beta_{0} \geqslant -\alpha $ and \( p \in [1, \infty]\). Then for any $ \alpha <
\beta < 2- \alpha, \ \zeta = \frac{( \beta - \beta_{0})_{+}}{2} \in [0, 1)$ and any $ \delta \in (0, 2 - \alpha) $ there exists a $q= q(\alpha, \beta, \delta) \geqslant 1 $ such that 
\begin{equs}
\| t \mapsto t^{\zeta}u_{t} \|_{C^{\frac{\beta}{2}}_{T} L^{p}} + \| t \mapsto t^{\zeta} u_{t}
\|_{L^{\infty}_{T} \mC^{\beta}_{p}} \leqslant C \| u_{0} \|_{\mC^{\beta_{0}}_{p}} \exp \Big( C \| \xi \|_{L^{q}([0, T];
\mC^{-\alpha})}^{\frac{2}{2 - \alpha - \delta }  } \Big),
\end{equs}
for a constant $ C = C(T, p, q, \alpha, \beta, \delta) > 0 $ independent of $ \xi $ and $
u_{0} $.
\end{lemma}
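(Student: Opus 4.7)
The plan is to exploit Duhamel's formula together with parabolic Schauder smoothing, the product estimate $\|\xi u\|_{\mathcal{C}^{-\alpha}_p}\lesssim \|\xi\|_{\mathcal{C}^{-\alpha}}\|u\|_{\mathcal{C}^{\tilde\beta}_p}$ (valid as soon as $\tilde\beta>\alpha$), and a short-interval iteration whose length is calibrated against $\|\xi\|_{L^q_T \mathcal{C}^{-\alpha}}$. Writing $u_t=P_t u_0+\int_0^t P_{t-s}(\xi_s u_s)\,\mathrm{d}s$ and applying $\|\cdot\|_{\mathcal{C}^{\tilde\beta}_p}$ at an auxiliary regularity $\tilde\beta\in(\alpha,\beta]$, the Schauder and product estimates yield
\[
t^{\tilde\zeta}\|u_t\|_{\mathcal{C}^{\tilde\beta}_p}\le C\|u_0\|_{\mathcal{C}^{\beta_0}_p}+C\,t^{\tilde\zeta}\int_0^t (t-s)^{-(\tilde\beta+\alpha)/2}\,s^{-\tilde\zeta}\,\|\xi_s\|_{\mathcal{C}^{-\alpha}}\,\big(s^{\tilde\zeta}\|u_s\|_{\mathcal{C}^{\tilde\beta}_p}\big)\,\mathrm{d}s,
\]
with $\tilde\zeta=(\tilde\beta-\beta_0)_+/2$. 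Applying Hölder in $s$ with conjugate exponents $q,q'$ extracts $\|\xi\|_{L^q_T\mathcal{C}^{-\alpha}}$ and leaves a Beta integral contributing $t^{\gamma}$ with $\gamma=\tfrac{1}{q'}-\tfrac{\tilde\beta+\alpha}{2}>0$ (which is legitimate provided $q$ is chosen large enough so the two Beta exponents lie in $(0,1)$).

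Setting $F(t)=\sup_{s\le t}s^{\tilde\zeta}\|u_s\|_{\mathcal{C}^{\tilde\beta}_p}$ this reads
\[
F(t)\le C\|u_0\|_{\mathcal{C}^{\beta_0}_p}+C\,t^{\gamma}\,\|\xi\|_{L^q_T\mathcal{C}^{-\alpha}}\,F(t).
\]
On $[0,t_*]$ with $t_*=(2C\|\xi\|_{L^q_T\mathcal{C}^{-\alpha}})^{-1/\gamma}$ the second term is absorbed, giving $F(t_*)\le 2C\|u_0\|_{\mathcal{C}^{\beta_0}_p}$. The argument is then reapplied on the consecutive intervals $[kt_*,(k+1)t_*]$, where from the second interval onward the initial datum already sits in $\mathcal{C}^{\tilde\beta}_p$, so $\tilde\zeta=0$ and the scheme closes with the same exponent $\gamma$. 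After $N=\lceil T/t_*\rceil=\mathcal{O}(T\|\xi\|_{L^q_T\mathcal{C}^{-\alpha}}^{1/\gamma})$ steps one obtains
\[
\sup_{t\le T}t^{\tilde\zeta}\|u_t\|_{\mathcal{C}^{\tilde\beta}_p}\le (2C)^N\|u_0\|_{\mathcal{C}^{\beta_0}_p}\le C'\|u_0\|_{\mathcal{C}^{\beta_0}_p}\exp\!\Big(C'\|\xi\|_{L^q_T\mathcal{C}^{-\alpha}}^{1/\gamma}\Big).
\]
The exponent $1/\gamma=\big(1-\tfrac{1}{q}-\tfrac{\tilde\beta+\alpha}{2}\big)^{-1}$ is then tuned by choosing $\tilde\beta$ just above $\alpha$ (related to $\delta$) and $q=q(\alpha,\beta,\delta)$ large, so that $1/\gamma\le 2/(2-\alpha-\delta)$.

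To pass from the intermediate-regularity bound to the statement's norms, I would run one further Schauder bootstrap: Duhamel from $t/2$ to $t$ combined with the just-established bound on $\|u_s\|_{\mathcal{C}^{\tilde\beta}_p}$ upgrades the control to $\mathcal{C}^\beta_p$ while contributing only a polynomial-in-$\|\xi\|_{L^q}$ prefactor, which is absorbed into the exponential by enlarging the constant $C$. The time-Hölder norm $\|t\mapsto t^{\zeta}u_t\|_{C^{\beta/2}_T L^p}$ is treated in the same loop by exploiting the identity $u_t-u_r=(P_{t-r}-\mathrm{Id})u_r+\int_r^t P_{t-s}(\xi_s u_s)\,\mathrm{d}s$, together with $\|(P_{t-r}-\mathrm{Id})v\|_{L^p}\lesssim(t-r)^{\beta/2}\|v\|_{\mathcal{C}^{\beta}_p}$ and the same Hölder-in-time-plus-Schauder estimate on the Duhamel integral.

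The main obstacle is calibrating the pair $(\tilde\beta,q)$ so that the natural iteration exponent $1/\gamma$, which lies above $2/(2-2\alpha)$ if one iterates directly at a regularity bounded away from $\alpha$, is actually brought down to the claimed $2/(2-\alpha-\delta)$. This forces $\tilde\beta\searrow\alpha$ and $q\nearrow\infty$ while keeping every Beta exponent strictly positive; the bootstrap step from $\mathcal{C}^{\tilde\beta}_p$ to $\mathcal{C}^\beta_p$ must then be performed in a single Duhamel application so that no further iteration (and hence no further contribution to the exponent) is generated, and one must verify that the $t^{\tilde\zeta}$ weight propagates correctly through this last step to give the stated $t^\zeta$ weighting on the left-hand side.
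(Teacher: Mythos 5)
Your iteration, run at a single auxiliary regularity $\tilde\beta>\alpha$ with the one-line product estimate $\|\xi u\|_{\mC^{-\alpha}_p}\lesssim\|\xi\|_{\mC^{-\alpha}}\|u\|_{\mC^{\tilde\beta}_p}$, produces the step exponent $\gamma = 1-\tfrac{1}{q}-\tfrac{\tilde\beta+\alpha}{2}$, which you correctly compute. The problem is that this exponent is not good enough: since the product estimate forces $\tilde\beta>\alpha$, one has $\gamma< 1-\alpha$ for every admissible choice of $\tilde\beta$ and $q$, hence $1/\gamma>\tfrac{2}{2-2\alpha}$. The lemma requires the exponent $\tfrac{2}{2-\alpha-\delta}$ for \emph{every} $\delta\in(0,2-\alpha)$, and $\tfrac{2}{2-\alpha-\delta}<\tfrac{2}{2-2\alpha}$ as soon as $\delta<\alpha$. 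So for small $\delta$ your scheme cannot close, no matter how you tune $(\tilde\beta,q)$; the ``calibration'' you flag as the main obstacle in your final paragraph is in fact an impossibility, not a delicate but achievable choice. The closing bootstrap from $\mC^{\tilde\beta}_p$ to $\mC^\beta_p$ is fine and essentially matches what the paper does, but it inherits the wrong exponent from the base iteration.

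The paper gets around exactly this obstruction by splitting the nonlinearity with paraproducts instead of using a single product estimate, and by running a \emph{two-norm} iteration. Writing $u\xi = u\para\xi + u(\reso+\rpara)\xi$, the piece $u\para\xi$ has regularity $-\alpha$ and Schauder cost $(t-s)^{-(\beta+\alpha)/2}$, but is controlled by the weaker norm $\|u\|_{L^p}$; the piece $u(\reso+\rpara)\xi$ has regularity $\beta-\alpha>0$ and only costs $(t-s)^{-\alpha/2}$, and is controlled by $\|u\|_{\mC^\beta_p}$. Crucially, the $L^p$ norm of $u$ itself admits a Duhamel estimate whose dominant Schauder singularity is $(t-s)^{-(\alpha+\ve)/2}$, not $(t-s)^{-(\beta+\alpha)/2}$. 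The paper then repeatedly substitutes the $L^p$ bound into the $\mC^\beta_p$ bound a finite number of times (the quantity $n$ chosen so that $\alpha_1+n\alpha_2(\ve)\geqslant(n+1)\alpha_3$), which converts every occurrence of the bad factor $t^{\alpha_1}$ into comparable powers of $t^{\alpha_3}$ with $\alpha_3=1-\tfrac{\alpha+\delta/2}{2}$. The resulting contraction window is $t\lesssim A^{-1/\alpha_3}$ with $1/\alpha_3=\tfrac{2}{2-\alpha-\delta/2}\leqslant\tfrac{2}{2-\alpha-\delta}$, which is what the lemma claims; this is done first for $\beta\leqslant\alpha+\delta/2$ and then lifted to general $\beta<2-\alpha$ by a one-shot Schauder bootstrap. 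So the missing idea in your proposal is precisely the paraproduct split and the nested iteration that decouples the $(\beta+\alpha)/2$ singularity from the $\mC^\beta_p$ norm; without it you can only reach the exponent $\tfrac{2}{2-2\alpha}$, which is weaker than what is stated.
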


\begin{remark}\label{rem:optimal-analytic-bound}
This result is a bit in the spirit of the quantitative estimates in
\cite{perkowski2020quantitative}.
That the exponential growth depends on the regularity of $ \xi $ should be
expected. For example, our estimate is in line -- up to the small factor $
\delta $ -- with the fact that the solution
to the $ 2D $ Anderson model driven by space white noise is integrable only for
short times, see e.g. \cite{gu2018moments}. In this case we would have $ \alpha = 1 - \ve $ for any $ \ve > 0
$, so the above estimate would deliver slightly more than quadratic bounds --
albeit if $ \alpha < 1 $ the estimate would need to incorporate additional
stochastic terms used to construct solutions with, say, regularity structures.
\end{remark}

\begin{proof}
The solution $ u $ as in the statement can be represented in mild form:
\begin{equs}
u_{t} = P_{t} u_{0} + \int_{0}^{t} P_{t-s} ( u_{s} \xi_{s}) \ud s.
\end{equs}

\textit{Step 1.} We start by establishing a bound on the spatial regularity,
i.e.\ on $ t^{\zeta}\| u_{t} \|_{\mC^{\beta}_{p}}. $
To lighten the notation, for a Banach space $ X $ and a map \( f \colon [0, \infty) \to X \) and $ t
> 0, \zeta \in [0, 1) $ we write $ \| u \|_{X_{\zeta, t}} $ for the norm
\begin{equs}
\| f \|_{X_{\zeta, t}} = \sup_{0 \leqslant s \leqslant t } s^{\zeta} \| f_{s} \|_{X}.
\end{equs}
If $ \zeta =0 $ we omit writing the dependence on it.
With this notation we can use the product estimates of
Lemma~\ref{lem:paraproduct-estimate} (observing that $ \beta - \alpha >0 $, so
that the resonant product below is well-defined) together with the Schauder estimates
of Lemma~\ref{lem:schauder-estiamtes} to bound, for some $ q \geqslant 1 $ sufficiently large: 
\begin{align*}
t^{\zeta} \| u_{t} \|_{\mC^{\beta}_{p}} & \lesssim \| u_{0}
\|_{\mC^{\beta_{0}}_{p}} + t^{\zeta} \int_{0}^{t} \| P_{t - s} (u_{s} \para \xi_{s})
\|_{\mC^{\beta}_{p}} + \| P_{t-s} (u_{s} (\reso + \rpara) \xi_{s}) \|_{\mC^{\beta}_{p}}  \ud s \\
& \lesssim \| u_{0} \|_{\mC^{\beta_{0}}_{p}} + t^{\zeta} \int_{0}^{t}(t
-s)^{-\frac{1}{2} \big(\beta + \alpha \big)} s^{- \zeta} \| u \|_{L^{p}_{\zeta, t}} \|
\xi_{s} \|_{\mC^{-\alpha}} \\
& \qquad \qquad \qquad \qquad + (t -s)^{- \frac{\alpha}{2}} s^{- \zeta} \| u
\|_{\mC^{\beta}_{p, \zeta, t}} \| \xi_{s} \|_{\mC^{-
\alpha}} \ud s \\
 & \lesssim \| u_{0} \|_{\mC^{\beta_{0}}_{p}} + t^{ \frac{1}{q^{\prime}} - \frac{\beta +
\alpha}{2}} \| u \|_{L^{p}_{\zeta, t}} A + t^{ \frac{1}{q^{\prime}} -
\frac{\alpha}{2} } \| u \|_{\mC^{\beta}_{p, \zeta, t}} A,
\end{align*} 
with $ A = \| \xi \|_{L^{q}_{T} \mC^{- \alpha}},$ and $ \frac{1}{q^{\prime}} +
\frac{1}{q} =1$. Here we have used that for
any $ \mu \in (0, 1)$ and $ f \colon [0, T] \to [0, \infty) $, by H\"older and a change of variables:
\begin{equs}
t^{\zeta}\int_{0}^{t} (t-s)^{- \mu} s^{- \zeta} f_{s} \ud s & \leqslant
t^{\zeta} \bigg(\int_{0}^{t} (t-s)^{- \mu q^{\prime}} s^{- \zeta q^{\prime}}
\ud s \bigg)^{\frac{1}{q^{\prime}}} \| f \|_{L^{q}_{t}}\\
&  \leqslant  t^{\frac{1}{q^{\prime}} - \mu} \bigg(
\int_{0}^{1} (1 - s)^{- \mu q^{\prime} } s^{- \zeta q^{\prime}} \ud s \bigg)^{\frac{1}{q^{\prime}}}
\| f \|_{L^{q}_{t}}  \lesssim t^{\frac{1}{q^{\prime}}  - \mu} \| f
\|_{L^{q}_{t}},
\end{equs}
where in the last step we assumed that $ q = q (\mu, \zeta) \geqslant 1 $ is
large enough so that the time integral we wrote is finite.
Since the right hand-side of our previous bound is increasing in $ t $ we can take the supremum over
all times up to $ t $ on the left hand-side to obtain 
\begin{equs}
\| u \|_{\mC^{\beta}_{p, \zeta, t}} & \lesssim \| u_{0}
\|_{\mC^{\beta_{0}}_{p}} + t^{ \frac{1}{q^{\prime}} - \frac{\beta +
\alpha}{2} } \| u \|_{L^{p}_{\zeta, t}} A + t^{\frac{1}{q^{\prime}} -
\frac{\alpha}{2} } \| u \|_{\mC^{\beta}_{p, \zeta, t}} A.
\end{equs}
In particular, let $ \alpha_{1} = \frac{1}{q^{\prime}} - \frac{\beta +
\alpha}{2} $. Then, since $ t^{\frac{1}{q^{\prime}} - \frac{\alpha}{2}}
\leqslant T^{\frac{\beta}{2}} t^{\alpha_{1}} $, there
exists a $ t_{*}(T) $, independent of $ A $, such that
\begin{equs}
\| u \|_{\mC^{\beta}_{p, \zeta, t}} \lesssim \| u_{0}
\|_{\mC^{\beta_{0}}_{p}}, \qquad  \forall t \in \left(0,
\frac{t_{*}}{A^{\frac{1}{\alpha_{1}}}} \right].
\end{equs}
This estimate guarantees that 
\begin{equs}[eqn:prf-moment-1d-small-time]
\sup_{0 < t \leqslant t_{*}/ A^{\frac{1}{\alpha_{1}}}} t^{\zeta}\| u_{t}
\|_{\mC^{\beta}_{p}} \lesssim \| u_{0}
\|_{\mC^{\beta_{0}}_{p}},
\end{equs}
so that in particular $ \| u_{t_{*} / A^{\frac{1}{\alpha_{1}}}}
\|_{\mC^{\beta}_{p}} \lesssim A^{\frac{\zeta}{\alpha_{1}}} \| u_{0}
\|_{\mC^{\beta_{0}}_{p}} $,
which tells us that by time $ t_{*} / A^{\frac{1}{\alpha_{1}}} $ the heat
semigroup has smoothened the initial condition and the regularity of the
solution is now governed by the forcing. Following this idea, we bound
the solution for times larger than $ t_{*} / A^{\frac{1}{\alpha_{1}}} $
differently (assuming $ t_{*} / A^{\frac{1}{\alpha_{1}}} \leqslant T $,
otherwise the proof is complete). Let us define $ v(t) = u (t_{*}/ A^{\frac{1}{\alpha_{1}}} + t) $.
We can follow the previous steps with $ \beta_{0} = \beta $ and
\(\zeta = 0\) to obtain for $q$ sufficiently large:
\begin{equs}[eqn:prf-moment-1d-1]
\| v \|_{\mC^{\beta}_{p,t}} & \lesssim \| v_{0}
\|_{\mC^{\beta}_{p}} + t^{\alpha_{1}} \| v \|_{L^{p}_{t}} A + t^{\alpha_{2}} \| v
\|_{\mC^{\beta}_{p, t}} A,
\end{equs}
with $ \alpha_{2} = \frac{1}{q^{\prime}} - \frac{\alpha}{2} $.
Now we would like to use Gronwall to obtain a bound that
depends only on $ v_{0} $ and $ A $. But this would lead to an estimate of the kind: \( \| v_{t}
\|_{\mC^{\frac{1}{2} + \gamma}} \lesssim C_1  \| v \|_{0} e^{ C_2 t
A^{ \frac{1}{\alpha_{1}}}}\) (see the discussion below). Since $
\alpha_{1} \simeq 1 - \frac{\beta + \alpha}{2} $ (for large $ q $) this is not of the correct
order for our result. Hence we have to take better care of
the $ \| v \|_{L^{p}_{t}} $ norm, to obtain roughly that the leading order term above
(for small $ t $) is of the order $ t^{\alpha_{2}} $, which would lead to
the required exponential bound. We find for any $ \ve > 0 $:
\begin{equs}
\| v_{t} \|_{L^{p}} & \lesssim \| v_{0} \|_{\mC^{\beta}_{p}} +
\int_{0}^{t} \| P_{t-s}( v_{s} \para \xi_{s}) \|_{L^{p}} + \| P_{t-s}
v_{s} (\reso + \rpara) \xi_{s} \|_{L^{p}} \ud s \\
& \lesssim \| v_{0} \|_{\mC^{\beta}_{p}} +\int_{0}^{t}(t-s)^{-
\frac{\alpha + \ve}{2} }  \| v \|_{L^{p}_{t}} \| \xi_{s}
\|_{\mC^{- \alpha}}  + \| v \|_{\mC^{\beta}_{p,
t}} \| \xi_{s} \|_{\mC^{-\alpha}} \ud s ,
\end{equs}
thus leading, for $ \ve >0 $ sufficiently small and $ q \geqslant 1 $
sufficiently large, to the bound:
\begin{equs}[eqn:prf-moment-1d-2]
& \| v \|_{L^{p}_{t}} \lesssim \| v_{0} \|_{\mC^{\beta}_{p}} +
t^{\frac{1}{q^{\prime}} - \frac{\alpha + \ve}{2} } \| v \|_{L^{p}_{t}}A + t \|
v \|_{\mC^{\beta}_{p, t}} A.
\end{equs}
Now define $ \alpha_{2}(\ve ) = \frac{1}{q^{\prime}} - \frac{\alpha +
\ve}{2}$ and fix $ \ve >0 $ small and $ q \geqslant 1 $ large so that
(for $ \delta $ as in the statement of the lemma):
\begin{equs}
\alpha_{2}(\ve) > 1 - \frac{\alpha + \delta/2}{2} : = \alpha_{3}. 
\end{equs}
In particular, we can fix \( n \in \NN \) such that $$ \alpha_{1} + n
\alpha_{2}(\ve) \geqslant  (n+1) \alpha_{3}.$$
We can then improve \eqref{eqn:prf-moment-1d-1} by plugging in the estimate
\eqref{eqn:prf-moment-1d-2} on the $ L^{p} $ norm and obtain:
\begin{equs} 
\| v \|_{\mC^{\beta}_{p, t}} & \lesssim \| v_{0}
\|_{\mC^{\beta}_{p}}( 1 + t^{\alpha_{1}}A) + \| v \|_{L^{p}_{t}}t^{\alpha_{1} +
\alpha_{2}(\ve) }  A^{2} +  \| v \|_{\mC^{\beta}_{p,
t}}(t^{\alpha_{2}} A + t^{1 + \alpha_{1}} A^{2}).
\end{equs}
In the rest of this calculation we can assume that $ t \in (0, 1) $, and that
$  \alpha_{2}(\ve) \leqslant \alpha_{2} $. Then, if we iterate this procedure another $ n -1 $ times by substituting
\eqref{eqn:prf-moment-1d-2} into the above bound, we obtain:
\begin{equs}[eqn:prf-moment-1d-almost-complete]
\| v \|_{\mC^{\beta}_{p, t}}  \lesssim &\| v_{0}
\|_{\mC^{\beta}_{p}} \Big( 1 + \sum_{i = 1}^{n} t^{
\alpha_{1} + (i-1) \alpha_{2}(\ve) }A^{i} \Big) + \| v \|_{L^{p}_{t}}t^{\alpha_{1} +
n \alpha_{2}(\ve) }  A^{n+1} \\
& \quad \quad \quad +  \| v \|_{\mC^{\beta}_{p, t}}\Big(t^{\alpha_{2}(\ve)} A +
\sum_{i = 1}^{n} t^{1+ \alpha_{1} +(i-1) \alpha_{2}(\ve) }A^{i+ 1}\Big), \qquad
\forall t \in (0, 1).
\end{equs}
Now, let us first work under the assumption
\begin{equs}
\beta \leqslant  \alpha + \delta/2.
\end{equs}
From the definition of $ \alpha_{1} $ we then find, provided $ q $ is
sufficiently large:
\begin{equs}
1 + \alpha_{1} \geqslant  1+ \frac{1}{q^{\prime}}  -\alpha - \delta /4 \geqslant 2 \Big(
1 - \frac{\alpha + \delta/2}{2} \Big) = 2 \alpha_{3}.
\end{equs}
We can then obtain from \eqref{eqn:prf-moment-1d-almost-complete}, for $ t \in
(0, 1) $:
\begin{equs}
\| v \|_{\mC^{\beta}_{p, t}} & \lesssim_{T, n} \| v_{0}
\|_{\mC^{\beta}_{p}} (1 + A^{n}) + \| v \|_{\mC^{\beta}_{p, t}} \Big[ 
t^{\alpha_{1}+(n-1) \alpha_{2}(\ve)} A^{n+1} + t^{\alpha_{2}(\ve)}A +
(t^{ \frac{1 + \alpha_{1}}{2} } A)^{2} \sum_{i = 1}^{n} (t^{
\alpha_{2}(\ve )} A)^{i-1} \big] \\
& \leqslant C(T, n) \| v_{0} \|_{\mC^{\frac{1}{2} + \gamma}_{p}} (1 + A^{n}) +
C(T, n)\| v \|_{\mC^{\frac{1}{2}
+ \gamma}_{p, t}} \Big[ ( t^{\alpha_{3}} A )^{n+1} + t^{\alpha_{3}}A +
(t^{\alpha_{3}} A)^{2} \sum_{i = 1}^{n} (t^{ \alpha_{3}} A)^{i-1} \big],
\end{equs}
for some $ C(T, n) >0 $. In particular, we can find a $ t_{*}^{\prime} >0  $
such that for $ 0 < t \leqslant (t_{*}^{\prime} /A^{\frac{1}{\alpha_{3}}} ) \wedge
1$ and uniformly over $ A $ one has (up to increasing the value of $
C(T, n) $)
\begin{equs}
\| v \|_{\mC^{\beta}_{p, t}} \leqslant C(T, n)(1 + A^{n}) \| v_{0}
\|_{C^{\beta}_{p}}(1 +t^{\alpha_{3}} A), \qquad \forall t,A >0
\ \colon \ t\leqslant (t_{*}^{\prime} /A^{\frac{1}{\alpha_{3}}}) \wedge 1.
\end{equs}
Now, using the linearity of the equation and iterating this bound on small
intervals of length $ (t_{*}^{\prime} / A^{\frac{1}{\alpha_{3}}}) \wedge 1 $, one finds (once again up to increasing the value of $
C(T, n) $)
\begin{equs}
\| v \|_{\mC^{\beta}_{p,T}} \lesssim \| v_{0}
\|_{\mC^{\beta}} \exp \Big( C(T, n) (1 + A^{\frac{1}{\alpha_{3}}}) \log{(1 + A^{n})}\Big).
\end{equs}
We can now use the definition of $ v $ together with the small-times bound
\eqref{eqn:prf-moment-1d-small-time} on $ u $ to deduce that (up to taking a
larger $ n $):
\begin{equs}[eqn:prof-moment-1d-final]
\| u \|_{\mC^{\beta}_{p, \zeta, T}} & \lesssim_{T} \| u_{0}
\|_{\mC^{\beta}_{p}} \exp \Big( C(T, n)(1 + A^{\frac{1}{\alpha_{3}}})
\log{(1 + A^{n})}\Big)\\ 
& \lesssim \| u_{0} \|_{\mC^{\beta}_{p}} \exp \Big( C(T, n)
A^{\frac{2}{2 - \alpha - \delta}}\Big),
\end{equs}
where in the last step we chose a possibly larger $ C(T, n) $ and used that
\begin{equs}
A^{\frac{1}{\alpha_{3}}} \log{(1 + A^{n})} \lesssim 1 +
A^{\frac{1}{\alpha_{3}- \delta/4} } = 1 + A^{\frac{2}{2 - \alpha - \delta}}.
\end{equs}
This concludes the proof of the result in the case $ \beta \leqslant \alpha +
\delta/2 $. We can build on this result to complete the proof of the bound on
the spatial regularity. Fix any $ \beta \in (\alpha, 2 - \alpha)$, then, by the bound we
just proved and \eqref{eqn:prf-moment-1d-1} we have
\begin{equs}
\| v \|_{\mC^{\beta}_{p,t}} & \lesssim_{T} \| v_{0}
\|_{\mC^{\beta}_{p}} +  B(A) + t^{\alpha_{2}} \| v
\|_{\mC^{\beta}_{p, t}} A,
\end{equs}
with $ B(A) =  \| u_{0} \|_{\mC^{\beta}_{p}} A \exp \Big( C(T, n)(1 + A^{\frac{1}{\alpha_{3}}})
\log{(1 + A^{n})}\Big)$. Hence, once more by a Gronwall type argument we
obtain:
\begin{equs}
\| v \|_{\mC^{\beta}_{p,t}} & \lesssim \Big( \| v_{0}
\|_{\mC^{\beta}_{p}} + B(A) \Big) \exp \Big( C(T) A^{\frac{1}{\alpha_{2}}}
\Big).
\end{equs}
Since $ \frac{1}{\alpha_{2}} \leqslant \frac{2}{2 - \alpha - \delta}$, for $
q \geqslant 1 $ sufficiently large, our claim now follows along the same
arguments that led to \eqref{eqn:prof-moment-1d-final}, which completes the
proof of the bound for the spatial regularity.

\textit{Step 2.} Finally, the bound on the temporal regularity can be deduced from the
spatial bound we just proved, by applying for instance
\cite[Lemma 6.6]{GubinelliPerkowski2017KPZ}.
\end{proof}

\section{Stochastic estimates}\label{sec:stochastic-estimates}
In this section we establish the stochastic estimates necessary for the Taylor
expansion of the Furstenberg formula near zero.
The next result establishes the key stochastic estimate for the proof of
Lemma~\ref{lem:regular-noise-small-time}.

\begin{lemma}\label{lem:moment-estimate-for-zeta} In the setting
of Assumption~\ref{assu:smooth-noise} and Definition~\ref{def:hamiltonians}, define for any $ \tau \in (0, 1)$
and $ (\omega, \omega^{\prime}) \in \Omega \times \Omega $ the random variable $\zeta(\tau,
\omega, \omega^{\prime})$ as:
  \[ \zeta (\tau, \omega, \omega') = \int_0^{\tau} \int_{\TT} H
     (\omega) e^{s H (\omega)} (z_{\infty} (\tau, \omega')) (x) \ud
     x \ud s, \]
  where $z_{\infty}(\tau, \omega^{\prime}) $ is defined as in
Proposition~\ref{prop:existence-proj-invariant-measure}. Then there exists a $
\gamma>0 $ such that:
  \[ \EE^{\PP \otimes \PP} \left[ \zeta (\tau) -
     \frac{\zeta^2 (\tau)}{2} \right] =
     \tau^2 \mf{r}(\tau) +\mathcal{O} \left( \tau^{2 + \gamma} \right),
  \]
  with:
  \[ \mathfrak{r} (0) \assign \lim_{\tau \rightarrow 0^+} \mathfrak{r}
     (\tau) = \frac{1}{4} \int_{\TT^2} \EE [| \xi (x) - \xi
     (y) |^2] \ud x \ud y > 0. \]
\end{lemma}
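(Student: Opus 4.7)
The plan is to expand $\zeta$ via one step of Duhamel in the semigroup variable, compute $\EE^{\PP \otimes \PP}[\zeta]$ and $\EE^{\PP \otimes \PP}[\zeta^2]$ separately using the independence of $\omega$ and $\omega'$ and the centering of $\xi_{\mathrm{stat}}$, and conclude via a short algebraic identity.

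First I would decompose $\zeta = A + B$ with
\begin{equs}
A(\tau) = \tau \int_{\TT} \xi_{\mathrm{stat}}(\omega,x)\, z_\infty(\tau, \omega')(x)\, \ud x, \qquad B(\tau) = \int_0^\tau \int_0^s \int_{\TT} \xi_{\mathrm{stat}}(\omega,x)\, H(\omega)\, e^{rH(\omega)} z_\infty(\tau,\omega')(x)\, \ud x\, \ud r\, \ud s,
\end{equs}
by using $e^{sH} - \mathrm{Id} = \int_0^s H e^{rH} \ud r$. The centering of $\xi_{\mathrm{stat}}$ and the independence of $\omega$ and $\omega'$ immediately yield $\EE^{\PP \otimes \PP}[A] = 0$ and
\begin{equs}
\EE^{\PP \otimes \PP}[A^2] = \tau^2 \int_{\TT^2} \EE[\xi_{\mathrm{stat}}(x)\xi_{\mathrm{stat}}(y)]\, \EE[z_\infty(\tau, x) z_\infty(\tau, y)]\, \ud x\, \ud y.
\end{equs}
For $B$, integrating first in $\omega'$ yields $\EE_{\omega'}[e^{rH(\omega)} z_\infty(\tau,\omega')] = e^{rH(\omega)}\bar z_\infty(\tau)$ with $\bar z_\infty(\tau) \assign \EE[z_\infty(\tau, \cdot)]$; decomposing $H(\omega) = \kappa\Delta + \xi_{\mathrm{stat}}(\omega)$ and using $\EE[\xi_{\mathrm{stat}}] = 0$ to kill the Laplacian contribution, the leading-order term after replacing $e^{rH(\omega)}\bar z_\infty$ by $\bar z_\infty$ becomes $\frac{\tau^2}{2} \int_{\TT} \EE[\xi_{\mathrm{stat}}^2(x)]\, \bar z_\infty(\tau, x)\, \ud x$.

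Combining these, I set
\begin{equs}
\mathfrak{r}(\tau) \assign \frac{1}{2} \int_{\TT} \EE[\xi_{\mathrm{stat}}^2(x)]\, \bar z_\infty(\tau, x)\, \ud x - \frac{1}{2}\int_{\TT^2} \EE[\xi_{\mathrm{stat}}(x)\xi_{\mathrm{stat}}(y)]\, \EE[z_\infty(\tau, x) z_\infty(\tau, y)]\, \ud x\, \ud y.
\end{equs}
The distributional convergence $z_\infty(\tau) \to 1$ in $\mC^\alpha$ from Proposition~\ref{prop:averaging-invariant-measure}, combined with the uniform $\mC^\alpha$-moment bounds of Lemma~\ref{lem:moment-bound-invariant-measure} (which supply uniform integrability), gives $\bar z_\infty(\tau) \to 1$ and $\EE[z_\infty(\tau, x) z_\infty(\tau, y)] \to 1$ uniformly as $\tau \to 0^+$. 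The polarization $|\xi(x)-\xi(y)|^2 = \xi(x)^2 + \xi(y)^2 - 2\xi(x)\xi(y)$ with $|\TT|=1$ then produces $\mathfrak{r}(0) = \frac{1}{4}\int_{\TT^2} \EE |\xi_{\mathrm{stat}}(x) - \xi_{\mathrm{stat}}(y)|^2\, \ud x\, \ud y$, strictly positive by the nontriviality assumption.

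The main obstacle is the quantitative remainder bound $\EE^{\PP \otimes \PP}[\zeta - \zeta^2/2] - \tau^2 \mathfrak{r}(\tau) = \mathcal{O}(\tau^{2+\gamma})$. The $\EE[AB]$ and $\EE[B^2]$ pieces are handled by rewriting $B = \int_0^\tau \int_{\TT} \xi(\omega,x)\,(e^{sH(\omega)} - \mathrm{Id})z_\infty(\tau, \omega')(x)\, \ud x\, \ud s$ and invoking the Schauder-type estimate $\|(e^{sH}-\mathrm{Id})\varphi\|_\infty \lesssim s^{\alpha/2}\|\varphi\|_{\mC^\alpha} + s\|\xi\|_\infty e^{s\|\xi\|_\infty}\|\varphi\|_\infty$ with $\alpha$ close to $2$, together with the exponential moments on $\|\xi_{\mathrm{stat}}\|_\infty$ from Assumption~\ref{assu:smooth-noise}; this gives $\EE[B^2] = \mathcal{O}(\tau^{4-\epsilon})$ and $\EE[|AB|] = \mathcal{O}(\tau^{3-\epsilon/2})$ by Cauchy--Schwarz. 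The genuinely delicate piece is showing that replacing $e^{rH(\omega)}\bar z_\infty$ by $\bar z_\infty$ inside $B$ costs only $\mathcal{O}(\tau^{2+\gamma})$: since the naive expansion $H(e^{rH}-\mathrm{Id})\bar z_\infty = \int_0^r H^2 e^{r'H}\bar z_\infty \ud r'$ formally involves the ill-behaved $H^2$, one must exploit the higher regularity $\bar z_\infty \in \mC^{2-\epsilon}$ (from Lemma~\ref{lem:moment-bound-invariant-measure}) and the Schauder gains of Lemma~\ref{lem:schauder-estiamtes} to trade the singular $1/r$ factor in $\|\Delta e^{r'H}\bar z_\infty\|$ against an integrable time power, distinguishing the piecewise-constant and Hölder regimes of Assumption~\ref{assu:smooth-noise} when integrating by parts against $\xi$.
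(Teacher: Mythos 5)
Your decomposition $\zeta = A + B$ and the bookkeeping that follows --- $\EE[A]=0$, $\EE[A^2]$ computed exactly by independence, and the leading $\tau^2$ piece of $\EE[B]$ extracted by replacing $e^{rH}\bar z_\infty$ with $\bar z_\infty$ so that centeredness kills the $\Delta$ contribution --- is correct, and it takes a genuinely different route from the paper. The paper represents $e^{sH}(\bar z_\infty)(x)$ via Feynman--Kac and Taylor-expands the pointwise exponential $\exp(\int_0^s \xi(B_r)\ud r)$; no unbounded operator ever appears, and the spatial regularity of $\xi$ enters only through the modulus of continuity of $\xi$ along Brownian paths ($|\xi(B_r)-\xi(x)| \lesssim \|\xi\|_{\mC^{\alpha}} r^{\alpha/2}$ in the H\"older regime, and the exit probability from a constancy interval via Doob in the piecewise regime). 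Yours is a purely semigroup/Schauder argument and would be a nice, more deterministic, alternative if it closed.

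However the remainder you correctly flag as the hard step --- that $\int_0^\tau\int_0^s\int_\TT \EE_\omega\big[\xi(\omega,x)\, H(\omega)(e^{rH(\omega)}-\mathrm{Id})\bar z_\infty(x)\big]\ud x\,\ud r\,\ud s = \mathcal{O}(\tau^{2+\gamma})$ --- is misdiagnosed in your sketch. You attribute the singularity of $H^2 e^{r'H}\bar z_\infty$ to ``the $1/r$ factor in $\|\Delta e^{r'H}\bar z_\infty\|$'', but with $\bar z_\infty\in\mC^{2-\ve}$ that norm is only of order $(r')^{-\ve/2}$ and is harmless. The true obstruction is the $\kappa^2\Delta^2$ part of $H^2$: here Schauder gives $\|\Delta^2 e^{r'H}\bar z_\infty\|_{\mC^{-\alpha}} \lesssim (r')^{-(2-\alpha+\ve)/2}$, which is integrable only when $\alpha>\ve$, and only once the pairing with $\xi$ is interpreted as a Besov duality (roughly $\mC^{\alpha}$ against $B^{-\alpha}_{1,1}$ in the H\"older regime, and $B^{1}_{1,\infty}$ against $B^{-1}_{\infty,1}$ in the piecewise regime). ``Integrating by parts against $\xi$'' is not available in either case, since $\partial_x\xi$ is a distribution for H\"older $\xi$ and a sum of Dirac masses for step $\xi$. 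You would also need a Schauder estimate for the perturbed semigroup $e^{r'H}$ rather than for $P_t$ (Lemma~\ref{lem:schauder-estiamtes} covers only the latter; a Duhamel argument supplies the extension, but it must be stated). So the algebraic skeleton, the identification of $\mathfrak{r}(\tau)$, and the polarization argument are all right, but the key remainder estimate is stated with the wrong singular exponent, rests on an inadmissible integration by parts, and still needs to be written out before this constitutes a proof.
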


\begin{proof}
  \textit{Step 1: Estimate for the first moment.} First we take the
  expectation over $\omega'$. Hence define
  \[ \bar{z}_{\infty} (\tau, x) = \int_{\Omega} z_{\infty}
     (\tau, \omega', x) \ud \PP (\omega'), \]
  and observe that $\bar{z}_{\infty} \in L^1 (\TT)$ with $\bar{z}_{\infty}
  (\tau, x) \geqslant 0$ for all $x \in \TT$ and by Fubini $\int_{\TT}
  \bar{z}_{\infty} (\tau, x) \ud x = 1$. Then by integration by
  parts we obtain
  \begin{equs}
    \mathbf{E}^{ \PP \otimes \PP} [\zeta (\tau)] & = 
    \mathbf{E}^{ \PP } \left[ \int_0^{\tau} \int_{\TT} H (\omega)
    e^{s H (\omega)} (\bar{z}_{\infty} (\tau)) (x) \ud x \ud s
    \right]\\
    & =  \int_0^{\tau} \int_{\TT} \int_{\Omega} \xi_{\mathrm{stat}}
    (\omega, x) [e^{s H (\omega)} (\bar{z}_{\infty} (\tau))] (x) \ud
    \PP (\omega) \ud x \ud s.
  \end{equs}
  Now we use the Feynman Kac formula to represent the semigroup $e^{s H
  (\omega)} (z)$. Here $\mathbf{E}_x^{\mathbf{Q}}$ indicates the average
  w.r.t. a periodic Brownian motion $B_t$ started in $x \in \TT$, so that
  \begin{equs}
     \xi_{\mathrm{stat}} (\omega, x) [e^{s H (\omega)}
    (\bar{z}_{\infty} (\tau))] (x) & = 
    \xi_{\mathrm{stat}} (\omega, x) \mathbf{E}_x^{\mathbf{Q}} \left[
    \bar{z}_{\infty} (\tau, B_s) \exp \left( \int_0^s \xi_{\mathrm{stat}}
    (\omega, B_r) \ud r \right) \right]  \\
    =  \xi_{\mathrm{stat}} (\omega, x) &
    \mathbf{E}_x^{\mathbf{Q}} \left[ \bar{z}_{\infty} (\tau, B_s)
    \left( 1 + \int_0^s \xi_{\mathrm{stat}} (\omega, B_r) \ud r + R_1 (s, B,
    \omega) \right) \right] .  
  \end{equs}
  For the rest term
  \[ R_1 (s, B, \omega) = \exp \left( \int_0^s \xi_{\mathrm{stat}} (\omega, B_r)
     \ud r \right) - \left( 1 + \int_0^s \xi_{\mathrm{stat}} (\omega, B_r)
     \ud r \right) \]
  we can use Taylor to estimate
  \[ | R_1 (s, \omega) | \leqslant \frac{1}{2} \exp (s \| \xi_{\mathrm{stat}}
     (\omega) \|_{\infty}) (s \| \xi_{\mathrm{stat}} (\omega) \|_{\infty})^2, \]
  so that
  \[ \left| \int_{\Omega} \xi_{\mathrm{stat}} (\omega, x)
     \mathbf{E}_x^{\mathbf{Q}} [\bar{z}_{\infty} (B_s) R_1 (s, \omega)] \ud \PP
(\omega)
     \right| \lesssim s^2 \mathbf{E} [\| \xi_{\mathrm{stat}} \|_{\infty}^3 \exp
     (s \| \xi_{\mathrm{stat}} \|_{\infty})] \mathbf{E}_x^{\mathbf{Q}}
     [\bar{z}_{\infty} (\tau, B_s)] . \]
  And the average is finite uniformly over $s$ in a bounded set by the moment
bound in Assumption~\ref{assu:smooth-noise}. Now we observe that the Lebesgue
measure on $ \TT$ is invariant for $B_s$, so that
  \begin{equation}
    \int_{\TT} \mathbf{E}_x^{\mathbf{Q}} [\bar{z}_{\infty} (\tau, B_s)]
    \ud x = \int_{\TT} \bar{z}_{\infty} (\tau, x) \ud x = 1,
    \label{eqn:zeta-bound-proof-z-inf-L1-bound}
  \end{equation}
  from the definition of $\bar{z}_{\infty}$, so that
\begin{equs}
\Big| \int_0^{\tau} \int_{\TT} \xi_{\mathrm{stat}} (\omega, x)
    \mathbf{E}_x^{\mathbf{Q}} [\bar{z}_{\infty} (\tau, B_s)  R_1 (s,
    \omega)] \ud \PP (\omega) \ud x \ud s \Big| & \lesssim \int_0^{\tau} s^2 \int_{\TT} \mathbf{E}_x^{\mathbf{Q}}
    [\bar{z}_{\infty} (\tau, B_s)] \ud x \ud s\\
    & =  \mathcal{O} (\tau^3) .
\end{equs}
  Next we would like to replace \( \int_{0}^{s} \xi_{\mathrm{stat}}(
B_{r}) \ud r \) by \( s \xi_{\mathrm{stat}}(x) \). We follow two different
approaches, depending on the regularity of $ \xi_{\mathrm{stat}} $. First
assume that \eqref{eqn:holder-noise} holds. Then for any $ \ve \in (0, 1/2) $
and $ s \in [0, 1] $
\begin{equs}
\bigg\vert \int_{0}^{s} \xi_{\mathrm{stat}}(B_{r}) -
\xi_{\mathrm{stat}}(x) \ud r \bigg\vert  \leqslant \| \xi_{\mathrm{stat}}
\|_{C^{\alpha}} \int_{0}^{s} | B_{r} -x |^{\alpha} \ud s &  \leqslant \| \xi_{\mathrm{ stat}} \|_{C^{\alpha}} \| B
\|_{C^{\frac{1}{2} - \ve}([0, 1])} \int_{0}^{s} r^{\alpha(\frac{1}{2}
- \ve)} \ud r \\
& \lesssim \| \xi_{\mathrm{ stat}} \|_{C^{\alpha}} \| B
\|_{C^{\frac{1}{2} - \ve}([0, 1])} s^{1 + \alpha(\frac{1}{2} - \ve)}.
\end{equs}
Hence, using that $ \EE^{\QQ}_{x} \| B \|_{C^{\frac{1}{2} - \ve}(\TT)} <
\infty $, we obtain
\begin{equs}
\Big| \mathbf{E}_x^{\mathbf{Q}} \Big[ \bar{z}_{\infty} (\tau,
    B_s) \Big( \int_0^s \xi_{\mathrm{stat}} (\omega, B_r) - & \xi_{\mathrm{stat}}
    (\omega, x) \ud r \Big) \Big] \Big|  \lesssim s^{1 + \alpha(\frac{1}{2} - \ve)} \| \xi_{\mathrm{stat}} \|_{\infty} \|
\overline{z}_{\infty}(\tau) \|_{\infty}. \label{eqn:prf-moment-bound-smooth}
\end{equs}
Instead, if we assume that \eqref{eqn:piecewise-noise} holds, then $ \TT 
= \bigcup_{i = 1}^{\mf{n}} A_{i} $, with $ A_{i} $ disjoint intervals. In this
case, for every $ x \in \TT $ there exists an $ i $ such that $ x \in A_{i} $ and we
can define $ p(x) \in \partial A_{i} $ the nearest boundary point of $
A_{i} $ to $ x $. Then
\begin{equs}
\Big| \mathbf{E}_x^{\mathbf{Q}} \Big[ \bar{z}_{\infty} (\tau,
    B_s) \Big( \int_0^s & \xi_{\mathrm{stat}} (\omega, B_r)  - \xi_{\mathrm{stat}}
    (\omega, x) \ud r \Big) \Big] \Big|  \\
    & \lesssim s \| \xi_{\mathrm{stat}} \|_{\infty} \| \bar{z}_{\infty}
    (\tau) \|_{\infty}  \PP \left( \sup_{0 \leqslant r
    \leqslant s} | B_r - x | \geqslant \left| x - p(x) \right| \right),
\end{equs}
  which holds true because $\xi_{\mathrm{stat}}$ is constant on any interval $
A_{i} $. Now Doob's martingale inequality guarantees that:
  \[ \PP \left( \sup_{0 \leqslant r \leqslant s} | B_r - x | \geqslant
     \left| x - p(x) \right| \right) \leqslant \exp \left( -
     \frac{\left| x - p(x) \right|^2}{2 s} \right) . \]
  So that
\begin{equs}
    \bigg\vert \int_{\TT} \xi_{\mathrm{stat}} (\omega, x) &
    \mathbf{E}_x^{\mathbf{Q}} \left[ \bar{z}_{\infty} (\tau, B_s) \left( \int_0^s
    \xi_{\mathrm{stat}} (\omega, B_r) - \xi_{\mathrm{stat}} (\omega, x) \ud r
    \right) \right] \ud x \bigg\vert \\
    & \lesssim \| \xi_{\mathrm{stat}}(\omega) \|_{\infty}^2 \|
    \bar{z}_{\infty} (\tau) \|_{\infty}  s \int_{\TT}
    \exp \left( - \frac{\left| x - p(x) \right|^2}{2 s} \right) \ud
    x \\
    & = \sum_{i = 1}^{\mf{n}}\| \xi_{\mathrm{stat}} \|_{\infty}^2 \|
    \bar{z}_{\infty} (\tau) \|_{\infty}  s \int_{A_{i}} \exp \left( - \frac{| x
- p(x)|^2}{2 s} \right) \ud x \\
    & \leqslant 2 \mf{n} \| \xi_{\mathrm{stat}} \|_{\infty}^2 \|
    \bar{z}_{\infty} (\tau) \|_{\infty} s^{1 +
    \frac{1}{2}} \int_0^{\infty} \exp \left( - \frac{| x |^2}{2}
    \right) \ud x \\
    & \lesssim \| \xi_{\mathrm{stat}} \|_{\infty}^2 \|
    \bar{z}_{\infty} (\tau) \|_{\infty} s^{1 + \frac{1}{2}} .
\label{eqn:prf-moment-bound-piecewise}
\end{equs}
At this point we can conclude the estimate on the first moment of $ \zeta $.
Via \eqref{eqn:prf-moment-bound-smooth} by defining $ \gamma = \alpha \big(
\frac{ 1}{2} - \ve \big) $ or \eqref{eqn:prf-moment-bound-piecewise} with $
\gamma= \frac{1}{2} $ (depending on the assumption on the noise) together with
the moment assumption on $ \xi_{\mathrm{stat}} $ and
Lemma~\ref{lem:moment-bound-invariant-measure} for the moments of $
z_{\infty} $, we obtain:
  \begin{equs}
    \mathbf{E}^{ \PP \otimes \PP} [\zeta (\tau)] & = 
    \int_0^{\tau} \int_{\TT} \int_{\Omega} s (\xi_{\mathrm{stat}}
    (\omega, x))^2 \mathbf{E}_x^{\mathbf{Q}} [\bar{z}_{\infty} (\tau,
    B_s)] \ud \PP (\omega) \ud x \ud s +\mathcal{O} \left(
    \tau^{2 + \gamma } \right) .\\
    & =  \frac{\tau^2}{2}  \int_{\TT} \kappa (x, x)
    \mathbf{E}_x^{\mathbf{Q}} [\bar{z}_{\infty} (\tau, B_s)] \ud x
    +\mathcal{O} \left( \tau^{2 + \gamma} \right),
  \end{equs}
where we have defined $ \kappa(x,y) = \EE \big[ \xi_{\mathrm{stat}}(x)
\xi_{\mathrm{stat}}(y) \big] $.
  We have completed the estimate for the first moment of $ \zeta $.

  \textit{Step 2: Estimate for the second moment.} Let us fix any sequence
  $z (\tau)$ of functions such that for every $\tau > 0$ $z
  (\tau) \geqslant 0$ with $\int_{\TT} z (\tau, x) \ud x = 1$
  and concentrate on estimating the following (later we will replace
$z(\tau)$ by the random $z_{\infty}(\omega^{\prime} , \tau )$):
\begin{equs}
   \mathbf{E}^{\PP}  \bigg[ \bigg( \int_0^{\tau} \int_{\TT} H
    (\omega) e^{s H (\omega)} & (z (\tau)) (x) \ud x \ud s
    \bigg)^2 \bigg]  =  \int_{[0, \tau]^2} \int_{\TT^2}
    \int_{\Omega} F (x, y, s, r, \omega) \ud \PP
    (\omega) \ud x \ud y \ud s \ud r.
\end{equs}
 Here $F$ is defined by:
  \[ F (x, y, s, r, \omega) = \xi_{\mathrm{stat}} (\omega, x) [e^{s
     H (\omega)} (z (\tau))] (x) \xi_{\mathrm{stat}} (\omega, y)
     [e^{r H (\omega)} (z (\tau))] (y),\]
  and we can expand $F$ similarly to the previous step:
\begin{equs}
F (x_1, x_2, s_1, s_2, \omega) & =  \prod_{i = 1}^2 \xi_{\mathrm{stat}}
       (\omega, x_i) \mathbf{E}_{x_i}^{\mathbf{Q}} \left[ z (\tau,
       B_{s_i}) \exp \left( \int_0^{s_i} \xi_{\mathrm{stat}} (\omega, B_r)
       \ud r \right) \right]\\
       & =  \prod_{i = 1}^2 \xi_{\mathrm{stat}} (\omega, x_i)
       \mathbf{E}_{x_i}^{\mathbf{Q}} [z (\tau, B_{s_i}) (1 + R_2 (s_i,
       B, \omega))],
\end{equs}
  with $R_2$ bounded by:
\begin{equs}
| R_2 (s_i, B, \omega) | & =  \left| \exp \left( \int_0^{s_i}
    \xi_{\mathrm{stat}} (\omega, B_r) \ud r \right) - 1 \right|\\
    & \leqslant s_i \exp (s_i \| \xi_{\mathrm{stat}} (\omega) \|_{\infty}) .
\end{equs}
  Hence if we define
  \[ \bar{F} (x_1, x_2, s_1, s_2, \omega) = \prod_{i = 1}^2 \xi_{\mathrm{stat}}
     (\omega, x_i) \mathbf{E}_{x_i}^{\mathbf{Q}} [z (\tau, B_{s_i})],
  \]
  we obtain
  \begin{equs}
    \int_{[0, \tau]^2} \int_{\TT^2} \int_{\Omega} | F - \bar{F}
    | \ud \PP (\omega) \ud x \ud s & \lesssim  \mathbf{E}
    [e^{\| \xi_{\mathrm{stat}} \|_{\infty}} \| \xi_{\mathrm{stat}} \|_{\infty}^2]
    \| z (\tau) \|_{\infty}^2 \int_{[0, \tau]^2} \int_{\TT^2}
    s_1 + s_2 \ud x\ud s\\
    & \lesssim  \mathbf{E} [e^{\| \xi_{\mathrm{stat}} \|_{\infty}} \|
    \xi_{\mathrm{stat}} \|_{\infty}^2] \| z (\tau) \|_{\infty}^2
    \tau^3 .
  \end{equs}
  Altogether, we have found that
  \begin{equ}
    \mathbf{E}^{\PP} \left[ \left( \int_0^{\tau} \int_{\TT} H
    (\omega) e^{s H (\omega)} (z (\tau)) (x) \ud x \ud s
    \right)^2 \right]  =  \int_{[0, \tau]^2} \int_{ \TT^2}
    \int_{\Omega} \bar{F} \ud \PP (\omega) \ud x \ud s
    + \| z (\tau) \|_{\infty}^2 \mathcal{O} (\tau^3) .
  \end{equ}
  Now the average of $ \overline{F} $ with respect to $ \PP$ is given by
  \begin{equ}
    \int_{[0, \tau]^2} \int_{\TT^2} \int_{\Omega} \bar{F}
    \ud \PP (\omega) \ud x \ud s  =  \int_{[0,
    \tau]^2} \int_{ \TT^2} \kappa (x, y) \prod_{i = 1}^2
    \mathbf{E}_{x_i}^{\mathbf{Q}} [z (\tau, B_{s_i})] \ud x
    \ud s.
  \end{equ}
  Finally, replacing \( z(\tau) \) with \( z_{\infty}(\tau) \) and using
Lemma~\ref{lem:moment-bound-invariant-measure} we obtain:
\begin{equs}
   \mathbf{E}^{\PP \otimes \PP} & \left[ \zeta (\tau) -
    \frac{\zeta^2 (\tau)}{2} \right] \\
   & =  \frac{\tau^2}{2} \int_{\Omega}  \int_{\TT} \kappa (x, x) \mathbf{E}_x^{\mathbf{Q}} [z_{\infty}
    (\tau, \omega^{\prime} , B_s)] \ud x - \int_{ \TT^2} \kappa (x_1, x_2) \prod_{i
    = 1}^2 \mathbf{E}_{x_i}^{\mathbf{Q}} [ z_{\infty} (\tau, \omega^{\prime},
B_{s_i})] \ud x_{1} \ud x_{2} \ud \PP(\omega^{\prime} ) \\
    &  \qquad +\mathcal{O} \left( \tau^{2 + \gamma} \right)\\
& = \frac{\tau^{2}}{4} \int_{\Omega} \int_{ \TT^{2}} \EE \big[
|\xi_{\mathrm{stat}}(x_{1}) - \xi_{\mathrm{stat}}(x_{2}) |^{2} \big] \prod_{i
    = 1}^2 \mathbf{E}_{x_i}^{\mathbf{Q}} [z (\tau, \omega^{\prime} ,  B_{s_i})]\ud x_{1} \ud
x_{2} \ud \PP(\omega^{\prime}) + \mathcal{O}(\tau^{2 + \gamma}).
\end{equs}
With this the proof is essentially complete. The last step is proving the
convergence
\begin{equs}
\lim_{\tau \to 0} \int_{\Omega} \int_{ \TT^{2}} \EE  \big[
|\xi_{\mathrm{stat}}(x_{1}) - & \xi_{\mathrm{stat}}(x_{2}) |^{2} \big] \prod_{i
    = 1}^2 \mathbf{E}_{x_i}^{\mathbf{Q}} [z (\tau, \omega^{\prime} ,  B_{s_i})]\ud x_{1} \ud
x_{2} \ud \PP(\omega^{\prime}) \\
&  = \int_{\TT^{2}} \EE \big[ |\xi_{\mathrm{stat}}(x_{1}) -
\xi_{\mathrm{stat}}(x_{2}) |^{2} \big]\ud x_{1} \ud x_{2},
\end{equs}
which is a consequence of Proposition~\ref{prop:averaging-invariant-measure}
and Lemma~\ref{lem:moment-bound-invariant-measure}.
\end{proof} \\
The following result is instead essential for the proof of
Lemma~\ref{lem:taylor-white-noise}.

\begin{lemma}\label{lem:stochastic-product}
Let $ \xi $ be white noise on the torus $ \TT $ as in
Assumption~\ref{assu:white-noise} and let $
\mathcal{I (\xi)} \colon \Omega \times [0, \infty) \times \TT \to \RR$ be defined by:
\begin{equs}
\mathcal{I}(\xi) (t) = \int_{0}^{t} P_{t-s} \xi \ud s.
\end{equs}
Then there exists a $ \delta_{*} > 0 $ such that for every $ \delta \in
(0, \delta_{*})$ and $ p \geqslant 1 $:
\begin{equs}
 \EE \bigg\| \xi \reso
\frac{\mathcal{I}(\xi)(s)}{\sqrt{s}}  - \sqrt{\frac{ \pi}{\kappa}} \bigg\|_{\mC^{-
3 \delta}}^{p} = \mathcal{O}(s^{\delta}).
\end{equs}
\end{lemma}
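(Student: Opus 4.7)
The plan is to decompose the random distribution into its deterministic mean and a centered stochastic fluctuation and treat the two pieces separately. By translation invariance of the law of $\xi$, the expectation $\mathfrak{c}(s) := \EE\big[(\xi \reso \mathcal{I}(\xi)(s)/\sqrt{s})(x)\big]$ is independent of $x$, so we write
\begin{equ}
\xi \reso \frac{\mathcal{I}(\xi)(s)}{\sqrt{s}} = \mathfrak{c}(s) + Y_s,\qquad \EE[Y_s(x)]=0,
\end{equ}
and it suffices to establish (i) $|\mathfrak{c}(s) - \sqrt{\pi/\kappa}| \lesssim s^{\delta}$, and (ii) $\EE\| Y_s \|_{\mC^{-3\delta}}^p \lesssim s^{p\delta}$.

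For (i), I would expand $\xi = \sum_{k\in\ZZ}\hat\xi_k e_k$ in the Fourier basis $e_k(x) = e^{2\pi i k x}$ of the Laplacian, where $\EE|\hat\xi_k|^2 = 1$, so that $\mathcal{I}(\xi)(s) = \sum_k \lambda_k(s)\hat\xi_k e_k$ with $\lambda_k(s) = (1-e^{-4\pi^2\kappa k^2 s})/(4\pi^2\kappa k^2)$ for $k\neq 0$ and $\lambda_0(s)=s$. The Fourier symbol $g(k) = \sum_{|i-j|\leq 1}\rho_i(k)\rho_j(k)$ of the resonant projection equals $1$ on the whole spectrum, as a consequence of the partition-of-unity property and the disjointness of non-adjacent Littlewood--Paley blocks, so
\begin{equ}
\mathfrak{c}(s) = \frac{1}{\sqrt{s}}\sum_{k\in\ZZ}\lambda_k(s).
\end{equ}
Up to the $k=0$ term of size $\sqrt{s}$, this is a Riemann sum with unit spacing for an integral which, after the change of variables $u = 2\pi k\sqrt{\kappa s}$, is identified via the standard identity $\int_0^\infty (1-e^{-u^2})/u^2\,du = \sqrt{\pi}$. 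An Euler--Maclaurin estimate (or Poisson summation, exploiting the exponential decay of the periodic heat kernel) yields the quantitative $s^{\delta}$ rate.

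For (ii), I would exploit that $Y_s$ lies in the inhomogeneous second Wiener chaos of $\xi$, since both $\xi$ and $\mathcal{I}(\xi)(s)$ are linear in the Gaussian noise. Gaussian hypercontractivity (Nelson's inequality) then reduces all $L^q(\Omega)$ bounds on $Y_s(x)$ to $L^2(\Omega)$ bounds, uniformly in $x$ and $s$. A direct Fourier--It\^o computation of $\EE|\Delta_j Y_s(x)|^2$ produces a double sum over Fourier modes in which the factor $\lambda_l(s)/\sqrt{s}$ is controlled by $(|l|^{-2}\wedge\sqrt{s})$; pairing the two legs and summing by heat-kernel asymptotics, one obtains a bound of the form $\EE|\Delta_j Y_s(x)|^2 \lesssim s^{2\delta}\cdot 2^{-2\beta j}$ for some $\beta>0$. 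Combining pointwise hypercontractivity with the Besov representation $\|\varphi\|_{B^{\alpha}_{p,p}}^p = \sum_j 2^{\alpha p j}\int_\TT |\Delta_j \varphi|^p\,dx$ and the embedding $B^{-3\delta+1/p}_{p,p}\hookrightarrow \mC^{-3\delta}$ for $p$ sufficiently large then delivers the required moment estimate.

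The main obstacle is the identification in (i) of the limiting constant together with the quantitative rate: one has to compare a discrete sum on $\ZZ$ with an integral on $\RR$ uniformly down to scale $\sqrt{s}$, tracking the low-frequency corrections coming from the Fourier symbol of the resonant product near $k=0$ (which only contribute higher-order $\mathcal{O}(s^{1/2})$ terms). Once the renormalization constant is in hand, step (ii) follows from standard Gaussian/Wiener-chaos techniques.
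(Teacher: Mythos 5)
Your strategy is essentially the paper's: decompose $\xi\reso\mathcal{I}(\xi)(s)/\sqrt{s}$ into its zeroth and second Wiener chaos, identify the zeroth-chaos (deterministic) part as a Riemann-sum approximation of the Gaussian integral with rate $O(\sqrt{s})$, and control the centered second-chaos part via hypercontractivity, an It\^o--isometry computation on Paley blocks, and a Besov embedding $B^{-3\delta+1/p}_{p,p}\hookrightarrow\mC^{-3\delta}$. Both ingredients in part (i) — that the resonant symbol $\sum_{|i-j|\leqslant 1}\varrho_i\varrho_j$ is identically $1$, and the identity $\int_0^\infty(1-e^{-u^2})/u^2\,\ud u=\sqrt{\pi}$ — are what the paper's Riemann-sum computation of $\mathfrak{s}(t)$ encodes.

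The one point that does not go through as written is the claimed Paley-block estimate for the fluctuation, $\EE|\Delta_j Y_s(x)|^2\lesssim s^{2\delta}2^{-2\beta j}$ for some $\beta>0$. A decaying bound in $j$ would mean $Y_s$ has \emph{positive} Besov regularity, which is false here: the resonant product of $\xi\in\mC^{-\frac12-\ve}$ with $\mathcal{I}(\xi)(s)/\sqrt{s}\in\mC^{\frac12-\ve}$ (the latter uniformly in $s$, cf.\ \eqref{eqn:prf-wn-small-tau-Ixi}) only lands in $\mC^{-\ve'}$, so the Paley blocks must \emph{grow} in $j$. Related is a slip in the pointwise control: $\lambda_l(s)/\sqrt{s}\lesssim (|l|^{-2}s^{-1/2})\wedge\sqrt{s}$, not $|l|^{-2}\wedge\sqrt{s}$. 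The It\^o isometry in fact yields $\EE|\Delta_j Y_s(x)|^2\lesssim s^{\delta}2^{2\delta j}$ (as in the paper), which is weaker but still sufficient: after hypercontractivity the weight $2^{(-3\delta+1/p)jp}$ in the $B^{-3\delta+1/p}_{p,p}$ norm beats $2^{\delta jp}$ once $p$ is large enough that $1/p<2\delta$, so the geometric series in $j$ converges and the embedding closes the argument. So you have the right route, but the intermediate second-chaos bound needs its sign corrected before the proof is valid.
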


\begin{proof}
  Our aim will be to prove for any $ p \geqslant 1 $, that
$\EE \Big\| \xi \reso
\frac{\mathcal{I}(\xi)(s)}{\sqrt{s}}  - \sqrt{\frac{\pi}{\kappa}}\Big\|_{B^{-
2 \delta}_{p,p}}^{p} = \mathcal{O}(s^{\delta}).$
Since $ p $ is arbitrary large, the Besov
embedding $ B^{- 2\delta}_{p,p} \subseteq B^{-2 \delta - \frac{1}{p}}_{\infty,
\infty} $ imples the desired result. From the definition of the $ B^{-
\delta}_{p,p} $ norm, and up to replacing $ \delta $ by $ 2 \delta $, it suffices to prove that uniformly over \( l \in
\NN \cup \{ -1 \} \)
\begin{equs}[eqn:prf-stochastic-bound-wn-aim]
\sup_{x \in \TT} \EE \Big| \Delta_{l} \Big(  \xi \reso
\frac{\mathcal{I}(\xi)(s)}{\sqrt{s}}  - \sqrt{\frac{\pi}{\kappa}}\Big)(x) \Big|^{p} \lesssim
2^{\delta l p} t^{ \frac{\delta}{2} p}.
\end{equs}
In addition, since \( \xi \) is Gaussian, by hypercontractivity it suffices to
check the last claim for $ p = 2 $.
We can write the product under consideration in Fourier coordinates. Here we have
\begin{equs}
\mF \left(\xi \reso \frac{\mathcal{I}(\xi) (t)}{\sqrt{t}}\right)(k) & = \sum_{k_{1} + k_{2}=k } \sum_{| i-j | \leqslant 1}
\varrho_{i}(k_{1}) \varrho_{j}(k_{2}) \hat{\xi}(k_{1}) \frac{1}{\sqrt{t}
}  \int_{0}^{t} e^{- (t-s) \kappa | k_{2} |^{2}}\hat{\xi}(k_{2}) \ud s,
\end{equs}
where $ \hat{\xi}(k) = \mF (\xi)(k) $.
In particular, for any $ l \in \NN \cup \{ -1 \}$ the $ l-
$th Paley block is given by
\begin{equs}
\Delta_{l}& \left(\xi \reso \frac{\mathcal{I}(\xi) (t)}{\sqrt{t}}\right)(x) \\
& = \sum_{k \in \ZZ} e^{2 \pi i k \cdot x}\varrho_{l}(k) \sum_{k_{1} + k_{2}=k }
\psi_{0}(k_{1}, k_{2}) \hat{\xi}(k_{1}) \int_{0}^{t}
e^{-(t-s) \kappa | k_{2} |^{2}} \hat{\xi}(k_{2}) \ud s,
\end{equs}
with $ \psi_{0} (k_{1}, k_{2}) = \sum_{| i - j | \leqslant 1}
\varrho_{i}(k_{1}) \varrho_{j}(k_{2}) $. Note that $ \{ \hat{\xi} (k)
\}_{k \in \ZZ} $ is a set of complex Gaussians with covariance $ \EE
\hat{\xi}(k_{1}) \hat{\xi} (k_{2}) = 1_{\{ k_{1} + k_{2} = 0 \}} $. So we can
write the Ito chaos decomposition for $ f \colon \ZZ^{2} \to \RR $ with $
\sum_{k_{1}, k_{2} \in \ZZ} | f(k_{1}, k_{2}) |^{2} + \sum_{k_{1} \in \ZZ} |
f(k_{1}, -k_{1}) |< \infty $:
\begin{align*}
\sum_{k_{1}, k_{2} \in \ZZ} f(k_{1}, k_{2}) \hat{\xi} (k_{1}) \hat{\xi} (
k_{2}) = \int_{\ZZ^{2}} f(k_{1}, k_{2}) \hat{\xi}( \ud k_{1}, \ud k_{2}) +
\sum_{k_{1} \in \ZZ} f(k_{1}, - k_{1}),
\end{align*}
the first term on the right hand-side being a multiple stochastic integral in
the sense of \cite[Section 1.1.2]{NualartMalliavin}. For our purposes, we can
decompose
\begin{equs}
\Delta_{l}& \left(\xi \reso \frac{\mathcal{I}(\xi) (t)}{\sqrt{t}}\right)(x) =\\
& \int_{\ZZ^{2}} e^{2 \pi \iota (k_{1} + k_{2}) \cdot x}\varrho_{l}(k_{1}+
k_{2}) \psi_{0}(k_{1}, k_{2}) \frac{1}{\sqrt{t}}\int_{0}^{t}
e^{-(t-s) \kappa | k_{2} |^{2}}  \ud s \ \hat{\xi}( \ud k_{1}, \ud k_{2}) +
\mf{s}(t, l),
\end{equs}
where
\begin{equs}
\mf{s}(t, l) = \sum_{k \in \ZZ} \frac{1}{\sqrt{t}}\int_{0}^{t} e^{-(t-s)
  \kappa | k |^{2}} \ud s 1_{\{ l = 0 \}}
\end{equs}
is the zeroth chaos (we will write $ \mf{s}(t) \assign \mf{s}(t, 0) $). Now if we look at the second chaos, we find:
\begin{equs}
\EE \bigg\vert &\int_{\ZZ^{2}} e^{2 \pi \iota (k_{1} + k_{2}) \cdot x}\varrho_{l}(k_{1}+
k_{2}) \psi_{0}(k_{1}, k_{2}) \frac{1}{\sqrt{t}}\int_{0}^{t}
e^{-(t-s) \kappa | k_{2} |^{2}}  \ud s \ \hat{\xi}( \ud k_{1}, \ud k_{2})
\bigg\vert^{2} \ \ \label{eqn:prf-1dwn-moment-secoon-chaos} \\ 
& \lesssim \int_{\ZZ^{2}} \varrho_{l}^{2}(k_{1}+
k_{2}) \psi_{0}^{2}(k_{1}, k_{2}) \frac{1}{t} \bigg(\int_{0}^{t}
e^{-(t-s) \kappa | k_{2} |^{2}}\ud s\bigg)^{2}  \ud k_{1} \ud k_{2},
\end{equs}
and for any $ \ve \in (0, 1) $ we can bound
\begin{equs}
\int_{0}^{t} e^{-(t-s)\kappa | k |^{2}} \ud s & \lesssim \int_{0}^{t}
\frac{1}{(t-s)^{1 - \ve} | k |^{2 (1 - \ve)} } \ud s  \lesssim  \frac{t^{\ve}}{| k |^{2(1 - \ve)}}.
\end{equs}
Using this bound with $ \ve = \frac{1}{2} + \frac{1}{2} \delta$, for some small $ \delta >
0 $ we can bound
\eqref{eqn:prf-1dwn-moment-secoon-chaos} uniformly over $ t > 0 $ by
\begin{equs}
\int_{\ZZ^{2}}  \varrho_{l}^{2}(k_{1}+
k_{2}) \psi_{0}^{2}(k_{1}, k_{2}) t^{\delta} \frac{1}{| k_{2} |^{2 - \delta} }
\ud k_{1} \ud k_{2}  & = \int_{\ZZ^{2}} \varrho_{l}^{2}(k_{1}) \psi_{0}^{2}(k_{1}- k_{2}, k_{2})
t^{\delta} \frac{1}{| k_{2} |^{2 - \delta} } \ud k_{1} \ud k_{2}\\
& \lesssim \int_{\ZZ^{2}} \frac{1}{| k_{1} |^{1 -2 \delta}} \varrho_{l}^{2}(k_{1}) \psi_{0}^{2}(k_{1}- k_{2}, k_{2})
t^{\delta} \frac{1}{| k_{2} |^{1 + \delta} } \ud k_{1} \ud k_{2},
\end{equs}
where in the first step we have changed variables and in the second we have
used that $ | k_{1} - k_{2}  | \simeq | k_{2} | \gtrsim | k_{1} | $ on the
support of $ \psi_{0} $. Now since for $ \delta >0 $ the integral over $
k_{2} $ is convergent we are left with
\begin{equs}
t^{\delta} \sum_{k \in \ZZ} \frac{\varrho_{l}^{2}(k)}{| k |^{1 - 2\delta}}
\lesssim t^{\delta} 2^{ 2 \delta l}.
\end{equs}
If we now prove that the limit \( \lim_{t \to 0} \mf{s}(t) = \sqrt{\pi/\kappa} +
\mathcal{O}(t^{\frac{\delta}{2}}) \) we have completed the proof of
\eqref{eqn:prf-stochastic-bound-wn-aim}.
We can rewrite \( \mf{s} \) as:
\begin{equs}
\mf{s}(t) & = \frac{1}{\sqrt{t}}\int_{0}^{t} \frac{1}{\sqrt{t-s}} \sum_{k \in
\ZZ} \sqrt{t-s}  e^{- (t-s) \kappa |k|^{2}} \ud s  =\frac{1}{\sqrt{t}}\int_{0}^{t} \frac{1}{\sqrt{t-s}} J_{\sqrt{t-s}}\ud s .
\end{equs}
Here we have defined for $ \ve>0 $: $J_{\ve} = \sum_{k \in \ve \ZZ} \ve e^{-
  \kappa | k |^{2}}$. The latter is a Riemann approximation of the integral $ \int_{\RR}
  e^{- \kappa | x |^{2}} \ud x $ and we can bound, uniformly over $ \ve \in
  (0, 1)$:
\begin{equs}
  \bigg\vert J_{\ve} - \int_{\RR} e^{- \kappa| x |^{2}} \ud x \bigg\vert \lesssim \ve,
\end{equs}
so that $\mf{s}(t) = \sqrt{\pi / \kappa} + \mathcal{O}(\sqrt{t})$, which
concludes the proof, as we can assume $ \delta <1 $.
\end{proof}

\

\begin{appendix}

\section{Paraproducts}\label{sec:paraproducts}
Let us start by defining the Fourier transform and its inverse (here $ \iota = \sqrt{-1} $):
\begin{equs}
\mF \varphi (k) & =  \int_{\TT} e^{- 2 \pi \iota x k} \varphi (x) \ud x, \qquad
& \forall k \in \ZZ, \quad \varphi \in \mS^{\prime}( \TT ),& \\
\mF^{-1} \psi (x) & =  \sum_{k \in \ZZ} e^{ 2 \pi \iota x k} \psi (k), \qquad & \forall x \in
\TT, \quad \psi \colon \ZZ \to \RR.&
\end{equs}
Next fix two smooth functions with compact support $ \varrho_{- 1}, \varrho_{0} $ that generate a dyadic
partition of the unity in the sense of
\cite[Proposition 2.10]{BahouriCheminDanchin2011FourierAndNonLinPDEs}. Namely, such that for $ j \in \NN
$, defining  $ \varrho_{j}(k) = \varrho(2^{j} k) $, we have $ 1 = \sum_{j \geqslant -1}
\varrho_{j} (k), \ \forall k \in \ZZ. $ Then define the $ j-$th Paley block
\begin{equs}[eqn:paley-block]
\Delta_{j} \varphi (x) = \mF^{-1} [ \varrho_{j} \mF \varphi] (x) = \langle
\varphi, K_{j}^{x} \rangle, \quad \forall
j \geqslant -1, \quad \varphi \in \mS^{\prime} (\TT).
\end{equs}
Here $ K_{j}^{x}(y) = \mF^{-1} \varrho_{j} (x-y) $ and we observe that a
scaling argument guarantees that uniformly over $ j, x $
\begin{equs}[eqn:bound-K-j]
\| K_{j}^{x} \|_{B^{\alpha}_{1, 1}} \lesssim 2^{\alpha j}.
\end{equs}
Next, note that every Paley block is a smooth function on $ \TT $. We can thus define the
periodic Besov spaces $ B^{\alpha}_{p, q} \subseteq
\mS^{\prime}(\TT) $ for $ \alpha \in \RR, p, q \in [1, \infty]$ by:
\begin{equs}
\| \varphi \|_{B^{\alpha}_{p, q} } = \Big( \sum_{j \geqslant -1} 2^{\alpha j q}
\| \Delta_{j} \varphi \|_{L^{p}}^{q} \Big)^{\frac{1}{q}}.
\end{equs}
We can decompose a product of periodic distributions, formally, as 
\begin{equs}
\varphi \cdot \psi = \psi \para \psi + \varphi \reso \psi + \varphi\rpara
\psi,
\end{equs}
where 
\begin{equs}
\varphi \para \psi = \sum_{j \geqslant -1} \sum_{i \leqslant j -2} \Delta_{i} \psi \Delta_{j} \psi,
\qquad \varphi \reso \psi = \sum_{|i - j| \leqslant 1} \Delta_{i} \varphi
\Delta_{j} \psi.
\end{equs}
We refer to the first as the paraproduct and to the last term as
the resonant product between $ \varphi $ and $ \psi $. We will also require the
following modified paraproduct for time-dependent functions $ \varphi, \psi
\colon [0, \infty) \to \mS^{\prime} (\TT)$:
\begin{equs}[eqn:modified-paraproduct]
\varphi \ppara \psi \, (t)= \sum_{j \geqslant -1} \bigg( \int_{0}^{t} 2^{2 j}
\mf{f}(2^{2j}(t -s))\bigg(  \sum_{i \leqslant j -2} \Delta_{i} \varphi \bigg)
(s)   \ud s \, \bigg) \Delta_{j} \psi(t),
\end{equs}
where $ \mf{f} \colon \RR \to [0, \infty) $ is a fixed smooth function with
compact support $ \supp( \mf{f}) \subseteq [0, \infty) $ and such that $
\int_{\RR} \mf{f}(s) \ud s =1 $.
To lighten the notation
we write, for $ \alpha \in \RR, p \in [1, \infty] $
\begin{equs}
\| \varphi \|_{\mC^{\alpha}} = \| \varphi \|_{B^{\alpha}_{\infty,
\infty}}, \qquad \| \varphi \|_{\mC^{\alpha}_{p}} = \| \varphi
\|_{B^{\alpha}_{p, \infty}}.
\end{equs}
The following estimates hold for paraproducts.
\begin{lemma}\label{lem:paraproduct-estimate}
For $ \alpha, \beta \in \RR $ 
\begin{equs}
\| \varphi \para \psi \|_{\mC^{\beta + \alpha \wedge 0}} \lesssim \| \varphi
\|_{\mC^{\alpha}} \| \psi \|_{\mC^{\beta}}.
\end{equs}
If $ \alpha + \beta >0 $ one has in addition
\begin{equs}
\| \varphi \reso \psi \|_{\mC^{\alpha + \beta}} & \lesssim \| \varphi
\|_{\mC^{\alpha}} \| \psi \|_{\mC^{\beta}}, \qquad \| \varphi \cdot \psi
\|_{\mC^{\alpha \wedge \beta}}  \lesssim \| \varphi \|_{\mC^{\alpha}} \| \psi
\|_{\mC^{\beta}}.
\end{equs}
\end{lemma}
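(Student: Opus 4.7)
The proof is a standard application of Littlewood--Paley theory, essentially due to Bony. I would organise it in three steps, one for each of the three estimates in the statement.

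For the paraproduct bound, I would first exploit the Fourier localisation of each summand. Writing $S_{j-1}\varphi=\sum_{i\leqslant j-2}\Delta_i\varphi$, the function $S_{j-1}\varphi\cdot\Delta_j\psi$ has Fourier support contained in an annulus of size $\simeq 2^j$ (because the Fourier support of $S_{j-1}\varphi$ is a ball of radius much smaller than that of $\Delta_j\psi$). Hence there is an absolute $N$ such that
\begin{equs}
\Delta_k(\varphi\para\psi)=\sum_{|j-k|\leqslant N}\Delta_k\bigl(S_{j-1}\varphi\cdot\Delta_j\psi\bigr).
\end{equs}
The two key Littlewood--Paley estimates are $\|\Delta_j\psi\|_{L^\infty}\lesssim 2^{-j\beta}\|\psi\|_{\mC^\beta}$ and
\begin{equs}
\|S_{j-1}\varphi\|_{L^\infty}\lesssim 2^{-j(\alpha\wedge 0)}\|\varphi\|_{\mC^\alpha},
\end{equs}
which follows by summing the bound $\|\Delta_i\varphi\|_{L^\infty}\lesssim 2^{-i\alpha}\|\varphi\|_{\mC^\alpha}$ over $i\leqslant j-2$ (uniformly bounded if $\alpha\geqslant 0$, blowing up like $2^{-j\alpha}$ if $\alpha<0$). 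Combining these yields $\|\Delta_k(\varphi\para\psi)\|_{L^\infty}\lesssim 2^{-k(\beta+\alpha\wedge 0)}\|\varphi\|_{\mC^\alpha}\|\psi\|_{\mC^\beta}$, which is the desired bound.

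For the resonant product, the Fourier support of $\Delta_i\varphi\cdot\Delta_j\psi$ with $|i-j|\leqslant 1$ is contained in a ball of radius $\simeq 2^j$, so applying $\Delta_k$ only picks up frequencies with $j\gtrsim k-N$:
\begin{equs}
\Delta_k(\varphi\reso\psi)=\sum_{j\geqslant k-N}\sum_{|i-j|\leqslant 1}\Delta_k(\Delta_i\varphi\cdot\Delta_j\psi).
\end{equs}
Estimating each term by $2^{-i\alpha}2^{-j\beta}\simeq 2^{-j(\alpha+\beta)}$ (up to the multiplicative constants from the $\mC^\alpha,\mC^\beta$ norms) and summing the geometric series over $j\geqslant k-N$, the assumption $\alpha+\beta>0$ ensures convergence and produces the bound $2^{-k(\alpha+\beta)}\|\varphi\|_{\mC^\alpha}\|\psi\|_{\mC^\beta}$. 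The pointwise product estimate then follows by the decomposition $\varphi\cdot\psi=\varphi\para\psi+\varphi\reso\psi+\varphi\rpara\psi$, applying the paraproduct bound twice (once with roles of $\varphi,\psi$ swapped) and the resonant bound, and noting that $\beta+\alpha\wedge 0,\alpha+\beta\wedge 0,\alpha+\beta\geqslant \alpha\wedge\beta$ whenever $\alpha+\beta>0$.

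The only step requiring any care is the Fourier localisation claim (that the paraproduct summands are spectrally supported in annuli and the resonant summands in balls), which rests on the standard property that $\operatorname{supp}\varrho_j\subseteq 2^j\mathcal{A}$ for a fixed annulus $\mathcal{A}$ (and a ball for $j=-1$); this is genuinely routine, so I do not expect any obstacle. Everything else reduces to summing geometric series in $j$, with the two regimes $\alpha\gtrless 0$ (respectively $\alpha+\beta\gtrless 0$) being the only places where the exponents in the statement get their particular shape.
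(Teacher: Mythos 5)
Your proof is correct and follows exactly the standard Littlewood--Paley argument (Bony's decomposition, Fourier-support localisation, geometric summation) used in the reference the paper cites for this lemma; the paper itself offers no proof, only a pointer to \cite[Theorems 2.82, 2.85]{BahouriCheminDanchin2011FourierAndNonLinPDEs}.

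One small imprecision, inherited from the lemma statement itself: your claim that $\|S_{j-1}\varphi\|_{L^\infty}\lesssim 2^{-j(\alpha\wedge 0)}\|\varphi\|_{\mC^\alpha}$ with uniform constants ``if $\alpha\geqslant 0$'' fails at $\alpha=0$, where the sum $\sum_{i\leqslant j-2}\|\Delta_i\varphi\|_{L^\infty}$ acquires a factor $\simeq j$ when measured against $\|\varphi\|_{\mC^0}$. The clean statement replaces $\|\varphi\|_{\mC^\alpha}$ by $\|\varphi\|_{L^\infty}$ when $\alpha\geqslant 0$ (and $\|\varphi\|_{L^\infty}\lesssim\|\varphi\|_{\mC^\alpha}$ only for $\alpha>0$). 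Since the paper only ever invokes the lemma with $\alpha\neq 0$, this is harmless, but it is worth being aware that the case $\alpha=0$ is genuinely excluded.
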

See for example \cite[Theorems 2.82,
2.85]{BahouriCheminDanchin2011FourierAndNonLinPDEs} for a proof. Next consider the periodic heat semigroup $ P_{t} = e^{t \kappa \Delta} $. The
following regularity estimates hold.
\begin{lemma}\label{lem:schauder-estiamtes}
For any $ \kappa > 0, \, \alpha \in \RR, p \in [1, \infty] $ and $ \beta \geqslant 0 $:
\begin{equs}
\| P_{t} \varphi \|_{\mC^{\alpha+ \beta}_{p}} \lesssim_{\kappa} t^{- \frac{\beta}{2} }
\| \varphi \|_{\mC^{\alpha}_{p}}, \qquad \| P_{t} \varphi - \varphi
\|_{\mC^{\alpha}_{p}} \lesssim_{\kappa} t^{\frac{\beta}{2}} \| \varphi
\|_{\mC^{\alpha + \beta}_{p}}, \qquad \forall \varphi \in \mS^{\prime}
(\TT).
\end{equs}
\end{lemma}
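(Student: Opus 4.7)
These are classical Schauder-type estimates for the heat semigroup on periodic Besov spaces. My plan is to dualize both inequalities in Fourier space and reduce them to a pointwise analysis of the multipliers $e^{-t\kappa|k|^2}$ and $1-e^{-t\kappa|k|^2}$ on the dyadic annuli $\{|k|\simeq 2^j\}$ supporting each Paley block $\Delta_j$, then reassemble by taking the supremum over $j$ in the definition of the Besov norm.

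The key ingredient, which I would record first, is that for any $j\geqslant 0$, $p\in[1,\infty]$, and any $f\in\mS^\prime(\TT)$ whose Fourier transform is supported in $\{|k|\simeq 2^j\}$, one has the block-level bounds
\begin{equs}
\| P_t f \|_{L^p} \lesssim e^{-c t 2^{2j}}\| f\|_{L^p}, \qquad
\| (P_t-\mathrm{Id}) f \|_{L^p} \lesssim \min(1, c t 2^{2j}) \| f\|_{L^p},
\end{equs}
for some $c=c(\kappa)>0$. Both follow from Young's inequality applied to the convolution kernel of the corresponding multiplier paired with a smooth annular cutoff: a rescaling $k\mapsto 2^{-j}k$ reduces each estimate to a uniform $L^1$-bound on a fixed family of kernels, and the resulting exponential or truncated-linear factor comes directly from the Gaussian decay of the heat kernel on the rescaled side. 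The low-frequency block $j=-1$ is handled by the trivial contraction $\| P_t f\|_{L^p}\leqslant \| f\|_{L^p}$. These are standard computations; I would just quote them from \cite[Chapter 2]{BahouriCheminDanchin2011FourierAndNonLinPDEs}.

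With these block bounds in hand, the two estimates of the lemma become routine. Since $P_t$ commutes with each $\Delta_j$, the smoothing estimate reduces to
\begin{equs}
\sup_{j\geqslant -1} 2^{j(\alpha+\beta)} \|\Delta_j P_t \varphi\|_{L^p}
\lesssim \|\varphi\|_{\mC^\alpha_p} \sup_{j\geqslant -1} 2^{j\beta} e^{-c t 2^{2j}}
\lesssim t^{-\beta/2}\|\varphi\|_{\mC^\alpha_p},
\end{equs}
the last bound coming from $\sup_{x\geqslant 0} x^{\beta/2} e^{-cx}<\infty$ applied with $x=t 2^{2j}$. For the second inequality, the same scheme with the block estimate for $P_t-\mathrm{Id}$ together with the elementary inequality $\min(1,x)\leqslant x^{\beta/2}$ (valid for all $x\geqslant 0$ when $\beta\in[0,2]$) yields the desired factor $t^{\beta/2}$. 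All applications of the lemma appearing in the main text lie in this range of $\beta$; the extension to $\beta>2$, should it be needed, is obtained by iterating the $\beta=2$ case through the semigroup property. The only genuinely non-trivial ingredient is the annular kernel bound, and as indicated above this is purely classical.
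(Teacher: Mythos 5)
The paper does not supply its own proof of this lemma; it points to \cite[Lemma A.7]{GubinelliPerkowski2017KPZ}, whose argument is precisely the dyadic-block one you give: commute $P_t$ with each $\Delta_j$, bound the heat multiplier on an annulus by scaling and the $L^1$-kernel estimate, and then take $\sup_j 2^{j\beta}e^{-ct 2^{2j}}$ or $\sup_j 2^{j\beta}\min(1,ct2^{2j})$. Your proof is correct in the range $\beta\in[0,2]$, and you are right that every use of the second estimate in the paper stays in this range. Two remarks on the fringes, neither of which affects the paper. First, the claim that the second inequality extends to $\beta>2$ by iterating the $\beta=2$ case through the semigroup property is not correct; the inequality actually \emph{fails} for $\beta>2$. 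For a single-mode $\varphi$ at frequency $k$ one has $\|(P_t-\mathrm{Id})\varphi\|_{\mC^\alpha_p}\simeq |1-e^{-\kappa t|k|^2}|\,|k|^\alpha\|\varphi\|_{L^p}$, so the desired bound would require $|1-e^{-\kappa y}|\lesssim y^{\beta/2}$ uniformly in $y=t|k|^2>0$, and since the left side is $\simeq\kappa y$ as $y\to 0$ this fails whenever $\beta>2$. Your caveat that $\beta\leqslant 2$ suffices for all applications is therefore the correct resolution, not the iteration. Second, the first inequality with $\beta>0$ cannot hold with a constant uniform over $t\in(0,\infty)$ (take $\varphi$ constant: the $j=-1$ block is preserved while $t^{-\beta/2}\to 0$), so the implicit constant should be read as depending on an upper bound $T$ for $t$; your treatment of the low block by the trivial contraction yields exactly this $T$-dependent version, which is what the paper uses throughout.
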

These estimates follow from the representation of the semigroup as a
Fourier multiplier, see e.g. \cite[Lemma A.7]{GubinelliPerkowski2017KPZ}. 
\end{appendix}

\newcommand{\etalchar}[1]{$^{#1}$}

\end{document}